\documentclass[10pt,a4paper]{article}

\usepackage{amsmath,mathrsfs}
\usepackage{amsthm,amssymb,amsfonts,dsfont}
\allowdisplaybreaks
\usepackage{xpatch}
\usepackage[utf8]{inputenc}
\usepackage{setspace, color}
\usepackage{array}
\usepackage{cite}
\usepackage{url}
\usepackage{multicol}
\usepackage{multirow}
\usepackage{array}
\usepackage{ragged2e}
\usepackage{graphicx}
\usepackage{pdflscape}
\usepackage{epstopdf} 
\usepackage{booktabs}
\usepackage{natbib}
\usepackage{hyperref}
\hypersetup{
    colorlinks=true,
    linkcolor=blue,
    filecolor=magenta,      
    urlcolor=cyan,
    pdftitle={Overleaf Example},
    pdfpagemode=FullScreen,
    }

\newtheorem{theorem}{Theorem}[section]
\newtheorem{definition}[theorem]{Definition}

\newtheorem{lemma}[theorem]{Lemma}
\newtheorem{proposition}[theorem]{Proposition}
\newtheorem{corollary}[theorem]{Corollary}
\newtheorem{remark}[theorem]{Remark}

\makeatletter
   \xpatchcmd{\@thm}{\fontseries\mddefault\upshape}{}{}{} 
\makeatother

\usepackage[margin=1in]{geometry}

\title{On the global dynamics and blow-up dichotomy for inhomogeneous coupled nonlinear Schrödinger systems}
\author{Mykael Araujo Cardoso\footnote{Department of Mathematics, UFPI, Brazil, e-mail: mykael@ufpi.edu.br. This author is partially supported by CAPES} \,,\, and  Lázaro Santos Gil\footnote{Department of Mathematics, IFMG and Department of Mathematics, UFPI, Brazil, e-mail: lazaro.gil@ifmg.edu.br. This author is partially supported by CAPES}}
\date{\today}

\begin{document}

\maketitle

\begin{abstract}
In this work, we investigate the dynamics of an inhomogeneous coupled nonlinear Schrödinger system with quadratic-type interactions. Such systems arise naturally in nonlinear dynamics and mathematical physics, particularly in nonlinear optics, plasma physics, and wave propagation in inhomogeneous dispersive media.
We establish a sharp criterion characterizing the dichotomy between global existence and finite-time blow-up of solutions to the associated initial value problem. This criterion is formulated in terms of conserved quantities, namely mass and energy, measured relative to the ground state solutions of the corresponding elliptic system.
The analysis combines variational methods, conservation laws, and sharp Gagliardo–Nirenberg-type inequalities to obtain local and global well-posedness results in both subcritical and intercritical regimes.
Our results extend and unify previous studies on single and multi-component nonlinear Schrödinger equations, providing a general analytical framework applicable to a broad class of coupled systems with spatially inhomogeneous nonlinearities and quadratic growth.

\medskip
\noindent\textbf{Keywords:}{Schrödinger system, Well-posedness, Global solution, Blow-up}

\end{abstract}


\tableofcontents

\section{Introduction}\label{sec1}

 We present here a study on the asymptotic behavior of the solution of the Initial Value Problem (IVP) associated with the system of nonlinear, nonhomogeneous Schrödinger equations
\begin{equation}\label{INLS}
\left\{
\begin{array}{l}
i \alpha_k\partial_t u_k +\gamma_k\Delta u_k - \beta_ku_k +|x|^{-b}f_k(u_1,\ldots,u_l)=0,\\[2mm]
(u_1(x,0), \ldots, u_l(x,0)) = (u_{10}(x),\ldots,u_{l0}(x)), \;\;\; k=1,\ldots,l,
\end{array}
\right.
\end{equation}
where $u_k = u_k(x,t)$ is a complex-valued function, $2\leq n \leq 5$, $0<b< \min\left\{2, \dfrac{n}{2}\right\}$, $\alpha_k$, $\gamma_k$ are positive real constants, $\beta_k$ is a nonnegative real constant, and the maps  $f_k$  have a quadratic-type growth. 
%

On the nonsingular case $b=0$, where \eqref{INLS} was introduced by Noguera and Pastor (see \cite{Noguera}),
the well-posedness in $L^{2}(\mathbb{R}^{n})\times \ldots \times L^{2}(\mathbb{R}^{n})$ when $n \leq 4$ and in $H^{1}(\mathbb{R}^{n})\times \ldots \times H^{1}(\mathbb{R}^{n})$ when $1 \leq n \leq 6$ was completely investigated in \cite{Noguera} by the contraction argument combined with Strichartz estimates. Moreover, the global well-posedness in $L^{2}(\mathbb{R}^{n})\times \ldots \times L^{2}(\mathbb{R}^{n})$ when $n \leq 4$, as well as the global existence in $H^{1}(\mathbb{R}^{n})\times \ldots \times H^{1}(\mathbb{R}^{n})$ where $n\leq 3$, were obtained due to the conservation of mass and energy. 
In addition, by conservation laws, they proved that for initial data in $H^{1}(\mathbb{R}^{n})\times \ldots \times H^{1}(\mathbb{R}^{n})$,
a condition for global existence was found which, depends on the size of the initial data when compared to the associated ground states.  The global existence, energy scattering, and the finite-time blow-up in the intercritical case were studied in \cite{Noguera},  while the finite-time blow-up on the critical case were shown in \cite{Noguera2}.

On the other hand, in case where the system \eqref{INLS} is an  equation with one component, namely  $l=1$, it has been extensively studied over the past several years. Genoud and Stuart \cite{Genoud} showed local well-posedness in. Farah \cite{Farah} established the global existence of solutions and proved the existence of finite-time blow-up solutions for initial data with finite variance. Dinh \cite{Dinh} extended the existence of blow-up solutions to radial data, while Cardoso and Farah \cite{Cardoso}, and Bai and Li \cite{Bai} did so for non-radial initial data. Energy scattering was initially proven for radial initial data by Farah and Guzmán \cite{Guzman}\cite{Guzman.2}, Campos \cite{Campos}, and Dinh \cite{Dinh.2}, and was later extended to non-radial initial data by Miao, Murphy, and Zheng \cite{Miao}, as well as Cardoso, Farah, Guzmán, and Murphy \cite{Cardoso-Murphy} (see also \cite{Murphy}).

Systems as in \eqref{INLS}, $b\geq 0$ and with power-like quadratic nonlinearities, appear in several areas in physics such as nonlinear optics, plasma physics, propagation in nonlinear fibers, among others.  In optical media, for instance, Dinh and Esfahani \cite{Doung}  studied  the following model
%
\begin{equation}\label{pvi.arxiv}
\left\{
\begin{array}{l}
i\partial_t u +\dfrac{1}{2}\Delta u +|x|^{-b}\overline{u}v=0,\\[2mm]
i\partial_t v +\dfrac{\kappa}{2}\Delta v - \gamma v +|x|^{-b}u^2=0,
\end{array}
\right.
\end{equation}
%
that describes the two-wave (degenerate, alias type-I) quadratic interactions for the complex fundamental - frequency (FF) and second-harmonic (SH) amplitudes in the presence of the spatial singular modulation of the $\chi^{(2)}$  nonlinearity. For other examples, when the propagation of optical beams in a nonlinear dispersive medium with quadratic respose is considered, the following three-wave interaction models appear (see Noynov and Saltiel \cite{Koynov} for the case with $b=0$):
%
\begin{equation}\label{example1}
\left\{
\begin{array}{l}
2i\partial_t w +\Delta w -\beta w = -\dfrac{1}{2}|x|^{-b}(u^2 + v^2),\\[2mm]
i\partial_t v +\Delta v - \beta_{1} v = -|x|^{-b} \overline{v}w, \\[2mm]
i\partial_t u +\Delta u -u = -|x|^{-b}\overline{u}w,
\end{array}
\right.
\end{equation}
and
\begin{equation}\label{example2}
\left\{
\begin{array}{l}
i\partial_t w +\Delta w - w = -|x|^{-b}(\overline{w}v + \overline{v}u),\\[2mm]
2i\partial_t v +\Delta v - \beta v = -|x|^{-b}\left( \dfrac{1}{2}w^{2} + \overline{w}u\right), \\[2mm]
3i\partial_t u +\Delta u -\beta_{1}u = -|x|^{-b}vw,
\end{array}
\right.
\end{equation}
where $\beta$, $\beta_{1} >0$ are real constants from the mathematical point of view, Pastor \cite{Pastor2} studied the above systems, with $b=0$, in the one-dimensional case. Global well-posedness in the energy space, existence of ground state solutions as well as their stability were established. Inspired by these works, in particular, following the ideas \cite{Noguera}, we establish sufficient conditions on the general interaction terms $|x|^{-b}f_k$ to analyze the dynamics of the system \eqref{INLS}. A key feature of our analysis is that we do not assume an explicit form for the nonlinearities $f_k$, treating instead a general class of functions with quadratic interactions.

Our initial discussion will focus on the local well-posedness ones. For this to hold, the following conditions are assumed
\begin{itemize}
    \item[\textbf{(H1)}] $$f_k(0,\ldots,0)=0, \;\;\;\;\;\;\; k=1,\ldots,l.$$
    \item[\textbf{(H2)}] There exists a constant $C>0$ such that for $(z_1,\ldots,z_l)$, $(z_{1}^{'},..,z_{l}^{'})\in \mathbb{C}^{l}$ we have
    \begin{eqnarray*}
        \left| \dfrac{\partial}{\partial z_m} [f_k(z_1,\ldots,z_l)-f_k(z_{1}^{'},..,z_{l}^{'})]\right| &\leq& C \sum_{j=1}^{l}|z_j-z_{j}^{'}|, \;\;\;\;\;\; k,m\,=\,1,\ldots,l; \\[2mm]
        \left| \dfrac{\partial}{\partial \overline{z}_m} [f_k(z_1,\ldots,z_l)-f_k(z_{1}^{'},..,z_{l}^{'})] \right| &\leq& C \sum_{j=1}^{l}|z_j-z_{j}^{'}|, \;\;\;\;\;\; k,m\,=\,1,\ldots,l. 
    \end{eqnarray*}
\end{itemize}
Next,to establish global well-posedness via conservation laws, we assume the following
\begin{itemize}
    \item[\textbf{(H3)}] There exists a function $F:\mathbb{C}^{l} \to \mathbb{C}$, such that
    $$
    f_k(z_1,\ldots,z_l) = \dfrac{\partial F}{\partial \overline{z}_m}(z_1,\ldots,z_{l}) + \overline{\dfrac{\partial F}{\partial {z}_m}} (z_1,\ldots,z_{l}).
    $$
    \item[\textbf{(H4)}] There exists positive constants $\sigma_1$,\ldots, $\sigma_{l}$ such that for any $\theta \in \mathbb{R}$ and $(z_1,\ldots,z_{l}) \in \mathbb{C}^{l}$
    $$
    \mbox{Re} F(e^{i \sigma_1 \theta} z_1, \ldots, e^{i \sigma_1 \theta} z_{l}) = 
    \mbox{Re} F( z_1, \ldots,  z_{l}). 
    $$
    \end{itemize}
\begin{remark}\label{remarkH3H4}
  Similar to \cite[Lemma~2.9.]{Noguera}, the properties \emph{(H3)}-\emph{(H4)} imply that, for any $(z_1,\ldots,z_{l}) \in \mathbb{C}^{l}$,
    $$
    \mbox{Im} \sum_{k=1}^{l} \sigma_k f_k(z_1,\ldots,z_{l})\overline{z}_k =0.
    $$
\end{remark}
    \begin{itemize}
    \item[\textbf{(H5)}] Function $F$ is homogeneous of degree $3$, that is, for any $\lambda >0$ and $(z_1,\ldots,z_{l}) \in \mathbb{C}^{l}$,
    $$
    F(\lambda z_1,\ldots, \lambda z_l) \;=\; \lambda^{3}     F( z_1,\ldots,z_l).
    $$
\end{itemize}
Finally, concerning the ground states, we impose the following condition
\begin{itemize}
    \item[\textbf{(H6)}] There holds
    $$
    \left| \mbox{Re} \int_{\mathbb{R}^{n}} |x|^{-b}F(u_1,\ldots,u_l)\, dx \right| \;\leq\;
    \int_{\mathbb{R}^{n}} |x|^{-b} F(|u_1|,\ldots,|u_l|)\, dx.
    $$
    \item[\textbf{(H7)}] Function $F$ is real valued on $\mathbb{R}^{l}$, that is, if $(y_1,\ldots,y_l) \in \mathbb{R}^{l}$ then
    $$
    F(y_1,\ldots,y_l) \in \mathbb{R}.
    $$
    Moreover, functions $f_k$ are non-negative on the positive cone in $\mathbb{R}^{l}$, that is, for $y_j \geq 0$, $j=1,\ldots,l$,
    $$
    f_k(y_1,\ldots,y_l) \geq 0.
    $$
    \item[\textbf{(H8)}] Function $F = F_1 + \ldots+ F_{m}$, where each $F_{s}$, $s=1,\ldots,m$ is super-
    modular on $\mathbb{R}_{+}^{d}$ for some $1\leq d\leq l$ and vanishes on hyperplanes, that is, for any $i$, $j \in \{1,\ldots,d\}$, $i \neq j$ and $k,h >0$, we have
    $$
    F_s(y+he_i +ke_j) + F_s(y) \;\geq\; F_s(y+he_i) + F_s(y+ke_j), \;\;\;\;\;\;\; y \in \mathbb{R}_{+}^{d},
    $$
    and $F_s(y_1,\ldots,y_{d}) =0$ if $y_j=0$ for some $j \in \{1,\ldots,d\}$.
\end{itemize}

Although not every result requires all the assumptions, we will assume (H1)–(H8) throughout the paper. In Section $2$, we indicate the specific assumptions used in each case.  
\begin{remark}
It is easy to see that the systems \eqref{pvi.arxiv}, \eqref{example1} and, \eqref{example2} satisfy \emph{(H1)}-\emph{(H8)} with
$$
F(z_1,z_2) \;=\; \overline{z}_{1}^{2}z_2, \quad F(z_1,z_2,z_3) = \dfrac{\overline{z}_1}{2}(z_{2}^{2} +z_{3}^{2}) \quad \mbox{and,} \quad
F(z_1,z_2,z_3) = \dfrac{z_{1}^{2}\overline{z}_{2}}{2} +z_1 z_{2} \overline{z}_{3},
$$    
respectively.
\end{remark}

The equation \eqref{INLS} (with $\beta_k = 0$) admits the following symmetries: if $\mathbf{u}(x,t)$ is a solution,  then  are also solutions:
\begin{itemize}
\item[a)] $\mathbf{w}(x,t)=\mathbf{u}(x,t-t_0)$, for all $t_0\in \mathbb{R}$ (\emph{time translation invariance})
\item[b)] $\mathbf{w}(x,t)=e^{i\gamma}\mathbf{u}(x,t)$ for all $\gamma\in \mathbb{R}$ (\emph{phase invariance})
\item[c)] $\mathbf{w}(x,t)= \lambda^{2-b}\mathbf{u}(\lambda x, \lambda^2t)$ (\emph{scaling invariance}).
\end{itemize}
%
%
Furthermore, computing the homogeneous Sobolev norm, we obtain
%
$$\|\mathbf{w}(\cdot, 0)\|_{\dot{\mathbf{H}}^s} = \lambda^{s-\frac{n}{2}+2-b}\|\mathbf{u}(\cdot, 0)\|_{\dot{\mathbf{H}}^s}.$$
The Sobolev index, which leaves the scaling symmetry, invariant is called the critical index and is defined as $s_c = \frac{n+2b -4}{2}$, as it represents the minimal regularity for which one can expect local well-posedness for the equations. Note that, if $s_c=0$ (or $n+2b=4$) the system \eqref{INLS} is known as mass-critical or $L^{2}$-critical; if $s_c =1 $ (or $n+2b=6$) it is called energy-critical or $H^{1}$-critical; if $0<s_c<1$ (or $4<n+2b<6$), the system \eqref{INLS} is known as mass-supercritical and energy-subcritical or intercritical. More precisely, we say that system \eqref{INLS} is:
$$
L^2 - 
\left\{ \begin{array}{lc}
\mbox{subcritical}, \;\;\;\, \textrm{ if } n+2b < 4\\
\mbox{critical},\;\;\;\;\;\;\;\;\; \textrm{ if } n+2b = 4\\
\mbox{supercritical}, \;\textrm{ if } n+2b > 4.
\end{array}\right.
\;\;\;\;\;
\mbox{and}
\;\;\;\;\;
\dot{H}^1 - 
\left\{ \begin{array}{lc}
\mbox{subcritical}, \;\;\;\, \textrm{ if } n+2b < 6\\
\mbox{critical},\;\;\;\;\;\;\;\;\; \textrm{ if } n+2b = 6\\
\mbox{supercritical}, \;\textrm{ if } n+2b > 6.
\end{array}\right.
$$

Concerning local well-posedness for the IVP \eqref{INLS}, the first result of this work is as follows

\begin{theorem}\emph{\textbf{(Existence of local $\mathbf{L^{2}}$-solutions: subcritical case)}} \label{l2-existence solutions}Let $1\leq n \leq 3$, $n+2b< 4$ and $0<b< \min\left\{2,\dfrac{n}{2}\right\}$. Assume that \emph{(H1)} and \emph{(H2)} hold. Then for any $r>0$ there exists $T=T(r) >0$ such that for any $\mathbf{u}_0 \in \mathbf{L}^{2}$     with $\|\mathbf{u}_{0}\|_{\mathbf{L}^2} < r$, system \eqref{INLS} has a unique solution $\mathbf{u} \in \mathbf{X}(I)$ with $I=[-T,T]$. 
\end{theorem}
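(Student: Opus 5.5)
We prove the theorem by a contraction argument on the Duhamel formulation in Strichartz spaces, following the scheme of Genoud--Stuart and Guzmán for the scalar INLS and of Noguera--Pastor for systems. We rescale time in each component, dividing the $k$-th equation by $\alpha_k$, so that $u_k$ solves $\partial_t u_k = i\frac{\gamma_k}{\alpha_k}\Delta u_k - i\frac{\beta_k}{\alpha_k}u_k + \frac{i}{\alpha_k}|x|^{-b}f_k(\mathbf{u})$. Let $U_k(t)=e^{it(\frac{\gamma_k}{\alpha_k}\Delta-\frac{\beta_k}{\alpha_k})}$. This group is unitary on $L^2$. Since $\gamma_k/\alpha_k>0$ and the $\beta_k$ term is only a unimodular phase, $U_k$ satisfies the standard Strichartz estimates for every $L^2$-admissible pair $(q,r)$, with constants depending on $\gamma_k/\alpha_k$. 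We define the map
$$\Phi_k(\mathbf{u})(t)=U_k(t)u_{k0}+\frac{i}{\alpha_k}\int_0^t U_k(t-s)\,|x|^{-b}f_k(\mathbf{u}(s))\,ds,$$
and look for a fixed point in a ball of $\mathbf{X}(I)=\prod_k \big(C(I;L^2)\cap L^{q}(I;L^{r})\big)$. The natural choice is the pair $(q,r)=(\frac{12}{n+2b},3)$, which is admissible because $\frac{2}{q}=n(\frac12-\frac13)$ requires $q=\frac{12}{n}$. We may adjust to a general admissible pair $(q,r)$ tailored to the weight, as in Guzmán's lemmas.

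The key step is a nonlinear estimate of the form
$$\big\||x|^{-b}\,|\mathbf{u}|\,|\mathbf{v}|\big\|_{L^{q'}_I L^{r'}_x}\le C\,T^{\theta}\,\|\mathbf{u}\|_{L^{q}_IL^{r}_x}\|\mathbf{v}\|_{L^{q}_IL^{r}_x},\qquad \theta>0.$$
Hypotheses (H1) and (H2) give $|f_k(\mathbf{z})|\le C|\mathbf{z}|^2$ and $|f_k(\mathbf{z})-f_k(\mathbf{z}')|\le C(|\mathbf{z}|+|\mathbf{z}'|)|\mathbf{z}-\mathbf{z}'|$, so this one bilinear estimate controls both the self-map and the difference. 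To prove it, we split $\mathbb{R}^n=B\cup B^c$ with $B$ the unit ball. On $B$ we use Hölder with $|x|^{-b}\in L^{\gamma}(B)$ for $\gamma<n/b$. On $B^c$ we use $|x|^{-b}\in L^{\gamma}(B^c)$ for $\gamma>n/b$. This requires choosing admissible pairs $(q_i,r_i)$, possibly different on $B$ and on $B^c$, with
$$\frac1{r'}=\frac1\gamma+\frac2{r_i},\qquad \frac1{q'}=\frac{2}{q_i}+\theta_i .$$
Hölder in time then produces the factor $T^{\theta_i}$.

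We expect the main obstacle to be the exponent bookkeeping: showing that such pairs exist with $\theta_i>0$. Counting scaling, $\theta=1-\frac{n+2b}{4}$, which is positive exactly when $n+2b<4$. This is where the hypothesis $n+2b<4$ enters. The constraint $b<n/2$ guarantees the needed range for $\gamma$ around $n/b$, so that both the ball and exterior pieces are possible. We would first try a single symmetric pair $r_1=r_2$ and perturb $\gamma=n/b\mp\varepsilon$ on each region. For $n=1$ we must also check $r\le\infty$, and admissibility constraints may force a non-symmetric choice.

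With the bilinear estimate in hand, Strichartz gives
$$\|\Phi(\mathbf{u})\|_{\mathbf{X}(I)}\le C\|\mathbf{u}_0\|_{\mathbf{L}^2}+CT^{\theta}\|\mathbf{u}\|_{\mathbf{X}(I)}^2$$
and
$$\|\Phi(\mathbf{u})-\Phi(\mathbf{v})\|_{\mathbf{X}(I)}\le CT^{\theta}\big(\|\mathbf{u}\|_{\mathbf{X}(I)}+\|\mathbf{v}\|_{\mathbf{X}(I)}\big)\|\mathbf{u}-\mathbf{v}\|_{\mathbf{X}(I)}.$$
Choose the radius $a=2Cr$ and then $T$ with $4CT^{\theta}a<1$. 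Then $\Phi$ is a contraction on the ball of radius $a$, and $T$ depends only on $r$. Continuity in time with values in $L^2$ follows from the Strichartz estimates. Unconditional uniqueness in $\mathbf{X}(I)$ follows by applying the difference estimate on short subintervals.
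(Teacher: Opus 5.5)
Your proposal is correct and takes essentially the same route as the paper. The paper's Lemma~\ref{lemmaL2} proves the weighted bilinear estimate by splitting into a ball and its complement, using $|x|^{-b}\in L^{n/(b\pm\epsilon)}$ with exponents $r=3n/(n-b)$, $r_\pm=3n/(n-b\mp3\epsilon)$ and H\"older in time to get $T^{\frac{4-n-2b}{4}\mp\frac{\epsilon}{2}}$, then runs the contraction with $a=2Cr$; the differences are cosmetic, since the paper balances with a ball of radius $R=T^{1/2}$ to get exactly $T^{\frac{4-n-2b}{4}}$ (your unit ball gives $T^{\frac{4-n-2b}{4}-\frac{\epsilon}{2}}$, which is equally good), and your ``natural'' pair $(\frac{12}{n+2b},3)$ is not admissible for $b>0$, whereas your symmetric choice $r_\pm=3n/(n-b\mp\epsilon)$ does work.
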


\begin{remark}
    We will also prove that under assumptions \emph{(H3)}-\emph{(H4)} the solution of the Theorem \ref{l2-existence solutions} is always global, i.e., system \eqref{INLS} has a unique solution $\mathbf{u} \in \mathbf{X}(\mathbb{R})$. 
\end{remark}

After this result in $\mathbf{L}^2$, the work focuses on studying the solutions in $\mathbf{H}^1$. The corresponding well-posedness result is stated as follows

\begin{theorem}\label{Existence H1}\emph{\textbf{(Existence of local $\mathbf{H^{1}}$-solutions: subcritical case $\mathbf{ n \geq 2}$)}} Let $n\geq 2$, 
    $0<b<\min\left\{2, \dfrac{n}{2}\right\}$ and $n+2b < 6$. Assume that \emph{(H1)} and \emph{(H2)} hold. Then for any $r>0$ there exists $T=T(r) >0$ such that for any $\mathbf{u}_0 \in \mathbf{H}^{1}$ with $\|\mathbf{u}_{0}\|_{\mathbf{H}^1} < r$, system \eqref{INLS} has a unique solution $\mathbf{u} \in \mathbf{Y}(I)$ with $I=[-T,T]$. 
\end{theorem}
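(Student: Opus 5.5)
We prove Theorem \ref{Existence H1} by a contraction argument on the Duhamel formulation, following the scheme of Guzmán for the single inhomogeneous NLS adapted to systems as in \cite{Noguera}. For each $k$, rescaling time by $\alpha_k$ turns the linear part into the group $U_k(t)=e^{it(\gamma_k\Delta-\beta_k)/\alpha_k}$. This group satisfies the standard Strichartz estimates with constants depending only on $\alpha_k,\gamma_k$; the factor $e^{-it\beta_k/\alpha_k}$ is a harmless unimodular phase. We then write
$$
\Phi_k(\mathbf{u})(t)=U_k(t)u_{k0}+\frac{i}{\alpha_k}\int_0^t U_k(t-s)\,|x|^{-b}f_k(\mathbf{u}(s))\,ds ,
$$
and work in the ball $B=\{\mathbf{u}\in\mathbf{Y}(I):\ \|\mathbf{u}\|_{S(L^2)}+\|\nabla\mathbf{u}\|_{S(L^2)}\le 2cr\}$. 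Here $S(L^2)$ is the Strichartz norm over $L^2$-admissible pairs, and we take $\mathbf{Y}(I)$ to be $C(I;\mathbf{H}^1)$ intersected with those norms. The distance is measured only at the $L^2$ level, $d(\mathbf{u},\mathbf{v})=\|\mathbf{u}-\mathbf{v}\|_{S(L^2)}$; $B$ is complete for $d$ by weak-$*$ lower semicontinuity.

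\textbf{Step 1: pointwise reduction.} From (H1)--(H2), integrating along segments gives
$$
|f_k(\mathbf{z})|\le C|\mathbf{z}|^2,\qquad |f_k(\mathbf{z})-f_k(\mathbf{z}')|\le C(|\mathbf{z}|+|\mathbf{z}'|)|\mathbf{z}-\mathbf{z}'| ,
$$
together with $|\nabla f_k(\mathbf{u})|\le C|\mathbf{u}||\nabla\mathbf{u}|$. So each component behaves like $|x|^{-b}|u|u$, i.e.\ like the INLS with power $\alpha=1$. The condition $n+2b<6$ is exactly energy subcriticality $1<\frac{4-2b}{n-2}$ (for $n\ge3$), and the conditions are automatic for $n=2$.

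\textbf{Step 2: nonlinear estimates (main obstacle).} The key is to bound
$$
\||x|^{-b}\,\mathbf{u}\,\mathbf{v}\|_{S'(L^2)}+\|\nabla(|x|^{-b}\,\mathbf{u}\,\mathbf{v})\|_{S'(L^2)}
\lesssim T^{\theta}\,\|\mathbf{u}\|_{S(H^1)}\|\mathbf{v}\|_{S(H^1)}
$$
for some $\theta>0$. The derivative falling on $|x|^{-b}$ produces the stronger singularity $|x|^{-b-1}$, and $|x|^{-b}$ is not in any global $L^p$. Following Guzmán, we split $\mathbb{R}^n=B(0,1)\cup B(0,1)^c$. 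On $B$ we use $|x|^{-b}\in L^\gamma$ for $\gamma<n/b$; for the term $|x|^{-b-1}$ we use Hardy's inequality $\||x|^{-1}u\|_{L^p}\lesssim\|\nabla u\|_{L^p}$, or equivalently $|x|^{-b-1}\in L^{\gamma}(B)$ with $\gamma<n/(b+1)$, which requires $b<n/2$ carefully combined. On $B^c$ we use $|x|^{-b}\in L^\infty$ or $L^\gamma$ with $\gamma>n/b$. We then choose admissible pairs $(q,r)$ with $r$ slightly above or below the needed exponent, close the Hölder exponents, and use Sobolev $\dot H^{1,r}\subset L^{r^*}$. The freedom to gain a positive power of $T$ comes from strict subcriticality $n+2b<6$, via Hölder in time. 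This exponent bookkeeping, done separately on $B$ and $B^c$ and in dimensions $n=2$ (endpoint issues) versus $n\ge3$, is where I expect the work to be. For the difference estimate only the $L^2$-level bound is needed, $\||x|^{-b}(|\mathbf{u}|+|\mathbf{v}|)|\mathbf{u}-\mathbf{v}|\|_{S'(L^2)}\lesssim T^\theta(\|\mathbf{u}\|_{S(H^1)}+\|\mathbf{v}\|_{S(H^1)})\|\mathbf{u}-\mathbf{v}\|_{S(L^2)}$. This avoids differentiating the difference, which matters because (H2) gives no Lipschitz control of $\nabla f_k$ beyond first order.

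\textbf{Step 3: conclusion.} By Strichartz, $\|\Phi(\mathbf{u})\|_{S(H^1)}\le cr+cT^\theta(2cr)^2$ and $d(\Phi\mathbf{u},\Phi\mathbf{v})\le cT^\theta\,4cr\,d(\mathbf{u},\mathbf{v})$. Choosing $T=T(r)$ with $cT^\theta 4cr<1/2$ makes $\Phi$ a contraction on $(B,d)$, which gives existence. Uniqueness in $\mathbf{Y}(I)$ follows from the same difference estimate on short subintervals, and continuity in time in $\mathbf{H}^1$ follows from the Strichartz estimates.
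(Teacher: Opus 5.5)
Your argument is correct, but it uses a different contraction metric from the paper.

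\textbf{The paper's route.} The paper runs the contraction in the full norm of $\mathbf{Y}(I)$. Lemma \ref{H1-integralpart} bounds the Duhamel term of $f_k(\mathbf u)-f_k(\mathbf u')$ in $S(H^1,I)$. This requires differentiating the difference, via Lemma \ref{Ademir.cor2.3.lem2.4}(ii), which rests on the Lipschitz bounds for $\partial_{z_m}f_k$ and $\partial_{\overline z_m}f_k$ in (H2). It also requires controlling $|x|^{-b-1}(|\mathbf u|+|\mathbf u'|)|\mathbf u-\mathbf u'|$ with the same bilinear Lemma \ref{H1-inequality} used for the nonlinearity itself.

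\textbf{Your route.} You use Kato's device: an $S(H^1)$-ball equipped with the $S(L^2)$ distance. The $H^1$-level bound (your (a), which is Lemma \ref{H1-inequality}) is then needed only for $f_k(\mathbf u)$. Differences need only the mixed bound $\||x|^{-b}|u||w|\|_{S'(L^2,I)}\lesssim T^{\theta}\|u\|_{S(H^1,I)}\|w\|_{S(L^2,I)}$. This bound does hold throughout the range $2\le n\le 5$, $0<b<\min\{2,\frac n2\}$, $n+2b<6$, with a positive power of $T$ (at best $\frac{6-n-2b}{4}$ by scaling).

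\textbf{What each buys.} Your approach uses less than (H2): only $|f_k(\mathbf z)-f_k(\mathbf z')|\lesssim(|\mathbf z|+|\mathbf z'|)|\mathbf z-\mathbf z'|$ and $|\nabla f_k(\mathbf u)|\lesssim|\mathbf u||\nabla\mathbf u|$. It also never meets the most singular weight on differences. The paper's approach gives Lipschitz dependence on the data in $\mathbf H^1$ at no extra cost, and this is part of the Kato well-posedness in Definition \ref{WP}. With your metric, continuous dependence in $\mathbf H^1$ would need a separate argument, although the theorem as stated does not ask for it.

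\textbf{Two small corrections.}
\begin{itemize}
\item Your stated motivation is off. (H2) \emph{is} Lipschitz control of the first derivatives of $f_k$, and that is exactly what makes $\nabla[f_k(\mathbf u)-f_k(\mathbf u')]$ estimable. So the weak metric is a convenience here, not a necessity.
\item Take the ball in $L^\infty(I;\mathbf H^1)$ with the Strichartz bounds, not inside $C(I;\mathbf H^1)$, since $d$-limits need not be continuous into $\mathbf H^1$. Continuity of the fixed point then follows from $\mathbf u=\Phi(\mathbf u)$ and Strichartz.
\end{itemize}
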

Assuming (H1)-(H4), the solution of Theorem \ref{Existence H1} satisfies, for all $t\in [-T,T]$, the conservation of charge:
%
    \begin{eqnarray}\label{Q(u)}
    Q(\mathbf{u}(t))\;=\;\sum_{k=1}^{l} {\alpha_k \sigma_k} \int_{\mathbb{R}^n} |u_k(x,t)|^2\, dx
    \end{eqnarray}
and the conservation of energy:
\begin{equation}\label{E(u)}
    E(\mathbf{u}(t)) = K(\mathbf{u}(t)) +L(\mathbf{u}(t))-2P(\mathbf{u}(t)),
\end{equation}
where 
\begin{eqnarray}\label{K,L,P}
    K(\mathbf{u}) = 
    \sum_{k=1}^{l} \gamma_k \|\nabla u_k \|_{L^2}^{2}, \;\;\;\; 
    L(\mathbf{u}) =
    \sum_{k=1}^{l} \beta_k \| u_k \|_{L^2}^{2} \;\;\;\mbox{and} \;\;\; 
    P(\mathbf{u}) =
    \mbox{Re} \int_{\mathbb{R}^{n}} |x|^{-b} F(\mathbf{u})\, dx  
\end{eqnarray}
are well defined for all  $\mathbf{u} = (u_1,\ldots,u_l) \in \mathbf{H}^{1}$.

%
The maximal existence time of the solution $T(\mathbf{u}_0) > 0$ depends only on the $\mathbf{H}^{1}$ norm of the initial data. This, in turn, together with energy conservation, implies the blow-up alternative:
$$
\mbox{if}\;\; T(\mathbf{u}_0) < \infty, \;\;\;
\mbox{then} \;\;\;     \lim_{t\to T(\mathbf{u}_0) }\|\mathbf{u}(t)\|_{\mathbf{H}^{1}} = +\infty.
$$
We say that a solution blows up in finite time if its maximal existence time is finite (and therefore the above limit holds).

Our main goal in the present paper is to establish a sharp criterion concerning the dichotomy between global existence and blow-up in finite time. Before presenting our results, we first investigate the existence of a special solution for system \eqref{INLS}. More precisely, a standing wave solution for \eqref{INLS} is a particular solution of the form
\begin{equation}\label{wave-solution}
u_k(x,t) \;=\; e^{i \sigma_k \omega t} \psi_{k}(x), \;\;\;\;\;\;\;\;\; k=1,\ldots,l,
\end{equation}
where $\omega \in \mathbb{R}$ and $\psi_{k}$ are real-valued functions decaying to zero at infinity. Substituting into \eqref{INLS}, under the assumpstions of (H3)-(H4) for $k=1,\ldots,l$ and any $\omega \in \mathbb{R}$, we find that $\psi_{k}$ satisfies the equation 
\begin{equation}\label{PDE-GroundState}
- \gamma_k \Delta \psi_k + b_k \psi_k =|x|^{-b}f_k(\boldsymbol{\psi}), \;\;\;\;\;\; k=1,\ldots,l,    
\end{equation}
where $b_k = \alpha_k \sigma_k \omega+ \beta_k \geq0$. The action function associated with \eqref{PDE-GroundState} is
\begin{equation}\label{I=SumH1-P}
I(\boldsymbol{\psi}) = \dfrac{1}{2} \left[ \sum_{k=1}^{l} \gamma_k \|\nabla \psi_k\|_{L^2}^{2} + \sum_{k=1}^{l}  b_k\|\psi_k\|_{L^2}^{2} \right] - \int_{\mathbb{R}^{n}} |x|^{-b} F(\boldsymbol{\psi}) dx.    
\end{equation}
A ground state solution for \eqref{PDE-GroundState} is a nontrivial critical point of the functional $I$ that also minimizes it. The preceding discussion leads to following definitions
\begin{definition}\label{weak-solutions}
    We say that $\boldsymbol{\psi} \in \mathbf{H}^{1}$ is a (weak) solution of \eqref{PDE-GroundState} if for any $\mathbf{g} \in \mathbf{H}^{1}$,
    \begin{equation*}\label{weak}
    \gamma_k \int_{\mathbb{R}^{n}} \nabla \psi_k\nabla g_k \, dx + b_k \int_{\mathbb{R}^{n}} \psi_k g_k \, dx \;=\; \int_{\mathbb{R}^{n}} |x|^{-b} f_k(\boldsymbol{\psi})g_k \,dx, \;\;\;\;\; k=1,\ldots,l.
    \end{equation*}
\end{definition}

\begin{remark} Note that  from equation \eqref{weak} we have  the following  conditions 
    \begin{itemize}
        \item[ $(i)$] System \eqref{PDE-GroundState} makes sense, since the right-hand side of the system is real, it is due from the properties of $f_k$ (see Lemma \ref{Ademir.lem2.10.2.11.2.13});
        \item[$(ii)$] Observe that $\boldsymbol{\psi} = \mathbf{0}$ is always a solution (trivial solution) of \eqref{PDE-GroundState}. Hence we will always be interested in non-trivial solutions; 
        \item[$(iii)$] In order to obtain non-trivial solutions, we will restrict to values of $\omega$ to those such that $b_k >0$. 
    \end{itemize}
\end{remark}

\begin{definition}
    Let $\mathcal{C}$ be the set of non-trivial critical points of $I$. We say that $\boldsymbol{\psi} \in \mathbf{H}^{1}$ is a ground state solution \eqref{PDE-GroundState} if
    $$
I(\boldsymbol{\psi}) = \inf \{ I(\phi);\;\;\; \phi \in \mathcal{C}\}.
$$
We denote by $\mathcal{G}(\omega, \boldsymbol{\beta})$ the set of all ground states for system \eqref{PDE-GroundState}, where $(\omega, \boldsymbol{\beta})$ indicates the dependence on the parameters $\omega$ and $\boldsymbol{\beta}$.
\end{definition}

The existence of this special solution is established in the following theorem.

\begin{theorem}\label{GS-existence}
    The set $\mathcal{G}(\omega, \boldsymbol{\beta})$ is non-empty, and thus there exists at least one ground state solution $\boldsymbol{\psi}$ for system \eqref{PDE-GroundState}.
\end{theorem}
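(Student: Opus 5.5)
Since $F$ is homogeneous of degree $3$ by (H5), I will obtain a ground state by minimizing over the Nehari manifold, or equivalently by maximizing a Weinstein-type functional, and then show that the minimizer is a critical point of least action. Assume $b_k>0$ for all $k$ and $n+2b<6$ (so that $\mathbf{H}^1$ is subcritical for the cubic interaction). Set
$$Q_\omega(\boldsymbol{\psi})=\sum_{k}\gamma_k\|\nabla\psi_k\|_{L^2}^2+\sum_k b_k\|\psi_k\|_{L^2}^2,$$
which is equivalent to $\|\boldsymbol{\psi}\|_{\mathbf{H}^1}^2$. Consider
$$\mathcal{N}=\Big\{\boldsymbol{\psi}\neq 0:\ Q_\omega(\boldsymbol{\psi})=3P(\boldsymbol{\psi})\Big\},$$
where the factor $3$ comes from Euler's identity $\sum_k \mathrm{Re}\, f_k(\boldsymbol\psi)\overline{\psi_k}=3\,\mathrm{Re}F(\boldsymbol\psi)$, a consequence of (H3) and (H5). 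On $\mathcal{N}$ we have $I=\tfrac16 Q_\omega$. Every nontrivial critical point lies in $\mathcal{N}$. Hence it suffices to show that $d=\inf_{\mathcal N} I$ is attained and that a minimizer is a critical point.

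\textbf{Step 1: Gagliardo--Nirenberg-type bound.} First I will show
$$|P(\boldsymbol{\psi})|\le C\|\boldsymbol{\psi}\|_{\mathbf{H}^1}^3.$$
By (H2) and (H5), $|F(\mathbf z)|\lesssim \sum|z_j|^3$. I split $\mathbb{R}^n$ into $B_1$ and $B_1^c$. On $B_1$ I apply H\"older with $|x|^{-b}\in L^{r}$, $r<n/b$, together with Sobolev. This is possible because $n+2b<6$ leaves room for an exponent $3r'<2^*$. On $B_1^c$ the bound is just the $L^3$ Sobolev estimate. Consequently $d>0$ and $\mathcal N$ is bounded away from $0$.

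\textbf{Step 2: reduction to nonnegative radial sequences.} Take a minimizing sequence. Replacing $\psi_k$ by $|\psi_k|$ keeps $Q_\omega$ non-increasing (diamagnetic inequality) and does not decrease $P$ by (H6). Rescaling $t\boldsymbol\psi$ back onto $\mathcal N$ with $t\le1$ then does not increase $I$. Next I replace each component by its Schwarz symmetrization. The kinetic terms decrease by P\'olya--Szeg\H{o}, while the $L^2$ norms are preserved. The term $\int|x|^{-b}F(|\boldsymbol\psi|)$ does not decrease, by the supermodularity and vanishing-on-hyperplanes hypothesis (H8). This uses a Brock/Burchard--Hajaiej-type rearrangement inequality with the radially decreasing weight $|x|^{-b}$, as in the $b=0$ argument of \cite{Noguera}. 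Projecting again onto $\mathcal N$, I obtain a bounded minimizing sequence of nonnegative radial functions.

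\textbf{Step 3: compactness (main obstacle).} I need $P(\boldsymbol\psi_j)\to P(\boldsymbol\psi)$ along a weakly convergent subsequence $\boldsymbol\psi_j\rightharpoonup\boldsymbol\psi$ in $\mathbf{H}^1$. For the region $|x|>R$ I will use the decay factor $|x|^{-b}\le R^{-b}$ together with the uniform $\mathbf{H}^1$ bound; compactness of radial embeddings (Strauss) is also available. For $|x|<R$, I use Rellich compactness into $L^{3r'}$ with $3r'<2^*$, combined with the Lipschitz-type estimate from (H2), which gives $|F(\mathbf z)-F(\mathbf z')|\lesssim (|\mathbf z|^2+|\mathbf z'|^2)|\mathbf z-\mathbf z'|$. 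Since $P(\boldsymbol\psi_j)=Q_\omega(\boldsymbol\psi_j)/3\ge c>0$, the limit satisfies $\boldsymbol\psi\neq0$. Weak lower semicontinuity of $Q_\omega$ gives $Q_\omega(\boldsymbol\psi)\le 3P(\boldsymbol\psi)$. Rescaling by some $t\le1$ places $t\boldsymbol\psi$ on $\mathcal N$ with $I(t\boldsymbol\psi)\le d$, so $t\boldsymbol\psi$ is a minimizer.

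\textbf{Step 4: the minimizer is a critical point.} Finally I use a Lagrange multiplier. Set $G(\boldsymbol\psi)=Q_\omega-3P$. Then $I'(\boldsymbol\psi)=\mu G'(\boldsymbol\psi)$ for some $\mu$. Testing both sides against $\boldsymbol\psi$ gives $\langle G'(\boldsymbol\psi),\boldsymbol\psi\rangle=2Q_\omega-9P=-Q_\omega<0$, while $\langle I'(\boldsymbol\psi),\boldsymbol\psi\rangle=G(\boldsymbol\psi)=0$. Therefore $\mu=0$, and the minimizer is a weak solution in the sense of Definition~\ref{weak-solutions}. Since $I\ge d$ on all of $\mathcal{C}\subset\mathcal N$, it is a ground state, and $\mathcal G(\omega,\boldsymbol\beta)\neq\emptyset$.
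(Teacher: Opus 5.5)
Your argument is correct, but it follows a different route from the paper.

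\textbf{The paper's route.} The paper never uses the Nehari manifold. It minimizes the Weinstein quotient $J=\mathcal{Q}^{\frac{6-n-2b}{4}}K^{\frac{n+2b}{4}}/P$ over $\mathcal{P}=\{P>0\}$. It normalizes a rearranged minimizing sequence to $K=\mathcal{Q}=1$ using the invariance $J(a\delta_\lambda\boldsymbol{\psi})=J(\boldsymbol{\psi})$, and passes to the limit with the same ball/tail splitting you describe. From $J'(\boldsymbol{\psi}_0)=0$ it obtains $K'+\frac{6-n-2b}{n+2b}\mathcal{Q}'=\frac{4\xi_1}{n+2b}P'$, whose coefficients do not match \eqref{PDE-GroundState}. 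A solution appears only after an explicit rescaling $t_0\delta_{\lambda_0}\boldsymbol{\psi}_0$. Showing it is a ground state then requires the Pohozaev identities of Lemma~\ref{Pohozaev}, through $J=c\,I^{1/2}$ on solutions (Lemma~\ref{J=I}), together with $\mathcal{C}\subset\mathcal{P}$.

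\textbf{What your route gains.} In your version the constraint is natural. Since $\langle G'(\boldsymbol{\psi}),\boldsymbol{\psi}\rangle=-Q_\omega(\boldsymbol{\psi})<0$, the multiplier must vanish. So the minimizer is directly a critical point, and its minimality on $\mathcal{C}\subset\mathcal{N}$ is immediate, with no scaling identity needed.

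\textbf{What the paper's route gains.} It simultaneously identifies, in \eqref{xi_1=Q}, the sharp constant in $P\le C_{op}\mathcal{Q}^{\frac{6-n-2b}{4}}K^{\frac{n+2b}{4}}$ in terms of $\mathcal{Q}$ of a ground state. This is what Theorems~\ref{thrm: Global} and~\ref{blow-up} actually use. From your Nehari level $d$ you would recover it in two steps. First optimize $(K+\mathcal{Q})^3/(54P^2)$ over dilations $\delta_\lambda$, which shows $d$ is a fixed multiple of $\xi_1^2$. Then apply \eqref{Q=(6-n-2b)I}.

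\textbf{A remark on Step 2.} Your Step 2, and with it (H6) and (H8), is not needed for existence. The tail in Step 3 is controlled solely by $|x|^{-b}\le R^{-b}$ on $\{|x|>R\}$, and the paper's tail estimate likewise uses only the decay of the weight. The rearrangement therefore only serves to make the ground state nonnegative and radial.
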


Once the existence of a ground state is established, we can write the sufficient condition for global solutions.

\begin{theorem}\emph{\textbf{(Sufficient condition for global solutions)}}\label{thrm: Global}.  Let $n \geq 2$, $n+2b<6$ and 
    $0<b<\min\left\{2, \dfrac{n}{2}\right\}$. Assume that \emph{(H1)-(H8)} holds, $\mathbf{u}_0 \in \mathbf{H}^{1}(\mathbb{R}^{n})$ and let $\mathbf{u}$ be the solution of \eqref{INLS} defined in the maximal existence interval $I$. Let $\boldsymbol{\psi} \in \mathcal{G}(1,\mathbf{0})$. 
    \begin{itemize}
            \item[$(i)$] If $2 \leq n \leq 3$ with $n+2b<4$ , then for any $\mathbf{u}_0 \in \mathbf{H}^1$, system \eqref{INLS} has a unique solution $\textbf{u} \in \mathbf{H}^{1}(\mathbb{R}^{n})$.
        \item[$(ii)$] Assume $n=3$ and $2b=1$. If
        \begin{eqnarray}\label{Q<Q}
            Q(\mathbf{u}_0) < Q(\boldsymbol{\psi})
        \end{eqnarray}                 
        then the initial value problem \eqref{INLS} is globally well-posed in $\mathbf{H}^{1}(\mathbb{R}^{n})$.
        \item[$(iii)$] Assume $4 < n+2b < 6$, $0<b<1$ and in addition that
        $$
    E(\mathbf{u}_0)^{s_c} Q(\mathbf{u}_0)^{1-s_c} < \mathcal{E}(\boldsymbol{\psi})^{s_c} Q(\boldsymbol{\psi})^{1-s_c}
    $$
    where $s_c = \dfrac{n+2b-4}{2}$ and $\mathcal{E}(\boldsymbol{\psi}) = K(\boldsymbol{\psi}) -2P(\boldsymbol{\psi})$. If
\begin{eqnarray}\label{KQ<KQ}
    K(\mathbf{u}_0)^{s_c} Q(\mathbf{u}_0)^{1-s_c} < K(\boldsymbol{\psi})^{s_c} Q(\boldsymbol{\psi})^{1-s_c}
\end{eqnarray}
    then
    $$
    K(\mathbf{u}(t))^{s_c} Q(\mathbf{u}_0)^{1-s_c} < K(\boldsymbol{\psi})^{s_c} Q(\boldsymbol{\psi})^{1-s_c}
    $$
    In particular, the initial value problem \eqref{INLS} is globally well-posed in $\mathbf{H}^{1}(\mathbb{R}^{n})$.
    \end{itemize}
\end{theorem}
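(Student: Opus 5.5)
The plan rests on a sharp Gagliardo--Nirenberg-type inequality for the potential term, combined with conservation of charge and energy and the blow-up alternative from Theorem \ref{Existence H1}. The first step is to show that for every $\mathbf{u}\in\mathbf{H}^1$
\begin{equation*}
P(\mathbf{u}) \le C_{opt}\, Q(\mathbf{u})^{\frac{6-n-2b}{4}}\, K(\mathbf{u})^{\frac{n+2b}{4}}.
\end{equation*}
The exponents follow from scaling. For the $L^2$ part we use the dilation $\lambda^{n/2}\mathbf{u}(\lambda x)$. By (H5), $F$ is homogeneous of degree $3$, and the weight $|x|^{-b}$ adds the factor $\lambda^{b}$, so $P$ scales like $\lambda^{\frac{n}{2}+b}$ while $K$ scales like $\lambda^2$.

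The optimal constant $C_{opt}$ is obtained by minimizing the Weinstein functional $J(\mathbf{u})=Q^{\frac{6-n-2b}{4}}K^{\frac{n+2b}{4}}/P$ over $\{P>0\}$. I would reduce to nonnegative functions via (H6): replacing $u_k$ by $|u_k|$ does not increase $K$ and does not decrease $P$. I would then pass to radially decreasing functions using the Schwarz symmetrization together with the supermodularity (H8), since the weight $|x|^{-b}$ is itself radially decreasing. Compactness comes from the decay of $|x|^{-b}$ at infinity, which makes $\mathbf{u}\mapsto\int|x|^{-b}F$ compact on $\mathbf{H}^1_{rad}$ for $n+2b<6$. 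The minimizer solves \eqref{PDE-GroundState} after rescaling, and I would identify it with $\boldsymbol{\psi}\in\mathcal{G}(1,\mathbf{0})$ via Theorem \ref{GS-existence}.

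The Pohozaev identities then give $K(\boldsymbol{\psi})$, $Q(\boldsymbol{\psi})$ and $P(\boldsymbol{\psi})$ in fixed ratios. These come from pairing the equation with $\psi_k$ and with $x\cdot\nabla\psi_k$, weighted by $\sigma_k$, and using Remark \ref{remarkH3H4} and (H5) in the form $\sum\sigma_k f_k\bar z_k$ proportional to $F$. The two identities are
\begin{equation*}
Q(\boldsymbol{\psi}) = \tfrac{6-n-2b}{2}P(\boldsymbol{\psi}), \qquad K(\boldsymbol{\psi})=\tfrac{n+2b}{2}P(\boldsymbol{\psi}).
\end{equation*}
Inserting them into $J(\boldsymbol{\psi})$ gives $C_{opt}$ explicitly in terms of $Q(\boldsymbol{\psi})$ and $K(\boldsymbol{\psi})$.

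With these in hand, the three cases go as follows. In every case we may take $\beta_k=0$ in the estimates, since $L\ge 0$ can be dropped and $E\ge K-2P$.
\begin{itemize}
\item[$(i)$] For $n+2b<4$ the exponent $\frac{n+2b}{4}<1$. Energy conservation gives $K(\mathbf{u}(t))\le E(\mathbf{u}_0)+2C\,Q(\mathbf{u}_0)^{\cdot}K(\mathbf{u}(t))^{\frac{n+2b}{4}}$, so $K(\mathbf{u}(t))$ stays bounded. The blow-up alternative then gives global existence.
\item[$(ii)$] For $n+2b=4$ the exponent is $1$, and the inequality becomes $K(1-2C_{opt}Q^{1/2})\le E$. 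The coefficient is positive exactly when $Q(\mathbf{u}_0)<Q(\boldsymbol{\psi})$, by the Pohozaev relations.
\item[$(iii)$] Here I would run the standard continuity argument. Set $f(x)=x-2C_{opt}Q(\mathbf{u}_0)^{\frac{6-n-2b}{4}}x^{\frac{n+2b}{4}}$ with $x=K(\mathbf{u}(t))$, so that $f(K(\mathbf{u}(t)))\le E(\mathbf{u}_0)$. After multiplying by $Q(\mathbf{u}_0)^{(1-s_c)/s_c}$, the maximum of $f$ is attained at the threshold value $K(\boldsymbol{\psi})Q(\boldsymbol{\psi})^{(1-s_c)/s_c}Q(\mathbf{u}_0)^{-(1-s_c)/s_c}$. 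The maximal value is $\mathcal{E}(\boldsymbol{\psi})$, suitably rescaled, which is checked via Pohozaev. The energy hypothesis then places $K(\mathbf{u}(t))$ strictly away from the threshold for all $t$. Since $t\mapsto K(\mathbf{u}(t))$ is continuous and starts below the threshold by \eqref{KQ<KQ}, it stays below, so $K$ is bounded and the solution is global.
\end{itemize}

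The main obstacle is the first step: attaining the sharp constant with a minimizer that is a ground state in $\mathcal{G}(1,\mathbf{0})$. This requires the rearrangement inequality for the general coupled $F$, which relies on (H8) together with vanishing on hyperplanes, and compactness with the singular weight, where the condition $b<\min\{2,n/2\}$ ensures local integrability. I would prove compactness by splitting $|x|^{-b}$ into $\mathbf{1}_{|x|<R}$ and $\mathbf{1}_{|x|\ge R}$ pieces. The inner piece is handled by Hölder and Rellich, and the outer piece is small as $R\to\infty$ by radial decay.
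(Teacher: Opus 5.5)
Your proposal is correct and follows essentially the paper's route. The sharp constant comes from minimizing the Weinstein functional (Theorem \ref{Existence-GS}), and the Pohozaev identities express it through $Q(\boldsymbol{\psi})$ and $K(\boldsymbol{\psi})$. The three cases are then those of Theorem \ref{global-solution}, and your continuity argument in $(iii)$ is exactly Lemma \ref{lemma-inequality}. Your threshold $K(\boldsymbol{\psi})\bigl(Q(\boldsymbol{\psi})/Q(\mathbf{u}_0)\bigr)^{(1-s_c)/s_c}$ and maximal value $\mathcal{E}(\boldsymbol{\psi})\bigl(Q(\boldsymbol{\psi})/Q(\mathbf{u}_0)\bigr)^{(1-s_c)/s_c}$ check out.

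Two justifications need fixing.

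\textbf{Pohozaev identities.} You should pair the $k$-th equation with $\psi_k$ (and with $x\cdot\nabla\psi_k$, or equivalently use the dilation derivative) and sum \emph{without} the weights $\sigma_k$. The $\sigma_k$ already sit in $b_k=\alpha_k\sigma_k$. The ingredient you need is Euler's relation $\mbox{Re}\sum_k f_k(\mathbf{z})\overline{z}_k = 3\,\mbox{Re}\,F(\mathbf{z})$ from (H5), which is Lemma \ref{Ademir.lem2.10.2.11.2.13}$(iii)$.

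Remark \ref{remarkH3H4} concerns the imaginary part and is used only for conservation of charge. A $\sigma_k$-weighted pairing goes wrong in two ways:
\begin{itemize}
\item it produces $\sum_k\sigma_k\gamma_k\|\nabla\psi_k\|_{L^2}^2$ rather than $K(\boldsymbol{\psi})$;
\item $\mbox{Re}\sum_k\sigma_k f_k\overline{z}_k$ is not proportional to $F$ in general. For $F=\frac{1}{2}z_1^2\overline{z}_2+z_1z_2\overline{z}_3$ with $\sigma=(1,2,3)$, the two monomials acquire the factors $4$ and $6$.
\end{itemize}

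\textbf{Identification with an arbitrary ground state.} Since $\boldsymbol{\psi}\in\mathcal{G}(1,\mathbf{0})$ is arbitrary, you need $J(\boldsymbol{\psi})=\xi_1$ for every ground state, not just for the rescaled minimizer. This follows from $J=c\,I^{1/2}$ on nontrivial solutions (Lemma \ref{J=I}) and the minimality of $I$ on ground states (Remark \ref{Q>0}). The bare existence statement of Theorem \ref{GS-existence} does not give it.
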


Finally, we establish the sharpness of condition $(iii)$ in Theorem \ref{thrm: Global} under specific assumptions on the coefficients of \eqref{INLS}. This is demonstrated by constructing initial data that fail to satisfy this condition, yet lead to finite-time blow-up. 
The final result is as follows:

\begin{theorem}\label{blow-up}
    Assume $\mathbf{u}_0 \in \mathbf{H}^{1}(\mathbb{R}^{n})$ and let $\mathbf{u}$ be the solution of \eqref{INLS} defined in the maximal existence interval $I$. Let $\boldsymbol{\psi} $ the ground state of the system \eqref{PDE-GroundState}. Assume $4 < n+2b <6$, $0<b<\min\left\{2, \dfrac{n}{2}\right\}$ and in addition that
    \begin{eqnarray}\label{E(u_0)Q(u_0)}
       E(\mathbf{u}_0)^{s_c} Q(\mathbf{u}_0)^{1-s_c} < \mathcal{E}(\boldsymbol{\psi})^{s_c} Q(\boldsymbol{\psi})^{1-s_c} 
    \end{eqnarray}
    where $s_c = \dfrac{n+2b-4}{2}$ and $\mathcal{E}(\boldsymbol{\psi}) = K(\boldsymbol{\psi}) -2P(\boldsymbol{\psi})$. If
     \begin{eqnarray}\label{K(u_0)Q(u_0)}
    K(\mathbf{u}_0)^{s_c} Q(\mathbf{u}_0)^{1-s_c} > K(\boldsymbol{\psi})^{s_c} Q(\boldsymbol{\psi})^{1-s_c}
    \end{eqnarray}
    Then, if $\mathbf{u}_0$ is radially symmetric we have that $I$ is finite.
\end{theorem}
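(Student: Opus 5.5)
We argue by contradiction, following the Holmer–Roudenko/Farah approach adapted to systems and to radial data (Dinh, Noguera–Pastor). The starting point is the sharp Gagliardo–Nirenberg inequality associated with the ground state of Theorem \ref{GS-existence}:
\[
P(\mathbf{u}) \le C_{GN}\, K(\mathbf{u})^{\frac{n+2b}{4}}\, Q(\mathbf{u})^{\frac{6-n-2b}{4}},
\]
with optimal constant $C_{GN}$ attained at $\boldsymbol{\psi}$. The Pohozaev identities for \eqref{PDE-GroundState} (with $\omega=1$, $\boldsymbol{\beta}=\mathbf{0}$) give $K(\boldsymbol{\psi})=\frac{n+2b}{3}P(\boldsymbol{\psi})$ up to the normalization of $P$, and they express $\mathcal{E}(\boldsymbol{\psi})$ as a fixed multiple $\frac{n+2b-6}{n+2b}$-type of $K(\boldsymbol{\psi})$. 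Setting $g(y)=y-2C_{GN}Q(\mathbf{u}_0)^{\frac{6-n-2b}{4}}y^{\frac{n+2b}{4}}$, energy conservation and $L\ge0$ give $E(\mathbf{u}_0)\ge g(K(\mathbf{u}(t)))$. A continuity argument, exactly as in Theorem \ref{thrm: Global}(iii) but with the inequality reversed, uses \eqref{E(u_0)Q(u_0)} and \eqref{K(u_0)Q(u_0)}. Since $g$ attains its maximum precisely at the level corresponding to $K(\boldsymbol{\psi})^{s_c}Q(\boldsymbol{\psi})^{1-s_c}$, it shows that $K(\mathbf{u}(t))^{s_c}Q(\mathbf{u}_0)^{1-s_c}>K(\boldsymbol{\psi})^{s_c}Q(\boldsymbol{\psi})^{1-s_c}$ for all $t\in I$. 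Quantitatively, there is $\delta>0$ with
\[
(n+2b)P(\mathbf{u}(t))-\ldots \quad\text{i.e.}\quad G(\mathbf{u}(t)):=2K(\mathbf{u}(t))-(n+2b)P(\mathbf{u}(t))\cdot c \le -\delta K(\mathbf{u}(t)),
\]
where $G$ is the virial functional. This bound follows by writing $G=c_1E-c_2K-c_3L$ and using that $E$ is strictly below the threshold while $K$ is strictly above it.

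Next we use a localized virial identity. Let $\varphi_R(x)=R^2\varphi(x/R)$ with $\varphi$ radial, $\varphi=|x|^2$ for $|x|\le1$, constant for $|x|\ge2$, and $\varphi''\le2$. Define $V_R(t)=\sum_k \alpha_k\sigma_k\int\varphi_R|u_k|^2dx$, with weights chosen so that Remark \ref{remarkH3H4} kills the nonlinear contribution to $V_R'$. The derivative $V_R'$ is then $2\sum_k\sigma_k\gamma_k\,\mathrm{Im}\int\nabla\varphi_R\cdot\nabla u_k\bar u_k$. Differentiating again, using (H3)–(H5) and the identity $\sum_k \mathrm{Re}(f_k \nabla \bar u_k)\sim \nabla\mathrm{Re}F$, with integration by parts of $\nabla(|x|^{-b})$, yields
\[
V_R''(t)\le c\,G(\mathbf{u}(t)) + O(R^{-2}Q) + C\int_{|x|\ge R}|x|^{-b}|\mathbf{u}|^3dx.
\]
The weights $\sigma_k\gamma_k/\alpha_k$ must be matched to the charge; I expect to need the coefficient assumption implicit in the statement, such as $\sigma_k\gamma_k$ proportional across components. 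This is where "specific assumptions on the coefficients" enter.

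The main obstacle is controlling the exterior cubic term. For radial data I will use the Strauss inequality $|x|^{\frac{n-1}{2}}|u_k(x)|\le C\|u_k\|_{L^2}^{1/2}\|\nabla u_k\|_{L^2}^{1/2}$. This gives
\[
\int_{|x|\ge R}|x|^{-b}|\mathbf{u}|^3 \le C R^{-b-\frac{n-1}{2}}Q^{5/4}K^{1/4},
\]
and by Young's inequality this is at most $\varepsilon K+C_\varepsilon R^{-\frac{4b+2n-2}{3}}$. Since $b>0$ and $n\ge2$, the exponent is positive, so the error decays. Choosing $\varepsilon<\delta c/2$ and $R$ large, we obtain $V_R''(t)\le -\frac{c\delta}{2}K(\mathbf{u}(t))+o_R(1)\le -\eta<0$, using the lower bound on $K$ from the first step.

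Finally, if $I$ were infinite, integrating twice would force $V_R(t)<0$ for large $t$, contradicting $V_R\ge0$. A cleaner contradiction comes from the sharper version $V_R''\le -cK$: combined with $|V_R'|\lesssim R\,Q^{1/2}K^{1/2}$, the standard ODE argument (Ogawa–Tsutsumi, Dinh) gives $V_R'(t)\le -c\int_{t_0}^tK$ and forces finite-time blow-up. Hence $I$ is finite.
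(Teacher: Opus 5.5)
Your architecture matches the paper's: the sharp Gagliardo--Nirenberg inequality plus the continuity argument (Lemma \ref{lemma-inequality}) to get $\mathcal{T}_n(\mathbf{u}(t))\le-\sigma_0K(\mathbf{u}(t))\le-\delta$, then a localized radial virial with the Strauss-type bound on the exterior cubic term, then the Ogawa--Tsutsumi ODE argument. The genuine problem is the coefficient hypothesis you import.

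With $V_R=\sum_k\alpha_k\sigma_k\int\varphi_R|u_k|^2$, the derivative $V_R'$ carries weights $\sigma_k\gamma_k$. Differentiating once more, with $\partial_tu_k=\frac{i}{\alpha_k}(\gamma_k\Delta u_k-\beta_ku_k+|x|^{-b}f_k)$, produces the nonlinear sums $\sum_k\frac{\sigma_k\gamma_k}{\alpha_k}\mathrm{Re}(f_k(\mathbf{u})\nabla\bar u_k)$ and $\sum_k\frac{\sigma_k\gamma_k}{\alpha_k}\mathrm{Re}(f_k(\mathbf{u})\bar u_k)$. These reduce to $\mathrm{Re}\,\nabla F$ and $3\,\mathrm{Re}\,F$ via Lemma \ref{Ademir.lem2.10.2.11.2.13} only if $\sigma_k\gamma_k/\alpha_k$ is independent of $k$. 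That is a mass-resonance condition, not ``$\sigma_k\gamma_k$ proportional across components.'' No such hypothesis appears in the theorem, so as written you prove a strictly weaker statement.

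The missing idea is that no weighted mass is needed. The paper works directly with the momentum-type quantity $\mathcal{R}(t)=2\sum_k\alpha_k\,\mathrm{Im}\int\nabla\varphi\cdot\nabla u_k\,\bar u_k\,dx$. Here the weights $\alpha_k$ cancel the factor $1/\alpha_k$ coming from the equation. The nonlinear terms therefore enter unweighted, and Lemma \ref{Ademir.lem2.10.2.11.2.13}(ii)--(iii) applies for arbitrary $\alpha_k,\gamma_k,\sigma_k$. The $\beta_k$ terms cancel component by component. With $\varphi=|x|^2$ near the origin this gives $\mathcal{R}'=8\mathcal{T}_n+\mathcal{R}_1+\mathcal{R}_2+\mathcal{R}_3$, with $\mathcal{R}_1\le0$, $\mathcal{R}_2=O(R^{-2}Q)$, and $\mathcal{R}_3$ supported in $|x|>R$.

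Your ``cleaner'' Ogawa--Tsutsumi contradiction uses only the first-order quantity and its derivative, so it goes through verbatim with $\mathcal{R}$:
\begin{itemize}
\item $\mathcal{R}'\le-2\delta$ gives $|\mathcal{R}(t)|\ge\delta t$ for large $t$.
\item Combined with $|\mathcal{R}|\le CRQ^{1/2}K^{1/2}$, this gives $K\ge C_0t^2$.
\item For large $t$ this yields $\mathcal{R}'\le-cK$.
\item Then $\eta(t)=\int_{T_1}^tK$ satisfies $\eta^2\lesssim\eta'$, which forces blow-up of $\eta$ in finite time.
\end{itemize}
Your first variant, integrating twice to force $V_R<0$, needs both the vanishing of the nonlinear term in $V_R'$ and the reduction of $V_R''$ to $\mathcal{T}_n$. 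For a weighted mass that again forces the resonance condition, so drop it.

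Finally, fix the constants in your first step. The Pohozaev identities give $K(\boldsymbol{\psi})=\frac{n+2b}{2}P(\boldsymbol{\psi})$ and $\mathcal{E}(\boldsymbol{\psi})=\frac{n+2b-4}{n+2b}K(\boldsymbol{\psi})=\frac{n+2b-4}{6-n-2b}Q(\boldsymbol{\psi})>0$. The factor $\frac{n+2b-6}{n+2b}$ you wrote is negative, which would make the threshold $\mathcal{E}(\boldsymbol{\psi})^{s_c}$ meaningless.
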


The structure of the paper is as follows. Section 2 contains preliminary results derived from assumptions (H1)–(H8). Section 3 is devoted to the study of the local and global theories of system \eqref{INLS} in the spaces $L^2$ and $H^1$. Our approach follows the standard techniques for Schrödinger-type equations: the Strichartz estimates, Hölder's inequality and, Sobolev embedding combined with the contraction mapping principle are sufficient to prove the local well-posedness. On the other hand, the global results are proved in view of an a priori bound of the local solution in the spaces of interest. In particular, solutions are global for any initial data in the subcritical regimes; global existence holds provided that certain conditions on the charge and energy of the initial data are satisfied. In Section 4, we investigate the existence of ground state solutions for the associated elliptic system. This step is necessary since we want to obtain a sharp Gagliardo–Nirenberg-type inequality, whose best constant depends on such solutions. The existence of ground states is established by minimizing the so-called Weinstein functional over an appropriate set. Finally, in Section 5 we are interested in the dichotomy between global well-posedness and finite-time blow-up. In the critical case, we establish a sharp criterion for global existence, depending on the parameters of the system. In the supercritical case, we show that solutions remain global provided there is a suitable balance between the charge and the energy of the initial data, measured relative to those of the ground states.

\section{Notation and preliminaries}

In this section, we introduce some notation, definitions, and results necessary for a good understanding of the entire text.

\subsection{Notation} 

    We use $C$ to denote various constants that
    may represent different values in different lines of an estimate, even within the same proof.
    Given any set $A$, by $\mathbf{A}$ we denote the product $A \times \ldots \times A$ ($l$ times). In particular, if $A$ is a Banach space with norm $\| \cdot \|_{A}$, then $\mathbf{A}$ is also a Banach space with the standard norm given by the sum. 
    Given $f$ a complex function, its real part will be denoted by $\mbox{Re}\,\,f$ and the imaginary
part by  $\mbox{Im}\,\,f$. 
Similarly, given $z$ a complex number, $\mbox{Re}\,\,z$ and $\mbox{Im}\,\,z$ represent its  real and imaginary parts. Also, $\overline{z}$ denotes its complex conjugate. 
In $\mathbb{C}^{l}$ we frequently write $\mathbf{z}$ and $\mathbf{z}^{'}$ instead of $(z_1,\ldots,z_l)$ and $(z_{1}^{'},\ldots, z_{l}^{'})$. 
Given $\mathbf{z} = (z_1,\ldots,z_l) \in \mathbb{C}^{l}$, we write $z_m \;=\; x_m + i y_m$, where $x_m = \mbox{Re}\,\, z_m$ and $y_m = \mbox{Im}\,\, z_m$. As usual, the operators $\dfrac{\partial}{\partial z_m}$ and $\dfrac{\partial}{\partial \overline{z}_m}$ are defined by 
    $$
    \dfrac{\partial}{\partial z_m} = \dfrac{1}{2} \left( \dfrac{\partial}{\partial x_m} - i \dfrac{\partial}{\partial y_m}\right), \;\;\;\;\;\;\;\;\;\;
    \dfrac{\partial}{\partial \overline{z}_m} = \dfrac{1}{2} \left( \dfrac{\partial}{\partial x_m} +  i \dfrac{\partial}{\partial y_m}\right).
    $$

Let $X,Y$ be Banach spaces. $C^k(X,Y)$ will denote the space of all maps from $X$ to $Y$ with $k$ continuous derivatives. 
For each $p$, $1\leq p < \infty$, let $L^p = L^p(\mathbb{R}^{n})$ be the class of measurable functions, $f:\mathbb{R}^n\to \mathbb{C}$,   
such that the integral (of Lebesgue)
$$\|f\|_{L^p} :=\left(\int_{\mathbb{R}^{n}}|f(x)|^pdx\right)^{\frac{1}{p}},
$$ 
    is finite. Also, let $L^{\infty}=L^{\infty}(\mathbb{R}^n)$ be the collection of functions that are bounded almost everywhere, i.e., at all points up to a null set.
$$\|f\|_{L^{\infty}}=ess\sup\{|f(x)|;\,\,x\in \mathbb{R}^{n}\}.$$
    Equipped with the norms
    $\|\cdot\|_{L^p}$ and $\|\cdot\|_{L^{\infty}}$,
    each $L^p(\mathbb{R}^n)$, $1\leq p \leq \infty$, is a Banach space.
    Additionally, for any $b>0$,  we define the weighted 
$L_{b}^{p} = L_{b}^{p}(\mathbb{R}^{n})$ norm  by 
    %
$$\|f\|_{L_{b}^{p}} :=\left(\int_{\mathbb{R}^{n}}|x|^{-b}|f(x)|^pdx\right)^{\frac{1}{p}},\,\,\,\,\,\,1\leq p < \infty.$$ 
The Fourier transform in the Schwarz space $\mathcal{S}(\mathbb{R}^{n})$ is defined as $$\widehat{f}(\xi):= \int_{\mathbb{R}^{n}} e^{-2 \pi i x \cdot \xi} f(x) dx.$$
We make use of the fractional differentiation operators $|\nabla|^{s}$ defined by
$$\widehat{|\nabla|^{s}f}(\xi) = |\xi|^{s} \widehat{f}(\xi).$$ 
    %
%
For $s\in \mathbb{R}$, we define the  Sobolev space $H^s(\mathbb{R}^n)$ by
$$
H^s=H^s(\mathbb{R}^{n})=\left\{f\in \mathcal{S}'(\mathbb{R}^n): J^sf\in L^2(\mathbb{R}^{n}),\mbox{ where } J^sf=((1+|\xi|^2)^{\frac{s}{2}}\widehat{f})^{\vee}\right\},
$$
equipped with the norm
$$\|f\|_{H^s}=\|J^sf\|_{L^2}=\left(\int_{\mathbb{R}^n}(1+|\xi|^2)^s|\widehat{f}(\xi)|^2d\xi\right)^{\frac{1}{2}}=\|(1+|\xi|^2)^{\frac{s}{2}}\widehat{f}\|_{L^2}.$$
%
%
%
Let $1 \leq p,q\leq \infty$ and a time interval $I$, the mixed space $L_{I}^{q}L_{x}^{p}$  is defined by
$$
L_{I}^{q}L_{x}^{p} \;=\;
\left\{
f: \mathbb{R}^{n} \times I  \to \mathbb{C}
:\;\; \|f\|_{L_{I}^{q}L_{x}^{p}} \;=\; \left( \int_{I} 
\left( \int_{\mathbb{R}^{n}} |f(x,t)|^{p} \,dx \right)^{\frac{q}{p}}
\,dt \right)^{\frac{1}{q}} < \infty
\right\},
$$
with the obvious modifications if either $p=\infty$ or $q=\infty$.
%
%
%
\begin{definition}
    The pair $(q,r)$ is called $L^{2}$-admissible if it satisfies the condition
$$
\dfrac{2}{q} \;=\; \dfrac{n}{2} - \dfrac{n}{r}
$$
where
\begin{eqnarray*}
\left\{ \begin{array}{lc}
2 \leq r \leq \dfrac{2n}{n-2} \;\;\;\; &\mbox{if} \;\; n\geq 3, \\[2mm]
2 \leq r < +\infty &\mbox{if} \;\; n = 2, \\[2mm]
2 \leq r \leq +\infty  &\mbox{if} \;\; n =1. \\[2mm]
\end{array}\right.    
\end{eqnarray*}
\end{definition}%
%
\begin{definition}
let $\mathcal{A}_0 \;=\; \{(q,r); \;\; (q,r) \;\;\mbox{is}\;\; L^{2}\mbox{-admissible}  \}$ and $(q^{'}, r^{'})$ is such that $\frac{1}{q} + \frac{1}{q^{'}} =1$ and $\frac{1}{r} + \frac{1}{r^{'}} =1$ for $(q,r) \in \mathcal{A}_0$.   We define the following Strichartz norm
$$
\|f\|_{S(L^{2},I)} \;=\; \sup_{(q,r) \in \mathcal{A}_0} \|f\|_{L_{I}^{q} L_{x}^{r}}
$$
and the dual Strichartz norm
$$
\|f\|_{S^{'}(L^{2},I)} \;=\; \inf_{(q,r) \in \mathcal{A}_{0}} \|f\|_{L_{I}^{q^{'}} L_{x}^{r^{'}}}.
$$    
In the case when $f$ and $\nabla f$ is $L^2$-admissible, we will use the  Strichartz norm given by the sum
$$
\|f\|_{S(H^{1},I)} \;:=\; \|f\|_{S(L^{2},I)} + \|\nabla f\|_{S(L^{2},I)}.
$$    
and the dual Strichartz norm
as 
$$
\|f\|_{S^{'}(H^{1},I)} \;:=\; \|f\|_{S^{'}(L^{2},I)} + \|\nabla f\|_{S^{'}(L^{2},I)}.
$$ 
\end{definition}

\subsection{Preliminaries}

Let us now give some useful consequences of our assumptions. In the following two lemmas, we establish estimates for the nonlinearities $f_k$ and the potential function $F$ appearing in (H3).

\begin{lemma}\label{Ademir.cor2.3.lem2.4}
If \emph{(H1)} and \emph{(H2)} hold
\begin{itemize}
    \item[$(i)$] For any $\mathbf{z}$, $\mathbf{z}' \in \mathbb{C}^{l}$, we have
    \begin{equation}
        |f_k(\mathbf{z}) - f_k(\mathbf{z}^{'})| \leq C \sum_{m=1}^{l}\sum_{j=1}^{l} (|z_j| +|z_{j}^{'}|)|z_{m} - z_{m}^{'}|, \;\;\;\;\;\;\;\;\; k=1,\ldots,l.
    \end{equation}
    In particular,  $$|f_k(\mathbf{z}) | \leq C \sum_{j=1}^{l} |z_j|^{2}, \;\;\;\;\;\;\;\;\; k=1,\ldots,l.     $$
    \item[$(ii)$] Let $\mathbf{u}$ and $\mathbf{u}'$ be complex-valued functions defined on $\mathbb{R}^{l}$. Then, for  $ k=1,\ldots,l$
    \begin{equation}
        |\nabla [f_k(\mathbf{u}) - f_k(\mathbf{u}^{'})]| \leq C \sum_{m=1}^{l}\sum_{j=1}^{l} |u_j| |\nabla (u_{m} - u_{m}^{'})| + C \sum_{m=1}^{l}\sum_{j=1}^{l} (|u_j-u_{j}^{'}|)|\nabla u_{m}^{'}|.
   \end{equation}
\end{itemize}
\end{lemma}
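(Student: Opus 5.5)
The proof treats $f_k$ as a function of the $2l$ real variables $(x_m,y_m)$ and applies the mean value theorem along a segment, with (H2) supplying Lipschitz control of the first derivatives and (H1) anchoring the estimate at the origin. Since $\partial_{x_m} = \partial_{z_m}+\partial_{\overline z_m}$ and $\partial_{y_m} = i(\partial_{z_m}-\partial_{\overline z_m})$, it suffices to control $\partial_{z_m}f_k$ and $\partial_{\overline z_m}f_k$.

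\emph{Bound on the derivatives.} First, (H2) with $\mathbf{z}'=\mathbf{0}$ gives
$$|\partial_{z_m}f_k(\mathbf{z})|\le |\partial_{z_m}f_k(\mathbf{0})|+C\sum_{j=1}^{l}|z_j|,$$
and the same holds for $\partial_{\overline z_m}f_k$. The constant term must be removed. (H2) alone does not force $\partial_{z_m}f_k(\mathbf{0})=0$: it only says the derivatives are Lipschitz, and a linear $f_k$ would satisfy (H1)–(H2). So here I would invoke the standing assumption that $f_k$ has quadratic-type growth (equivalently, the homogeneity (H5), under which $f_k$ is homogeneous of degree 2 and its first derivatives are homogeneous of degree 1, hence vanish at $\mathbf 0$). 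In the paper's intended reading, (H2) is applied together with $\nabla f_k(\mathbf{0})=0$, as in \cite{Noguera}. With this, $|\partial_{z_m}f_k(\mathbf{w})|+|\partial_{\overline z_m}f_k(\mathbf{w})|\le C\sum_j|w_j|$. I expect this vanishing-at-the-origin point to be the only delicate step, and I would state it explicitly as the interpretation of ``quadratic-type growth.''

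\emph{Part (i).} Set $\mathbf{w}(s)=\mathbf{z}'+s(\mathbf{z}-\mathbf{z}')$ for $s\in[0,1]$. The fundamental theorem of calculus along this segment gives
$$f_k(\mathbf{z})-f_k(\mathbf{z}')=\int_0^1\sum_{m=1}^{l}\Big[\partial_{z_m}f_k(\mathbf{w}(s))(z_m-z_m')+\partial_{\overline z_m}f_k(\mathbf{w}(s))\overline{(z_m-z_m')}\Big]ds.$$
Using the derivative bound and $|w_j(s)|\le |z_j|+|z_j'|$, we obtain the stated estimate. Taking $\mathbf{z}'=\mathbf{0}$ and using (H1), we get $|f_k(\mathbf{z})|\le C\sum_{j,m}|z_j||z_m|\le C\sum_j|z_j|^2$, where the last step uses $2ab\le a^2+b^2$.

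\emph{Part (ii).} The chain rule gives
$$\nabla f_k(\mathbf{u})=\sum_{m=1}^{l}\big(\partial_{z_m}f_k(\mathbf{u})\nabla u_m+\partial_{\overline z_m}f_k(\mathbf{u})\nabla\overline{u}_m\big).$$
For the difference, add and subtract $\partial_{z_m}f_k(\mathbf{u})\nabla u_m'$:
$$\partial_{z_m}f_k(\mathbf{u})\nabla u_m-\partial_{z_m}f_k(\mathbf{u}')\nabla u_m' = \partial_{z_m}f_k(\mathbf{u})\nabla(u_m-u_m')+\big[\partial_{z_m}f_k(\mathbf{u})-\partial_{z_m}f_k(\mathbf{u}')\big]\nabla u_m'.$$
The first term is bounded by $C\sum_j|u_j||\nabla(u_m-u_m')|$, using the derivative bound above. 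The second is bounded by $C\sum_j|u_j-u_j'||\nabla u_m'|$, directly from (H2). The $\partial_{\overline z_m}$ terms are handled the same way. Summing over $m$ gives (ii).
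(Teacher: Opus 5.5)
Your proof is correct and is the standard segment (mean-value) plus chain-rule argument of Noguera--Pastor, which the paper cites in place of a proof. You are also right that (H1)--(H2) alone do not suffice, since any linear $f_k$ is a counterexample to (i). The missing fact $\partial_{z_m}f_k(\mathbf{0})=\partial_{\overline{z}_m}f_k(\mathbf{0})=0$ does follow from the paper's standing hypotheses, but it needs (H3) as well as (H5): (H3) passes the degree-3 homogeneity of $F$ to degree-2 homogeneity of $f_k$, and continuity of the derivatives (from (H2)) then forces them to vanish at the origin.
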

\begin{proof}
    See Norman and Pastor \cite[Corollary~2.3. and Lemma~2.4.]{Noguera}.
\end{proof}

\begin{lemma}\label{Ademir.lem2.10.2.11.2.13} Assume that \emph{(H1)}-\emph{(H3)} and \emph{(H5)} hold.
\begin{itemize}
    \item[$(i)$] Let $\mathbf{z}$, $\mathbf{z}' \in \mathbb{C}^{l}$. Then     
    \begin{equation}
        |\emph{\mbox{Re}}\,F (\mathbf{z}) - \emph{\mbox{Re}}\,F(\mathbf{z}^{'})| \leq C \sum_{m=1}^{l}\sum_{j=1}^{l} (|z_j|^{2} +|z_{j}^{'}|^{2})|z_{m} - z_{m}^{'}|.
    \end{equation}
    In particular,
    $$
        |\emph{\mbox{Re}}\,F (\mathbf{z})| \leq C \sum_{m=1}^{l}\sum_{j=1}^{l} |z_j|^{3}.
    $$
    \item[$(ii)$] Let $\mathbf{u}$ be a complex-value function defined on $\mathbb{R}^{n}$. Then 
    $$ \emph{\mbox{Re}}\,\sum_{k=1}^{l}f_k(\mathbf{u}) \nabla \overline{u}_k \;=\; \emph{\mbox{Re}}\;[\nabla F(\mathbf{u})].$$
    \item[$(iii)$] Let $\mathbf{u}: \mathbb{R}^{l} \to \mathbb{C}$. Then $$\emph{\mbox{Re}}\,\sum_{k=1}^{l}f_k(\mathbf{u}) \overline{u}_k \;=\; \emph{\mbox{Re}}\;[3 F(\mathbf{u})].$$
    \item[$(iv)$] If $F$ satisfies \emph{(H7)} then $$ f_k(\mathbf{x}) \;=\; \dfrac{\partial F}{\partial x_k}(\mathbf{x}), \;\;\;\;\;\;\;\forall \mathbf{x}\in \mathbb{R}^{l}.
$$
In addition, $F$ is positive on the positive cone of $\mathbb{R}^{l}$.
\end{itemize}
\end{lemma}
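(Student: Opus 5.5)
We prove the four items of Lemma \ref{Ademir.lem2.10.2.11.2.13} separately, using only the homogeneity (H5), the structure (H3) linking $F$ to the $f_k$, and the pointwise bounds of Lemma \ref{Ademir.cor2.3.lem2.4}.

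For $(i)$ we use the fundamental theorem of calculus along the segment $\mathbf{z}_\tau=\mathbf{z}'+\tau(\mathbf{z}-\mathbf{z}')$, $\tau\in[0,1]$. Writing the real derivative in Wirtinger form,
$$
\frac{d}{d\tau}\mbox{Re}\,F(\mathbf{z}_\tau)=\mbox{Re}\sum_{m=1}^{l}\Big[\frac{\partial F}{\partial z_m}(\mathbf{z}_\tau)(z_m-z_m')+\frac{\partial F}{\partial \overline{z}_m}(\mathbf{z}_\tau)\overline{(z_m-z_m')}\Big].
$$
Since $\mbox{Re}\,w=\mbox{Re}\,\overline{w}$, this equals $\mbox{Re}\sum_m f_m(\mathbf{z}_\tau)\overline{(z_m-z_m')}$ by (H3). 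Lemma \ref{Ademir.cor2.3.lem2.4}$(i)$ gives $|f_m(\mathbf{z}_\tau)|\le C\sum_j|z_{j,\tau}|^2\le C\sum_j(|z_j|^2+|z_j'|^2)$, and integrating in $\tau$ yields the first estimate. For the particular case we take $\mathbf{z}'=0$ and use $F(0)=0$, which follows from (H5) by letting $\lambda\to0$.

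Item $(ii)$ is the chain rule: $\partial_{x_i}F(\mathbf{u})=\sum_m\big(\partial_{z_m}F\,\partial_{x_i}u_m+\partial_{\overline{z}_m}F\,\partial_{x_i}\overline{u}_m\big)$. Taking real parts and conjugating the first term inside $\mbox{Re}$ regroups the sum as $\mbox{Re}\sum_m\big(\overline{\partial_{z_m}F}+\partial_{\overline{z}_m}F\big)\partial_{x_i}\overline{u}_m=\mbox{Re}\sum_m f_m(\mathbf{u})\partial_{x_i}\overline{u}_m$, by (H3). Item $(iii)$ is Euler's identity: differentiating $F(\lambda\mathbf{z})=\lambda^3F(\mathbf{z})$ at $\lambda=1$ gives $\sum_m(\partial_{z_m}F\,z_m+\partial_{\overline{z}_m}F\,\overline{z}_m)=3F$. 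Taking real parts and applying the same conjugation trick gives $\mbox{Re}\sum f_m\overline{z}_m=3\,\mbox{Re}\,F$.

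For $(iv)$, take $\mathbf{x}\in\mathbb{R}^l$. Then $\partial_{x_k}F=\partial_{z_k}F+\partial_{\overline{z}_k}F$ in real coordinates, and by (H7) this derivative along the real direction is real. By (H3), $f_k=\partial_{\overline{z}_k}F+\overline{\partial_{z_k}F}$.

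The main obstacle is that $f_k(\mathbf{x})=\partial_{x_k}F(\mathbf{x})$ also requires $\mbox{Im}\,\partial_{z_k}F=0$ on $\mathbb{R}^l$, which (H7) alone does not give. We would handle it by comparing real and imaginary parts, using that $\mbox{Re}\,f_k=\mbox{Re}\,\partial_{x_k}F$. The imaginary part of $f_k$ on real arguments is $\mbox{Im}(\partial_{\overline{z}_k}F-\partial_{z_k}F)=\mbox{Im}(i\,\partial_{y_k}F)=\mbox{Re}\,\partial_{y_k}F$. We would argue this vanishes, or at least that (H7)'s implicit requirement that $f_k$ be real and nonnegative on the positive cone forces it, following \cite[Lemma~2.13]{Noguera}. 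On the positive cone the identity $f_k=\partial_{x_k}F$ holds, at minimum for real parts, which is what the later applications use.

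Positivity of $F$ on the positive cone then follows from $(iii)$: $3F(\mathbf{y})=\sum_k f_k(\mathbf{y})y_k\ge0$ for $y_k\ge0$, by (H7). If strict positivity is intended, it holds wherever some $f_k(\mathbf{y})y_k>0$.
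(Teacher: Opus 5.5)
Your arguments for (i)--(iii) are correct. The paper gives no argument of its own here; it only cites Noguera--Pastor. Your chain-rule and conjugation step in (ii) is the same computation the paper later carries out in the energy and virial identities.

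One remark tightens (i) and also explains (iv). Since $\overline{\partial_{z_k}F}=\partial_{\overline{z}_k}\overline{F}$, hypothesis (H3) says exactly $f_k=2\,\partial_{\overline{z}_k}(\mathrm{Re}\,F)$. In real coordinates, $\partial_{x_k}\mathrm{Re}\,F=\mathrm{Re}\,f_k$ and $\partial_{y_k}\mathrm{Re}\,F=\mathrm{Im}\,f_k$. These are continuous, so $\mathrm{Re}\,F\in C^1$, and its derivative in a direction $\mathbf{w}$ is $\mathrm{Re}\sum_k f_k\overline{w}_k$. That is all your FTC, chain-rule and Euler steps need. For the ``in particular'' bound you also need $|z_j|^2|z_m|\le |z_j|^3+|z_m|^3$.

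The gap is in (iv). The sentence ``we would argue this vanishes \dots following Noguera--Pastor'' is not an argument, and under the listed hypotheses no argument can exist. Your own computation gives $f_k-\partial_{x_k}F=i\,\mathrm{Im}\,f_k=i\,\partial_{y_k}\mathrm{Re}\,F$ on $\mathbb{R}^l$. Nothing in (H1)--(H3), (H5), (H7) controls $\partial_{y_k}\mathrm{Re}\,F$ off the positive cone.

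Here is a counterexample with $l=1$ and $z=x+iy$:
$$F(z)=x^3+y\,(|z|^2-x|z|).$$
Check the hypotheses one at a time.
\begin{itemize}
\item $F$ is real-valued and homogeneous of degree $3$, so (H5) holds.
\item Define $f_1:=2\partial_{\overline{z}}F=\partial_xF+i\,\partial_yF$; since $F$ is real, this is (H3).
\item $f_1$ is homogeneous of degree $2$ and smooth off the origin. Its first derivatives are homogeneous of degree $1$ and smooth off the origin, hence globally Lipschitz, which gives (H1)--(H2).
\item On $\mathbb{R}$ one has $F(x)=x^3$ and $f_1(x)=3x^2+i\,(x^2-x|x|)$. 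So $f_1\ge 0$ on $[0,\infty)$ and (H7) holds.
\end{itemize}
Yet for $x<0$ we get $f_1(x)=3x^2+2ix^2\ne \partial_xF(x)=3x^2$.

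So the provable content of (iv) is exactly what you actually obtained:
\begin{itemize}
\item $\mathrm{Re}\,f_k=\partial_{x_k}F$ on all of $\mathbb{R}^l$;
\item $f_k=\partial_{x_k}F$ on the closed positive cone, where (H7) forces $\mathrm{Im}\,f_k=0$ (so the identity holds fully there, not merely ``for real parts'' as you wrote);
\item $F\ge 0$ on the cone, by (iii).
\end{itemize}
You should state and prove this corrected version rather than defer to the reference. The identity on all of $\mathbb{R}^l$ needs an extra hypothesis such as $f_k(\mathbb{R}^l)\subset\mathbb{R}$. That holds, for instance, whenever $F$ is a polynomial in $\mathbf{z},\overline{\mathbf{z}}$ with real coefficients, which covers all the model systems.

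Similarly, ``positive'' can only mean $\ge 0$. $F\equiv 0$ satisfies every hypothesis, and $\overline{z}_1^2z_2$ vanishes on the boundary of the cone. Nonnegativity is all the paper uses later, for example $F(\mathbf{e}_i)\ge 0$.
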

\begin{proof}
    See Norman and Pastor \cite[Lemmas~2.10,~2.11~and,~2.13]{Noguera}.
\end{proof}

We now present some additional results that arise due to the presence of the weight. The first one is a lemma  inspired by the work of Burchard and Hajaiej \cite{Burchard}, in which it is essential to use rearrangement in the proof of the existence of the ground state.

\begin{lemma}\label{PropH8} If $F:\mathbb{R}_{+}^{d} \to \mathbb{R}$ satisfies for any $i$, $j \in \{1,\ldots,d\}$, $i \neq j$ and $k,h >0$, the following inequality
    \begin{eqnarray}\label{F+F>F+F}
        F(\mathbf{y}+he_i +ke_j) + F(\mathbf{y}) \;\geq\; F(\mathbf{y}+he_i) + F(\mathbf{y}+ke_j), \;\;\;\;\;\;\; \mathbf{y} \in \mathbb{R}_{+}^{d}.
    \end{eqnarray}
    
Then $D(y_0, \mathbf{y}) = |y_0|^{b}\left[F(\mathbf{y}) - \sum_{i=1}^{d}F(y_i \mathbf{e}_i)\right]$ also satisfies \eqref{F+F>F+F} for all $\mathbf{y} \in \mathbb{R}_{+}^{d}$,  $y_0>0$ and $b>0$. 
\end{lemma}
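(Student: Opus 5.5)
Since the factor $|y_0|^{b}$ does not involve any coordinate of $\mathbf{y}$, the whole statement reduces to showing that the bracket $G(\mathbf{y}) := F(\mathbf{y}) - \sum_{i=1}^{d} F(y_i\mathbf{e}_i)$ satisfies \eqref{F+F>F+F} on $\mathbb{R}_{+}^{d}$. Multiplying that inequality by $|y_0|^{b}>0$ keeps its direction, and $y_0$ stays frozen in all four terms. One point of interpretation: if the increments are allowed to act on the $y_0$ slot as well, viewing $D$ as a function on $\mathbb{R}_{+}^{d+1}$, that extra case has to be checked separately, as discussed below.

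For the bracket, write the supermodularity inequality for $G$ at $\mathbf{y}$ with increments $he_i$, $ke_j$, where $i\neq j$ in $\{1,\ldots,d\}$. The $F$ part is exactly the hypothesis \eqref{F+F>F+F}. For the subtracted part $S(\mathbf{y}) := \sum_{m} F(y_m\mathbf{e}_m)$, which is separable, a direct computation gives
$$S(\mathbf{y}+he_i+ke_j)+S(\mathbf{y})-S(\mathbf{y}+he_i)-S(\mathbf{y}+ke_j)=0.$$
To see this, note that the $m$-th summand depends only on $y_m$. The summands with $m\notin\{i,j\}$ are identical in all four terms. The $i$-th summand takes the value $F((y_i+h)\mathbf{e}_i)$ twice with a plus sign and twice with a minus sign (and likewise $F(y_i\mathbf{e}_i)$), and the same happens for the $j$-th summand. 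So $S$ is modular, with equality in \eqref{F+F>F+F}. Subtracting this identity from the hypothesis on $F$ gives the inequality for $G$, and multiplying by $|y_0|^{b}$ gives it for $D$.

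If one also needs pairs that involve the $y_0$ coordinate, take increments $he_0$ and $ke_j$. Then the expression to be nonnegative is
$$\bigl[(y_0+h)^{b}-y_0^{b}\bigr]\bigl[G(\mathbf{y}+ke_j)-G(\mathbf{y})\bigr].$$
The first factor is positive because $b>0$. For the second factor I would use the fact, coming from (H8), that the relevant $F_s$ vanish on the coordinate hyperplanes. This gives $G(\mathbf{y})=F(\mathbf{y})$, and $G(\mathbf{y})\ge 0$ follows from supermodularity by telescoping from the hyperplane, i.e.\ comparing with $\mathbf{y}$ where one coordinate is set to zero. Monotonicity of $G$ in $y_j$ then comes from applying \eqref{F+F>F+F} again with the base point moved onto the hyperplane $\{y_i=0\}$: this shows that $G(\mathbf{y}+ke_j)-G(\mathbf{y})$ is nondecreasing in $y_i$ and equals $0$ at $y_i=0$ (where $G\equiv 0$ for $d\ge2$). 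Hence it is nonnegative.

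I expect this monotonicity step to be the only delicate point. The $\mathbf{y}$-only case is purely algebraic. If the statement is read strictly as written, with $\mathbf{y}\in\mathbb{R}_{+}^{d}$ and the indices $i,j$ ranging over $\{1,\ldots,d\}$, then the second and third paragraphs already complete the proof.
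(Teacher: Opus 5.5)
Your reduction matches the paper's. $G=F-\sum_i F(y_i\mathbf{e}_i)$ is supermodular because the separable part is modular. For the pair $(y_0,y_j)$ the defect factors as $[(y_0+h)^b-y_0^b][G(\mathbf{y}+ke_j)-G(\mathbf{y})]$, so everything rests on $G$ being nondecreasing in each variable. That mixed pair is not an optional extra reading; it is the entire content of the paper's proof. It is what the later application of the Burchard--Hajaiej rearrangement inequality with the radial weight $|x|^{-b}$ requires, and it is the only place the hypothesis $b>0$ is used. The literal reading in your last paragraph reduces the lemma to a triviality and leaves $b>0$ unused.

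\textbf{The gap: your monotonicity of $G$ uses a hypothesis the lemma does not have.} You import vanishing on coordinate hyperplanes from (H8). Under that assumption $G=F$ (for $d\ge 2$), so subtracting the separable part does nothing, and your argument covers only that special case. Monotonicity actually follows from supermodularity alone.

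The hypothesis at a base point $\mathbf{z}$ with increments $he_i$, $ke_j$ says that $F(\mathbf{z}+ke_j)-F(\mathbf{z})$ is nondecreasing in each $z_i$, $i\neq j$. You may lower each $y_i$, $i\neq j$, to $0$ one coordinate at a time. This is legitimate because $\mathbb{R}^d_+$ must contain the axes for $F(y_i\mathbf{e}_i)$ to be defined. Doing so gives
\[
F(\mathbf{y}+ke_j)-F(\mathbf{y})\;\ge\;F((y_j+k)\mathbf{e}_j)-F(y_j\mathbf{e}_j),
\]
which is exactly $G(\mathbf{y}+ke_j)\ge G(\mathbf{y})$. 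The subtracted sum $\sum_i F(y_i\mathbf{e}_i)$ is precisely the endpoint of this telescoping, which is why it appears in $D$. With this step replacing your appeal to (H8), your argument proves the lemma as stated, and it then coincides with the paper's proof. The paper merely asserts the monotonicity as ``not difficult to see''.
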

\begin{proof} 
It is not difficult to see that $F(\mathbf{y}) - \sum_{i=1}^{m}F(y_i \mathbf{e}_i)$ also satisfies \eqref{F+F>F+F} and  is non-decreasing in each variable. So, $D(y_{0},\mathbf{y})$ satisfies the  conditions \eqref{F+F>F+F} if, and only if, for any $j \in \{1,\ldots,d\}$, we have 
\begin{eqnarray*}
  & &  (y_0+h)^b\left[ F(\mathbf{y} +ke_j) - \sum_{i=1}^{d}F((y_i +\delta_{ij}k) \mathbf{e}_i) \right] +(y_0)^{b}\left[F(\mathbf{y}) - \sum_{i=1}^{d}F(y_i \mathbf{e}_i)\right] \\[2mm]
 & &   \geq (y_0+h)^b\left[F(\mathbf{y}) - \sum_{i=1}^{d}F(y_i \mathbf{e}_i)\right] +(y_0)^{b}\left[ F(\mathbf{y} +ke_j) - \sum_{i=1}^{d}F((y_i +\delta_{ij}k) \mathbf{e}_i)\right],
\end{eqnarray*}
or equivalently
$$
[(y_0+h)^b - y_0^b]\times\left[ \left(F(\mathbf{y} +ke_j) - \sum_{i=1}^{d}F((y_i +\delta_{ij}k) \mathbf{e}_i)\right) - \left(F(\mathbf{y}) - \sum_{i=1}^{d}F(y_i \mathbf{e}_i) \right) \right] \geq 0.
$$
The result follows from the fact that $F(\mathbf{y}) - \sum_{i=1}^{d}F(y_i \mathbf{e}_i)$ is non-decreasing in $j$.
\end{proof}

\begin{lemma}\label{Sobolev~multiplication} \emph{\textbf{(Sobolev multiplication law)}} Let $n\geq 1$. Assume that $s,s_1,s_2$ are real numbers satisfying either
\begin{itemize}
    \item[$(i)$] $s_1+s_2 \geq 0$, $s_1$,$s_2 \geq s$ and $s_1+s_2 > s +\dfrac{n}{2}$; or 
    \item[$(ii)$] $s_1+s_2 > 0$, $s_1$,$s_2 > s$ and $s_1+s_2 \geq s +\dfrac{n}{2}$.
\end{itemize}
Then, there is a continuous multiplication map
$$
H^{s_1}(\mathbb{R}^n) \times H^{s_2}(\mathbb{R}^n) \to H^{s}(\mathbb{R}^n) 
$$
taking 
$$
(u,v) \mapsto uv $$
and satisfying the estimate
$$
\|uv \|_{H^{s}} \leq C\|u \|_{H^{s_1}}\|v \|_{H^{s_2}}.$$
\end{lemma}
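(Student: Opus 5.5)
This is a classical result, the Sobolev product estimate in $H^s$. I would prove it on the Fourier side: reduce to a weighted convolution bound and close it with Cauchy--Schwarz or Young's inequality.

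Write $\langle\xi\rangle=(1+|\xi|^2)^{1/2}$, $f=\langle\xi\rangle^{s_1}|\widehat u|$ and $g=\langle\xi\rangle^{s_2}|\widehat v|$. Then $\|f\|_{L^2}=\|u\|_{H^{s_1}}$, $\|g\|_{L^2}=\|v\|_{H^{s_2}}$, and $\widehat{uv}=\widehat u*\widehat v$. By duality against $h\in L^2$, $h\ge0$, it suffices to bound
$$
\iint \frac{\langle\xi\rangle^{s}}{\langle\eta\rangle^{s_1}\langle\xi-\eta\rangle^{s_2}}\, f(\eta)g(\xi-\eta)h(\xi)\,d\eta\,d\xi \le C\|f\|_{L^2}\|g\|_{L^2}\|h\|_{L^2}.
$$

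I would split the integration region according to which frequency dominates, using $\langle\xi\rangle\lesssim\langle\eta\rangle+\langle\xi-\eta\rangle$.

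\emph{Region $|\eta|\ge|\xi-\eta|$.} Here $\langle\xi\rangle\lesssim\langle\eta\rangle$.
\begin{itemize}
\item If $s\ge0$, then $\langle\xi\rangle^s\lesssim\langle\eta\rangle^s$, so the multiplier is at most $\langle\eta\rangle^{s-s_1}\langle\xi-\eta\rangle^{-s_2}$. Since $s_1\ge s$, this is bounded by $\langle\xi-\eta\rangle^{-(s_1+s_2-s)}$ when $s_2\ge0$, using $\langle\xi-\eta\rangle\le\langle\eta\rangle$.
\item Under (i), $s_1+s_2-s>n/2$, so $\langle\cdot\rangle^{-(s_1+s_2-s)}\in L^2$. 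I then write the bound as $\int (f*(\langle\cdot\rangle^{-\sigma}g))\,h$ and apply Young's inequality in the form $L^2\times L^1\to L^2$. The factor $\langle\cdot\rangle^{-\sigma}g$ lies in $L^1$ with norm at most $\|\langle\cdot\rangle^{-\sigma}\|_{L^2}\|g\|_{L^2}$ by Cauchy--Schwarz, which closes the estimate.
\item If $s_2<0$, then $s\le s_2<0$, and I move weights instead: $\langle\xi-\eta\rangle^{-s_2}\lesssim\langle\eta\rangle^{-s_2}$ in this region. The multiplier becomes $\langle\xi\rangle^{s}\langle\eta\rangle^{-(s_1+s_2)}$. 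Because $s_1+s_2\ge0$ (resp. $>0$) and $\langle\xi\rangle\lesssim\langle\eta\rangle$, with $s<0$ and $s_1+s_2-s>n/2$, I can trade powers to get decay in $\xi$ and use $\langle\cdot\rangle^{s}\in L^2$ against $h$ in the same Young/Cauchy--Schwarz way.
\end{itemize}

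\emph{Region $|\eta|<|\xi-\eta|$.} This is symmetric, with the roles of $(u,s_1)$ and $(v,s_2)$ exchanged.

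Case (ii) is the endpoint $s_1+s_2-s=n/2$. Here the weight $\langle\cdot\rangle^{-n/2}$ is no longer in $L^2$, so the argument above breaks. This is the main obstacle. I would first try a Littlewood--Paley decomposition combined with Bernstein's inequality. The strict inequalities $s_i>s$ and $s_1+s_2>0$ give geometric decay in the paraproduct pieces (high--low, low--high, high--high), and that decay lets the sums close. In the high--high interaction, the condition $s_1+s_2>0$ controls the summation over comparable frequencies.

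Since this lemma is standard (see, e.g., Tao's dispersive PDE notes or D'Ancona--Foschi--Selberg), I would expect the paper to simply cite it. My plan would be to give the non-endpoint proof above and refer to the literature for the endpoint case (ii).
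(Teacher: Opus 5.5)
\textbf{Missing case in (i).} The paper gives no argument for this lemma; it only cites Noguera--Pastor. Deferring the endpoint (ii) to the literature is therefore consistent with the paper, and your Littlewood--Paley outline for (ii) is the standard one. The problem is in the part you do propose to prove.

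In the region $|\eta|\ge|\xi-\eta|$ your bullets treat $s\ge0$ (which forces $s_2\ge0$) and $s_2<0$ (which forces $s<0$). The regime $s<0\le s_2$ is never handled, and symmetrically $s<0\le s_1$ in the other region. This is not a corner case. It contains $(s,s_1,s_2)=(-1,0,1)$, exactly the triple the paper uses in the corollary that follows, which falls under (i) for $n=2,3$.

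There the multiplier on your region is $\langle\xi\rangle^{-1}\langle\xi-\eta\rangle^{-1}$, and either $\xi$ or $\xi-\eta$ can be the small frequency (take $\eta=\xi$, or $\xi=0$). The best bound of this multiplier by a function of any single one of $\xi$, $\eta$, $\xi-\eta$ is comparable to $\langle\cdot\rangle^{-1}$, which is not in $L^2(\mathbb{R}^n)$ for $n\ge2$. So neither of your closing mechanisms applies on the region as you have cut it:
\begin{itemize}
\item an $L^2$ weight on one input plus $L^2\ast L^1\to L^2$;
\item an $L^2$ weight on the output plus $L^2\ast L^2\to L^\infty$.
\end{itemize}
``Trading'' all the decay onto one variable is impossible there.

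Separately, in your third bullet the weight that must be in $L^2$ is $\langle\cdot\rangle^{s-s_1-s_2}$, not $\langle\cdot\rangle^{s}$. The latter would need $s<-n/2$.

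\textbf{How to repair it.} Use the standard three-frequency observation, which also makes the case analysis on signs unnecessary. Set $\xi_0=\xi$, $\xi_1=\eta$, $\xi_2=\xi-\eta$ and $a_0=-s$, $a_1=s_1$, $a_2=s_2$, so the multiplier is $\prod_{i}\langle\xi_i\rangle^{-a_i}$.

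Each $\langle\xi_i\rangle$ is at most the sum of the other two, so the two largest are comparable. Hypothesis (i) says precisely that every pairwise sum $a_j+a_k$ (that is, $s_1-s$, $s_2-s$, $s_1+s_2$) is nonnegative. If $\langle\xi_i\rangle$ is the smallest, then
$\langle\xi_j\rangle^{-a_j}\langle\xi_k\rangle^{-a_k}\lesssim\langle\xi_j\rangle^{-(a_j+a_k)}\le\langle\xi_i\rangle^{-(a_j+a_k)}$.
Hence the multiplier is $\lesssim\langle\xi\rangle^{-\sigma}+\langle\eta\rangle^{-\sigma}+\langle\xi-\eta\rangle^{-\sigma}$ with $\sigma=s_1+s_2-s>n/2$.

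Each of the three terms then closes by your own Young/Cauchy--Schwarz argument. The first uses $\langle\cdot\rangle^{-\sigma}h\in L^1$ against $f\ast g\in L^\infty$; the other two use $L^2\ast L^1\to L^2$. This also shows where each hypothesis of (i) is used.
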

\begin{proof}
    See  \cite[Corollary~3.16.]{Noguera} 
\end{proof}
\begin{corollary}\label{corollario-sobolev law} Let $n \geq 2$, $0<b<\min\left\{2, \dfrac{n}{2}\right\}$ and $n+2b < 6$. Assume $u$, $v \in H^{1}(\mathbb{R}^n)$. Then holds
$$
\| |x|^{-b} uv \|_{H^{-1}} \leq C
\| u \|_{H^{1}}
\| v \|_{H^{1}}.
$$
\end{corollary}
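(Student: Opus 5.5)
By duality, the claim follows once we show that for every $w\in H^1(\mathbb{R}^n)$
$$
\Big|\int_{\mathbb{R}^n}|x|^{-b}\,u\,v\,\overline{w}\,dx\Big|\le C\|u\|_{H^1}\|v\|_{H^1}\|w\|_{H^1}.
$$
Taking the supremum over $\|w\|_{H^1}\le 1$ then gives the $H^{-1}$ bound. To prove this trilinear estimate, split $\mathbb{R}^n=B\cup B^c$ with $B=\{|x|\le 1\}$ and treat each region with H\"older's inequality and the Sobolev embedding $H^1\hookrightarrow L^p$. Here $2\le p\le \frac{2n}{n-2}$ when $n\ge3$, and $2\le p<\infty$ when $n=2$.

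\emph{Region $B^c$.} Here $|x|^{-b}\le 1$, so the weight can be dropped. Use H\"older with all three functions in $L^3$:
$$
\int_{B^c}|x|^{-b}|u||v||w|\,dx\le \|u\|_{L^3}\|v\|_{L^3}\|w\|_{L^3}.
$$
This is admissible because $3\le \frac{2n}{n-2}$ exactly when $n\le 6$, and the hypothesis $n+2b<6$ with $b>0$ forces $n\le5$.

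\emph{Region $B$.} Write
$$
\int_B|x|^{-b}|u||v||w|\,dx\le \||x|^{-b}\|_{L^\gamma(B)}\|u\|_{L^{p}}\|v\|_{L^{p}}\|w\|_{L^{p}},\qquad \frac1\gamma+\frac3p=1.
$$
Two conditions must hold:
\begin{itemize}
\item $|x|^{-b}\in L^\gamma(B)$, which requires $\frac{n}{\gamma}>b$;
\item $p$ lies in the Sobolev range above.
\end{itemize}
Since $\frac1\gamma=1-\frac3p$, the first condition reads $n-\frac{3n}{p}>b$. For $n\ge3$, choose $p$ at or just below the endpoint $\frac{2n}{n-2}$. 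Then $n-\frac{3n}{p}$ is at or just below $n-\frac{3(n-2)}{2}=\frac{6-n}{2}$, and $\frac{6-n}{2}>b$ is precisely the hypothesis $n+2b<6$, so a slight perturbation keeps the strict inequality. For $n=2$, taking $p$ large makes $n-\frac{3n}{p}$ close to $2$, and $b<1=\min\{2,n/2\}$, so the condition holds. We also need $\gamma\ge1$, i.e.\ $p\ge3$. This holds for the chosen $p$, since the endpoint $\frac{2n}{n-2}$ exceeds $3$ for $n\le5$.

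The main obstacle is the exponent bookkeeping in region $B$: we must check that the integrability condition for the singular weight is compatible with the Sobolev range. This comes down to the inequality $n-\frac{3(n-2)}{2}>b$, which is exactly where $n+2b<6$ enters. For $n=2$ the endpoint $p=\infty$ is not available, so one must use large finite $p$; the margin $b<1$ is what makes this work. Combining the two regional bounds gives the trilinear estimate, and duality yields the corollary.

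Alternatively, one could invoke Lemma~\ref{Sobolev~multiplication}: first bound $uv\in H^{s}$ for suitable $s$, then treat multiplication by $|x|^{-b}$ through Hardy-type inequalities. The direct H\"older approach above seems cleaner.
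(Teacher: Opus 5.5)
Your proof is correct, but it follows a different route from the paper's.

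\textbf{The paper's route.} The paper never dualizes explicitly. For $b\le 1$ it applies the Sobolev multiplication law with $(s_1,s_2,s)=(0,1,-1)$ to get $\||x|^{-b}uv\|_{H^{-1}}\le C\||x|^{-b}u\|_{L^2}\|v\|_{H^1}$. It then bounds $\||x|^{-b}u\|_{L^2}$ by the Hardy-type interpolation inequality $\int|u|^2|x|^{-2b}\le C\|u\|_{L^2}^{2-2b}\|\nabla u\|_{L^2}^{2b}$. For $b>1$, which forces $n=3$, it splits at the unit ball. Inside the ball it applies the multiplication law twice, first with $(-\tfrac12,1,-1)$ and then with $(0,1,-\tfrac12)$, using $|x|^{-b}\in L^2(B_1)$. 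Outside the ball it applies the law once more.

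\textbf{What your route buys.} Your duality--H\"older--Sobolev argument replaces all of this with one exponent condition, $n(1-\tfrac3p)>b$ at $p=\tfrac{2n}{n-2}$, which is literally $n+2b<6$. It needs no case split in $b$ and no external lemma.

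It is also more robust. The multiplication law with $(0,1,-1)$ requires $1\ge -1+\tfrac n2$, i.e.\ $n\le 4$. Equivalently, $L^2\cdot H^1\subset H^{-1}$ is dual to $H^1\cdot H^1\subset L^2$, i.e.\ $H^1\subset L^4$, which fails for $n=5$. So the paper's first case does not actually cover the admissible range $n=5$, $b<\tfrac12$. Your choice $p=\tfrac{10}{3}$, $\gamma=10$ handles exactly that range.

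\textbf{Two cosmetic points.} First, what you need on $\gamma$ is $1/\gamma=1-3/p\ge 0$, i.e.\ $p\ge 3$. The condition $\gamma\ge1$ holds automatically. Second, for $n=2$ your argument works for every $b<2$, so $b<1$ is not really the margin that makes it work there.
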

\begin{proof}
    If $2b \in [0,2]$, taking $s_1=0$, $s_2=1$ and $s=-1$, as in Lemma \ref{Sobolev~multiplication}, we have
    \begin{equation}\label{lemma~Sobolev~1}
        \||x|^{-b}uv \|_{H^{-1}} \leq C\||x|^{-b} u \|_{L^{2}}\|v \|_{H^{1}}.
    \end{equation}
    On the other hand, from the Hardy inequalities
    \begin{equation}\label{lemma~Sobolev~2}
        \int_{\mathbb{R}^{n}} \dfrac{|u|^{2}}{|x|^{2b}} \,dx \leq C \left( \dfrac{2}{n-2b} \right)^{2b}\|u\|_{L^{2}}^{2-2b} \| \nabla u \|_{L^{2}}^{2b}.
    \end{equation}
    Replacing \eqref{lemma~Sobolev~2} in \eqref{lemma~Sobolev~1}, we obtain the desired inequality.

    If $2b>2$, in this case we have $n=3$ and $2b<3$. Denote by $B_1$ the open unit ball in $\mathbb{R}^{3}$. Consider the sets $B_{+}^{1} = B_{1}$ and $B_{-}^{1}= \mathbb{R}^{3}-B_{1}$.

    Taking $s_1=-\dfrac{1}{2}$, $s_{2}=1$ and $s=-1$ as in Lemma \ref{Sobolev~multiplication}, we have
    \begin{equation}\label{lemma~Sobolev~3}
        \|\chi_{B_{+}^{1}} |x|^{-b}uv \|_{H^{-1}} \leq C\| \chi_{B_{+}^{1}} |x|^{-b} u \|_{H^{-1/2}}\|v \|_{H^{1}}.
    \end{equation}
    Applying  Lemma \ref{Sobolev~multiplication} again, this time taking $s_1=0$, $s_2=1$ and $s=-\dfrac{1}{2}$
    \begin{equation}\label{lemma~Sobolev~4}
\|\chi_{B_{+}^{1}} |x|^{-b}u \|_{H^{-1/2}} \leq C\||x|^{-b} \|_{L^{2}(B_{+}^{1})}\|u \|_{H^{1}}.    \end{equation}
    Note that $\||x|^{-b} \|_{L^{2}(B_{+}^{1})} < \infty$, since $2<\dfrac{3}{b}$.   Replacing \eqref{lemma~Sobolev~4} in \eqref{lemma~Sobolev~3}, we obtain
    \begin{equation}\label{lemma~Sobolev~5}
        \| \chi_{B_{+}^{1}}|x|^{-b} uv \|_{H^{-1}} \leq C
\| u \|_{H^{1}}
\| v \|_{H^{1}}.
    \end{equation}
    Finally, since $|x|^{-b}$ and $\nabla(|x|^{-b})$ are bounded by $C$ in $B_{-}^{1}$. Applying Lemma \ref{Sobolev~multiplication} directly, we have
    \begin{equation}\label{lemma~Sobolev~6}
        \| \chi_{B_{-}^{1}}|x|^{-b} uv \|_{H^{-1}} \leq C
\| u \|_{H^{1}}
\| v \|_{H^{1}}.
    \end{equation}
    \eqref{lemma~Sobolev~5} and     \eqref{lemma~Sobolev~6} conclude the last case.
\end{proof}
\begin{lemma}\label{Ademir.lemma2.15} If $n \geq 2$, $n+2b<6$, and $0< b< \min\left\{2, \dfrac{n}{2} \right\}$. Assume that the nonlinearities $f_k$ satisfy \emph{(H1)} and \emph{(H2)}, then for all $k=1,\ldots,l$, we have $|x|^{-b}f_k \in \mathcal{C}(\mathbf{H}^{1}(\mathbb{R}^{n}),\,\mathbf{H}^{-1}(\mathbb{R}^{n}))$.
\end{lemma}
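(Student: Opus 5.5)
To prove that $\mathbf{u}\mapsto |x|^{-b}f_k(\mathbf{u})$ is continuous from $\mathbf{H}^{1}$ to $H^{-1}$, I will first show it maps $\mathbf{H}^1$ into $H^{-1}$ with a local Lipschitz bound. Concretely, the target estimate is
$$
\||x|^{-b}[f_k(\mathbf{u})-f_k(\mathbf{u}')]\|_{H^{-1}} \leq C\big(\|\mathbf{u}\|_{\mathbf{H}^1}+\|\mathbf{u}'\|_{\mathbf{H}^1}\big)\|\mathbf{u}-\mathbf{u}'\|_{\mathbf{H}^1}.
$$
Taking $\mathbf{u}'=\mathbf{0}$ and using (H1), this gives $|x|^{-b}f_k(\mathbf{u})\in H^{-1}$. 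Continuity, in fact local Lipschitz continuity, then follows immediately.

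I will prove the estimate by duality. For $g\in H^1$, the quantity to bound is
$$
\Big|\int_{\mathbb{R}^n}|x|^{-b}[f_k(\mathbf{u})-f_k(\mathbf{u}')]\,\overline{g}\,dx\Big|.
$$
By Lemma \ref{Ademir.cor2.3.lem2.4}(i), the integrand is bounded pointwise by
$$
C\sum_{j,m}|x|^{-b}(|u_j|+|u_j'|)\,|u_m-u_m'|\,|g|.
$$
It therefore suffices to prove the trilinear estimate
$$
\int_{\mathbb{R}^n}|x|^{-b}|v||w||g|\,dx\leq C\|v\|_{H^1}\|w\|_{H^1}\|g\|_{H^1},
$$
since the supremum over $\|g\|_{H^1}\le1$ then yields the $H^{-1}$ bound. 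An equivalent route is to apply Corollary \ref{corollario-sobolev law} directly to the products $|x|^{-b}|v||w|$, noting that $|v|\in H^1$ with $\|\,|v|\,\|_{H^1}\le\|v\|_{H^1}$. I prefer to verify the trilinear bound by hand, because it makes the range of $b$ transparent.

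For the trilinear bound I will split $\mathbb{R}^n=B_1\cup B_1^c$. On $B_1^c$ the weight is bounded, so Hölder's inequality with all three functions in $L^3$ suffices. Sobolev's embedding gives $H^1\subset L^3$ for $n\le6$, which covers our range.

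On $B_1$ I will use Hölder with exponents $\frac1r+\frac3p=1$:
$$
\int_{B_1}|x|^{-b}|v||w||g|\,dx\leq \||x|^{-b}\|_{L^r(B_1)}\|v\|_{L^p}\|w\|_{L^p}\|g\|_{L^p}.
$$
Two conditions are needed. First, $br<n$, so that the weight lies in $L^r(B_1)$. Second, $p\in[2,\frac{2n}{n-2}]$ (with $p<\infty$ if $n=2$), so that $H^1\subset L^p$. The condition $br<n$ means $1-\frac3p=\frac1r>\frac bn$, i.e. $\frac3p<1-\frac bn$. Choosing $p$ as large as allowed, $p=\frac{2n}{n-2}$, this requires
$$
\frac{3(n-2)}{2n}<\frac{n-b}{n} \iff 3n-6<2n-2b \iff n+2b<6,
$$
which is exactly our hypothesis. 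For $n=2$ any large finite $p$ works.

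The only delicate point is this exponent bookkeeping near the origin. This is precisely where $n+2b<6$ (the $H^1$-subcritical condition) enters, together with $b<n/2$, which guarantees that the admissible interval of $p$ is nonempty. The rest of the argument is routine.
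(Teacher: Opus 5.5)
Your proof is correct. It shares the paper's skeleton but proves the key bilinear estimate by a different route.

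\textbf{Common skeleton.} The pointwise bound of Lemma \ref{Ademir.cor2.3.lem2.4}(i) reduces everything to the weighted bilinear estimate $\| |x|^{-b}vw \|_{H^{-1}}\le C\|v\|_{H^1}\|w\|_{H^1}$. That estimate gives a local Lipschitz bound, hence continuity.

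\textbf{How the bilinear estimate is obtained.} The paper simply invokes Corollary \ref{corollario-sobolev law}, which is your ``equivalent route''. It proves that corollary with the Sobolev multiplication law (Lemma \ref{Sobolev~multiplication}) plus Hardy's inequality. You prove the estimate directly, by duality, H\"older and Sobolev on $B_1$ and $B_1^c$.

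\textbf{What each route buys.} The paper's route is shorter once the corollary is available. Yours is more elementary and shows that $n+2b<6$ is exactly the condition needed. It also covers the full range of the statement. For $n=5$, $0<b<\frac12$, the paper's choice $(s_1,s_2,s)=(0,1,-1)$ violates $s_1+s_2\ge s+\frac n2$; that step only works for $n\le 4$. Your exponent count goes through in this case.

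Your trilinear bound uses $g$ only through $\|g\|_{L^p}$. So the passage from the pointwise majorant to an $H^{-1}$ bound is immediate. The paper's first inequality uses this monotonicity of the $H^{-1}$ norm tacitly. It is valid there, because the majorant is nonnegative and $\| |g| \|_{H^1}\le\|g\|_{H^1}$.

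\textbf{One small correction.} The admissible range of $p$ on $B_1$ is nonempty precisely because $n+2b<6$, as your own computation shows. The hypothesis $b<\frac n2$ plays no role in this estimate. For $n\ge 3$ it already follows from $n+2b<6$, and for $n=2$ your argument works for every $b<2$.
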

\begin{proof}
   Let $(\mathbf{u}_i) \subset \mathbf{H}^{1}(\mathbb{R}^n)$ be such that $\mathbf{u}_i \to \mathbf{u}$ in $\mathbf{H}^{1}(\mathbb{R}^n)$. In particular, there exists $M>0$ such that $\|\mathbf{u}_i\|_{\mathbf{H}^{1}(\mathbb{R}^n)} \leq M$.
   Lemma \ref{Ademir.cor2.3.lem2.4},
 and Corollario \ref{corollario-sobolev law} lead to
   \begin{eqnarray*}
       \| |x|^{-b}(f_k(\mathbf{u}_i)-f_k(\mathbf{u})) \|_{\mathbf{H}^{-1}(\mathbb{R}^n)}
       &\leq&
       C \sum_{m=1}^{l}\sum_{j=1}^{l} \| |x|^{-b}(|u_{ji}|-|u_{j}|)|u_{mi}-u_{m}| \|_{H^{-1}(\mathbb{R}^n)} \\[2mm]
       &\leq&
       C \sum_{m=1}^{l}\sum_{j=1}^{l} \| u_{ji} \|_{H^{1}(\mathbb{R}^n)} \|u_{mi}-u_{m} \|_{H^{1}(\mathbb{R}^n)}\\[2mm]
       &+&C \sum_{m=1}^{l}\sum_{j=1}^{l} \|u_{j}\|_{H^{1}(\mathbb{R}^n)} \|u_{mi}-u_{m} \|_{H^{1}(\mathbb{R}^n)} \\[2mm]
       &\leq&
       C \left( M + \|u\|_{\mathbf{H}^{1}(\mathbb{R}^n)} \right) \sum_{m=1}^{l}  \|u_{mi}-u_{m} \|_{H^{1}(\mathbb{R}^n}.  \end{eqnarray*}
   We note that the right-hand side goes to $0$ as $i\to +\infty$, which implies that $|x|^{-b}f_k(\mathbf{u}_i) \to |x|^{-b}f_k(\mathbf{u})$ in $\mathbf{H}^{-1}(\mathbb{R}^{n})$.
\end{proof}

The next is a technical result which we use in the global solution analysis in the intercritical case.

  \begin{lemma}\label{lemma-inequality}
        Let $I$ an open interval with $0\in I$. Let $\alpha \in \mathbb{R}$ and $q>1$. Define $\gamma = (\beta q)^{- \frac{1}{q-1}}$ and $f(r) = \alpha - r + \beta r^q$, for $r\geq 0$. Let $G(t)$ a non-negative continuos function such that  $f \circ G \geq 0$ on $I$. Assume that  $\alpha < \left( 1 - \dfrac{1}{q} \right) \gamma$.
        \begin{itemize}
            \item[$(i)$] If $G(0) < \gamma$, then $G(t) < \gamma$ for all $t \in I$
            \item[$(ii)$] If $G(0) > \gamma$, then $G(t) > \gamma$ for all $t \in I$ 
        \end{itemize}
    \end{lemma}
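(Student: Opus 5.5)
The argument is a continuity (bootstrap) argument built on the shape of $f$. Implicitly $\beta>0$, since $\gamma$ is defined through $(\beta q)^{-1/(q-1)}$.

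\emph{Step 1: the graph of $f$.} On $[0,\infty)$ we have $f'(r) = -1 + \beta q r^{q-1}$. This vanishes exactly at $r=\gamma$, because $\gamma^{q-1} = (\beta q)^{-1}$. Moreover $f'<0$ on $(0,\gamma)$ and $f'>0$ on $(\gamma,\infty)$, so $f$ is strictly decreasing then strictly increasing, with global minimum at $\gamma$. The minimum value is
$$
f(\gamma) = \alpha - \gamma + \beta\gamma^{q} = \alpha - \gamma + \frac{\gamma}{q} = \alpha - \Big(1-\frac{1}{q}\Big)\gamma ,
$$
using $\beta\gamma^{q} = \gamma\,\beta\gamma^{q-1} = \gamma/q$. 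By the hypothesis $\alpha < (1-\frac1q)\gamma$, we get $f(\gamma)<0$. By continuity of $f$, there is a neighbourhood of $\gamma$ on which $f<0$. In particular the set $\{r\ge 0 : f(r)\ge 0\}$ does not contain $\gamma$.

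\emph{Step 2: connectedness.} Since $f\circ G \ge 0$ on $I$, we have $G(t)\neq\gamma$ for every $t\in I$. Now $G$ is continuous and $I$ is an interval, hence connected, so $G(I)$ is an interval (intermediate value theorem) that does not contain $\gamma$. Therefore either $G(I)\subset[0,\gamma)$ or $G(I)\subset(\gamma,\infty)$. Since $0\in I$, which alternative holds is decided by $G(0)$:
\begin{itemize}
\item if $G(0)<\gamma$, then $G(t)<\gamma$ for all $t\in I$, which is $(i)$;
\item if $G(0)>\gamma$, then $G(t)>\gamma$ for all $t\in I$, which is $(ii)$.
\end{itemize}
Concretely, if $G(t_1)$ lay on the other side of $\gamma$ for some $t_1\in I$, the intermediate value theorem would give $t_2$ between $0$ and $t_1$ (hence in $I$) with $G(t_2)=\gamma$. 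Then $f(G(t_2))=f(\gamma)<0$, contradicting $f\circ G\ge 0$.

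\emph{Main obstacle.} The only nontrivial point is the algebraic identity $\beta\gamma^q=\gamma/q$, which turns the hypothesis on $\alpha$ into exactly the strict negativity of $f$ at its minimum. Everything else is connectedness. Nonnegativity of $G$ is needed only so that $f(G(t))$ is defined for non-integer $q$.
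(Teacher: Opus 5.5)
Your proof is correct. The identity $\beta\gamma^{q}=\gamma/q$ turns the hypothesis into $f(\gamma)=\alpha-(1-\frac1q)\gamma<0$, so $f\circ G\ge 0$ forces $G(t)\neq\gamma$ on $I$, and the intermediate value theorem finishes the argument. The paper gives no proof of its own and defers to B\'egout, Esfahani--Pastor and Pastor, where the proof is exactly this continuity argument, with $\beta>0$ stated explicitly, as you rightly noted it must be.
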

\begin{proof}
    See, for instance  \cite[Lemma~5.2]{Begout}, \cite[Lemma~4.2]{Esfahani} or  \cite[Lemma~3.1]{Pastor}. 
\end{proof}

\bigskip
\section{Local and global well-posedness in \texorpdfstring{$L^{2}$}{L2} and \texorpdfstring{$H^{1}$}{H1}}

In this section, we analyse the dynamics of system \eqref{INLS} in the $L^2$ and $H^1$ frameworks. 
Using Duhamel's principle, we obtain the integral equation corresponding to the system  \eqref{INLS}:
%
\begin{equation}\label{Duhamel}
u_k(\cdot,t)=U_k(t)u_{k0}+i\int_{0}^{t}U_k(t-t') \dfrac{|x|^{-b}}{\alpha_k} f_k(u_1,\ldots,u_l)\,dt',\end{equation}
%
where $U_k$ is defined by
$$
U_k (t) \;=\; e^{i\frac{t}{\alpha_k}(\gamma_k \Delta - \beta_k)}, \;\;\;\;\;\;\;\;\;\; k=1,\ldots,l,
$$
and $e^{it \Delta}$ is the linear Schrödinger group.

We will discuss the well-posedness of system \eqref{INLS} in the  $L^2$-subcritial case and the $H^1$-subcritial  case. From now on, we will refer to the expression \emph{Well-posedness Theory} in the sense of Kato, according to the following definition:
%
%
\begin{definition}\label{WP}
The initial-value problem \eqref{INLS} is locally well-posed if for all $\mathbf{u}_0 \in \mathbf{H}^{s}(\mathbb{R}^{n})$, there exist $T>0,$ a subspace $X$ of $C\left([-T,T]; \mathbf{H}^{s}(\mathbb{R}^{n})\right) $ and  a unique solution  $\mathbf{u}$ such that
\begin{itemize}
\item[1.] $\mathbf{u}$ is the solution of the integral equation \eqref{Duhamel};
\item[2.] $\mathbf{u}\in X$ (Persistence);
\item[3.] The solution varies continuously depending  on the initial data (Continuous Dependence).
\end{itemize}
Global well-posedness requires that the same properties hold for all time $T > 0$.
\end{definition}

We prove, assuming (H1) and (H2),  the existence of a local solution to the IVP \eqref{INLS} by using the contraction mapping principle in a suitable space based on the well-known Strichartz estimates.  

\begin{proposition}\label{Strichartz}\emph{\textbf{(Strichartz`s inequality)}} Let $(q_1,r_1)$ and $(q_2,r_2)$ be two admissible pairs and $I =[-T,T]$ for some $T>0$. Then, for $k=1,\ldots,l$,
$$
\|U_k(t) f \|_{L_{\mathbb{R}}^{q_1}L_{x}^{r_1}} \leq C\| f \|_{L^{2}}
$$
and
$$
\left\| \int_{0}^{t} U_k(t-s) f(\cdot,s) \,ds \right\|_{L_{\mathbb{I}}^{q_1}L_{x}^{r_1}} \leq C\| f \|_{L_{\mathbb{I}}^{q_{2}^{'}}L_{x}^{r_{2}^{'}}},
$$
where $q_{2}^{'}$ and $r_{2}^{'}$ are the Holder conjugate of $q_2$ and $r_2$, respectively.
\end{proposition}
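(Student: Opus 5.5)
The plan is to reduce the statement to the classical Strichartz estimates for the free Schrödinger group $e^{it\Delta}$ on $\mathbb{R}^n$ (Keel--Tao, including the endpoint when $n\geq 3$), by viewing each $U_k$ as a rescaled and phase-modulated copy of it. Writing $U_k(t) = e^{-i\frac{\beta_k}{\alpha_k}t}\, e^{i\frac{\gamma_k}{\alpha_k}t\Delta}$, the first observation is that the factor $e^{-i\beta_k t/\alpha_k}$ has modulus one. It therefore disappears from every $L^{q}_tL^{r}_x$ norm of $U_k(t)f$. In the Duhamel term it enters as $e^{-i\beta_k (t-s)/\alpha_k}$, which splits into $e^{-i\beta_k t/\alpha_k}$ outside the integral and $e^{i\beta_k s/\alpha_k}$ absorbed into $f(\cdot,s)$ without changing $\|f\|_{L^{q_2'}_IL^{r_2'}_x}$. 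So it suffices to treat $\beta_k=0$.

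Next, set $c_k=\gamma_k/\alpha_k>0$ and write $e^{ic_kt\Delta}f = (e^{i\tau\Delta}f)|_{\tau=c_kt}$. The change of variables $\tau=c_k t$ gives
\begin{equation*}
\|e^{ic_kt\Delta}f\|_{L^{q_1}_tL^{r_1}_x} = c_k^{-1/q_1}\|e^{i\tau\Delta}f\|_{L^{q_1}_\tau L^{r_1}_x}\leq C c_k^{-1/q_1}\|f\|_{L^2}.
\end{equation*}
For the inhomogeneous term, substitute $s=\sigma/c_k$ in the integral and set $g(\sigma)=f(\sigma/c_k)$. This gives
\begin{equation*}
\int_0^t e^{ic_k(t-s)\Delta}f(s)\,ds = c_k^{-1}\int_0^{c_kt}e^{i(c_kt-\sigma)\Delta}g(\sigma)\,d\sigma .
\end{equation*}
Apply the standard inhomogeneous Strichartz estimate on the rescaled interval $c_kI$ and undo the scaling in both norms. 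The constant that results is $C c_k^{-1-1/q_1+1/q_2'}$, which depends only on $\alpha_k,\gamma_k$ and not on $T$. Taking the maximum over $k=1,\ldots,l$ yields a single constant $C$.

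The only genuinely nontrivial ingredient is the inhomogeneous estimate for two different admissible pairs, including the endpoint $(2,\frac{2n}{n-2})$ when $n\geq3$. For this I would cite Keel--Tao rather than reprove it. If a self-contained argument were wanted, the non-endpoint case follows from the $TT^*$ argument combined with the dispersive bound $\|e^{it\Delta}\|_{L^1\to L^\infty}\lesssim |t|^{-n/2}$ and the Christ--Kiselev lemma to pass to the retarded integral $\int_0^t$. The endpoint itself requires the Keel--Tao bilinear interpolation, and that is the step I would not redo.
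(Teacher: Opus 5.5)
Your proposal is correct and takes essentially the same approach as the paper, which simply cites the standard Strichartz estimates (Cazenave, Theorem 2.3.3). Your removal of the phase $e^{-i\beta_k t/\alpha_k}$ and the time rescaling $\tau=(\gamma_k/\alpha_k)t$ make explicit the reduction from $U_k$ to $e^{it\Delta}$ that the paper leaves implicit, and they give the correct $T$-independent constant $C(\gamma_k/\alpha_k)^{-1/q_1-1/q_2}$.
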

\begin{proof}
    See, for instance,  Cazenave \cite[Theorem~2.3.3.]{Cazenave} 
\end{proof}

\subsection{ Local existence of \texorpdfstring{$L^{2}$}{L2}-solutions}

In this section, we prove the existence of local $L^2$ solutions in the subcritical regime through  the integral equation \eqref{Duhamel} in $L^{2}$ with $n+2b<4$ and $0<b< \min\left\{2,\dfrac{n}{2}\right\}$. 
The results show here closely align with those in \cite{Hayashi} and \cite{Noguera}. For any $u_{10}, \ldots,u_{l0} \in L^{2}$, we solve \eqref{Duhamel} in the space
$$
X(I) = \mathcal{C}\cap L^{\infty}\left(I, L^2  \right) \cap S(L^2,I), \;\;\;\; n+2b<4, 
$$
for some time interval $I = [-T,T]$ with $T>0$. Here the norm in $X(I)$ is the same as in $S(L^2,I)$.

The following two lemmas are used to bound the integral term of \eqref{Duhamel}, and these are proved using Hölder's inequality in space and time variables and Strichartz's inequalities
\begin{lemma}
\label{lemmaL2}
    Assume $n+2b<4$ and $0<b<\min \left\{ 2, \dfrac{n}{2}\right\}$. Then 
    $$
    \| \,|x|^{-b} fg \|_{S'(L^2,I)} \leq C T^{\theta} \, \|f\|_{S(L^2,I)}\, \|g\|_{S(L^2,I)},
    $$
    where $I=[0,T]$, $\theta = \dfrac{4-n-2b}{4}$, and $C$ is a positive constant.
\end{lemma}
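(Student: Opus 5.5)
The plan is the standard splitting of the singular weight into a local and a far part, followed by H\"older in space, H\"older in time, and the Strichartz norm. Let $B$ be the unit ball and write
$$
\| \,|x|^{-b} fg \|_{S'(L^2,I)} \leq \| \chi_{B}|x|^{-b} fg \|_{S'(L^2,I)} + \| \chi_{B^c}|x|^{-b} fg \|_{S'(L^2,I)} =: A_1 + A_2 .
$$
Since $S'(L^2,I)$ is an infimum over admissible pairs, it is enough to bound each piece in one dual norm $L_I^{q'}L_x^{r'}$ with $(q,r)\in\mathcal{A}_0$. Different pairs may be used for $A_1$ and $A_2$: take the infimum piece by piece, so that the sum is bounded by the sum of the two chosen norms.

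For $A_1$ we use H\"older in space in the form
$$
\| \chi_B |x|^{-b} fg\|_{L^{r'}} \leq \| |x|^{-b}\|_{L^{\gamma}(B)} \|f\|_{L^{r}}\|g\|_{L^{r}}, \qquad \frac{1}{r'} = \frac{1}{\gamma} + \frac{2}{r},
$$
with $\| |x|^{-b}\|_{L^\gamma(B)}<\infty$ exactly when $n/\gamma > b$. The condition becomes $n - \frac{3n}{r} > b$, which is satisfied for $r$ close enough to the endpoint. In time, H\"older gives $\|\,\|f\|_{L^r}\|g\|_{L^r}\|_{L^{q'}_I} \leq T^{\theta}\|f\|_{L^q_I L^r_x}\|g\|_{L^q_I L^r_x}$ with $\theta = 1 - \frac{3}{q}$.

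Admissibility gives $\frac{2}{q} = \frac{n}{2}-\frac{n}{r}$, so $\frac{3}{q} = \frac{3n}{4} - \frac{3n}{2r}$. This exponent computation is the key (and somewhat delicate) step, because the stated exponent $\theta = \frac{4-n-2b}{4}$ has to come out exactly. Using the borderline choice $\frac{n}{\gamma} = b + \varepsilon$ would give a slightly different power. The cleaner route is to choose the parameters so that $\theta$ is exactly $\frac{4-n-2b}{4}$. Solving $1 - \frac{3}{q} = \frac{4-n-2b}{4}$ gives $\frac{3}{q} = \frac{n+2b}{4}$, i.e. $q = \frac{12}{n+2b}$, and then $r$ is determined by admissibility: $\frac{n}{r} = \frac{n}{2}-\frac{n+2b}{6}$. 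With this pair one checks that
$$
\frac{n}{\gamma} = n - \frac{3n}{r} = n - \frac{3n}{2} + \frac{n+2b}{2} = b,
$$
which is exactly the borderline, so $|x|^{-b}\notin L^{\gamma}(B)$.

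This borderline is the main obstacle. To get around it I would replace $L^\gamma$ by the Lorentz space $L^{\gamma,\infty}$, where $|x|^{-b}\in L^{n/b,\infty}$ globally, and use the Lorentz-space H\"older inequality together with Strichartz estimates in Lorentz spaces. Alternatively, I would use different pairs for $f$ and $g$ (or for the two regions), and on $B^c$ use that $|x|^{-b}$ lies in $L^{\gamma}(B^c)$ precisely when $n/\gamma < b$. Treating $A_1$ slightly on one side of the borderline and $A_2$ slightly on the other gives each piece a power $T^{\theta\pm\varepsilon}$. These combine into $T^{\theta}$ for bounded $T$ (say $T\le 1$, absorbing $T^{\varepsilon}$), which is all that is needed for the contraction argument.

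The remaining checks are routine. One must confirm that the pairs used are admissible, i.e. $2\le r\le \frac{2n}{n-2}$, which holds because $n+2b<4$ gives $q>3$ and $r>2$. The condition $q'\le q$ is also needed, and holds since $q>2$. With these, $A_1 + A_2 \leq C T^{\theta}\|f\|_{S(L^2,I)}\|g\|_{S(L^2,I)}$ follows.
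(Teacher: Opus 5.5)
Your borderline pair $q=\frac{12}{n+2b}$, $r=\frac{3n}{n-b}$ is correct; it is exactly the pair the paper uses for $g$. The gap is in how you get around it: your route does not give the stated power of $T$.

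With a \emph{fixed} splitting radius, the exponent of $T$ on each piece is forced by scaling. Suppose you use $\||x|^{-b}\|_{L^{\gamma}}$ with $\frac{1}{\tilde r'}=\frac{1}{\gamma}+\frac{1}{r_1}+\frac{1}{r_2}$ and all pairs admissible. Then H\"older in time produces $T^{1-\frac n4-\frac{n}{2\gamma}}$, whichever pairs you pick. On the unit ball, integrability forces $n/\gamma=b+\varepsilon$, hence $A_1\lesssim T^{\theta-\varepsilon/2}$. Outside it, $n/\gamma=b-\varepsilon$, hence $A_2\lesssim T^{\theta+\varepsilon/2}$. For $T\le 1$ only the second term can be absorbed, since $T^{\theta-\varepsilon/2}\ge T^{\theta}$. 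So what you actually prove is $CT^{\theta-\varepsilon/2}$; for $T\ge 1$ the roles reverse. That suffices for the contraction, as you note, but it is not the lemma.

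The missing idea is to let the radius depend on $T$. Splitting at $|x|=R$, one has $\||x|^{-b}\|_{L^{n/(b\pm\varepsilon)}(B^{R}_{\pm})}\le CR^{\pm\varepsilon}$. The two pieces are then bounded by
\[
C\bigl(T^{\theta-\varepsilon/2}R^{\varepsilon}+T^{\theta+\varepsilon/2}R^{-\varepsilon}\bigr)\|f\|_{S(L^2,I)}\|g\|_{S(L^2,I)},
\]
and the choice $R=T^{1/2}$ turns both terms into exactly $T^{\theta}$, for every $T>0$. This is the paper's proof, with $r_\pm=\frac{3n}{n-b\mp3\varepsilon}$ for $f$, $r$ for $g$, and one common dual pair $(q',r')$ on both regions.

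A common dual pair is also needed for a separate reason. The paper defines $S'(L^2,I)$ as an infimum over pairs of a single norm, and this is not subadditive across different pairs. So your step of ``taking the infimum piece by piece'' is not justified.

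Your Lorentz-space alternative could probably be made to work, but as phrased it aims at the wrong target. The lemma involves no propagator, and Strichartz estimates in Lorentz spaces would control Lorentz--Strichartz norms, not $\|f\|_{S(L^2,I)}$. Here $\frac{2}{r}<\frac{1}{r'}$, so O'Neil's inequality requires $f,g\in L^{r,s}$ with $s\le 2r'<r$. You would therefore have to show $\|f\|_{L^q_IL^{r,s}_x}\lesssim\|f\|_{S(L^2,I)}$, for instance by interpolating two neighbouring admissible pairs at each fixed time and then applying H\"older in $t$. That step is absent from your proposal.
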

\begin{proof}
Let $R >0$ be chosen later and denote by $B_R$ the open ball of radius $R$ in $\mathbb{R}^{n}$. Consider the sets $B_{+}^{R}=B_R$ and $B_{-}^{R} = \mathbb{R}^{n}\setminus B_R$, and let $0< \epsilon < \min \left\{ \dfrac{4 - n - 2b}{2}, \; \dfrac{n+2b}{6}, \; \dfrac{n-b}{3}, \; b \right\}$. Define
$$
r=\dfrac{3n}{n-b} \;\;\; \mbox{and} \;\;\;  r_{\pm} = \dfrac{3n}{n-b \mp 3\epsilon}.
$$
From the choice of $\epsilon$, and since $n+2b<4$ and $0<b<1$, one gets that $2<r< \dfrac{2n}{n-2}$ and $2<r_{\pm}< \dfrac{2n}{n-2}$. Moreover, the following relation holds:
$$
\dfrac{1}{r'}= \dfrac{b\pm \epsilon}{n} + \dfrac{1}{r_{\pm}} + \dfrac{1}{r}.
$$
Therefore, from the Hölder inequality we obtain
\begin{eqnarray*}
    \| |x|^{-b} fg \|_{L^{r'}(B_{\pm}^{R})} &\leq& C \| |x|^{-b} \|_{L^{\frac{n}{b\pm \epsilon}}(B_{\pm}^{R})} \|f\|_{L^{r_{\pm}}}\|g\|_{L^{r}} \\[2mm]
    &\leq& C R^{\pm \epsilon}\|f\|_{L^{r_{\pm}}}\|g\|_{L^{r}}.
\end{eqnarray*}
Now, let $q$, $q_{\pm}$ be such that $(q,r)$, $(q_{+},r_{+})$ and $(q_{-},r_{-})$ are $L^{2}$-admissible pair. One can easily check that for $\theta_{\pm} = \dfrac{4-n-2b}{4} \mp \dfrac{\epsilon}{2}$, the following condition is satisfied:
$$
\dfrac{1}{q'} = \theta_{\pm} + \dfrac{1}{q_{\pm}} + \dfrac{1}{q}
$$
Then, by the Hölder inequality in the time variable, we have
$$\| |x|^{-b} fg \|_{L_{I}^{q'}L^{p'}} \leq C (T^{\theta_{+}}R^{\epsilon} + T^{\theta_{-}}R^{-\epsilon}) \|f\|_{S(L^2,I)}\, \|g\|_{S(L^2,I)}.$$
Finally, taking $R=T^{\frac{1}{2}}$, we have the desired estimate. 
\end{proof}


\begin{lemma}\label{l2-desigualdade2}
    Let $n+2b < 4$ and 
    $0<b<1$. Suppose $\mathbf{u}$, $\mathbf{u}^{'} \in \mathbf{X}(I) = S(L^2,I) \times \ldots \times S(L^2,I)$. Assume that \emph{(H1)} and \emph{(H2)} hold. Then
        $$
        \left\| \int_{0}^{t} U_{k}(t-t') \dfrac{|x|^{-b}}{\alpha_k} [f_k(\mathbf{u}) - f_k(\mathbf{u}^{'})]\, dt^{'} \right\|_{S(L^2,I)} 
        \leq CT^{\frac{4-n-2b}{4}} \left( \| \mathbf{u}\|_{\mathbf{X}(I)} + \|\mathbf{u}^{'}\|_{\mathbf{X}(I)} \right) \| \mathbf{u} - \mathbf{u}^{'}\|_{\mathbf{X}(I)}.
        $$
\end{lemma}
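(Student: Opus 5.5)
We combine the inhomogeneous Strichartz estimate of Proposition \ref{Strichartz} with the bilinear bound of Lemma \ref{lemmaL2}, after reducing the difference $f_k(\mathbf{u})-f_k(\mathbf{u}')$ to a sum of bilinear products via Lemma \ref{Ademir.cor2.3.lem2.4}(i).

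\textbf{Step 1 (Strichartz).} Since $1/\alpha_k$ is a fixed constant, the inhomogeneous estimate in Proposition \ref{Strichartz} holds for every pair of admissible pairs. Taking the supremum over $(q_1,r_1)\in\mathcal{A}_0$ on the left and the infimum over $(q_2,r_2)$ on the right gives
$$
\left\| \int_{0}^{t} U_{k}(t-t') \frac{|x|^{-b}}{\alpha_k} [f_k(\mathbf{u}) - f_k(\mathbf{u}^{'})]\, dt^{'} \right\|_{S(L^2,I)} \leq C \left\| |x|^{-b}[f_k(\mathbf{u})-f_k(\mathbf{u}')]\right\|_{S'(L^2,I)}.
$$
If one prefers not to take the infimum abstractly, one can fix the specific dual pair $(q',r')$ with $r=\frac{3n}{n-b}$ used in the proof of Lemma \ref{lemmaL2}. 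That lemma in fact bounds $\||x|^{-b}fg\|_{L^{q'}_IL^{r'}_x}$ for this pair.

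\textbf{Step 2 (pointwise reduction).} By Lemma \ref{Ademir.cor2.3.lem2.4}(i), pointwise in $(x,t)$,
$$
|x|^{-b}|f_k(\mathbf{u})-f_k(\mathbf{u}')| \leq C\sum_{m,j=1}^{l} |x|^{-b}\,(|u_j|+|u_j'|)\,|u_m-u_m'|.
$$
The norm $L^{q'}_IL^{r'}_x$ is a lattice norm, meaning it is monotone under pointwise domination of absolute values. So it suffices to estimate each term $\||x|^{-b}\,|w|\,|v|\|_{L^{q'}_IL^{r'}_x}$, where $w\in\{u_j,u_j'\}$ and $v=u_m-u_m'$.

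\textbf{Step 3 (apply Lemma \ref{lemmaL2}).} Lemma \ref{lemmaL2} is proved via H\"older's inequality, so it depends only on the moduli of the functions. Applying it with $f=|w|$ and $g=|v|$, and noting $\||w|\|_{S(L^2,I)}=\|w\|_{S(L^2,I)}$, we get
$$
\||x|^{-b}\,|w|\,|v|\|_{S'(L^2,I)}\le CT^{\frac{4-n-2b}{4}}\|w\|_{S(L^2,I)}\|u_m-u_m'\|_{S(L^2,I)}.
$$
Summing over $j$ and $m$ gives the bound $CT^{\frac{4-n-2b}{4}}\left(\|\mathbf{u}\|_{\mathbf{X}(I)}+\|\mathbf{u}'\|_{\mathbf{X}(I)}\right)\|\mathbf{u}-\mathbf{u}'\|_{\mathbf{X}(I)}$, since the norm on $\mathbf{X}(I)$ is the sum of the component norms. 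This is the claimed estimate.

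The only delicate point is the interval. Lemma \ref{lemmaL2} is stated on $[0,T]$, while here $I=[-T,T]$. We handle this by splitting into $[0,T]$ and $[-T,0]$, applying the same H\"older argument on each half (time reversal preserves all the norms), and absorbing the factor $2$ into $C$. The hypothesis $0<b<1$ together with $n+2b<4$ is exactly what Lemma \ref{lemmaL2} requires, so no further restriction arises.
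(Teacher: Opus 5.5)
Your proposal is correct and follows the paper's proof exactly: Proposition \ref{Strichartz} reduces the Duhamel term to the dual Strichartz norm, Lemma \ref{Ademir.cor2.3.lem2.4}(i) gives the pointwise bilinear bound, and Lemma \ref{lemmaL2} is applied to each product term before summing over $j,m$. Your added remarks (the lattice monotonicity, working with $|w|,|v|$, and splitting $[-T,T]$ into two halves) are harmless details that the paper leaves implicit.
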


\begin{proof}
In view of Proposition \ref{Strichartz}, by 
Lemma \ref{Ademir.cor2.3.lem2.4}
and Lemma \ref{lemmaL2} we get
\begin{eqnarray*}
    \left\| \int_{0}^{t} U(t-t') \dfrac{|x|^{-b}}{\alpha_k} [f_k(\mathbf{u}) - f_k(\mathbf{u}')] \,dt' \right\|_{S(L^2,I)}
&\leq&
C \| |x|^{-b} [f_k(\mathbf{u}) - f_k(\mathbf{u}')] \|_{S'(L^2,I)} \\[2mm]
&\leq&
C \sum_{m=1}^{l}\sum_{j=1}^{l} \||x|^{-b}\,(|u_j| + |u_{j}^{'}|) |u_{m}-u_{m}^{'}| \|_{S'(L^2,I)} \\[2mm]
&\leq&
C T^{\theta} \sum_{m=1}^{l}\sum_{j=1}^{l} ( \|u_j\|_{S(L^2,I)} + \|u_{j}^{'}\|_{S(L^2,I)})\|u_m - u_{m}^{'}\|_{S(L^2,I)}  \\[2mm]
&\leq&
C T^{\theta} \left( \|\mathbf{u}\|_{\mathbf{X}(I)} + \|\mathbf{u}^{'}\|_{\mathbf{X}(I)} \right)\|\mathbf{u}-\mathbf{u}^{'}\|_{\mathbf{X}(I)},
\end{eqnarray*}
where $\theta = \dfrac{4-n-2b}{4}$.
\end{proof}

Now we present the proof of the first result.



\textbf{Proof of Theorem \ref{l2-existence solutions}:}
    We shall prove that for some $T>0$ the operator $\Gamma$ defined by
$$ \Gamma(\mathbf{u}) \;=\; (\Phi_1(\mathbf{u}),\ldots,\Phi_l(\mathbf{u})),$$
    where
\begin{eqnarray*}
    \Phi_k(\mathbf{u})(t) 
    \;=\; U_k(t)u_{k0} + i \int_{0}^{t} U_k(t-t') \dfrac{|x|^{-b}}{\alpha_k}f_k(\mathbf{u}) \,dt' \;\;\;\;\;\;\;\; k=1,\ldots,l.
\end{eqnarray*}
is a contraction on the complete metric space
$$
B(T,a) \;=\; 
\{ \mathbf{u} \in \mathbf{X}(I): \|\mathbf{u}\|_{\mathbf{X}(I)} := \sum_{j=1}^{l} \|u_j\|_{S(L^2,I)} \leq a \},
$$
with the metric $d(\mathbf{u},\mathbf{v}) =  \|\mathbf{u}-\mathbf{v}\|_{\mathbf{X}(I)}$.

Indeed, using Strichartz estimates Proposition \ref{Strichartz} and Lemma \ref{l2-desigualdade2}, 
we get
\begin{eqnarray*}
    \|\Phi_k(\mathbf{u})(t)\|_{S(L^2,I)} 
    &\leq& \|U_k(t)u_{k0}\|_{S(L^2,I)} + \left\| \int_{0}^{t} U_k(t-t') \dfrac{|x|^{-b}}{\alpha_k} f_k(\mathbf{u}) \,dt' \right\|_{S(L^2,I)} \\[2mm]
    &\leq& C\|u_{k0}\|_{L^2} + CT^{\frac{4-n-2b}{4}}\| \mathbf{u} \|_{\mathbf{X}(I)}^{2}.
\end{eqnarray*}
Summing the above inequality over $k$, since $\|\mathbf{u}_{0}\|_{\mathbf{L}^{2}} < r$ and $\mathbf{u} \in B(T,a)$, we have:
$$
 \|\Gamma(\mathbf{u})\|_{\mathbf{X}(I)} 
\;\leq\; Cr + CT^{\frac{4-n-2b}{4}} a^2.
$$
Let us choose $a=2Cr$ and take $T$ such that $CT^{\frac{4-n-2b}{4}} a < \dfrac{1}{4l}$. We obtain
$$
 \|\Gamma(\mathbf{u})\|_{\mathbf{X}(I)}\;\leq\; \dfrac{a}{2} + \dfrac{a}{4l} < a.
$$
Therefore $\Gamma = (\Phi_1, \ldots.,\Phi_l) : B(T,a) \to B(T,a)$ is well defined.\\

Now, we show  that $\Gamma$ is a contraction. Again using Strichartz estimates Proposition \ref{Strichartz} and Lemma \ref{l2-desigualdade2}, we have

\begin{eqnarray*}
    \|\Phi_k(\mathbf{u})(t) - \Phi_k(\mathbf{u}')(t)\|_{S(L^2,I)} 
    &\leq&  \left\| \int_{0}^{t} U_k(t-t') \dfrac{|x|^{-b}}{\alpha_k} [f_k(\mathbf{u}) - f_k(\mathbf{u}')] \,dt'\right\|_{S(L^2,I)} \\[2mm]
  &\leq&  
  CT^{\frac{4-n-2b}{4}} \left( \| \mathbf{u}\|_{\mathbf{X}(I)} + \|\mathbf{u}^{'}\|_{\mathbf{X}(I)} \right) \| \mathbf{u} - \mathbf{u}^{'}\|_{\mathbf{X}(I)} \\[2mm]
  &\leq&  
  2CT^{\frac{4-n-2b}{4}} a \| \mathbf{u} - \mathbf{u}^{'}\|_{\mathbf{X}(I)} \\[2mm]
  &\leq& 
  \dfrac{1}{2l} \| \mathbf{u} - \mathbf{u}^{'}\|_{\mathbf{X}(I)}.
  \end{eqnarray*}
Therefore, summing again, over $k$
$$
    \|\Gamma(\mathbf{u}) - \Gamma(\mathbf{u}^{'})\|_{\mathbf{X}(I)} 
\;\leq\;
\dfrac{1}{2} \| \mathbf{u} - \mathbf{u}^{'}\|_{\mathbf{X}(I)}.
$$
Finally, we will check the  continuous dependence. Again by Strichartz estimates Proposition \ref{Strichartz} and Lemma \ref{l2-desigualdade2}, we get

\begin{eqnarray*}
    \|u_k(t) - u_{k}^{'}(t)\|_{S(L^2,I)} 
    &\leq&
    \| U_k(t)(u_{k0} - u_{k0}^{'})\|_{S(L^2,I)}\\[2mm] 
    &+&
    \left\| \int_{0}^{t} U_k(t-t') \dfrac{|x|^{-b}}{\alpha_k}[f_k(\mathbf{u}) - f_k(\mathbf{u}')] \,dt'\right\|_{S(L^2,I)} \\[2mm]
   &\leq& 
    C \| u_{k0} - u_{k0}^{'} \|_{L^2}
    +
    2CT^{\frac{4-n-2b}{4}} a \| \mathbf{u} - \mathbf{u}^{'}\|_{\mathbf{X}(I)}
\end{eqnarray*}
Since $T$ is small, and summming over $k$, we get
$$\| \mathbf{u} - \mathbf{u}^{'}\|_{\mathbf{X}(I)}
\leq  \widetilde{K} \| \mathbf{u}_0 - \mathbf{u}_{0}^{'}\|_{\mathbf{X}(I)}
$$
and the result follows.
 \qed
\begin{remark}\label{l2-time}
    Note that the time $T>0$ obtained in  Theorem \ref{l2-existence solutions} depends solely on the $\mathbf{L}^{2}$ norm of the initial data. In fact, the existence time of the solution depends only on the  $\mathbf{L}^{2}$ norm of the initial data, not its position in space, that is, if $\mathbf{u}_0$, $\mathbf{v}_0 \in \mathbf{L}^{2}$ are such that $\|\mathbf{u}_0\|_{\mathbf{L}^{2}} = \|\mathbf{v}_0\|_{\mathbf{L}^{2}}$ then $T(\|\mathbf{u}_0\|_{\mathbf{L}^{2}}) = T(\|\mathbf{v}_0\|_{\mathbf{L}^{2}})$. Moreover, if $\|\mathbf{u}_0\|_{\mathbf{L}^{2}} \leq \|\mathbf{v}_0\|_{\mathbf{L}^{2}}$ then $T(\|\mathbf{u}_0\|_{\mathbf{L}^{2}}) \geq T(\|\mathbf{v}_0\|_{\mathbf{L}^{2}})$.
\end{remark}


\subsection{ Local existence of \texorpdfstring{$H^{1}$}{H1}-solutions}

 Now, we prove the existence of local $H^1$ solutions in the subcritical regime, more precisely, we study \eqref{Duhamel} in $H^{1}$ under the condition $n+2b<6$ and $0<b< \min\left\{2,\dfrac{n}{2}\right\}$. Thus, assuming that $u_{10}, \ldots,u_{l0} \in H^{1}$, we solve \eqref{Duhamel} in the space
$$
Y(I) = \mathcal{C}\cap L^{\infty}\left(I, H^1  \right) \cap S(H^1,I), \;\;\;\; n+2b<6, 
$$
for some time interval $I = [-T,T]$, with $T>0$. Here in $Y(I)$ the norm  is the same as $S(H^1,I)$.

Firstly, we recall the Sobolev embedding that will be useful to prove the following lemmas


\begin{proposition}\emph{\textbf{(Sobolev embedding)}} \label{thm:sobolev-embedding} Let $1 \leq p < + \infty$.
\begin{itemize}
    \item[$(i)$] If $n>2$ and $p<n$ then $H^{1,p}$ is continuosly embedded in $L^{r}$, where $\dfrac{1}{n} = \dfrac{1}{p} - \dfrac{1}{r}$. Moreover,
    $$
    \|f\|_{L^{r}} \leq C \| \nabla f \|_{L^{p}}.
    $$
    \item[$(ii)$] If $n=2$ then $H^{1} \subset L^{r}$ for all $r \in[2,+\infty)$. Futhermore,
    $$
    \|f\|_{L^{r}} \leq C \| f \|_{H^{1}}.
    $$
\end{itemize}
\end{proposition}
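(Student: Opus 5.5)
The plan is the classical Gagliardo--Nirenberg--Sobolev argument. Both parts are standard (they can be quoted from Cazenave \cite{Cazenave}), but I would argue them directly. By density it suffices to prove the inequalities for $f \in C_c^{\infty}(\mathbb{R}^{n})$ and then pass to the limit using Fatou's lemma. In part $(i)$ I read $H^{1,p}$ as the Sobolev space $W^{1,p}$, and the inequality is scale-invariant, which is exactly why $\frac{1}{r} = \frac{1}{p} - \frac{1}{n}$.

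\emph{Step 1 ($p=1$ in part $(i)$).} For each coordinate $x_i$ we have the one-dimensional bound
$$
|f(x)| \leq \int_{\mathbb{R}} |\partial_i f| \, dy_i .
$$
Hence
$$
|f(x)|^{\frac{n}{n-1}} \leq \prod_{i=1}^{n} \Big( \int_{\mathbb{R}} |\nabla f| \, dy_i \Big)^{\frac{1}{n-1}} .
$$
We integrate successively in $x_1, \ldots, x_n$. At each stage one factor is independent of the current variable, and the generalized H\"older inequality with $n-1$ factors each raised to the power $\frac{1}{n-1}$ handles the others. This yields
$$
\|f\|_{L^{\frac{n}{n-1}}} \leq \|\nabla f\|_{L^{1}} .
$$
I expect this iterated H\"older bookkeeping to be the main obstacle; everything else follows from it.

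\emph{Step 2 ($1<p<n$).} Apply Step 1 to $g = |f|^{\gamma}$ with $\gamma = \frac{p(n-1)}{n-p} > 1$. Since $|\nabla g| \leq \gamma |f|^{\gamma-1} |\nabla f|$, H\"older's inequality gives
$$
\Big( \int |f|^{\frac{\gamma n}{n-1}} \Big)^{\frac{n-1}{n}} \leq \gamma \, \|f\|_{L^{(\gamma-1)p'}}^{\gamma-1} \, \|\nabla f\|_{L^{p}} .
$$
The choice of $\gamma$ makes $\frac{\gamma n}{n-1} = (\gamma-1)p' = r$. Dividing by $\|f\|_{L^{r}}^{\gamma-1}$, which is finite for compactly supported $f$, gives
$$
\|f\|_{L^{r}} \leq C \|\nabla f\|_{L^{p}} .
$$

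\emph{Step 3 (part $(ii)$, $n=2$).} Apply Step 1 in dimension $2$ to $|f|^{\gamma}$ with $\gamma \geq 1$ and use Cauchy--Schwarz:
$$
\|f\|_{L^{2\gamma}}^{\gamma} \leq \gamma \, \|f\|_{L^{2(\gamma-1)}}^{\gamma-1} \, \|\nabla f\|_{L^{2}} .
$$
Starting from $\gamma = 2$, this gives $\|f\|_{L^{4}} \leq C \|f\|_{H^{1}}$. Inducting on $\gamma$ then gives $\|f\|_{L^{2\gamma}} \leq C \|f\|_{H^{1}}$ for every integer $\gamma$. Finally, every $r \in [2,\infty)$ is reached by interpolating between $L^{2}$ and some $L^{2\gamma}$.

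Alternatively, for part $(ii)$ one can use Hausdorff--Young: for $r \geq 2$,
$$
\|f\|_{L^{r}} \leq \|\widehat{f}\|_{L^{r'}} \leq \|(1+|\xi|^{2})^{1/2} \widehat{f}\|_{L^{2}} \, \|(1+|\xi|^{2})^{-1/2}\|_{L^{s}},
$$
where $\frac{1}{r'} = \frac{1}{2} + \frac{1}{s}$. The last factor is finite because $s > 2 = n$, which holds precisely when $r < \infty$.
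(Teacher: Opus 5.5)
Your argument is correct. It also takes a genuinely different route from the paper, which gives no proof at all: it simply cites Bergh--L\"ofstr\"om (Theorem 6.5.1), with Linares--Ponce and Demengel as alternative references. That cited result is the general embedding theorem for Bessel potential and Besov spaces, proved there with interpolation machinery.

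You instead give the self-contained classical proof:
\begin{itemize}
\item the Gagliardo--Nirenberg--Sobolev inequality at $p=1$ via iterated H\"older;
\item the power trick $g=|f|^{\gamma}$ for $1<p<n$ (your identities $\frac{\gamma n}{n-1}=(\gamma-1)p'=\frac{np}{n-p}$ are right, and $|f|^{\gamma}\in C^1_c$ because $\gamma>1$);
\item for $n=2$, either the Ladyzhenskaya-type induction or Hausdorff--Young with $s=\frac{2r}{r-2}>2$.
\end{itemize}

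The citation buys brevity and generality: fractional orders and the whole scale $s-\frac{n}{p}=s_1-\frac{n}{p_1}$. Your route buys three things:
\begin{itemize}
\item an elementary argument with explicit constants, e.g.\ $C=\frac{p(n-1)}{n-p}$ in $(i)$;
\item in $n=2$, if you keep the two factors separate in the induction instead of lumping them into $\|f\|_{H^1}$, the scale-invariant bound $\|f\|_{L^{2\gamma}}\le C\|f\|_{L^2}^{1/\gamma}\|\nabla f\|_{L^2}^{1-1/\gamma}$ as a by-product;
\item the endpoint $p=1$.
\end{itemize}

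On that endpoint, your reading $H^{1,p}=W^{1,p}$ is exactly what makes the statement true. For $1<p<\infty$ the two spaces coincide, but the Bessel potential space $H^{1,1}$ does not embed in $L^{n/(n-1)}$. Indeed, testing a bounded embedding on $G_1*\phi_\varepsilon$, with $\phi_\varepsilon$ an approximate identity, would force $G_1\in L^{n/(n-1)}$. That is impossible, since $G_1\sim|x|^{1-n}$ near the origin.
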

\begin{proof}
     See Bergh-Löfström \cite[Theorem~6.5.1]{Bergh}   (see also Linares-Ponce  \cite[Theorem~3.3.]{Linares-Ponce} 
 and Demenguel-Demenguel \cite[Theorem~4.18]{Demenguel} ).
\end{proof}

As in the $L^2$ case, using Hölder's inequality and Sobolev embedding, we obtain the following result

\begin{lemma}\label{H1-inequality}
    Let $n\geq 2$, $n+2b<6$ and $0<b< \min\left\{2, \dfrac{n}{2}\right\}$, then
    $$ \displaystyle     \| |x|^{-b} fg \|_{S^{'}(H^1,I)} \leq C T^{\theta(n)}  \| f\|_{S(H^1,I)} \| g \|_{S(H^1,I)},$$
where 
$$
\theta(n) = \left\{ \begin{array}{lc}
\dfrac{4-n-2b}{4}, \;\;\;\, \textrm{ if } \; n+2b < 4\\
\dfrac{6-n-2b}{4}, \;\;\;\,  \textrm{ if } \; 4 \leq n+2b <6
\end{array}\right. .
$$
\end{lemma}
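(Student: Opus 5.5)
We follow the scheme of Lemma \ref{lemmaL2}: split $\mathbb{R}^n$ into $B_{+}^{R}=B_R$ and $B_{-}^{R}=\mathbb{R}^n\setminus B_R$, apply Hölder in space with $|x|^{-b}\in L^{n/(b\pm\epsilon)}(B_{\pm}^{R})$, and use the Sobolev embedding (Proposition \ref{thm:sobolev-embedding}) to place $f$ and $g$ in Lebesgue spaces controlled by $\nabla f,\nabla g$ in admissible norms. Since $\|\cdot\|_{S'(H^1,I)}$ is an infimum over admissible pairs, it suffices to estimate $|x|^{-b}fg$ and $\nabla(|x|^{-b}fg)$ in one dual pair $L_I^{q'}L_x^{r'}$. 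We then apply Hölder in time to extract a power of $T$, and choose $R=T^{1/2}$ to balance the factors $T^{\theta_\pm}R^{\pm\epsilon}$, exactly as before.

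\textbf{Case $n+2b<4$.} Here $\|f\|_{S(L^2,I)}\le\|f\|_{S(H^1,I)}$. For the undifferentiated term, Lemma \ref{lemmaL2} gives $CT^{(4-n-2b)/4}\|f\|_{S(H^1)}\|g\|_{S(H^1)}$ directly. For the gradient,
$$\nabla(|x|^{-b}fg)=|x|^{-b}(\nabla f)g+|x|^{-b}f\nabla g-b|x|^{-b-2}x\,fg .$$
The first two terms are handled by Lemma \ref{lemmaL2} with $\nabla f$ (respectively $\nabla g$) in place of $f$. The last term is bounded by $|x|^{-b-1}|f||g|$. We treat it by writing $|x|^{-1}|f|$ and using the Hardy inequality $\||x|^{-1}f\|_{L^{p}}\le C\|\nabla f\|_{L^p}$ for $p<n$ (valid for $n\ge2$ when $p<n$; for $n=2$ we take $p$ slightly less than $2$ via Sobolev, or absorb $|x|^{-1}$ into the weight). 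The exponent count then matches the $L^2$ case with one derivative used, so the time power is unchanged.

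\textbf{Case $4\le n+2b<6$.} Here the scaling is $\dot H^{s_c}$-critical with $s_c<1$, and we exploit the $H^1$ regularity. We take $r=\frac{2n}{n-2}\cdot$(slightly less), or more concretely exponents $r,r_\pm$ with $\frac1{r'}=\frac{b\pm\epsilon}{n}+\frac1{r}+\frac{1}{\rho_\pm}$, where $f\in L^{\rho_\pm}$ via Sobolev from $\nabla f\in L^{r_\pm}$, i.e. $\frac1{\rho_\pm}=\frac1{r_\pm}-\frac1n$. The undifferentiated term is treated in the same way, with both factors upgraded by Sobolev. Counting exponents through the admissibility relation $\frac2q=\frac n2-\frac nr$, the time gap becomes $\theta_\pm=\frac{6-n-2b}{4}\mp\frac{\epsilon}{2}$, since each Sobolev upgrade gains $1/n$ in space, equivalent to $1/2$ in the time power relative to the $L^2$ computation. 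The singular term $|x|^{-b-1}fg$ is again converted by Hardy to $|x|^{-b}|\nabla f||g|$-type norms.

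\textbf{Main obstacle.} The main difficulty is choosing the exponents admissibly. We need $2\le r,r_\pm<\frac{2n}{n-2}$ (for $n=2$, finite) and $r_\pm<n$ so that Sobolev and Hardy apply, while $b\pm\epsilon<n/r'$ keeps the weight integrable on each region. This is delicate in $n=2,3$ with $b$ near $n/2$ (e.g.\ $n=3$, $2b>2$), where the weight $|x|^{-b-1}$ is close to non-integrable. If a single choice fails, we would first try different admissible pairs for $f$ and $g$, using a non-symmetric split: put more integrability on the factor carrying the derivative. Failing that, we would apply the $H^1$ version only near the origin (as in Corollary \ref{corollario-sobolev law}), where $|x|^{-b}\in L^{n/(b+\epsilon)}(B_+^R)$, and away from the origin use boundedness of the weight and its gradient.
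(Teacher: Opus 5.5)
\textbf{The gap is in dimension $n=2$,} in the term $b|x|^{-b-2}x\,fg$. This dimension lies entirely inside your first case: there $b<1$, so $n+2b<4$. Neither device you propose works for this term.

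Hardy's inequality $\||x|^{-1}f\|_{L^p(\mathbb{R}^2)}\le C\|\nabla f\|_{L^p(\mathbb{R}^2)}$ needs $p<2$. But every $L^2$-admissible pair has $r\ge 2$, so $\|\nabla f\|_{L^p}$ with $p<2$ is not controlled by $\|f\|_{S(H^1,I)}$. Sobolev embedding only raises Lebesgue exponents, so it cannot supply this.

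If instead you put $|x|^{-1}$ into the weight and keep $f,g$ at the $L^2$-admissible level, your own exponent count gives the time power $\frac{2-n-2b}{4}=-\frac b2<0$. In two dimensions there is no scale-invariant embedding of $\dot W^{1,r}$, $r\ge 2$, into a Lebesgue space to recover the missing $\frac12$. So for $n=2$ the claim that ``the time power is unchanged'' cannot be obtained from scale-invariant estimates at all.

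What is missing is to give up scaling near the origin. This is what the paper does in its separate case $n=2$: restrict to $|I|<1$ and split at the \emph{fixed} unit ball.
\begin{itemize}
\item On $B_1$, H\"older on a set of finite measure takes $\nabla f$ from $L^{r/2}$ down to $L^p$ with $p=\frac{6}{4-b}<2$. There Hardy applies, in a localized form that also involves $\|f\|_{L^p(B_1)}$. Meanwhile $g$ goes into $L^{r_+}$ via the inhomogeneous embedding $H^1(\mathbb{R}^2)\subset L^{r_+}$, applied to the $L^\infty_tH^1_x$ component.
\item Alternatively on $B_1$, use $|x|^{-b-1}\in L^{2/(b+1+\epsilon)}(B_1)$ with $f,g\in L^\infty_tL^s_x$ for $s$ large. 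This gives the power $T^{1-\frac{b+\epsilon}{2}-\frac2s}\le T^{\frac{1-b}{2}}$.
\item On $|x|\ge 1$, use $|x|^{-b-1}\le |x|^{-b}$ and the $B^R_-$ bound with $R=T^{1/2}<1$.
\end{itemize}
Your last paragraph mentions such a localization only as a fallback. In $n=2$ it is forced, and it is exactly where the restriction $T<1$ enters.

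For $n\ge 3$ your route is workable and differs from the paper's. You convert $|x|^{-1}f$ by Hardy with $2\le p<n$ and keep the weight $|x|^{-b}$. The paper instead keeps $|x|^{-b-1}\in L^{n/(b+1\pm\epsilon)}(B^R_\pm)$ as the weight and compensates with Sobolev upgrades of the factors. Both routes admit admissible exponents.

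One correction there: in the undifferentiated term, upgrade only one factor. Upgrading both shifts the count to $\frac{8-n-2b}{4}$, and at $n=3$, $b=\frac12$ it leaves no admissible exponent on $B^R_-$.
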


\begin{proof}
    Let $R>0$ be chosen later and denote by $B_R$ the open ball of radius $R$ in $\mathbb{R}^{n}$. We consider the sets $B_{+}^{R} = B_R$ and $B_{-}^{R} = \mathbb{R}^{n}\setminus B_R$. Let $0< \eta <1$. We will divide the proof into three cases:
    \begin{itemize}
        \item[$(i)$] $4\leq n +2b < 6$

Let $\eta < \dfrac{6-n-2b}{6}$ and $0 < \epsilon < \min \left\{ \dfrac{6-n-2b}{6} - \eta, \; \eta + \dfrac{n+2b-4}{2}  \right\}$.

Define 
$$  r=\dfrac{2n}{n-2+2\eta} \;\;\; \mbox{and} \;\;\;  r_{\pm} = \dfrac{4n}{n-2b +4 -2\eta \mp 2\epsilon} .$$
From the choice of $\eta$ and  $\epsilon$, together with $4 \leq n+2b<6$ and $0<b<\min\left\{2, \dfrac{n}{2}\right\}$, one gets that $2<r< \dfrac{2n}{n-2}$ and $2<r_{\pm}<3 \leq n$. Moreover, the following relation holds:
\begin{eqnarray*}
\dfrac{1}{r'} = \dfrac{b + 1\pm \epsilon}{n} + 2 \left( \dfrac{1}{r_{\pm}} - \dfrac{1}{n} \right)  = \dfrac{b \pm \epsilon}{n} + \left( \dfrac{1}{r_{\pm}} - \dfrac{1}{n} \right) + \dfrac{1}{r_{\pm}}
\end{eqnarray*}
Therefore, from the Hölder inequality and the Sobolev embedding (Theorem \ref{thm:sobolev-embedding}), we obtain
\begin{eqnarray*} \| \nabla (|x|^{-b} fg) \|_{L^{r'}(B_{\pm}^{R})} &\leq& C \left( \| |x|^{-b} \|_{L^{\frac{n}{b\pm \epsilon}}(B_{\pm}^{R})} + \| |x|^{-b} \|_{L^{\frac{n}{b+1\pm \epsilon}}(B_{\pm}^{R})} \right) \|\nabla f\|_{L^{r_{\pm}}}\|\nabla g\|_{L^{r_{\pm}}} \\[2mm]
    &\leq& C R^{\pm \epsilon}\|\nabla f\|_{L^{r_{\pm}}}\|\nabla g\|_{L^{r_{\pm}}},
\end{eqnarray*}
and
\begin{eqnarray*} \||x|^{-b} fg \|_{L^{r'}(B_{\pm}^{R})} \leq  C R^{\pm \epsilon}\|\nabla f\|_{L^{r_{\pm}}}\| g\|_{L^{r_{\pm}}}.
\end{eqnarray*}

Now, let $q$, $q_{\pm}$ be such that $(q,r)$, $(q_{+},r_{+})$ and $(q_{-},r_{-})$ are $L^{2}$- admissible pair. We can easily check that $\theta_{\pm} = \dfrac{6-n-2b}{4} \mp \dfrac{\epsilon}{2}$, and the following condition is satisfied:
$$
\dfrac{1}{q'} = \theta_{\pm} + \dfrac{2}{q_{\pm}}
$$
then by the Hölder's inequality in the time variable, we have
$$\| |x|^{-b} fg \|_{S^{'}(H^1,I)} \leq C (T^{\theta_{+}}R^{\epsilon} + T^{\theta_{-}}R^{-\epsilon}) \|f\|_{S(H^1,I)}\, \|g\|_{S(H^1,I)}.$$
Finally, taking $R=T^{\frac{1}{2}}$, we have the desired estimate. 
        \item[$(ii)$]  $n=3$ and $0< b < \dfrac{1}{2}$.

Let $\eta < \dfrac{4-n-2b}{2} = \dfrac{1-2b}{2}$ and $0 < \epsilon < \dfrac{1-2b}{2} - \eta$. Define 
$$  r= \dfrac{6}{1+2\eta} = \dfrac{2n}{n-2+2\eta} \;\;\; \mbox{and} \;\;\;  r_{\pm} = \dfrac{3}{1-b-\eta \mp \epsilon} =  \dfrac{n}{1-b-\eta \mp \epsilon}  .$$
From the choice of $\eta$ and  $\epsilon$, together with $0 <b<\dfrac{1}{2}$, one gets that $2<r< 6=\dfrac{2n}{n-2}$ and $2<r_{\pm}<6=\dfrac{2n}{n-2}$. Moreover, the following relation holds:
\begin{eqnarray*}
\dfrac{1}{r'} = \dfrac{b + 1\pm \epsilon}{n} +  \dfrac{1}{r_{\pm}} + \left( \dfrac{1}{2} - \dfrac{1}{n} \right)  = \dfrac{b \pm \epsilon}{n}  + \dfrac{1}{r_{\pm}} + \dfrac{1}{2}
\end{eqnarray*}
Therefore, from the Hölder inequality and the Sobolev embedding, we obtain
\begin{eqnarray*} \| \nabla (|x|^{-b} fg) \|_{L^{r'}(B_{\pm}^{R})} &\leq& C \left( \| |x|^{-b} \|_{L^{\frac{n}{b\pm \epsilon}}(B_{\pm}^{R})} + \| |x|^{-b} \|_{L^{\frac{n}{b+1\pm \epsilon}}(B_{\pm}^{R})} \right) \| f\|_{W^{1,r_{\pm}}}\| g\|_{H^{1}} \\[2mm]
    &\leq& C R^{\pm \epsilon}\| f\|_{W^{1,r_{\pm}}}\|\nabla g\|_{H^{1}},
\end{eqnarray*}
and
\begin{eqnarray*} \||x|^{-b} fg \|_{L^{r'}(B_{\pm}^{R})} \leq  C R^{\pm \epsilon}\| f\|_{L^{r_{\pm}}}\| g\|_{L^{2}}.
\end{eqnarray*}

Now, let $q$, $q_{\pm}$ be such that $(q,r)$, $(q_{+},r_{+})$ and $(q_{-},r_{-})$ are $L^{2}$-admissible pair. One can easily check that $\theta_{\pm} = \dfrac{4-n-2b}{4} \mp \dfrac{\epsilon}{2}$, and the following condition is satisfied
$$
\dfrac{1}{q'} = \theta_{\pm} + \dfrac{1}{q_{\pm}}.
$$
Then by the Hölder's inequality in the time variable, we have
$$\| |x|^{-b} fg \|_{S^{'}(H^1,I)} \leq C (T^{\theta_{+}}R^{\epsilon} + T^{\theta_{-}}R^{-\epsilon}) \|f\|_{S(H^1,I)}\, \|g\|_{S(H^1,I)}.$$
Finally, taking $R=T^{\frac{1}{2}}$, we have the desired estimate.
        \item[$(iii)$]   $n=2$ and $0< b < 1$.

Consider $B_{+}^{1} = B_1$ and $B_{-}^{1} = \mathbb{R}^{n} \setminus B_1$. First, we will estimate on $B_{+}^{1}$. For this, let $0<\epsilon < \dfrac{1-b}{3}$. Define
$$  r= \dfrac{6}{1-b} , \;\;\; r_{+} = \dfrac{6}{1-b-3\epsilon}, \;\;\; \mbox{and} \;\;\;  p = \dfrac{6}{4-b}.$$

From the choice of $\epsilon$, 
$$
\dfrac{1}{r'} = \dfrac{b+ \epsilon}{2} + \dfrac{1}{r_{+}} + \dfrac{1}{p}.
$$
Hölder's inequality, Hardy's inequality, and the Sobolev embedding give 
\begin{eqnarray*} \| \nabla (|x|^{-b} fg) \|_{L^{r'}(B_{+}^{1})} &\leq&
C  \| |x|^{-b} \|_{L^{\frac{n}{b + \epsilon}}(B_{+}^{1})}  \left\| \dfrac{f}{|x|} \right\|_{L^{p}(B_{+}^{1})} \| g\|_{L^{r_{+}}(B_{+}^{1})} \\[2mm]
&+& \| |x|^{-b} \|_{L^{\frac{n}{b + \epsilon}}(B_{+}^{1})} \| \nabla f \|_{L^{p}(B_{+}^{1})} \| g\|_{L^{r_{+}}(B_{+}^{1})} \\[2mm]
&+& \| |x|^{-b} \|_{L^{\frac{n}{b + \epsilon}}(B_{+}^{1})} \| \nabla g \|_{L^{p}(B_{+}^{1})} \| f\|_{L^{r_{+}}(B_{+}^{1})}  \\[2mm]
    &\leq& C  \left( \dfrac{p}{2-p}\right) \left\| \nabla f \right\|_{L^{p}(B_{+}^{1})} \| g\|_{L^{r_{+}}(B_{+}^{1})} \\[2mm] &+& C \| \nabla f \|_{L^{p}(B_{+}^{1})} \| g\|_{L^{r_{+}}(B_{+}^{1})} + C \| \nabla g \|_{L^{p}(B_{+}^{1})} \| f\|_{L^{r_{+}}(B_{+}^{1})}  \\[2mm]
    &\leq& C \left( \dfrac{4-b}{1-b} \right) \| \nabla f \|_{L^{\frac{r}{2}}} \| g \|_{H^{1}} + C  \| \nabla g \|_{L^{\frac{r}{2}}} \| f \|_{H^{1}},
    \end{eqnarray*}
and
\begin{eqnarray*} \||x|^{-b} fg \|_{L^{r'}(B_{+}^{1})} \leq  C \| f\|_{H^1}\| g\|_{L^{\frac{r}{2}}}.
\end{eqnarray*}    
This holds since  $L^{\frac{r}{2}}(B_{+}^{1}) \subset L^{p}(B_{+}^{1})$, where $1<p<2<\dfrac{r}{2}$.

Now, let $q$, $\widetilde{q}$ be such that $(q,r)$ and $\left(\widetilde{q},\dfrac{r}{2}\right)$ are $L^{2}$-admissible pairs. One can easily check that $\theta = \dfrac{1-b}{2}$, and the following condition is satisfied:
$$
\dfrac{1}{q'} = \theta + \dfrac{1}{\widetilde{q}}.
$$
Then, by Hölder's inequality in the time variable, we have
$$\|\chi_{B_{+}^{1}}  |x|^{-b} fg \|_{S(H^1,I)} \leq CT^{\theta} \|f\|_{S(H^1,I)}\, \|g\|_{S(H^1,I)}.$$ 

We now estimate on $B_{-}^{1}$. Consider, without loss of generality,  $|I|<1$. Take $R$ such that $T< R^2 < 1$, let $0<\epsilon < \dfrac{1-b}{3}$; in this case we have the following inequality:
\begin{equation}\label{balls}
\| |x|^{-b-1} \|_{L^{\frac{n}{b-\epsilon}}(B_{-}^{1})} \leq \| |x|^{-b} \|_{L^{\frac{n}{b-\epsilon}}(B_{-}^{1})} \leq \| |x|^{-b} \|_{L^{\frac{n}{b-\epsilon}}(B_{-}^{R})} \leq CR^{-\epsilon}     
\end{equation}
Define
$$  r= \dfrac{6}{2-b} , \;\;\;  \mbox{and} \;\;\;r_{-} = \dfrac{6}{2-b+3\epsilon}.$$
From the choice of $\epsilon$, 
$$
\dfrac{1}{r'} = \dfrac{b+ \epsilon}{2} + \dfrac{1}{r_{-}} + \dfrac{1}{r}.
$$
From \eqref{balls}, Hölder's Inequality, and Sobolev embedding we have 
\begin{eqnarray*} \| \nabla (|x|^{-b} fg) \|_{L^{r'}(B_{-}^{1})} &\leq&
C  \| |x|^{-b-1} \|_{L^{\frac{n}{b - \epsilon}}(B_{-}^{1})}  \| f \|_{L^{r_{-}}} \| g\|_{L^{r}} \\[2mm]
&+& \| |x|^{-b} \|_{L^{\frac{n}{b - \epsilon}}(B_{-}^{1})} \| \nabla f \|_{L^{r_{-}}} \| g\|_{L^{r}} + \| |x|^{-b} \|_{L^{\frac{n}{b - \epsilon}}(B_{-}^{1})}   \| f\|_{L^{r_{-}}}\| \nabla g \|_{L^{r}}  \\[2mm]
    &\leq& C R^{-\epsilon}   \| f\|_{W^{1,r_{-}}}\|  g \|_{W^{1,r}},
    \end{eqnarray*}
    and
\begin{eqnarray*} \||x|^{-b} fg \|_{L^{r'}(B_{\pm}^{R})} \leq  C R^{- \epsilon}\| f\|_{L^{r_{-}}}\| g\|_{L^{r}}.
\end{eqnarray*}

Now, let $q$, $q_{-}$ be such that $(q,r)$ and $(q_{-},r_{-})$ are $L^{2}$-admissible pairs. One can easily check that $\theta = \dfrac{1-b}{2}$, and the following condition is satisfied
$$
\dfrac{1}{q'} = \theta + \dfrac{\epsilon}{2} + \dfrac{1}{q_{-}} + \dfrac{1}{q},
$$
then by Hölder's inequality in the time variable, we have
$$\|\chi_{B_{-}^{1}}   |x|^{-b} fg \|_{S(H^1,I)} \leq CT^{\theta + \frac{\epsilon}{2}}R^{-\epsilon}  \|f\|_{S(H^1,I)}\, \|g\|_{S(H^1,I)}
\leq CT^{\theta} \|f\|_{S(H^1,I)}\, \|g\|_{S(H^1,I)}.$$ 
Therefore
$$\| |x|^{-b} fg \|_{S(H^1,I)} \leq \|\chi_{B_{+}^{1}}  |x|^{-b} fg \|_{S(H^1,I)} + \|\chi_{B_{-}^{1}}  |x|^{-b} fg \|_{S(H^1,I)} \leq T^{\theta} \|f\|_{S(H^1,I)}\, \|g\|_{S(H^1,I)}.$$
    \end{itemize}
\end{proof}

Lemma \ref{H1-inequality} yields the following estimate for the integral component in the system  \eqref{Duhamel}.

\begin{lemma}\label{H1-integralpart}
    Let $n\geq 2$, 
    $0<b<\min\left\{2, \dfrac{n}{2}\right\}$ with $n+2b < 6$. Suppose $\mathbf{u}$, $\mathbf{u}^{'} \in \mathbf{Y}(I)$ and assume that \emph{(H1)} and \emph{(H2)} hold. Then
        $$
        \left\| \int_{0}^{t} U_{k}(t-t') \dfrac{|x|^{-b}}{\alpha_k} [f_k(\mathbf{u}) - f_k(\mathbf{u}^{'})]\, dt^{'} \right\|_{\mathbf{Y}(I)} 
        \leq CT^{\theta(n)} \left( \| \mathbf{u}\|_{\mathbf{Y}(I)} + \|\mathbf{u}^{'}\|_{\mathbf{Y}(I)} \right) \| \mathbf{u} - \mathbf{u}^{'}\|_{\mathbf{Y}(I)}.
        $$
\end{lemma}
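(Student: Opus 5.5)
The plan mirrors the proof of Lemma \ref{l2-desigualdade2}, with Lemma \ref{H1-inequality} replacing Lemma \ref{lemmaL2}.

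\emph{Step 1: Strichartz.} The $\mathbf{Y}(I)$ norm of the Duhamel term is controlled by $S(H^1,I)$ norms, so I first apply Proposition \ref{Strichartz} to the Duhamel term and to its gradient. Since $\nabla$ commutes with $U_k(t)$, this gives
\begin{equation*}
\left\| \int_{0}^{t} U_{k}(t-t') \frac{|x|^{-b}}{\alpha_k} [f_k(\mathbf{u}) - f_k(\mathbf{u}^{'})]\, dt^{'} \right\|_{S(H^1,I)} \leq C\, \| |x|^{-b}[f_k(\mathbf{u}) - f_k(\mathbf{u}^{'})]\|_{S^{'}(H^1,I)} .
\end{equation*}
The $L^\infty_I H^1$ and continuity parts of the $\mathbf{Y}(I)$ norm follow from the same estimate, because $(\infty,2)$ is admissible. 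The factor $1/\alpha_k$ is absorbed into $C$.

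\emph{Step 2: pointwise bounds.} I use Lemma \ref{Ademir.cor2.3.lem2.4}. Part $(i)$ bounds $|f_k(\mathbf{u})-f_k(\mathbf{u}')|$ by $C\sum_{m,j}(|u_j|+|u_j'|)|u_m-u_m'|$, which handles the $L^2$-level part of the dual norm. Part $(ii)$ bounds $|\nabla[f_k(\mathbf{u})-f_k(\mathbf{u}')]|$ by $C\sum_{m,j}|u_j||\nabla(u_m-u_m')| + C\sum_{m,j}|u_j-u_j'||\nabla u_m'|$.

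\emph{Step 3: the weight's derivative (main obstacle).} The gradient of the full expression $|x|^{-b}[f_k(\mathbf{u})-f_k(\mathbf{u}')]$ also contains the term $\nabla(|x|^{-b})\,[f_k(\mathbf{u})-f_k(\mathbf{u}')]$, of size $|x|^{-b-1}(|u_j|+|u_j'|)|u_m-u_m'|$. Lemma \ref{H1-inequality} is stated for a product $fg$, and its proof already estimates the full $\nabla(|x|^{-b}fg)$. It does so via the factor $\||x|^{-b}\|_{L^{n/(b+1\pm\epsilon)}}$, or via Hardy's inequality when $n=2$, using only the moduli and gradients of $f$ and $g$. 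I therefore plan to rerun its Hölder, Sobolev and Hardy chain on each of the following products, rather than cite it as a black box:
\begin{itemize}
\item[$(a)$] $|x|^{-b-1}|u_j||u_m-u_m'|$, from the derivative of the weight;
\item[$(b)$] $|x|^{-b}|u_j||\nabla(u_m-u_m')|$;
\item[$(c)$] $|x|^{-b}|u_j-u_j'||\nabla u_m'|$.
\end{itemize}
The same exponent choices $r, r_\pm$, the split into $B^R_\pm$, and the choice $R=T^{1/2}$ apply verbatim, since only $\|f\|_{L^{r_\pm}}$, $\|\nabla f\|_{L^{r_\pm}}$ and the analogous norms for $g$ enter. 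Each product is then bounded by $CT^{\theta(n)}\|u_j\|_{S(H^1,I)}\|u_m-u_m'\|_{S(H^1,I)}$, or the same with $u_j'$ or $u_j-u_j'$ and $u_m'$. Checking that the case analysis of Lemma \ref{H1-inequality} really covers these products with absolute values is the step needing the most care. Because only $|f|$, $|g|$, $|\nabla f|$ and $|\nabla g|$ appear in the Hölder bounds, I expect no new difficulty.

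\emph{Step 4: summation.} Summing over $m,j$ and $k$, and using $\|u_m'\|\le\|\mathbf{u}'\|_{\mathbf{Y}(I)}$ and $\|u_j-u_j'\|\le \|\mathbf{u}-\mathbf{u}'\|_{\mathbf{Y}(I)}$, gives
\begin{equation*}
CT^{\theta(n)}\left(\|\mathbf{u}\|_{\mathbf{Y}(I)}+\|\mathbf{u}'\|_{\mathbf{Y}(I)}\right)\|\mathbf{u}-\mathbf{u}'\|_{\mathbf{Y}(I)},
\end{equation*}
which is the claimed estimate.
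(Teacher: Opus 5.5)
Your proposal is correct and follows the same route as the paper, whose proof of this lemma is only the remark that it is analogous to Lemma \ref{l2-desigualdade2}. That route is Strichartz (Proposition \ref{Strichartz}) applied to the Duhamel term and its gradient, then the pointwise bounds of Lemma \ref{Ademir.cor2.3.lem2.4}, then the weighted estimate of Lemma \ref{H1-inequality}. Your termwise treatment of $\nabla(|x|^{-b})[f_k(\mathbf{u})-f_k(\mathbf{u}')]$ and of the two gradient products, rerunning the H\"older--Sobolev--Hardy chain from the proof of Lemma \ref{H1-inequality}, makes explicit a step the paper leaves implicit. Applying that lemma as a black box to $|x|^{-b}fg$ with $f=|u_j|+|u_j'|$ and $g=|u_m-u_m'|$ would not by itself control the gradient of $|x|^{-b}[f_k(\mathbf{u})-f_k(\mathbf{u}')]$.
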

\begin{proof}
    The proof is similar to  that of Lemma \ref{l2-desigualdade2}.
\end{proof}

Following the strategy of Theorem \ref{l2-existence solutions}, we use the contraction mapping theorem, together with Lemma \ref{H1-integralpart}, to establish the existence of local $H^1$  solutions.

\textbf{Proof of Theorem \ref{Existence H1}:}
    The proof is similar to  that of Theorem \ref{l2-existence solutions}.
%
\begin{remark}\label{H1-time}
    Note that the time $T>0$ obtained in Theorem \ref{Existence H1} depends  solely on the $\mathbf{H}^{1}$-norm of the initial data. In fact, the existence time of the solution depends only on the $\mathbf{H}^{1}$-norm of the initial data, not  its position in space, that is, if $\mathbf{u}_0$, $\mathbf{v}_0 \in \mathbf{H}^{1}$ satisfy $\|\mathbf{u}_0\|_{\mathbf{H}^{1}} = \|\mathbf{v}_0\|_{\mathbf{H}^{1}}$ then $T(\|\mathbf{u}_0\|_{\mathbf{H}^{1}}) = T(\|\mathbf{v}_0\|_{\mathbf{H}^{1}})$. Moreover, if $\|\mathbf{u}_0\|_{\mathbf{H}^{1}} \leq \|\mathbf{v}_0\|_{\mathbf{H}^{1}}$ then $T(\|\mathbf{u}_0\|_{\mathbf{H}^{1}}) \geq T(\|\mathbf{v}_0\|_{\mathbf{H}^{1}})$.
\end{remark}

Thus, we conclude this section with an important result concerning blow-up alternative in $H^1$.
\begin{corollary}\label{H1: Blow-up}
    Let $\mathbf{u} \in \mathbf{H}^{1}$ be a solution of system \eqref{INLS} as in Theorem \eqref{Existence H1}. There exists $T_{\ast}, T^{\ast} \in (0,\infty]$ such that the local solutions can be extended to the interval $(-T_{\ast}, T^{\ast})$; moreover, if $T^{\ast} <\infty$ (respect. $T_{\ast}<\infty$), then
    $$
    \lim_{t\to T^{\ast}}\|\mathbf{u}(t)\|_{\mathbf{H}^{1}} = \infty \;\;\;\;\; (\mbox{resp.,}\; \lim_{t\to -T_{\ast}}\|\mathbf{u}(t)\|_{\mathbf{H}^{1}} = \infty).
    $$
\end{corollary}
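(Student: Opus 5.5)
The blow-up alternative follows from the fact, recorded in Remark \ref{H1-time}, that the local existence time $T=T(r)$ in Theorem \ref{Existence H1} depends only on an upper bound $r$ for the $\mathbf{H}^{1}$-norm of the data, and is nonincreasing in $r$. The plan has three steps: extend the solution to a maximal interval, prove the alternative with $\liminf$, and then upgrade $\liminf$ to $\lim$.

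First, define $T^{\ast}$ as the supremum of all $T>0$ for which a solution $\mathbf{u}\in \mathbf{Y}([0,T])$ of \eqref{Duhamel} exists, and define $T_{\ast}$ in the same way for negative times. Uniqueness in Theorem \ref{Existence H1} shows that two solutions agree on their common interval. For this I would use the standard argument that the set of times where they agree is nonempty, closed by continuity in $\mathbf{H}^{1}$, and open by local uniqueness applied from any common time. Since \eqref{INLS} is invariant under time translation (symmetry (a), which still holds when $\beta_k\neq 0$ because the equation is autonomous), solutions can be glued to give a single solution on $(-T_{\ast},T^{\ast})$. The case $T_{\ast}$ is symmetric, so I only treat $T^{\ast}$.

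Second, suppose $T^{\ast}<\infty$ and, for contradiction, that $\|\mathbf{u}(t)\|_{\mathbf{H}^{1}}\not\to\infty$ as $t\to T^{\ast}$. Then there are a constant $M$ and times $t_j\uparrow T^{\ast}$ with $\|\mathbf{u}(t_j)\|_{\mathbf{H}^{1}}\le M$. Apply Theorem \ref{Existence H1} with initial time $t_j$, data $\mathbf{u}(t_j)$ and $r=M+1$. This gives a solution on $[t_j-T(M+1),\,t_j+T(M+1)]$, where the length $T(M+1)$ does not depend on $j$. Choose $j$ with $t_j+T(M+1)>T^{\ast}$. By uniqueness this solution coincides with $\mathbf{u}$ on the overlap, so $\mathbf{u}$ extends beyond $T^{\ast}$, contradicting maximality. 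This argument works directly under the limit-along-a-sequence formulation, so it already gives $\lim_{t\to T^{\ast}}\|\mathbf{u}(t)\|_{\mathbf{H}^{1}}=\infty$ rather than only $\limsup$. Indeed, failure of the limit to be $+\infty$ is exactly the existence of such a bounded sequence $(t_j)$.

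The main obstacle is justifying that restarting the flow at a time $t_j$ is legitimate. This requires two things: the Duhamel formulation \eqref{Duhamel} must be compatible with a shifted initial time, which follows from the group property of $U_k$; and the glued function must lie in $\mathbf{Y}$ on the union of the intervals. The latter holds because Strichartz norms over a finite union of intervals are controlled by the sum of the norms over each piece. I would state these facts briefly and cite the standard argument in Cazenave \cite{Cazenave}.
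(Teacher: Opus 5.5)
Your proposal is correct and follows essentially the same route as the paper: define the maximal time, suppose $\|\mathbf{u}(t_j)\|_{\mathbf{H}^{1}}\le M$ along some $t_j\uparrow T^{\ast}$, and use the uniform local existence time from Remark \ref{H1-time} together with uniqueness to extend past $T^{\ast}$, a contradiction. Your extras (taking $r=M+1$ to meet the strict inequality in Theorem \ref{Existence H1}, and the gluing and time-translation details) only make explicit what the paper leaves implicit. One small slip: the argument gives $\liminf=\infty$, which is the same as $\lim=\infty$, so no separate ``upgrade'' step is needed, and ``$\limsup$'' in your second paragraph should read ``$\liminf$''.
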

\begin{proof}
    Define $$T^{\ast} = \sup\{T; \;\;\mbox{  there exists a solution of } \eqref{INLS}\mbox{ in }[0,T]\}.$$
    From Theorem \ref{Existence H1}, there exists a unique solution $\mathbf{u}$ of \eqref{INLS} corresponding to the initial data $\mathbf{u}_{0}$ such that $\mathbf{u} \in \mathcal{C}([0,T^{\ast}), \mathbf{H}^{1}) \cap \mathbf{X}([0,T^{\ast}))$. If $T^{\ast}<\infty$, assume by contradiction that there exists a sequence $t_k \uparrow T^{\ast}$ and $M>0$ such that $\|\mathbf{u}(t_k)\|_{\mathbf{H}^{1}} \leq M$ for all $k$. By Remark \ref{H1-time}, for $\mathbf{u}^{k}(0) = \mathbf{u}(t_k)$, there exists $T(M)>0$ and a unique solution $\mathbf{u}^{k} \in \mathbf{X}([0,T(M)])$. By the uniqueness of the solution, it follows that  $\mathbf{u}(t)$ exists on $[0, t_k+ T(M)]$. Since $t_k \uparrow T^{\ast}$ and $T(M)>0$, we have $t_k+ T(M) > T^{\ast}$ for $k$ sufficiently large, which contradicts the maximality of $T^{\ast}$. The other case is analogous.
\end{proof}

\subsection{Global existence of \texorpdfstring{$L^{2}$}{L2}-solutions}

The key to obtaining global solutions is to establish an a priori estimate in $L^2$  for the local solution, which follows from conservation of the charge. 

The following derivation is formal, but the process can be made rigorous by approximating with sufficiently regular solutions and passing to the limit, or by using the strategy in \cite{Ozawa}.

\begin{lemma}\label{Q(u0)}
    Under assumptions \emph{(H3)} and \emph{(H4)}, the charge of the system \eqref{INLS}, defined as
    \begin{eqnarray*}
    Q(\mathbf{u}(t))\;=\;\sum_{k=1}^{l} {\alpha_k \sigma_k} \int_{\mathbb{R}^n} |u_k(x,t)|^2\, dx
    \end{eqnarray*}
    %
    is conserved.
\end{lemma}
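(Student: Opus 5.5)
We differentiate $Q(\mathbf{u}(t))$ in time and show that the derivative vanishes, as in the formal computation for the single-component NLS. The justification for $\mathbf{H}^1$ (or $\mathbf{L}^2$) solutions is postponed to the end.

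Since $\alpha_k,\sigma_k$ are real, for a sufficiently regular solution
$$
\frac{d}{dt}Q(\mathbf{u}(t)) = 2\sum_{k=1}^{l}\alpha_k\sigma_k\,\mbox{Re}\int_{\mathbb{R}^n}\partial_t u_k\,\overline{u}_k\,dx .
$$
From \eqref{INLS} we have
$$
\alpha_k\partial_t u_k = i\gamma_k\Delta u_k - i\beta_k u_k + i|x|^{-b}f_k(\mathbf{u}).
$$
Substituting this gives
$$
\frac{d}{dt}Q = 2\sum_{k}\sigma_k\,\mbox{Re}\int\bigl(i\gamma_k\Delta u_k\overline{u}_k - i\beta_k|u_k|^2 + i|x|^{-b}f_k(\mathbf{u})\overline{u}_k\bigr)dx .
$$
We handle the three terms in turn.
\begin{itemize}
\item[$(i)$] After integrating by parts, the Laplacian term is $-i\gamma_k\int|\nabla u_k|^2\,dx$, which is purely imaginary, so its real part vanishes.
\item[$(ii)$] The term $-i\beta_k\int|u_k|^2\,dx$ is purely imaginary, so its real part also vanishes.
\item[$(iii)$] For the nonlinear term, $\mbox{Re}(iz)=-\mbox{Im}\,z$ turns it into
$$
-2\int_{\mathbb{R}^n}|x|^{-b}\,\mbox{Im}\sum_{k}\sigma_k f_k(\mathbf{u})\overline{u}_k\,dx ,
$$
and this vanishes pointwise by Remark \ref{remarkH3H4}, which is where (H3)--(H4) enter.
\end{itemize}
Hence $\frac{d}{dt}Q(\mathbf{u}(t))=0$.

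The delicate step is making this rigorous for solutions that are only in $\mathbf{L}^2$ or $\mathbf{H}^1$. There $\partial_t u_k$ lies only in $H^{-1}$ (respectively in a dual Strichartz space), so the pairing $\langle\partial_t u_k,u_k\rangle$ has to be interpreted in the $H^{-1}$--$H^1$ duality.

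For $\mathbf{H}^1$ solutions, Lemma \ref{Ademir.lemma2.15} gives $|x|^{-b}f_k(\mathbf{u})\in C(I;H^{-1})$, so $u_k\in C^1(I;H^{-1})\cap C(I;H^1)$. Then $t\mapsto\|u_k(t)\|_{L^2}^2$ is $C^1$, with derivative $2\,\mbox{Re}\langle\partial_t u_k,u_k\rangle_{H^{-1},H^1}$. Each pairing above is well defined: the nonlinear one is covered by Corollary \ref{corollario-sobolev law}, and the computation goes through verbatim, noting that Remark \ref{remarkH3H4} is a pointwise identity, so it holds inside the integral.

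For $\mathbf{L}^2$ solutions we argue by density. We approximate $\mathbf{u}_0$ in $\mathbf{L}^2$ by $\mathbf{H}^1$ data, whose solutions conserve $Q$ on a common interval; the common interval exists because by Remark \ref{l2-time} the existence time depends only on the $\mathbf{L}^2$ norm. We then pass to the limit using the continuous dependence in $\mathbf{X}(I)$ from Theorem \ref{l2-existence solutions}. Alternatively one can follow the regularization argument of \cite{Ozawa}, which avoids approximation by applying the Strichartz-based identity directly to the Duhamel formula.
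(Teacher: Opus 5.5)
Your computation is the paper's proof: pair each equation with $\overline{u}_k$, weight by $\sigma_k$, note that the $\Delta$ and $\beta_k$ terms contribute only imaginary parts, and kill the nonlinear term pointwise with Remark~\ref{remarkH3H4}. Your rigorous justification goes further than the paper, which only says the formal derivation can be made rigorous by approximation or via \cite{Ozawa}; the $\mathbf{H}^1$ duality argument is sound, but in the $\mathbf{L}^2$ density step Remark~\ref{l2-time} gives a common interval only for the $\mathbf{L}^2$ solutions, so you also need the approximating solutions to remain $\mathbf{H}^1$ solutions on that interval (persistence of regularity, e.g.\ via Theorem~\ref{global-solution}$(i)$ when $n\geq 2$), or else fall back on Ozawa's argument.
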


\begin{proof}
    Multiplying \eqref{INLS} by $\overline{u_k}$, we obtain
    $$
\dfrac{\partial}{\partial t}Q(\mathbf{u}(t)) \;=\; 2 \int_{\mathbb{R}^n} |x|^{-b} \mbox{Im}\; \sum_{k=1}^{l} \sigma_k f_k(\mathbf{u})\overline{u}_{k} =0,
$$
where the last equality follows from  Remark \ref{remarkH3H4}.
\end{proof}
As an immediate consequence of the previous Lemma, we have the following

\begin{theorem}
    Let $1\leq n \leq 3$, $0<b<\min\left\{2,\dfrac{n}{2} \right\}$ with $n+2b <4$. Assume that \emph{(H1)-(H4)} hold. Then, for any $\mathbf{u}_0 \in \mathbf{L}^{2}$, system \eqref{INLS} has a unique global solution $\mathbf{u} \in \mathbf{X}(\mathbb{R})$. Moreover,
    $$
 Q(\mathbf{u}(t)) \;=\; Q(\mathbf{u}_0), \;\;\;\;\;\; \forall t \in \mathbb{R}. 
$$
\end{theorem}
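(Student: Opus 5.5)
The plan is to combine the local theory of Theorem \ref{l2-existence solutions} with the a priori bound supplied by conservation of charge (Lemma \ref{Q(u0)}), and then iterate the local result using Remark \ref{l2-time}. The only structural input needed is that the charge $Q$ controls the full $\mathbf{L}^2$ norm. Since $\alpha_k>0$ and $\sigma_k>0$, we have
$$
\min_k(\alpha_k\sigma_k)\,\|\mathbf{u}(t)\|_{\mathbf{L}^2}^2 \le C\,Q(\mathbf{u}(t)) \le C'\max_k(\alpha_k\sigma_k)\,\|\mathbf{u}(t)\|_{\mathbf{L}^2}^2 ,
$$
where the constants $C,C'$ come only from comparing the sum-norm with the $\ell^2$-sum of the components.

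First, I would make the formal computation in Lemma \ref{Q(u0)} rigorous for $\mathbf{L}^2$ solutions. The standard route is to approximate $\mathbf{u}_0$ by $\mathbf{H}^1$ (or smoother) data. For $n\ge2$, Theorem \ref{Existence H1} gives $\mathbf{H}^1$ solutions for which the multiplication by $\overline{u}_k$ is justified in the $H^{-1}$--$H^1$ duality, using Lemma \ref{Ademir.lemma2.15} together with the identity of Remark \ref{remarkH3H4}. I would then pass to the limit using the continuous dependence in $\mathbf{X}(I)$ from Theorem \ref{l2-existence solutions}. The limit passage must be done on a common interval $[-T,T]$ with $T=T(r)$: by Remark \ref{l2-time} the approximating solutions all live on that interval, so $Q(\mathbf{u}(t))=Q(\mathbf{u}_0)$ holds for $t\in[-T,T]$. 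For $n=1$, where Theorem \ref{Existence H1} does not directly apply, I would use the Ozawa strategy \cite{Ozawa}, which works directly from the Duhamel formula \eqref{Duhamel} and the Strichartz estimates. I expect this justification to be the main technical obstacle.

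Second, I would globalize. Set $r=2\|\mathbf{u}_0\|_{\mathbf{L}^2}\sqrt{\max_k\alpha_k\sigma_k/\min_k\alpha_k\sigma_k}\cdot C$, chosen so that every $\mathbf{v}$ with $Q(\mathbf{v})=Q(\mathbf{u}_0)$ satisfies $\|\mathbf{v}\|_{\mathbf{L}^2}<r$. Let $T=T(r)$. Starting at $t_0=0$, solve on $[-T,T]$. At $t=T$, conservation gives $\|\mathbf{u}(T)\|_{\mathbf{L}^2}<r$, so I apply Theorem \ref{l2-existence solutions} again with initial datum $\mathbf{u}(T)$; this uses time-translation invariance and the fact that the Duhamel formula can be restarted at any time. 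Each step extends the solution by the fixed length $T$, and induction gives existence on all of $\mathbb{R}$.

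Uniqueness on each piece comes from the contraction argument. Gluing the pieces yields $\mathbf{u}\in\mathbf{X}([-N T,NT])$ for every $N$, hence $\mathbf{u}\in\mathbf{X}(\mathbb{R})$ in the local-in-time sense. The identity $Q(\mathbf{u}(t))=Q(\mathbf{u}_0)$ for all $t\in\mathbb{R}$ then follows by chaining the conservation established on each interval.
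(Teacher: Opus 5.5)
This is essentially the paper's argument. Conservation of $Q$ (Lemma \ref{Q(u0)}) together with the equivalence of $Q$ and $\|\cdot\|_{\mathbf{L}^2}^2$ gives a uniform $\mathbf{L}^2$ bound, and since the local time depends only on that norm (Remark \ref{l2-time}), the solution extends by a fixed step. The paper phrases this as a contradiction with the maximal time $T^{\ast}$ along $t_k\uparrow T^{\ast}$, and it takes conservation of $Q$ as formally justified.

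One caution on your extra justification step. The $\mathbf{H}^1$ solutions of Theorem \ref{Existence H1} are only guaranteed on intervals depending on $\|\mathbf{u}_0^j\|_{\mathbf{H}^1}$, and these may shrink along an $\mathbf{L}^2$-approximating sequence, so Remark \ref{l2-time} alone does not give conservation on the common interval $[-T(r),T(r)]$. You also need persistence of $\mathbf{H}^1$ regularity there (e.g.\ via Theorem \ref{global-solution}$(i)$ for $n=2,3$), or Ozawa's argument \cite{Ozawa} in all dimensions.
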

\begin{proof}
Similary to Corollary \ref{H1: Blow-up},     define $$T^{\ast} = \sup\{T; \;\;\mbox{  there exists a solution of } \eqref{INLS}\mbox{ in }[0,T]\}.$$
    From Theorem \ref{l2-existence solutions}, there exists a unique solution $\mathbf{u}$ of \eqref{INLS} corresponding to the initial data $\mathbf{u}_{0} \in \mathbf{L}^{2}$ such that $\mathbf{u} \in \mathcal{C}([0,T^{\ast}), \mathbf{L}^{2}) \cap \mathbf{X}([0,T^{\ast}))$. Now, if $T^{\ast}<\infty$, assume by contradiction that there exists a sequence $t_k \uparrow T^{\ast}$. Since $Q(\cdot)$ and $\|\cdot\|_{\mathbf{L}^{2}}$ are equivalent norms on $\mathbf{L}^{2}$ and by Lemma \ref{Q(u0)}, we have 
    $\|\mathbf{u}(t_k)\|_{\mathbf{L}^{2}} \leq C Q(\mathbf{u}_0)$ for some positive constant $C$. By Remark \ref{l2-time}, for $\mathbf{u}^{k}(0) = \mathbf{u}(t_k)$ there exists $T_0:=T(Q(\mathbf{u}_0))>0$ and a unique solution $\mathbf{u}^{k} \in \mathbf{X}([0,T_0])$. From the uniqueness of the solution, it follows that  $\mathbf{u}(t)$ exists on $[0, t_k+ T_0]$. Since $t_k \uparrow T^{\ast}$ and $T_0>0$, we have $t_k+ T_0 > T^{\ast}$  for large $k$, contradicting the maximality of $T^{\ast}$. The case for negative times is analogous. 
\end{proof}

\subsection{Global existence of \texorpdfstring{$H^{1}$}{H1}-solutions}

The key to obtaining global solutions is to establish an a priori estimate in $H^1$ for the local solution, which follows from the conservation of energy (see Corollary \ref{H1: Blow-up}).

We begin by recalling an important tool for minimizing the Weinstein functional, which is fundamental in the study of INLS: a Gagliardo-Nirenberg type inequality, proved by Farah \cite{Farah} and by Combet and Genoud \cite{Combet}. This work utilizes the following version
\begin{theorem}\label{GN}
    Let $n+2b<6$ and $0 < b< \min \left\{2,n \right\}$. Then the inequality
    \begin{equation}
  \int_{\mathbb{R}^n}|x|^{-b}|u(x)|^{3}\,dx
  \leq
  C_{GN}\|\nabla u\|_{L^2}^{\frac{n+2b}{2}}\|u\|_{L^2}^{\frac{6-(n+2b)}{2}}
\end{equation}
    holds for some positive constant $C_{GN}$.
\end{theorem}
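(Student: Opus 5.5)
We prove the weighted Gagliardo--Nirenberg inequality of Theorem \ref{GN} by splitting $\mathbb{R}^n$ into a ball and its complement, applying H\"older on each piece, and then optimizing over the radius. Let $B_R$ be the ball of radius $R>0$, and write
\[
\int_{\mathbb{R}^n}|x|^{-b}|u|^3\,dx=\int_{B_R}|x|^{-b}|u|^3\,dx+\int_{\mathbb{R}^n\setminus B_R}|x|^{-b}|u|^3\,dx .
\]
The whole argument relies on the homogeneity of both sides under the scaling $u_\lambda(x)=u(\lambda x)$.

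\emph{Outer region.} There $|x|^{-b}\le R^{-b}$, so
\[
\int_{|x|>R}|x|^{-b}|u|^3\,dx\le R^{-b}\|u\|_{L^3}^3 .
\]
The classical Gagliardo--Nirenberg inequality gives
\[
\|u\|_{L^3}^3\le C\|\nabla u\|_{L^2}^{n/2}\|u\|_{L^2}^{3-n/2},
\]
which is valid for $n\le5$; in our range, $n+2b<6$ with $b>0$ forces this.

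\emph{Inner region.} Apply H\"older with exponents $\frac{p}{p-1}$ and $p$:
\[
\int_{B_R}|x|^{-b}|u|^3\,dx\le \||x|^{-b}\|_{L^{p/(p-1)}(B_R)}\,\|u\|_{L^{3p}}^3 .
\]
The weight factor is finite and equals $CR^{\,n(p-1)/p-b}$ provided $bp/(p-1)<n$. We need $3p<\frac{2n}{n-2}$ (any $p$ works if $n=2$) so that $\|u\|_{L^{3p}}^3\le C\|\nabla u\|_{L^2}^{3\theta}\|u\|_{L^2}^{3(1-\theta)}$ with $\theta=n\left(\frac12-\frac1{3p}\right)$. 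Both conditions together amount to choosing $p$ slightly larger than $1$ but close enough to it.

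\emph{Compatibility check.} This is the step most likely to cause trouble. Setting $\delta=1-\frac1p$, the two requirements are $\delta>b/n$ and $3\theta=\frac{3n}{2}-n(1-\delta)<\dots$ compatible with Sobolev, i.e. $\frac{n}{2}+n\delta\le\frac{3n}{2}\cdot\frac{n-2}{n}+\ldots$. Explicitly, we need
\[
\frac1{3p}>\frac{n-2}{2n}\iff \delta<1-\frac{3(n-2)}{2n}=\frac{6-n}{2n}.
\]
So we need $\frac{b}{n}<\frac{6-n}{2n}$, which is exactly $n+2b<6$. Hence a suitable $p$ exists. The hypothesis $b<n$ is only used to make $|x|^{-b}$ locally integrable; the strict inequalities are what leave room to choose $p$.

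\emph{Optimization in $R$.} Combining the two regions gives a bound of the form
\[
A R^{n\delta-b}+BR^{-b},
\]
where $A$ and $B$ are products of powers of $\|\nabla u\|_{L^2}$ and $\|u\|_{L^2}$, and $n\delta-b>0$. Minimizing in $R$, which in practice amounts to taking $R=(\|u\|_{L^2}/\|\nabla u\|_{L^2})$ raised to the power fixed by the scaling, yields a single monomial $\|\nabla u\|_{L^2}^{a}\|u\|_{L^2}^{c}$ with $a+c=3$. Scaling invariance then forces $a=\frac{n+2b}{2}$, since under $u(\lambda x)$ the left side scales like $\lambda^{b-n}$ and the right side like $\lambda^{a-n/2\cdot 3}$.

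A cleaner alternative for the final step is to note that the bound is $\le C(\|\nabla u\|_{L^2}^{a_1}\|u\|_{L^2}^{c_1}+\|\nabla u\|_{L^2}^{a_2}\|u\|_{L^2}^{c_2})$ with $a_1<a<a_2$, and then apply it to $u(\lambda\cdot)$ with $\lambda=\|u\|_{L^2}^{2/n}\ldots$ chosen so that $\|\nabla u_\lambda\|_{L^2}=\|u_\lambda\|_{L^2}$. This reduces the sum to the single correct monomial and gives the constant $C_{GN}$.
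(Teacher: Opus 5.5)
Your proof is correct, but it follows a different route from the paper's. The paper gives no argument of its own and cites Farah's Theorem~1.2. There the inequality comes from Weinstein's variational method: one minimizes $\|\nabla u\|_{L^2}^{\frac{n+2b}{2}}\|u\|_{L^2}^{\frac{6-n-2b}{2}}/\int_{\mathbb{R}^n}|x|^{-b}|u|^3\,dx$, using the compactness of $H^1(\mathbb{R}^n)\hookrightarrow L^3(|x|^{-b}dx)$, and the optimal $C_{GN}$ is then identified through the ground state of $\Delta Q-Q+|x|^{-b}|Q|Q=0$.

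You prove only the non-sharp inequality, and you do it directly. You apply H\"older and the classical Gagliardo--Nirenberg inequality on $B_R$ and on its complement (essentially the splitting that underlies the compactness lemma), then choose $R$. The bookkeeping checks out. The admissible window $\frac{b}{n}<\delta\le\frac{6-n}{2n}$ is nonempty exactly when $n+2b<6$. Taking $R=\|u\|_{L^2}/\|\nabla u\|_{L^2}$ (power one) turns both terms into a multiple of $\|\nabla u\|_{L^2}^{\frac{n+2b}{2}}\|u\|_{L^2}^{\frac{6-n-2b}{2}}$.

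You lose the optimal constant. In exchange you get a short, self-contained proof that needs neither compactness nor any ground-state theory. That is all this paper needs: Theorem~\ref{GN} is used only in Proposition~\ref{P<=C0QK} and in Section~4 to guarantee $\xi_0,\xi_1>0$, while the sharp constant for the system is obtained separately via Theorem~\ref{Existence-GS}.

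Two cosmetic slips. First, the weight condition $bp/(p-1)<n$ forces $p>\frac{n}{n-b}$, so $p$ must stay away from $1$ rather than be close to it; your explicit $\delta$-window is the correct statement. Second, in your alternative final step the right dilation is $\lambda=\|u\|_{L^2}/\|\nabla u\|_{L^2}$.
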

\begin{proof}
    See, for instance, Farah \cite[Theorem~1.2]{Farah}.
\end{proof}

This result enables us to estimate the functional $P$ defined in \eqref{K,L,P}. 

\begin{proposition}\label{P<=C0QK}
    Assume that \emph{(H1)-(H3)} and \emph{(H5)} hold. If $\mathbf{u} \in \mathbf{H}^{1}$, then $P(\mathbf{u})$, defined in \eqref{K,L,P}, 
    is finite; moreover, the following inequality holds:
    $$
\left| P(\mathbf{u}) \right| \leq C_0 Q(\mathbf{u})^{\frac{6-n-2b}{4}} K(\mathbf{u})^{\frac{n+2b}{4}},
$$
where $C_0$ is a positive constant depending on $\alpha_k$, $\sigma_k$, and $\gamma_k$, for $k=1,\ldots,l$. 
\end{proposition}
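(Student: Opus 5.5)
The plan is to reduce the estimate to the scalar Gagliardo--Nirenberg inequality of Theorem \ref{GN}, applied component by component, and then to convert the resulting $L^2$ and gradient norms into $Q$ and $K$.

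First, pointwise control of the integrand. By Lemma \ref{Ademir.lem2.10.2.11.2.13}(i), which uses (H1)--(H3) and (H5), we have $|\mbox{Re}\,F(\mathbf{u}(x))| \le C\sum_{j=1}^{l}|u_j(x)|^3$. Hence
$$
|P(\mathbf{u})| \le \int_{\mathbb{R}^n}|x|^{-b}|\mbox{Re}\,F(\mathbf{u})|\,dx \le C\sum_{j=1}^{l}\int_{\mathbb{R}^n}|x|^{-b}|u_j|^3\,dx .
$$
Once each term on the right is shown to be finite, this also settles the finiteness of $P(\mathbf{u})$.

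Next, apply Theorem \ref{GN} to each component $u_j\in H^1$. This is legitimate because $n+2b<6$ and $b<\min\{2,n/2\}\le\min\{2,n\}$. It gives
$$
\int_{\mathbb{R}^n}|x|^{-b}|u_j|^3\,dx \le C_{GN}\|\nabla u_j\|_{L^2}^{\frac{n+2b}{2}}\|u_j\|_{L^2}^{\frac{6-n-2b}{2}} .
$$
Now compare with the conserved quantities. Since $\alpha_k\sigma_k>0$ and $\gamma_k>0$, we have
$$
\|u_j\|_{L^2}^2\le (\alpha_j\sigma_j)^{-1}Q(\mathbf{u}), \qquad \|\nabla u_j\|_{L^2}^2\le \gamma_j^{-1}K(\mathbf{u}).
$$
Writing the exponents as $\frac{n+2b}{2}=2\cdot\frac{n+2b}{4}$ and $\frac{6-n-2b}{2}=2\cdot\frac{6-n-2b}{4}$, each term is bounded by
$$
C_{GN}\,\gamma_j^{-\frac{n+2b}{4}}(\alpha_j\sigma_j)^{-\frac{6-n-2b}{4}}\,K(\mathbf{u})^{\frac{n+2b}{4}}Q(\mathbf{u})^{\frac{6-n-2b}{4}} .
$$
Both exponents are nonnegative because $n+2b<6$, so the monotonicity steps are valid.

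Finally, sum over $j$ to obtain the claim with
$$
C_0=C\,C_{GN}\sum_{j=1}^{l}\gamma_j^{-\frac{n+2b}{4}}(\alpha_j\sigma_j)^{-\frac{6-n-2b}{4}} .
$$
This constant depends only on $\alpha_k,\sigma_k,\gamma_k$ (and on $F$, $n$, $b$ through $C$ and $C_{GN}$). There is no real obstacle in this argument. The only points to check are that the cubic pointwise bound from Lemma \ref{Ademir.lem2.10.2.11.2.13}(i) needs only the hypotheses listed, which it does via homogeneity (H5), and that the parameter range matches that of Theorem \ref{GN}.
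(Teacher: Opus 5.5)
Your proposal is correct and follows the same route as the paper: the pointwise cubic bound $|\mbox{Re}\,F(\mathbf{u})|\le C\sum_j|u_j|^3$ from Lemma~\ref{Ademir.lem2.10.2.11.2.13}(i), then the scalar Gagliardo--Nirenberg inequality of Theorem~\ref{GN} applied to each component, then the comparisons $\|u_j\|_{L^2}^2\le(\alpha_j\sigma_j)^{-1}Q(\mathbf{u})$ and $\|\nabla u_j\|_{L^2}^2\le\gamma_j^{-1}K(\mathbf{u})$. Your exponents, giving $Q(\mathbf{u})^{\frac{6-n-2b}{4}}K(\mathbf{u})^{\frac{n+2b}{4}}$, are the ones that match the statement; the paper's displayed chain instead writes $\frac{6-n-2b}{2}$ and $\frac{n+2b}{2}$, which is a typo.
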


\begin{proof}
First, note that from the Gagliardo-Nirenberg inequality (Theorem \ref{GN}), for each $k=1,\ldots,l$,
\begin{eqnarray*}
  \int_{\mathbb{R}^n}|x|^{-b}|u_k(x)|^{3}\,dx
  &\leq&
  C_{GN}\|\nabla u_k\|_{L^2(\mathbb{R}^n)}^{\frac{n+2b}{2}}\|u_k\|_{L^2(\mathbb{R}^n)}^{\frac{6-(n+2b)}{2}} \\[2mm]
&\leq&
C \gamma_k^{-\frac{n+2b}{4}} (\alpha_k \sigma_k)^{\frac{n+2b-6}{4}} Q(\mathbf{u})^{\frac{6-n-2b}{2}} K(\mathbf{u})^{\frac{n +2b}{2}}.   
\end{eqnarray*}
Using Lemma \ref{Ademir.lem2.10.2.11.2.13}, we obtain
\begin{eqnarray*}
|P(\mathbf{u})| 
&\leq& \int |x|^{-b}|\mbox{Re}F(\mathbf{u})|\,dx  \leq
C \sum_{k=1}^{l} \int|x|^{-b}|u_k(x)|^{3}\,dx \\[2mm]
&\leq&
C_0 Q(\mathbf{u})^{\frac{6-n-2b}{2}} K(\mathbf{u})^{\frac{n +2b}{2}}.    
\end{eqnarray*}
\end{proof}

The conservation of energy for system \eqref{INLS} is established in the following lemma.

\begin{lemma}\label{E(u0)}
    If condition \emph{(H3)} holds, then the energy associated with \eqref{INLS}, given by 
    %
%
\begin{eqnarray*}
E(\mathbf{u}(t))= 
\sum_{k=1}^{l} \gamma_{k} \int_{\mathbb{R}^N} |\nabla u_k(x,t)|^2\, dx +
\sum_{k=1}^{l} \beta_{k} \int_{\mathbb{R}^N} | u_k(x,t)|^2\, dx -
2\emph{Re} \int_{\mathbb{R}^N} |x|^{-b} F(\textbf{u}(t))\,dx    
\end{eqnarray*} 
%
is a conserved quantity.
\end{lemma}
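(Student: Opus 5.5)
We will prove conservation of energy by the standard formal computation: differentiate $E(\mathbf{u}(t))$ in time and use the equation \eqref{INLS} together with (H3) to see that the derivative vanishes. As in Lemma \ref{Q(u0)}, the computation is first done for sufficiently regular solutions, for instance $\mathbf{u}\in \mathcal{C}(I,\mathbf{H}^{2})\cap \mathcal{C}^{1}(I,\mathbf{L}^{2})$. It is then justified for $\mathbf{H}^1$ solutions by approximation: regularize the data, use the continuous dependence from Theorem \ref{Existence H1}, and use the continuity of $P$ on $\mathbf{H}^1$ given by Lemma \ref{Ademir.lem2.10.2.11.2.13}$(i)$ together with Proposition \ref{P<=C0QK}. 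Alternatively one can follow the strategy of \cite{Ozawa}, which avoids extra regularity by working directly with the Duhamel formula \eqref{Duhamel}.

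\textbf{Step 1 (derivative of $K+L$).} We compute
$$\frac{d}{dt}\Big(\gamma_k\|\nabla u_k\|_{L^2}^2+\beta_k\|u_k\|_{L^2}^2\Big)=2\,\mathrm{Re}\int(-\gamma_k\Delta u_k+\beta_k u_k)\,\overline{\partial_t u_k}\,dx .$$
From \eqref{INLS} we have $-\gamma_k\Delta u_k+\beta_k u_k=i\alpha_k\partial_t u_k+|x|^{-b}f_k(\mathbf{u})$. The term $\mathrm{Re}\,(i\alpha_k|\partial_t u_k|^2)$ vanishes, so summing over $k$ gives
$$\frac{d}{dt}(K+L)=2\,\mathrm{Re}\int|x|^{-b}\sum_{k}f_k(\mathbf{u})\,\overline{\partial_t u_k}\,dx.$$

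\textbf{Step 2 (derivative of $P$).} By the chain rule in the Wirtinger variables,
$$\partial_t F(\mathbf{u})=\sum_m\Big(\frac{\partial F}{\partial z_m}\partial_t u_m+\frac{\partial F}{\partial \overline z_m}\overline{\partial_t u_m}\Big).$$
Taking real parts and using $\mathrm{Re}\big(\tfrac{\partial F}{\partial z_m}\partial_t u_m\big)=\mathrm{Re}\big(\overline{\tfrac{\partial F}{\partial z_m}}\,\overline{\partial_t u_m}\big)$, we obtain
$$\mathrm{Re}\,\partial_tF(\mathbf{u})=\mathrm{Re}\sum_m\Big(\frac{\partial F}{\partial\overline z_m}+\overline{\frac{\partial F}{\partial z_m}}\Big)\overline{\partial_t u_m}.$$
By (H3) this equals $\mathrm{Re}\sum_m f_m(\mathbf{u})\overline{\partial_t u_m}$; this is the time analogue of Lemma \ref{Ademir.lem2.10.2.11.2.13}$(ii)$. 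Hence $\frac{d}{dt}2P(\mathbf{u})$ coincides exactly with the expression obtained in Step 1, and therefore $\frac{d}{dt}E(\mathbf{u}(t))=0$.

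\textbf{Main obstacle.} The delicate part is the rigorous justification, because the singular weight $|x|^{-b}$ means that $\partial_t u_k$ only lies in $H^{-1}$ for $\mathbf{H}^1$ solutions. The pairings above must therefore be read as $H^{-1}$–$H^{1}$ dualities. This is legitimate because $|x|^{-b}f_k(\mathbf{u})\in H^{-1}$ by Corollary \ref{corollario-sobolev law} and Lemma \ref{Ademir.lemma2.15}, so the equation holds in $H^{-1}$. With this reading, the approximation argument closes: $E$ is continuous on $\mathbf{H}^1$, so energy conservation for the regular approximating solutions passes to the limit.
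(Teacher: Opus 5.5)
Your argument is correct and is essentially the paper's proof. The paper likewise differentiates $K$, $L$ and $\mathrm{Re}\int|x|^{-b}F(\mathbf{u})\,dx$ in time, uses (H3) to write the last derivative as $\mathrm{Re}\int|x|^{-b}\sum_k f_k(\mathbf{u})\,\partial_t\overline{u}_k\,dx$, and substitutes the equation to reduce $\frac{d}{dt}E$ to $2\,\mathrm{Re}\, i\sum_k\alpha_k\int|\partial_t u_k|^2\,dx=0$, treating the computation as formal. One small correction to your justification remark: for $\mathbf{H}^1$ solutions the terms $\int i\alpha_k\partial_t u_k\,\overline{\partial_t u_k}\,dx$ and $\int(-\gamma_k\Delta u_k+\beta_k u_k)\,\overline{\partial_t u_k}\,dx$ pair $H^{-1}$ with $H^{-1}$, so they cannot be read as $H^{-1}$--$H^{1}$ dualities, and the rigor has to come from the approximation by regular solutions (or Ozawa's argument), which is what you ultimately invoke.
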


\begin{proof}
We already know that
\begin{eqnarray}\label{partial: u}
 \dfrac{\partial}{\partial t} \int_{\mathbb{R}^n} | u_k(t) |^{2} \,dx 
\; = \;
2 \mbox{Re} \int_{\mathbb{R}^n}  u_k \; \partial_{t} \overline{u}_k \, dx. 
\end{eqnarray}
    Now, for the gradient term
\begin{eqnarray}\label{partial: nabla u}
   \dfrac{\partial}{\partial t}\int_{\mathbb{R}^n} |\nabla u_k|^2 \, dx
   &=&
      \dfrac{\partial}{\partial t} \left( - \int_{\mathbb{R}^n} \Delta u_k\;  \overline{u}_k \, dx \right) \nonumber \\[2mm]
      &=&
      - \int_{\mathbb{R}^n} \Delta (\partial_t u_k) \;  \overline{u}_k \, dx      - \int_{\mathbb{R}^n} \Delta u_k\;  \partial_{t} \overline{u}_k \, dx \\[2mm]
    &=&
      - \int_{\mathbb{R}^n} \partial_t u_k \;  \Delta (\overline{u}_k) \, dx      - \int_{\mathbb{R}^n} \Delta u_k\;  \partial_{t} \overline{u}_k \, dx \nonumber \\[2mm]
      &=&
      -2 \mbox{Re} \int_{\mathbb{R}^n}  \Delta (u_k) \, \partial_t \overline{u_k} \;  dx. \nonumber
      \end{eqnarray}
On the other hand, note that
$$
   \dfrac{\partial}{\partial t} \int_{\mathbb{R}^n} |x|^{-b} F(\mathbf{u}) \;dx
   \;=\;
    \int_{\mathbb{R}^n} |x|^{-b} \sum_{k=1}^{l} \left[\dfrac{\partial F(\mathbf{u})}{\partial u_k} \partial_t u_k + \dfrac{\partial F(\mathbf{u})}{\partial \overline{u}_k} \partial_t \overline{u}_k \right] \, dx.
$$
Taking de real part
\begin{eqnarray}\label{partial: F}
   \dfrac{\partial}{\partial t} \mbox{Re} \int_{\mathbb{R}^n} |x|^{-b} F(\mathbf{u}) \;dx
   &=&
    \mbox{Re} \int_{\mathbb{R}^n} |x|^{-b} \sum_{k=1}^{l} \left[ \dfrac{\partial F(\mathbf{u})}{\partial u_k} \partial_t u_k + \dfrac{\partial F(\mathbf{u})}{\partial \overline{u}_k} \partial_t \overline{u}_k \right] \, dx \nonumber \\[2mm]
    &=&
    \mbox{Re} \int_{\mathbb{R}^n} |x|^{-b} \sum_{k=1}^{l} \left[ \overline{\dfrac{\partial F(\mathbf{u})}{\partial u_k} \partial_t u_k} + \dfrac{\partial F(\mathbf{u})}{\partial \overline{u}_k} \partial_t \overline{u}_k \right] \, dx \nonumber  \\[2mm]
    &=&
\mbox{Re} \int_{\mathbb{R}^n} |x|^{-b} \sum_{k=1}^{l} \left[ \overline{\dfrac{\partial F(\mathbf{u})}{\partial u_k}} \partial_t \overline{u}_k + \dfrac{\partial F(\mathbf{u})}{\partial_t \overline{u}_k} \partial_t \overline{u}_k \right] \, dx \nonumber  \\[2mm]
    &=&
\mbox{Re} \int_{\mathbb{R}^n} |x|^{-b} \sum_{k=1}^{l} \left[ \dfrac{\partial F}{\partial \overline{u}_k}(\mathbf{u}) + \overline{\dfrac{\partial F}{\partial u_k}}(\mathbf{u}) \right] \partial_t \overline{u}_k  \, dx \nonumber  \\[2mm]
&=&
\mbox{Re} \int_{\mathbb{R}^n} |x|^{-b} \sum_{k=1}^{l} f_k(\mathbf{u}) \partial_t\overline{u}_k  \, dx.
      \end{eqnarray}
The last equality is from $(H3)$. Therefore, from the \eqref{INLS}, \eqref{partial: u}, \eqref{partial: nabla u} and \eqref{partial: F}:
\begin{eqnarray*}
    \dfrac{\partial}{\partial t} E(\mathbf{u}(t))
    &=& 
    \sum_{k=1}^{l} \gamma_k \dfrac{\partial}{\partial t}\int_{\mathbb{R}^n} | \nabla u_k|^2 \, dx 
    + \sum_{k=1}^{l} \beta_k \dfrac{\partial}{\partial t}\int_{\mathbb{R}^n} |u_k|^2 \, dx
    - 2 \dfrac{\partial}{\partial t} \mbox{Re} \int_{\mathbb{R}^n} |x|^{-b} F(\mathbf{u}) \;dx \\[2mm]
    &=&
    2 \mbox{Re} \sum_{k=1}^{l} \int_{\mathbb{R}^n} (- \gamma_k \Delta u_k + \beta_k u_k - |x|^{-b}f_k(\mathbf{u}) \partial_t \overline{u}_k  \;dx \\[2mm]
    &=&
    2 \mbox{Re} \sum_{k=1}^{l} i \alpha_k \int_{\mathbb{R}^n} \partial_t u_k \partial_t \overline{u}_k  \;dx = 
    2 \mbox{Re} \;i \sum_{k=1}^{l}  \alpha_k \int_{\mathbb{R}^n}|\partial_t u_k|^{2}   \;dx =0. 
\end{eqnarray*}
Therefore, the energy is conserved.
\end{proof}


Next, we define the Weinstein functional
$$
    J(\mathbf{u}) = \dfrac{Q(\mathbf{u})^{\frac{6-n-2b}{4}} K(\mathbf{u})^{\frac{n+2b}{4}}}{|P(\mathbf{u})|}, 
$$
and the real number
\begin{equation}\label{xi0}
    \xi_0 = \inf\{J(\mathbf{u}):\; \mathbf{u} \in \mathbf{H}^{1},\;\; P(\mathbf{u})\neq 0\},
\end{equation}
where $Q$ is the charge defined in \eqref{Q(u)}.

\begin{remark}
    By Proposition \ref{P<=C0QK}, if  \emph{(H1)-(H3)} and \emph{(H5)} hold, then,  $\xi_0$ is a positive constant such that
\begin{equation}\label{P<QK}
    |P(\mathbf{u})| \leq \dfrac{1}{\xi_0} Q(\mathbf{u})^{\frac{6-n-2b}{4}} K(\mathbf{u})^{\frac{n+2b}{4}}.
\end{equation}
Indeed, if $P(\mathbf{u}) \neq 0$, then $|P(\mathbf{u})| >0$. Thus,
$$
0 < \dfrac{1}{C_0} \leq J(\mathbf{u}),
$$
and the conclusion follows.
\end{remark}

We now proceed to prove the existence of global $H^1$ solutions for system \eqref{INLS}.

\begin{remark}\label{K: bound}
    From Corollary \ref{H1: Blow-up}, we have that if $\| u(t) \|_{\mathbf{H}^{1}}$ is bounded, then the solution of \eqref{INLS} is global. On the other hand,  from Lemma \ref{Q(u0)} $\|u(t)\|_{\mathbf{L}^{2}}$ is already bounded. Therefore, the strategy for proving the existence of global $H^{1}$-solutions for \eqref{INLS} when $n \geq 2$ and $n+2b < 6$ is to show that $K(\textbf{u}(t))$ is also bounded.
\end{remark}

The following theorem is a non-sharpened version of the Theorem \ref{thrm: Global}; however, it serves as a first criterion for a global solution, and its proof is the inspiration for the sharper result.

\begin{theorem}\label{global-solution}
    Let $n \geq 2$, $n+2b<6$ with 
    $0<b<\min\left\{2, \dfrac{n}{2}\right\}$. Assume that conditions\emph{(H1)-(H5)} hold.
    \begin{itemize}
        \item[$(i)$] If $2 \leq n \leq 3$ and $n+2b<4$, then for any $\mathbf{u}_0 \in \mathbf{H}^1$, system \eqref{INLS} has a unique solution $\textbf{u} \in \textbf{Y}(\mathbb{R})$.
        \item[$(ii)$] If $n=3$ and $2b=1$, then for any $\mathbf{u}_0 \in \mathbf{H}^{1}$ satisfying 
        $$
        2 Q(\textbf{u}_0)^{\frac{1}{2}} < \xi_0,
        $$
        system \eqref{INLS} has a unique solution $\textbf{u} \in \mathbf{Y}(\mathbb{R})$.
        \item[$(iii)$] If \( 3 \leq n \leq 5 \), then for any \( \mathbf{u}_0 \in \mathbf{H}^{1} \) satisfying
\begin{equation}\label{QK<C}
    Q(\mathbf{u}_0)^{1-s_c}K(\mathbf{u}_0)^{s_c} < \left(\dfrac{\xi_0}{s_c+2} \right)^{2}
\end{equation}
and
\begin{equation}\label{QE<C}
    Q(\mathbf{u}_0)^{1-s_c}E(\mathbf{u}_0)^{s_c} < \left(\dfrac{s_c}{s_c+2} \right)^{s_c}\left(\dfrac{\xi_0}{s_c+2} \right)^{2}, \quad \text{where } s_c=\dfrac{n+2b-4}{2},
\end{equation}
    
        system \eqref{INLS} has a unique solution $\textbf{u} \in \mathbf{Y}(\mathbb{R})$.    
    \end{itemize}
\end{theorem}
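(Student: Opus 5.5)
The plan is to use Remark \ref{K: bound}. Since $Q(\mathbf{u}(t))=Q(\mathbf{u}_0)$ by Lemma \ref{Q(u0)} and $Q$ is equivalent to the $\mathbf{L}^2$ norm, it suffices to show that $K(\mathbf{u}(t))$ stays bounded on the maximal interval $I$. Corollary \ref{H1: Blow-up} then gives global existence in $\mathbf{Y}(\mathbb{R})$.

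\textbf{Basic inequality.} Combine energy conservation (Lemma \ref{E(u0)}), the sign condition $\beta_k\ge 0$ (so $L\ge 0$), and the sharp bound \eqref{P<QK}. This gives, for all $t\in I$,
$$
E(\mathbf{u}_0)=K+L-2P\;\ge\; K(\mathbf{u}(t))-\frac{2}{\xi_0}\,Q(\mathbf{u}_0)^{\frac{6-n-2b}{4}}\,K(\mathbf{u}(t))^{\frac{n+2b}{4}}.
$$
The three cases correspond to whether the exponent $q:=\frac{n+2b}{4}$ is less than, equal to, or greater than $1$.

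\textbf{Case (i), $n+2b<4$.} Here $q<1$. Young's inequality gives $\frac{2}{\xi_0}Q^{(6-n-2b)/4}K^{q}\le \frac12 K + C(Q(\mathbf{u}_0))$. Hence $K(\mathbf{u}(t))\le 2E(\mathbf{u}_0)+2C$ uniformly in $t$.

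\textbf{Case (ii), $n=3$, $2b=1$.} Here $q=1$ and $\frac{6-n-2b}{4}=\frac12$. The inequality reads
$$\bigl(1-\tfrac{2}{\xi_0}Q(\mathbf{u}_0)^{1/2}\bigr)K(\mathbf{u}(t))\le E(\mathbf{u}_0).$$
The hypothesis $2Q(\mathbf{u}_0)^{1/2}<\xi_0$ makes the coefficient positive, which bounds $K$.

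\textbf{Case (iii), intercritical.} Now $q=\frac{s_c+2}{2}>1$ and $\frac{6-n-2b}{4}=\frac{1-s_c}{2}$. I multiply the inequality by $Q(\mathbf{u}_0)^{(1-s_c)/s_c}$ and set
$$G(t)=Q(\mathbf{u}_0)^{\frac{1-s_c}{s_c}}K(\mathbf{u}(t)),\qquad \alpha=Q(\mathbf{u}_0)^{\frac{1-s_c}{s_c}}E(\mathbf{u}_0).$$
The powers of $Q$ in the nonlinear term cancel, since $\frac{1-s_c}{s_c}(1-q)+\frac{1-s_c}{2}=0$ using $1-q=-s_c/2$. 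This yields $\alpha-G(t)+\beta G(t)^q\ge0$ with $\beta=2/\xi_0$.

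$G$ is continuous because $\mathbf{u}\in\mathcal{C}(I,\mathbf{H}^1)$. So Lemma \ref{lemma-inequality} applies with
$$\gamma=(\beta q)^{-\frac{1}{q-1}}=\Bigl(\frac{\xi_0}{s_c+2}\Bigr)^{2/s_c}.$$
Raising to the power $s_c$, the condition $G(0)<\gamma$ is exactly \eqref{QK<C}. Since $1-\frac1q=\frac{s_c}{s_c+2}$, the condition $\alpha<(1-\frac1q)\gamma$ is exactly \eqref{QE<C} when $E(\mathbf{u}_0)\ge0$. If $E(\mathbf{u}_0)<0$, then $\alpha<0$ and the condition holds trivially. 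Part (i) of the lemma gives $G(t)<\gamma$ on $I$, so $K(\mathbf{u}(t))$ is bounded.

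The main obstacle is the exponent bookkeeping in (iii): the normalization must make the $Q$-powers cancel so that Lemma \ref{lemma-inequality} applies verbatim, and the constants must match \eqref{QK<C}–\eqref{QE<C} after raising to $s_c$. I will rely on \eqref{P<QK}, with exponents $\frac{6-n-2b}{4},\frac{n+2b}{4}$, rather than the displayed conclusion of Proposition \ref{P<=C0QK}, whose exponents appear inconsistent.
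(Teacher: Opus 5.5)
Your proposal is correct and follows the paper's proof. Both use the same basic inequality from energy conservation, $L\ge 0$ and \eqref{P<QK}, then Young's inequality in (i), absorption of the $K$ term in (ii), and Lemma \ref{lemma-inequality} in (iii); the only difference is cosmetic: you rescale $G$ by $Q(\mathbf{u}_0)^{(1-s_c)/s_c}$ so that $\beta=2/\xi_0$ carries no power of $Q$, which carries out explicitly the verification the paper calls ``straightforward'' and adds the useful remark on the case $E(\mathbf{u}_0)<0$.
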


\begin{proof}
    From Remark \ref{K: bound}, it suffices to obtain an a \textit{priori} bound for $K(\textbf{u})$. For $(i)$, using \eqref{P<QK} and Young's inequality, we can write for any $\epsilon>0$: $\alpha \beta \leq \epsilon \alpha^{p} + \dfrac{1}{q(\epsilon p)^{\frac{q}{p}}} \beta^{q}$, where $\dfrac{1}{p} + \dfrac{1}{q} = 1$.

In our case 
$$
p = \dfrac{4}{n+2b}>1 \;\;\; \mbox{and} \;\;\; q =\dfrac{4}{4-n-2b}
$$
Thus,
$$
2|P(\textbf{u})| \leq \epsilon K(\textbf{u}) + C(\epsilon) Q(\textbf{u})^{\frac{6-n-2b}{4-n-2b}}.
$$
Using the last inequality, the conservation of energy Lemma \ref{E(u0)}, the conservation of charge Lemma \ref{Q(u0)} and the fact that $-L(\textbf{u}) \leq 0$ we obtain an a \textit{priori} bound for $K(\textbf{u}).$ Indeed
\begin{eqnarray*}
    K(\textbf{u}) = E(\textbf{u}_0) - L(\textbf{u}) +2P(\textbf{u}) 
    \leq  E(\textbf{u}_0) +2|P(\textbf{u})| \leq   E(\textbf{u}_0) + \epsilon K(\textbf{u}) + C(\epsilon) Q(\textbf{u})^{\frac{6-n-2b}{4-n-2b}}.
\end{eqnarray*}
Therefore, if $0 < \epsilon < 1$, then
$$
K(\textbf{u})
\leq
\dfrac{1}{1-\epsilon} \left[ E(\textbf{u}_0)  +  C(\epsilon) Q(\textbf{u}_0)^{\frac{6-n-2b}{4-n-2b}} \right].
$$

For case $(ii)$,  we have  $n+2b =4$ ($n=3$ and $2b=1$), thus  \eqref{P<QK} can be rewritten as $$2|P(\textbf{u})| \leq K(\textbf{u}) \cdot \dfrac{2}{\xi_0} Q(\textbf{u}_0)^{\frac{1}{2}}.$$
Similarly to the previous argument, we have
$$
K(\textbf{u}) \leq E(\textbf{u}_0) + K(\textbf{u}) \cdot \dfrac{2}{\xi_0} Q(\textbf{u}_0)^{\frac{1}{2}}.
$$
Therefore,
$$
K(\textbf{u}) \leq \left[ 1 - \dfrac{2}{\xi_0} Q(\textbf{u}_0)^{\frac{1}{2}} \right]^{-1} E(\textbf{u}_0).
$$
This bound is valid since $1 - \dfrac{2}{\xi_0} Q(\textbf{u}_0)^{\frac{1}{2}} > 0$, which is equivalent to the condition $2 Q(\textbf{u}_0)^{\frac{1}{2}} < \xi_0$.

Finally, for case $(iii)$, we first note that
$$
K(\textbf{u}) \leq E(\textbf{u}_0) + \dfrac{2}{\xi_0} Q(\textbf{u}_0)^{\frac{6-n-2b}{4}} K(\textbf{u})^{\frac{n+2b}{4}}.
$$
Applying Lemma \ref{lemma-inequality} with
    $$
\alpha = E(\textbf{u}_0), \;\;\; \beta = \dfrac{2}{\xi_0} Q(\textbf{u})^{\frac{6-n-2b}{4}},\;\;\;\;
q = \dfrac{n+2b}{4}, \;\;\;\; \mbox{and}\;\;\;\;
G(t) = K(\mathbf{u}(t)). 
$$
we find that $$
\gamma \;=\; (\beta q)^{-\frac{1}{q-1}}
\;=\;
\left( \dfrac{2\xi_0}{n+2b}\right)^{\frac{4}{n+2b-4}} Q(\textbf{u}_0)^{- \frac{6-n-2b}{n+2b-4}}
$$
It is straightforward to verify that $\alpha < \left( 1 - \dfrac{1}{q} \right) \gamma$ is equivalent to \eqref{QE<C} and $G(0)< \gamma$ is equivalent to \eqref{QK<C}, as desired. 
\end{proof}

\bigskip

\section{Existence of ground state solutions}

Building on the  Theorem \ref{global-solution}, we will establish a sharp criterion for the dichotomy of global existence versus finite-time blow-up. This criterion, formulated in terms of the ground state of the corresponding elliptic system, for this reason, this section is devoted to proving the existence of ground state solutions as stated in Theorem \ref{GS-existence}.
From now on, we will assume that conditions (H1)-(H8) hold. 
First, let us introduce
the \emph{Weinstein functional}, for the elliptic system. Given $\boldsymbol{\psi} \in \mathbf{H}^{1}$ define
\begin{equation}\label{J}
    J(\boldsymbol{\psi}) := \dfrac{\mathcal{Q}(\boldsymbol{\psi})^{\frac{6-n-2b}{4}} K(\boldsymbol{\psi})^{\frac{n+2b}{4}}}{P(\boldsymbol{\psi})}; \;\;\;\;\;\; P(\boldsymbol{\psi}) \neq 0,
\end{equation}
where 
$$
K(\boldsymbol{\psi}) :=  \sum_{k=1}^{l} \gamma_k \|\nabla \psi_k\|_{L^2}^{2}, \;\;\;\;
\mathcal{Q}(\boldsymbol{\psi}) :=\sum_{k=1}^{l}  b_k\|\psi_k\|_{L^2}^{2}, \;\;\;\; \mbox{and} \;\;\;\;
P(\boldsymbol{\psi}) :=\int_{\mathbb{R}^{n}} |x|^{-b} F(\boldsymbol{\psi}).
$$
\begin{remark} Note that we can rewrite the action functional $I$, defined in \eqref{I=SumH1-P}, as 
\begin{equation}\label{I}
    I(\boldsymbol{\psi}) = \dfrac{1}{2}[K(\boldsymbol{\psi}) + \mathcal{Q}(\boldsymbol{\psi})] -P (\boldsymbol{\psi}).
\end{equation}
\end{remark}
\begin{remark}\label{QinG(1,0)}
    If $b_k=\alpha_k\sigma_k$ i.e. $\omega=1$ and $\beta_k=0$, for all $k$,  we have the functions $Q$ and $\mathcal{Q}$ are iquals.
\end{remark}
\begin{remark} Similary to Propostion \ref{P<=C0QK}, the following inequality holds
\begin{equation}\label{P<CQK}
|P(\mathbf{u})| \leq  C_0 \mathcal{Q}(\mathbf{u})^{\frac{6-n-2b}{2}} K(\mathbf{u})^{\frac{n +2b}{2}}.    
\end{equation} 
\end{remark}


We start showing that these functionals are indeed Fréchet differentiable. Here, the primes denote their Fréchet derivatives.

\begin{lemma}\label{K',Q',P'} If $\mathbf{g} \in \mathbf{H}^{1}$. Then
$$
K'(\boldsymbol{\psi})(\mathbf{g}) = 2 \sum_{k=1}^{l} \gamma_k \int_{\mathbb{R}^{n}} \nabla \psi_k\nabla g_k \, dx, \;\;\;\;
\mathcal{Q}'(\boldsymbol{\psi})(\mathbf{g})  = 2 \sum_{k=1}^{l}  b_k \int_{\mathbb{R}^{n}} \psi_k g_k \, dx,$$
and
$$
P'(\boldsymbol{\psi})(\mathbf{g})  = \sum_{k=1}^{l} \int_{\mathbb{R}^{n}} |x|^{-b} f_k(\boldsymbol{\psi})g_k \,dx.
$$    
\end{lemma}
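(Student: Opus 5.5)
The plan is to compute each Gâteaux derivative directly from the definitions and then upgrade to Fréchet differentiability by showing the remainder is $o(\|\mathbf{g}\|_{\mathbf{H}^1})$, using continuity of the derivative maps. Here $\boldsymbol{\psi},\mathbf{g}$ are real-valued, the setting of the elliptic system \eqref{PDE-GroundState}.

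\emph{Quadratic functionals $K$ and $\mathcal{Q}$.} Expanding gives
$$K(\boldsymbol{\psi}+\mathbf{g})-K(\boldsymbol{\psi})=2\sum_{k=1}^{l}\gamma_k\int\nabla\psi_k\cdot\nabla g_k\,dx+K(\mathbf{g}).$$
The remainder satisfies $K(\mathbf{g})\le C\|\mathbf{g}\|_{\mathbf{H}^1}^2=o(\|\mathbf{g}\|_{\mathbf{H}^1})$, and the linear term is bounded by Cauchy--Schwarz. The same expansion, with $\gamma_k\nabla$ replaced by $b_k$, handles $\mathcal{Q}$.

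\emph{The potential term $P$.} First, for real arguments, Lemma \ref{Ademir.lem2.10.2.11.2.13}(iv) (which uses (H7)) gives $\partial F/\partial x_k=f_k$ on $\mathbb{R}^l$. By the mean value theorem applied to $s\mapsto F(\boldsymbol{\psi}+s\mathbf{g})$, pointwise
$$F(\boldsymbol{\psi}+\mathbf{g})-F(\boldsymbol{\psi})-\sum_k f_k(\boldsymbol{\psi})g_k=\int_0^1\sum_k\big[f_k(\boldsymbol{\psi}+s\mathbf{g})-f_k(\boldsymbol{\psi})\big]g_k\,ds.$$
Next, multiply by $|x|^{-b}$ and integrate. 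Lemma \ref{Ademir.cor2.3.lem2.4}(i) bounds the bracket by $C\sum_{j,m}(|\psi_j|+|g_j|)|g_m|$. The remainder is therefore controlled by finitely many terms of the form $\int|x|^{-b}|h_1||h_2||h_3|\,dx$, where each $h_i$ is either some $\psi_j$ or some $g_m$ and at least two of the $h_i$ are components of $\mathbf{g}$.

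The main obstacle is the trilinear bound
$$\int|x|^{-b}|h_1h_2h_3|\,dx\le C\prod_{i=1}^{3}\|h_i\|_{H^1}.$$
I would first try Hölder's inequality with $\int|x|^{-b}|h_1h_2h_3|\le\prod_i\big(\int|x|^{-b}|h_i|^3\big)^{1/3}$, valid for the weighted measure $|x|^{-b}dx$. Then Theorem \ref{GN} gives $\int|x|^{-b}|h|^3\le C\|h\|_{H^1}^3$ in the range $n+2b<6$. It would follow that the remainder is $O(\|\mathbf{g}\|_{\mathbf{H}^1}^2)$, so $P$ is Fréchet differentiable. The same Hölder/GN bound shows the claimed derivative $\mathbf{g}\mapsto\sum_k\int|x|^{-b}f_k(\boldsymbol{\psi})g_k\,dx$ is a bounded linear functional, via $|f_k(\boldsymbol{\psi})|\le C\sum_j|\psi_j|^2$. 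Alternatively, one could invoke Corollary \ref{corollario-sobolev law} together with Lemma \ref{Ademir.lemma2.15}, which give $|x|^{-b}f_k\in\mathcal{C}(\mathbf{H}^1,\mathbf{H}^{-1})$. This yields continuity of $P'$, and a continuous Gâteaux derivative implies Fréchet differentiability.
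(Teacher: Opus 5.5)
Your proposal is correct. The paper omits this proof as ``quite standard,'' and your argument is exactly the standard verification it alludes to: exact expansion for $K$ and $\mathcal{Q}$, and for $P$ a first-order Taylor remainder controlled by the weighted trilinear bound from H\"older and Theorem \ref{GN}.

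Of your two routes for $P$, keep the H\"older--Gagliardo--Nirenberg one, since it covers the whole range $n+2b<6$ directly; by duality it even proves Corollary \ref{corollario-sobolev law}. The paper's own proof of that corollary applies the multiplication law with $(s_1,s_2,s)=(0,1,-1)$, which requires $n\le 4$.
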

\begin{proof}
   In view of our assumptions, the proof is quite standard, and we therefore omit the details.
\end{proof}

Lemma \ref{K',Q',P'} implies that $I$ is Fréchet differentiable and that its critical points (in the sense of Definition \ref{weak-solutions}) are precisely the solutions of \eqref{PDE-GroundState}.


In the following lemma we obtain Pohozaev-type  identities which are satisfied by any solution of \eqref{PDE-GroundState}.

\begin{lemma}\label{Pohozaev}\emph{\textbf{(Pohozaev-type  identities)}}\label{P=2I, K = (n+2b)I and Q=(6-n-2b)I}
    Let $\boldsymbol{\psi}$ be a solution of \eqref{PDE-GroundState}. Then
    \begin{eqnarray}
        P(\boldsymbol{\psi}) &=& 2 I(\boldsymbol{\psi}), \label{P=2I} \\[2mm]
        K(\boldsymbol{\psi}) &=& (n+2b) I(\boldsymbol{\psi}), \label{k=(n+2b)I} \\[2mm]
        \mathcal{Q}(\boldsymbol{\psi}) &=& (6-n-2b) I(\boldsymbol{\psi}). \label{Q=(6-n-2b)I} 
    \end{eqnarray}
\end{lemma}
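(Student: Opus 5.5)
Two relations between $K$, $\mathcal{Q}$ and $P$ will be derived; together with the definition \eqref{I} of $I$ they form a linear system that gives all three identities. The first relation is the Nehari identity. Take $\mathbf{g}=\boldsymbol{\psi}$ in Definition \ref{weak-solutions} and sum over $k$. This gives
$$K(\boldsymbol{\psi})+\mathcal{Q}(\boldsymbol{\psi})=\sum_{k=1}^{l}\int_{\mathbb{R}^n}|x|^{-b}f_k(\boldsymbol{\psi})\psi_k\,dx .$$
Ground states can be taken real-valued, since \eqref{PDE-GroundState} is posed for real $\psi_k$. For real arguments, Lemma \ref{Ademir.lem2.10.2.11.2.13}$(iv)$ says $f_k=\partial F/\partial x_k$. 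By (H5), Euler's relation for functions homogeneous of degree $3$ (equivalently, part $(iii)$ of the same lemma) then gives $\sum_k f_k(\boldsymbol{\psi})\psi_k=3F(\boldsymbol{\psi})$. Hence
$$K+\mathcal{Q}=3P.$$

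The second relation is the Pohozaev identity, obtained from the dilation $\boldsymbol{\psi}_\lambda(x)=\boldsymbol{\psi}(\lambda x)$. Direct scaling gives
$$K(\boldsymbol{\psi}_\lambda)=\lambda^{2-n}K,\qquad \mathcal{Q}(\boldsymbol{\psi}_\lambda)=\lambda^{-n}\mathcal{Q},\qquad P(\boldsymbol{\psi}_\lambda)=\lambda^{b-n}P .$$
Since $\boldsymbol{\psi}$ is a critical point of $I$, we should have $\frac{d}{d\lambda}I(\boldsymbol{\psi}_\lambda)\big|_{\lambda=1}=0$, that is,
$$\tfrac{2-n}{2}K-\tfrac{n}{2}\mathcal{Q}-(b-n)P=0 .$$
The obstacle is that $\lambda\mapsto\boldsymbol{\psi}_\lambda$ is not a $C^1$ curve in $\mathbf{H}^1$ at a general $H^1$ point: its derivative at $\lambda=1$ is $x\cdot\nabla\boldsymbol{\psi}$, which need not lie in $\mathbf{H}^1$. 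The rigorous route is the standard one. Multiply the $k$-th equation of \eqref{PDE-GroundState} by $\chi_R(x)\,x\cdot\nabla\psi_k$, where $\chi_R$ is a smooth cutoff, integrate over the annulus $\{\epsilon<|x|<R\}$, and let $\epsilon\to0$, $R\to\infty$. This needs the following ingredients.
\begin{itemize}
\item Local regularity of $\boldsymbol{\psi}$ away from the origin, from elliptic regularity; the nonlinearity is quadratic and $n+2b<6$, so a bootstrap works.
\item The identity $\sum_k f_k(\boldsymbol{\psi})\,x\cdot\nabla\psi_k=x\cdot\nabla F(\boldsymbol{\psi})$, from Lemma \ref{Ademir.lem2.10.2.11.2.13}$(ii)$ (or $(iv)$).
\item The integration by parts $\int|x|^{-b}x\cdot\nabla F=-(n-b)\int|x|^{-b}F$.
\end{itemize}
The boundary terms must vanish. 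At infinity this follows because $\nabla\psi_k,\psi_k\in L^2$ and $|x|^{-b}F(\boldsymbol{\psi})\in L^1$, so along a sequence $R_j\to\infty$ the surface integrals tend to zero. Near $0$ the same integrability gives a sequence $\epsilon_j\to0$ along which $\epsilon_j\int_{|x|=\epsilon_j}\big(|\nabla\boldsymbol{\psi}|^2+|\boldsymbol{\psi}|^2+|x|^{-b}|F|\big)\to0$. This limiting step is the part I expect to require the most care.

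It remains to solve the linear system. Write $P=2I$ and use $K+\mathcal{Q}=3P$. Then $I=\frac12(K+\mathcal{Q})-P=\frac32P-P=\frac12P$, which is \eqref{P=2I}. Substituting $\mathcal{Q}=3P-K$ into the Pohozaev relation gives
$$(2-n)K-n(3P-K)-2(b-n)P=0 .$$
This reduces to $2K=(n+2b)P$, so $K=\frac{n+2b}{2}P=(n+2b)I$, which is \eqref{k=(n+2b)I}. Finally
$$\mathcal{Q}=3P-K=\Bigl(6-(n+2b)\Bigr)I,$$
which is \eqref{Q=(6-n-2b)I}.
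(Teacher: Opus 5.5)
Your proposal is correct and follows the same route as the paper. Both arguments obtain the Nehari identity $K+\mathcal{Q}=3P$ by testing with $\mathbf{g}=\boldsymbol{\psi}$, then the relation $\frac{n-2}{2}K+\frac{n}{2}\mathcal{Q}-(n-b)P=0$ by differentiating $I$ along dilations (the paper uses $\boldsymbol{\psi}(x/\lambda)$ rather than $\boldsymbol{\psi}(\lambda x)$, which gives the same relation), and finally the same linear algebra.

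The one difference is that the paper simply asserts $\frac{d}{d\lambda}I(\delta_\lambda\boldsymbol{\psi})|_{\lambda=1}=0$. You correctly note that this step is only formal, since $x\cdot\nabla\boldsymbol{\psi}$ need not lie in $\mathbf{H}^1$, and you supply the standard multiplier-with-cutoff justification that the paper omits.
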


\begin{proof}
    We first note that by letting $g_k=\psi_k$, for $k=1,\ldots,l$ in \eqref{weak} we obtain
    $$
    \gamma_{k}  \|\nabla \psi_k \|_{L^{2}}^{2} + b_k \| \psi_k \|_{L^{2}}^{2} \;=\;  \int_{\mathbb{R}^{n}} |x|^{-b} f_k(\boldsymbol{\psi})\psi_{k} \,dx.
    $$
    Summing over $k$ and using Lemma \ref{Ademir.lem2.10.2.11.2.13},
    we have
    \begin{equation}\label{K+Q=3P}
        K(\boldsymbol{\psi}) + \mathcal{Q}(\boldsymbol{\psi}) = 3 P(\boldsymbol{\psi}).
    \end{equation}
    Therefore,
    $$
    I(\boldsymbol{\psi}) = \dfrac{1}{2}[K(\boldsymbol{\psi}) + \mathcal{Q}(\boldsymbol{\psi})] - P(\boldsymbol{\psi}) = \dfrac{1}{2}P(\boldsymbol{\psi}),
    $$
 and \eqref{P=2I} follows.

To prove  \eqref{k=(n+2b)I}, define $(\delta_{\lambda} f)(x) = f\left( \dfrac{x}{\lambda} \right)$. Then the function $h \mapsto h(\lambda) = I(\delta_{\lambda} \psi)$ has a critical point at $\lambda =1$, or equivalently,
$$
h'(\lambda) \;=\; \left.\dfrac{d}{d \lambda}\right|_{\lambda=1} I(\delta_{\lambda} \boldsymbol{\psi}) =0.
$$
Note that
\begin{itemize}
    \item[$(i)$] $K(\delta_{\lambda} \boldsymbol{\psi} ) = \lambda^{n-2} K (\boldsymbol{\psi})$ so $K'(\delta_{\lambda} \boldsymbol{\psi}) = (n-2) \lambda^{n-3} K (\boldsymbol{\psi})$.
    \item[$(ii)$] $\mathcal{Q}(\delta_{\lambda} \boldsymbol{\psi} ) = \lambda^{n} \mathcal{Q} (\boldsymbol{\psi})$ so $\mathcal{Q}'(\delta_{\lambda} \boldsymbol{\psi} ) = n \lambda^{n-1} \mathcal{Q} (\boldsymbol{\psi})$.
    \item[$(iii)$] $P(\delta_{\lambda} \boldsymbol{\psi} ) = \lambda^{n-b} P (\boldsymbol{\psi})$ so $P'(\delta_{\lambda} \boldsymbol{\psi} ) = (n-b) \lambda^{n-b-1} P (\boldsymbol{\psi})$.
\end{itemize}
Thus,
\begin{equation}\label{h'=0}
h'(1) = \dfrac{n-2}{2}K( \boldsymbol{\psi}) + \dfrac{n}{2} \mathcal{Q}(\boldsymbol{\psi}) -(n-b)P(\boldsymbol{\psi}) =0.    
\end{equation}
On the other hand, using \eqref{P=2I} in \eqref{I}, we have
\begin{equation}\label{K+Q=6I}
    K(\boldsymbol{\psi}) +\mathcal{Q}(\boldsymbol{\psi}) = 6I(\boldsymbol{\psi}).
\end{equation}
Substituting \eqref{K+Q=6I} and \eqref{P=2I} into \eqref{h'=0}, we obtain
$$
3n I(\boldsymbol{\psi}) - K(\boldsymbol{\psi}) -2(n-b)I(\boldsymbol{\psi}) =0,
$$
This implies that \eqref{k=(n+2b)I} holds. Finally, substituting \eqref{k=(n+2b)I} into \eqref{K+Q=6I} yields \eqref{Q=(6-n-2b)I}.
\end{proof}

\begin{remark}\label{Q>0}
Since $\mathcal{Q}(\boldsymbol{\psi}) > 0$ for any $\boldsymbol{\psi} \neq \mathbf{0}$, it follows from \eqref{Q=(6-n-2b)I} that the system \eqref{k=(n+2b)I} has non-trivial solutions only if $6-n-2b > 0$. In addition, $\mathcal{Q}(\boldsymbol{\psi})$ remains constant on $\mathcal{G}(\omega, \boldsymbol{\beta})$, and $\boldsymbol{\psi}$ is a ground state if and only if $\mathcal{Q}(\boldsymbol{\psi})$ is minimal.
\end{remark}

We now prove the existence of at least one ground state solution for \eqref{PDE-GroundState}. Our strategy is to minimize the Weinstein-type functional $J$, which first requires the following preliminaries.
\begin{lemma}\label{C subset P}
Assume $n \geq 2$ with $n+2b < 6$ and define the set 
\[
\mathcal{P} := \{ \boldsymbol{\psi} \in \mathbf{H}^{1} \mid P(\boldsymbol{\psi}) > 0 \}.
\]
Then:
\begin{itemize}
    \item[$(i)$] $\mathcal{C} \subset \mathcal{P}$.
    \item[$(ii)$] $\xi_1 := \inf \, \{ J(\boldsymbol{\psi}) \mid \boldsymbol{\psi} \in \mathcal{P} \} > 0$.
    \end{itemize}
\end{lemma}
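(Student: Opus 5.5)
For part $(i)$ we use the Pohozaev-type identities of Lemma \ref{Pohozaev}. Let $\boldsymbol{\psi} \in \mathcal{C}$ be a non-trivial critical point of $I$. By \eqref{Q=(6-n-2b)I}, $\mathcal{Q}(\boldsymbol{\psi}) = (6-n-2b)I(\boldsymbol{\psi})$. We restrict to $\omega$ with $b_k>0$, so $\mathcal{Q}(\boldsymbol{\psi})>0$ because $\boldsymbol{\psi}\neq\mathbf{0}$. Since $n+2b<6$, this gives $I(\boldsymbol{\psi})>0$. Then \eqref{P=2I} yields $P(\boldsymbol{\psi}) = 2I(\boldsymbol{\psi})>0$, that is, $\boldsymbol{\psi}\in\mathcal{P}$. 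Equivalently, \eqref{K+Q=3P} gives $3P(\boldsymbol{\psi}) = K(\boldsymbol{\psi})+\mathcal{Q}(\boldsymbol{\psi})>0$ directly, with no need for the scaling identity. Here $P$ is real because $\boldsymbol{\psi}$ is real-valued and (H7) holds.

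For part $(ii)$ we use the Gagliardo--Nirenberg-type bound \eqref{P<CQK}, the analogue of Proposition \ref{P<=C0QK} with $Q$ replaced by $\mathcal{Q}$. Take $\boldsymbol{\psi}\in\mathcal{P}$, so that $0<P(\boldsymbol{\psi})=|P(\boldsymbol{\psi})|$. We need
$$
P(\boldsymbol{\psi}) \le C_0\, \mathcal{Q}(\boldsymbol{\psi})^{\frac{6-n-2b}{4}} K(\boldsymbol{\psi})^{\frac{n+2b}{4}}
$$
with exponents matching those in $J$. This follows from Lemma \ref{Ademir.lem2.10.2.11.2.13}$(i)$, which gives $|F(\boldsymbol{\psi})|\le C\sum_j|\psi_j|^3$, combined with Theorem \ref{GN} applied to each component. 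Each $\|\psi_j\|_{L^2}^2$ is bounded by $b_j^{-1}\mathcal{Q}(\boldsymbol{\psi})$, and each $\|\nabla\psi_j\|_{L^2}^2$ by $\gamma_j^{-1}K(\boldsymbol{\psi})$. The resulting constant $C_0$ depends only on $b_k,\gamma_k$ and $C_{GN}$. Dividing gives $J(\boldsymbol{\psi})\ge 1/C_0$ for every $\boldsymbol{\psi}\in\mathcal{P}$, hence $\xi_1\ge 1/C_0>0$. Note that $K(\boldsymbol{\psi})>0$ whenever $P(\boldsymbol{\psi})\ne0$, so $J$ is well defined on $\mathcal{P}$.

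The only delicate point is the bookkeeping of exponents. As printed, \eqref{P<CQK} and Proposition \ref{P<=C0QK} carry exponents $\frac{6-n-2b}{2}$ and $\frac{n+2b}{2}$ on the quantities $\mathcal{Q}$ and $K$, which are themselves squared norms. I would redo that step carefully. Summing $\|\nabla\psi_j\|^{\frac{n+2b}{2}}\|\psi_j\|^{\frac{6-n-2b}{2}}$ over $j$ and bounding each factor by the corresponding power of $K^{1/2}$ and $\mathcal{Q}^{1/2}$ produces exactly the exponents $\frac{n+2b}{4}$ and $\frac{6-n-2b}{4}$ that appear in $J$. The rest is immediate.
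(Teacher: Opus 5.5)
Your proposal is correct and follows the paper's proof essentially step for step. For $(i)$ you use the Pohozaev identities to get $P(\boldsymbol{\psi})=2I(\boldsymbol{\psi})=\frac{2}{6-n-2b}\mathcal{Q}(\boldsymbol{\psi})>0$, and for $(ii)$ you combine Lemma \ref{Ademir.lem2.10.2.11.2.13} with the componentwise Gagliardo--Nirenberg inequality to get $P\le C_1 K^{\frac{n+2b}{4}}\mathcal{Q}^{\frac{6-n-2b}{4}}$, hence $\xi_1\ge 1/C_1>0$.

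Your exponent bookkeeping (quarter powers of the squared-norm quantities, with constants involving $b_k$ and $\gamma_k$) is the correct form of what the paper writes. Your observation that $3P=K+\mathcal{Q}$ alone already yields $(i)$ is also a valid shortcut.
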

\begin{proof}
    For $(i)$: Let $  \boldsymbol{\psi} \in \mathcal{C}$. Then $ \boldsymbol{\psi} \neq 0$ and $P(\boldsymbol{\psi}) = 2I(\boldsymbol{\psi}) = \dfrac{2}{6-n-2b}\mathcal{Q}(\boldsymbol{\psi}) >0$. Therefore, $\boldsymbol{\psi} \in \mathcal{P}$.

    Now, for $(ii)$: By the Gagliardo-Nirenberg inequality Theorem \ref{GN} and Lemma \ref{Ademir.lem2.10.2.11.2.13}
    \begin{eqnarray*}
        |P(\boldsymbol{\psi})| &\leq& C \sum_{k=1}^{l} \int_{\mathbb{R}^{n}} |x|^{-b} |\psi_{k}|^3 dx \leq  C \sum_{k=1}^{l} \| \nabla u_k \|_{L^2}^{\frac{n+2b}{2}} \| u_k \|_{L^2}^{\frac{6 -n-2b}{2}}\\[2mm]
        &\leq&   C \gamma_{k}^{-\left(\frac{n+2b}{2}\right)} (\alpha_k \sigma_{k})^{-\left(\frac{6-n-2b}{4} \right)}K(\boldsymbol{\psi})^{\frac{n+2b}{4}} \mathcal{Q}(\boldsymbol{\psi})^{\frac{6-n-2b}{4}}  \\[2mm]
        &\leq& C_1 K(\boldsymbol{\psi})^{\frac{n+2b}{4}} \mathcal{Q}(\boldsymbol{\psi})^{\frac{6-n-2b}{4}}.
    \end{eqnarray*}
Thus, 
$$
0 < \dfrac{1}{C_1} \leq J(\boldsymbol{\psi}).
$$
Therefore, $\xi_1 >0$.
\end{proof}

The following lemma establishes a direct relationship between the functionals $J$ and $I$.

\begin{lemma}\label{J=I}
    Assume $2 \leq n \leq 5$ with $n+2b<6$. If $\boldsymbol{\psi}$ is a non-trivial solution of \eqref{PDE-GroundState}, then 
    $$
    J(\boldsymbol{\psi}) = \dfrac{(n+2b)^{\frac{n+2b}{4}} (6-n-2b)^{\frac{6-n-2b}{4}} }{2} I(\boldsymbol{\psi})^{\frac{1}{2}}.
    $$
    In particular, any non-trivial solution $\boldsymbol{\psi} \in \mathcal{P}$ of \eqref{PDE-GroundState} that minimizes of $J$ is a ground state of \eqref{PDE-GroundState}.
\end{lemma}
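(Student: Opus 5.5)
Substitute the Pohozaev identities of Lemma \ref{Pohozaev} into the definition \eqref{J} of $J$. If $\boldsymbol{\psi}$ is a non-trivial solution of \eqref{PDE-GroundState}, then \eqref{P=2I}, \eqref{k=(n+2b)I} and \eqref{Q=(6-n-2b)I} give
$$
P(\boldsymbol{\psi})=2I(\boldsymbol{\psi}),\qquad K(\boldsymbol{\psi})=(n+2b)I(\boldsymbol{\psi}),\qquad \mathcal{Q}(\boldsymbol{\psi})=(6-n-2b)I(\boldsymbol{\psi}).
$$
We have $I(\boldsymbol{\psi})>0$, because $\mathcal{Q}(\boldsymbol{\psi})>0$ for $\boldsymbol{\psi}\neq \mathbf{0}$ and $6-n-2b>0$ (Remark \ref{Q>0}). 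Hence $P(\boldsymbol{\psi})>0$, so $\boldsymbol{\psi}\in\mathcal{P}$ by Lemma \ref{C subset P}$(i)$, and $J(\boldsymbol{\psi})$ is defined. Plugging in,
$$
J(\boldsymbol{\psi})=\frac{(6-n-2b)^{\frac{6-n-2b}{4}}(n+2b)^{\frac{n+2b}{4}}\, I(\boldsymbol{\psi})^{\frac{6-n-2b}{4}+\frac{n+2b}{4}}}{2I(\boldsymbol{\psi})}.
$$
The exponents in the numerator add to $\frac{6}{4}=\frac32$, so dividing by $I(\boldsymbol{\psi})$ leaves $I(\boldsymbol{\psi})^{1/2}$. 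This is the claimed identity, and it is pure bookkeeping once the positivity of $I(\boldsymbol{\psi})$ has been noted.

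For the ``in particular'' part, set $c_{n,b}=\frac{(n+2b)^{\frac{n+2b}{4}}(6-n-2b)^{\frac{6-n-2b}{4}}}{2}>0$. On the set $\mathcal{C}$ of non-trivial critical points we then have $J=c_{n,b}\,I^{1/2}$ with $I>0$, and $t\mapsto c_{n,b}t^{1/2}$ is strictly increasing on $(0,\infty)$. Let $\boldsymbol{\psi}\in\mathcal{C}\cap\mathcal{P}$ minimize $J$, i.e.\ $J(\boldsymbol{\psi})=\xi_1$. For any $\boldsymbol{\phi}\in\mathcal{C}$, Lemma \ref{C subset P}$(i)$ gives $\boldsymbol{\phi}\in\mathcal{P}$, so $c_{n,b}I(\boldsymbol{\psi})^{1/2}=J(\boldsymbol{\psi})\le J(\boldsymbol{\phi})=c_{n,b}I(\boldsymbol{\phi})^{1/2}$. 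Therefore $I(\boldsymbol{\psi})\le I(\boldsymbol{\phi})$, so $I(\boldsymbol{\psi})=\inf_{\mathcal{C}}I$ and $\boldsymbol{\psi}$ is a ground state.

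The only step needing care is the sign issue: one must check that $P(\boldsymbol{\psi})>0$, so that the fractional powers and the quotient make sense, and that minimizing $J$ over $\mathcal{P}$ bounds $J$ on all of $\mathcal{C}$. Both follow from Lemma \ref{C subset P}$(i)$ together with Remark \ref{Q>0}, so I expect no real obstacle.
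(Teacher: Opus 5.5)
Your proof is correct and matches the paper's argument: substitute the three Pohozaev identities into $J$ and add the exponents, which gives $I(\boldsymbol{\psi})^{3/2}/(2I(\boldsymbol{\psi}))$ times the constant, i.e.\ the claimed multiple of $I(\boldsymbol{\psi})^{1/2}$. Your explicit check that $I(\boldsymbol{\psi})>0$, and your monotonicity argument for the ``in particular'' clause (using $\mathcal{C}\subset\mathcal{P}$), fill in details the paper leaves implicit.
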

\begin{proof}
Using \eqref{Q=(6-n-2b)I}, \eqref{k=(n+2b)I} and \eqref{P=2I}, we obtain
    \begin{eqnarray*}
        J(\boldsymbol{\psi}) &=& \dfrac{\mathcal{Q}(\boldsymbol{\psi})^{\frac{6-n-2b}{4}} K(\boldsymbol{\psi})^{\frac{n+2b}{4}}}{P(\boldsymbol{\psi})} = \dfrac{(n+2b)^{\frac{n+2b}{4}} (6-n-2b)^{\frac{6-n-2b}{4}}}{2 I(\boldsymbol{\psi})}I(\boldsymbol{\psi})^{\frac{3}{2}} \\
        &=& \dfrac{(n+2b)^{\frac{n+2b}{4}} (6-n-2b)^{\frac{6-n-2b}{4}} }{2} I(\boldsymbol{\psi})^{\frac{1}{2}}.
    \end{eqnarray*}
\end{proof}

In what follows, given any non-negative function $f \in \mathbf{H}^{1}$, we denote by $f^{\ast}$ its symmetric-decreasing rearrangement (see, for instance, \cite{Leoni} or \cite{Lieb}).
Also, for any $\lambda > 0$, we define $(\delta_{\lambda}g)(x) = g\left(\dfrac{x}{\lambda}\right)$. Thus, for any $\mathbf{g} = (g_1,\ldots,g_l) \in \mathbf{H}^{1}$, we set $\mathbf{g}^{\ast} = (g_{1}^{\ast},\ldots,g_{l}^{\ast})$ and $(\delta_{\lambda}\mathbf{g})(x) = \left( g_{1}\left( \dfrac{x}{\lambda} \right), \ldots, g_{l}\left( \dfrac{x}{\lambda} \right) \right)$.
Regarding scaling transformations and symmetric-decreasing rearrangement, the functionals introduced in this section satisfy the following properties.

\begin{lemma}\label{Q,K,P,Q',K',P'}
    Let $n\geq 2$, $0<b< \min\left\{2, \dfrac{n}{2}\right\}$, $n+2b<6$, and $a, \lambda >0$. If $\boldsymbol{\psi} \in \mathcal{P}$ and $\mathbf{g} \in C_{0}^{\infty} (\mathbb{R}^{n})^{l}$, we have
\begin{itemize}
    \item[$(i)$] $\mathcal{Q}(a \delta_{\lambda} \boldsymbol{\psi}) = a^{2} \lambda^{n} \mathcal{Q}(\boldsymbol{\psi})$.
    \item[$(ii)$] $K(a \delta_{\lambda} \boldsymbol{\psi}) = a^{2} \lambda^{n-2} K(\boldsymbol{\psi})$.
    \item[$(iii)$] $P(a \delta_{\lambda} \boldsymbol{\psi}) = a^{3} \lambda^{n-b} P(\boldsymbol{\psi})$.
    \item[$(iv)$] $K'(a \delta_{\lambda} \boldsymbol{\psi})\mathbf{g} = a \lambda^{n-2} K'(\boldsymbol{\psi})(\delta_{\lambda^{-1}}\mathbf{g} )$.
    \item[$(v)$] $\mathcal{Q}'(a \delta_{\lambda} \boldsymbol{\psi})\mathbf{g} = a \lambda^{n} \mathcal{Q}'(\boldsymbol{\psi})(\delta_{\lambda^{-1}}\mathbf{g} )$.
    \item[$(vi)$] $P'(a \delta_{\lambda} \boldsymbol{\psi})\mathbf{g} = a^{2} \lambda^{n-b} P'(\boldsymbol{\psi})(\delta_{\lambda^{-1}}\mathbf{g} )$.
\end{itemize}
In addition, if $\psi_k$ is non-negative for $k=1,\ldots,l$, then 
\begin{itemize}
    \item[$(vii)$] $\mathcal{Q}(\boldsymbol{\psi}^{\ast}) = \mathcal{Q}(\boldsymbol{\psi})$.
    \item[$(viii)$] $K(\boldsymbol{\psi}^{\ast}) \leq K(\boldsymbol{\psi})$.
    \item[$(ix)$] $P(\boldsymbol{\psi}^{\ast}) \geq P(\boldsymbol{\psi})$.
\end{itemize}
\end{lemma}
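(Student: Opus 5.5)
Items (i)–(vi) follow from a change of variables $x=\lambda y$ together with homogeneity; items (vii)–(ix) follow from standard rearrangement inequalities. I would treat the two groups separately.

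\emph{Scaling identities.} For (i) and (ii), write $\|a\psi_k(\cdot/\lambda)\|_{L^2}^2=a^2\lambda^n\|\psi_k\|_{L^2}^2$. Since $\nabla(\psi_k(x/\lambda))=\lambda^{-1}(\nabla\psi_k)(x/\lambda)$, we also get $\|\nabla(a\delta_\lambda\psi_k)\|_{L^2}^2=a^2\lambda^{n-2}\|\nabla\psi_k\|_{L^2}^2$. Summing with the weights $b_k$, respectively $\gamma_k$, gives both identities.

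For (iii), (H5) gives $F(a\boldsymbol{\psi})=a^3F(\boldsymbol{\psi})$ for $a>0$. Substituting $x=\lambda y$ then gives $|x|^{-b}=\lambda^{-b}|y|^{-b}$ and $dx=\lambda^n dy$, hence the factor $a^3\lambda^{n-b}$.

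For (iv)–(vi), I use the formulas of Lemma \ref{K',Q',P'} and the same substitution. For instance,
\[
\int|x|^{-b}f_k(a\delta_\lambda\boldsymbol{\psi})g_k\,dx=\lambda^{n-b}\int|y|^{-b}f_k(a\boldsymbol{\psi}(y))g_k(\lambda y)\,dy .
\]
Here $g_k(\lambda y)=(\delta_{\lambda^{-1}}g_k)(y)$. Since $F$ is $3$-homogeneous, $f_k$ is $2$-homogeneous, which follows from (H3) and (H5) (differentiating $F(a\mathbf{z})=a^3F(\mathbf{z})$). This yields the factor $a^2$. The computations for $K'$ and $\mathcal{Q}'$ are identical, with one derivative producing $\lambda^{-1}$ in the case of $K'$.

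\emph{Rearrangement.} Item (vii) is equimeasurability of the rearrangement, so $\|\psi_k^\ast\|_{L^2}=\|\psi_k\|_{L^2}$. Item (viii) is the Pólya–Szegő inequality $\|\nabla\psi_k^\ast\|_{L^2}\le\|\nabla\psi_k\|_{L^2}$ for nonnegative $H^1$ functions.

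Item (ix) is the main obstacle. I would use (H8) to write $F=\sum_s F_s$ with each $F_s$ supermodular on $\mathbb{R}^d_+$ and vanishing on coordinate hyperplanes. I then incorporate the weight by introducing the extra variable $y_0=|x|^{-b}$, a function whose rearrangement is itself, since it is radially symmetric decreasing. Because $F_s$ vanishes on the hyperplanes, $F_s(y_i\mathbf{e}_i)=0$. Lemma \ref{PropH8} then shows that $D(y_0,\mathbf{y})=|y_0|\,F_s(\mathbf{y})$, obtained by taking exponent $1$ in the lemma after substituting $y_0=|x|^{-b}$, is supermodular in all variables, including $y_0$. It also vanishes when any $y_j=0$, and it is continuous.

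With these properties, the Burchard–Hajaiej rearrangement inequality (\cite{Burchard}, Brascamp–Lieb–Luttinger type) applies and gives
\[
\int D\bigl(|x|^{-b},\boldsymbol{\psi}(x)\bigr)\,dx\le\int D\bigl(|x|^{-b},\boldsymbol{\psi}^\ast(x)\bigr)\,dx .
\]
Summing over $s$ yields $P(\boldsymbol{\psi})\le P(\boldsymbol{\psi}^\ast)$.

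Two points remain to check. First, the integrals are finite: by Lemma \ref{Ademir.lem2.10.2.11.2.13}(i) and Theorem \ref{GN} they are bounded by Gagliardo–Nirenberg quantities, which stay finite since (vii) and (viii) hold for $\boldsymbol{\psi}^\ast$. Second, the growth condition required in \cite{Burchard}: I would verify it by approximating with truncations $\min(\psi_k,N)\chi_{B_N}$ and passing to the limit using monotone convergence and the cubic bound on $F$.
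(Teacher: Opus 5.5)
Your proposal is correct and follows the paper's route: (i)--(vi) come from the substitution $x=\lambda y$ and the homogeneity of $F$ and $f_k$, (vii)--(viii) from equimeasurability and P\'olya--Szeg\H{o}, and (ix) from Lemma~\ref{PropH8} combined with the Burchard--Hajaiej rearrangement inequality; one correction in (iv): both gradients contribute a factor $\lambda^{-1}$, not just one, since $(\nabla g_k)(\lambda y)=\lambda^{-1}\nabla(\delta_{\lambda^{-1}}g_k)(y)$. Your handling of (ix) is actually cleaner than the paper's: taking $y_0=|x|^{-b}$ as an extra variable that equals its own rearrangement, with exponent $1$, gives exactly the weight $|x|^{-b}$ with the right monotonicity (the paper's $D(|x|,\boldsymbol{\psi})$, built from $|y_0|^{b}$ with $y_0=|x|$, literally carries $|x|^{b}$), and $F_s(y_i\mathbf{e}_i)=0$ for $d\ge 2$ removes the separate diagonal terms, since any one-variable piece is $F(\mathbf{e}_i)\,y_i^3$ with $F(\mathbf{e}_i)\ge 0$ and is handled by Hardy--Littlewood.
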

\begin{proof}
Properties $(i)$--$(vi)$ follow immediately from the definitions, taking into account Lemma \ref{K',Q',P'}. Properties $(vii)$ and $(viii)$ follow from the facts that (see, for instance, \cite[Theorems 16.10 and 16.17]{Leoni})
\[
\| \psi_{k}^{\ast} \|_{{L}^{2}} = \| {\psi_k} \|_{{L}^{2}} \quad \text{and} \quad \| \nabla \psi_{k}^{\ast} \|_{{L}^{2}} \leq \| \nabla {\psi_k} \|_{{L}^{2}}, \quad k=1,\ldots,l.
\]
Let $\widetilde{F} = \widetilde{F}_1 + \ldots + \widetilde{F}_m$, where each $\widetilde{F}_s(\mathbf{y}) = F_s(\mathbf{y}) - \sum_{i=1}^{m}F_s(y_i \mathbf{e}_i)$ for $s=1,\ldots,m$ is supermodular and $\widetilde{F}_s$ is nondecreasing in each variable, since $F_s$ is supermodular. From Lemma \ref{PropH8}, this is a necessary and sufficient condition for 
\begin{equation}\label{G}
    \int_{\mathbb{R}^{n}} G(|x|, \boldsymbol{\psi}) \, dx \leq \int_{\mathbb{R}^{n}} G(|x|, \boldsymbol{\psi}^{\ast}) \, dx
\end{equation}
to hold. See \cite[Theorem 1]{Burchard} and \cite[Proposition 3.1]{Hajaiej}.

From $(H5)$ and by Lemma \ref{Ademir.lem2.10.2.11.2.13},
$F( \psi_i \mathbf{e_i}) = C \psi_{i}^{3}$, where $C= F(\mathbf{e_i}) \geq 0$. Thus, we way apply the same Theorem to obtain
\begin{equation}\label{Gi}
     \int_{\mathbb{R}^{n}} |x|^{-b}F( \psi_i \mathbf{e_i}) \leq \int_{\mathbb{R}^{n}} |x|^{-b}F( \psi_i^{\ast} \mathbf{e_i}), \;\;\;\;\; i=1,\ldots,n.
\end{equation}
Therefore, from \eqref{G} and \eqref{Gi}
\begin{eqnarray*}
    \int_{\mathbb{R}^{n}} |x|^{-b} F( \boldsymbol{\psi}) &=&
    \int_{\mathbb{R}^{n}} |x|^{-b} \widetilde{F} (\boldsymbol{\psi})
    + \sum_{i=1}^{m} \int_{\mathbb{R}^{n}} |x|^{-b} F_(\psi_i \mathbf{e}_i) \\[2mm]
    &=&
     \int_{\mathbb{R}^{n}} D(|x|, \boldsymbol{\psi}) +   \sum_{i=1}^{m} \int_{\mathbb{R}^{n}} |x|^{-b} F_(\psi_i \mathbf{e}_i) \\[2mm]
     &\leq& 
      \int_{\mathbb{R}^{n}} D(|x|, \boldsymbol{\psi}^{\ast}) +  \sum_{i=1}^{m} \int_{\mathbb{R}^{n}} |x|^{-b} F_(\psi_i \mathbf{e}_i) \\[2mm]
      &=&
      \int_{\mathbb{R}^{n}} |x|^{-b} F (\boldsymbol{\psi}^{\ast})
    - \sum_{i=1}^{m} \left[ \int_{\mathbb{R}^{n}} |x|^{-b}  F_(\psi_i^{\ast} \mathbf{e}_i)  - \int_{\mathbb{R}^{n}} |x|^{-b} F_(\psi_i \mathbf{e}_i)  \right] \\[2mm]
    &\leq&  \int_{\mathbb{R}^{n}} |x|^{-b} F (\boldsymbol{\psi}^{\ast}).
\end{eqnarray*}
\end{proof}

The next lemma describes the behavior of $J$ under scaling and symmetric-decreasing rearrangement.

\begin{lemma}\label{J,J'}
    Under the assumptions of Lemma \ref{Q,K,P,Q',K',P'}:
    \begin{itemize}
        \item[$(i)$] $J(a \delta_{\lambda} \boldsymbol{\psi}) = J(\boldsymbol{\psi})$.
        \item[$(ii)$] $J(|\boldsymbol{\psi}|) \leq J(\boldsymbol{\psi})$, where $|\boldsymbol{\psi}| = (|\psi_1|,\ldots, |\psi_l|)$.
        \item[$(iii)$] $J'(a \delta_{\lambda} \boldsymbol{\psi})\mathbf{g} = a^{-1} J'(\boldsymbol{\psi})(\delta_{\lambda^{-1}} \mathbf{g})$.
        
        In addition, if $\psi_k$ is non-negative, for $k=1,\ldots,l$, then
        \item[$(iv)$] $J(\boldsymbol{\psi}^{\ast}) \leq J(\boldsymbol{\psi})$.
    \end{itemize}
\end{lemma}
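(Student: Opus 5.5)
Each item of the lemma follows by feeding the scaling and rearrangement identities of Lemma \ref{Q,K,P,Q',K',P'} into the definition \eqref{J} of $J$; nothing beyond bookkeeping of exponents and one pointwise inequality for $F$ is needed.

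\textbf{Item $(i)$.} I would use $(i)$--$(iii)$ of Lemma \ref{Q,K,P,Q',K',P'}. The numerator of $J(a\delta_\lambda\boldsymbol{\psi})$ acquires the factor
$$a^{2\cdot\frac{6-n-2b}{4}+2\cdot\frac{n+2b}{4}}\,\lambda^{n\frac{6-n-2b}{4}+(n-2)\frac{n+2b}{4}} = a^{3}\lambda^{\frac{6n-n^2-2bn+n^2+2bn-2n-4b}{4}} = a^3\lambda^{n-b}.$$
This is exactly the factor acquired by $P(a\delta_\lambda\boldsymbol{\psi})$, so the ratio is unchanged. Note also that $a\delta_\lambda\boldsymbol{\psi}\in\mathcal{P}$ whenever $\boldsymbol{\psi}\in\mathcal{P}$, so both sides are defined.

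\textbf{Item $(iii)$.} I would differentiate $J=\mathcal{Q}^{p}K^{q}P^{-1}$, with $p=\frac{6-n-2b}{4}$ and $q=\frac{n+2b}{4}$, by the quotient/product rule:
$$J'(\boldsymbol{\psi})\mathbf{g} = J(\boldsymbol{\psi})\left[p\frac{\mathcal{Q}'(\boldsymbol{\psi})\mathbf{g}}{\mathcal{Q}(\boldsymbol{\psi})}+q\frac{K'(\boldsymbol{\psi})\mathbf{g}}{K(\boldsymbol{\psi})}-\frac{P'(\boldsymbol{\psi})\mathbf{g}}{P(\boldsymbol{\psi})}\right].$$
Now evaluate at $a\delta_\lambda\boldsymbol{\psi}$ and use $(iv)$--$(vi)$ together with $(i)$--$(iii)$ of Lemma \ref{Q,K,P,Q',K',P'}. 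Each quotient scales the same way:
$$\frac{a\lambda^{n}}{a^2\lambda^{n}} = \frac{a\lambda^{n-2}}{a^2\lambda^{n-2}} = \frac{a^2\lambda^{n-b}}{a^3\lambda^{n-b}} = a^{-1},$$
with the test function replaced by $\delta_{\lambda^{-1}}\mathbf{g}$. Since item $(i)$ gives $J(a\delta_\lambda\boldsymbol{\psi})=J(\boldsymbol{\psi})$, the claim follows.

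\textbf{Item $(ii)$.} I would use $\||\psi_k|\|_{L^2}=\|\psi_k\|_{L^2}$ and the diamagnetic inequality $\|\nabla|\psi_k|\|_{L^2}\le\|\nabla\psi_k\|_{L^2}$. These give $\mathcal{Q}(|\boldsymbol{\psi}|)=\mathcal{Q}(\boldsymbol{\psi})$ and $K(|\boldsymbol{\psi}|)\le K(\boldsymbol{\psi})$. For the denominator I would invoke (H6):
$$0<P(\boldsymbol{\psi})=\mathrm{Re}\int|x|^{-b}F(\boldsymbol{\psi})\le\int|x|^{-b}F(|\boldsymbol{\psi}|)=P(|\boldsymbol{\psi}|),$$
where $F(|\boldsymbol{\psi}|)$ is real by (H7). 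In particular $|\boldsymbol{\psi}|\in\mathcal{P}$, and since all three quantities are positive, $J(|\boldsymbol{\psi}|)\le J(\boldsymbol{\psi})$.

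\textbf{Item $(iv)$.} For nonnegative $\psi_k$ this is immediate from $(vii)$--$(ix)$ of Lemma \ref{Q,K,P,Q',K',P'}: $\mathcal{Q}$ is preserved, $K$ does not increase, and $P(\boldsymbol{\psi}^*)\ge P(\boldsymbol{\psi})>0$.

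The main obstacle is the interpretation of $P$ in $(ii)$. In \eqref{J}, $P$ is defined without a real part, so I would read it as $\mathrm{Re}$, consistent with \eqref{K,L,P}. I would also check that the diamagnetic inequality holds for each component separately, which is standard.
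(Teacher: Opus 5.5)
Your proposal is correct and follows the paper's route: items $(i)$, $(iii)$ and $(iv)$ come from the scaling and rearrangement identities of Lemma \ref{Q,K,P,Q',K',P'}, and item $(ii)$ uses (H6) for the denominator. Your exponent check, your use of the diamagnetic inequality $\|\nabla|\psi_k|\|_{L^2}\le\|\nabla\psi_k\|_{L^2}$ for $K$, and your reading of $P$ as $\mathrm{Re}\int|x|^{-b}F$ only fill in details the paper's one-line proof leaves implicit.
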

\begin{proof}
    The proofs of $(i)$, $(iii)$, and $(iv)$ are immediate consequences of Lemma \ref{Q,K,P,Q',K',P'}. For $(ii)$, we must use assumption (H6).  
\end{proof}

Equipped with the preceding lemmas, we are now able to present our main result on the existence of ground states. We adopt the following conventions: a function $\boldsymbol{\psi} \in \mathbf{H}^{1}$ is said to be positive (resp., non-negative), denoted $\boldsymbol{\psi} > 0$ ($\boldsymbol{\psi} \geq 0$), if each of its components is positive (resp., non-negative). Furthermore, $\boldsymbol{\psi}$ is radially symmetric if all of its components are radially symmetric functions. Here, we establish a more complete version of Theorem \ref{GS-existence}.

\begin{theorem}\emph{ \textbf{(Existence of ground state solutions)}}\label{Existence-GS} Assume that conditions \emph{(H1) - (H8)} hold. For $n \geq 2$, $n+2b<6$, and $0 <b < \min\left\{2, \dfrac{n}{2}\right\}$, the infimum
\begin{eqnarray}\label{xi_1}
    \xi_1 \;=\; \inf_{\boldsymbol{\psi} \in \mathcal{P}} J(\boldsymbol{\psi})
\end{eqnarray}
    is attained at a function $\boldsymbol{\psi}_0 \in \mathcal{P}$ such that:
\begin{itemize}
    \item[$(i)$] $\boldsymbol{\psi}_0$ is non-negative and radially symmetric function.
    \item[$(ii)$] There exist $t_0 >0$ and $\lambda_0 >0$ such that $\boldsymbol{\psi} = t_0 \delta_{\lambda_0} \boldsymbol{\psi}_0$ is a positive ground state solution of \eqref{PDE-GroundState}. In addition, if $\widetilde{\boldsymbol{\psi}}$ is any ground state of \eqref{PDE-GroundState} then,
   \begin{equation}\label{xi_1=Q}
    \xi_1 \;=\; 
\dfrac{(n+2b)^{\frac{n+2b}{4}} (6-n-2b)^{\frac{4-n-2b}{4}}}{2} \mathcal{Q}(\widetilde{\boldsymbol{\psi}})^{\frac{1}{2}}.
\end{equation}
\end{itemize} 
\end{theorem}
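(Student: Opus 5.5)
We minimize $J$ over $\mathcal{P}$ directly, following the Weinstein strategy.

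\textbf{Step 1: reduce to a normalized radial minimizing sequence.} Take a minimizing sequence $(\boldsymbol{\psi}_j)\subset\mathcal{P}$ with $J(\boldsymbol{\psi}_j)\to\xi_1$, which is positive by Lemma \ref{C subset P}. By Lemma \ref{J,J'}$(ii)$ we may replace $\boldsymbol{\psi}_j$ by $|\boldsymbol{\psi}_j|$. This uses (H6) and (H7) to keep $P>0$. By Lemma \ref{J,J'}$(iv)$ we may then replace it by $|\boldsymbol{\psi}_j|^{\ast}$, so each term is non-negative, radial and non-increasing. Using the scaling invariance of Lemma \ref{J,J'}$(i)$, we choose $a_j,\lambda_j$ such that $\mathcal{Q}(\boldsymbol{\psi}_j)=K(\boldsymbol{\psi}_j)=1$. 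This is possible since by Lemma \ref{Q,K,P,Q',K',P'} the pair $(\mathcal{Q},K)$ scales as $(a^2\lambda^n,a^2\lambda^{n-2})$. The sequence is then bounded in $\mathbf{H}^1$ and $P(\boldsymbol{\psi}_j)=1/J(\boldsymbol{\psi}_j)\to 1/\xi_1$.

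\textbf{Step 2: compactness of $P$ (the main obstacle).} Extract a subsequence with $\boldsymbol{\psi}_j\rightharpoonup\boldsymbol{\psi}_0$ weakly in $\mathbf{H}^1$ and a.e.; then $\boldsymbol{\psi}_0$ is non-negative and radial. We must show $P(\boldsymbol{\psi}_j)\to P(\boldsymbol{\psi}_0)$, and we split $\mathbb{R}^n$ into $B_R$ and its complement.
\begin{itemize}
\item[] On $\mathbb{R}^n\setminus B_R$, use $|x|^{-b}\le R^{-b}$ together with Lemma \ref{Ademir.lem2.10.2.11.2.13}$(i)$ and the $H^1$ bound to get a tail of size $O(R^{-b})$, uniformly in $j$.
\item[] On $B_R$, use the compactness of $H^1(B_R)\hookrightarrow L^{p}(B_R)$ for $p<2n/(n-2)$. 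Combine this with Hölder, placing $|x|^{-b}\in L^{n/(b+\epsilon)}(B_R)$, which is admissible since $b<n/2$.
\end{itemize}
The difference estimate of Lemma \ref{Ademir.lem2.10.2.11.2.13}$(i)$ then gives convergence; the exponents close because $n+2b<6$. Radial symmetry is not even needed here, since the weight provides the decay.

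\textbf{Step 3: identify the limit as a minimizer.} From Step 2, $P(\boldsymbol{\psi}_0)=1/\xi_1>0$, so $\boldsymbol{\psi}_0\in\mathcal{P}$. Weak lower semicontinuity gives $\mathcal{Q}(\boldsymbol{\psi}_0)\le1$ and $K(\boldsymbol{\psi}_0)\le1$, hence $J(\boldsymbol{\psi}_0)\le\xi_1$. Therefore $\boldsymbol{\psi}_0$ attains the infimum, and in fact $\mathcal{Q}=K=1$, so the convergence is strong. This proves $(i)$.

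\textbf{Step 4: Euler--Lagrange equation and ground state property.} Since $\boldsymbol{\psi}_0$ minimizes $J$ on the open set $\mathcal{P}$, we have $J'(\boldsymbol{\psi}_0)=0$. By Lemma \ref{K',Q',P'}, and using $\mathcal{Q}=K=1$ and $P=1/\xi_1$, this reads
$$\tfrac{6-n-2b}{4}\mathcal{Q}'(\boldsymbol{\psi}_0)+\tfrac{n+2b}{4}K'(\boldsymbol{\psi}_0)=\xi_1 P'(\boldsymbol{\psi}_0).$$
Using the scaling rules $(iv)$--$(vi)$ of Lemma \ref{Q,K,P,Q',K',P'}, we choose $t_0,\lambda_0$ so that $\boldsymbol{\psi}=t_0\delta_{\lambda_0}\boldsymbol{\psi}_0$ satisfies $\tfrac12(K'+\mathcal{Q}')(\boldsymbol{\psi})=P'(\boldsymbol{\psi})$, i.e. $\boldsymbol{\psi}$ solves \eqref{PDE-GroundState}. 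Concretely, $\lambda_0^2$ matches the ratio $\tfrac{n+2b}{6-n-2b}$ and $t_0$ absorbs $\xi_1$.

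Positivity follows from (H7): the right-hand side $f_k(\boldsymbol{\psi})\ge0$, so the strong maximum principle for $-\gamma_k\Delta+b_k$ applies componentwise. This needs each component to be nontrivial, which is what the vanishing-on-hyperplanes part of (H8) is meant to handle.

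Since $J$ is invariant under scaling (Lemma \ref{J,J'}$(i)$), $J(\boldsymbol{\psi})=\xi_1$. Every element of $\mathcal{C}$ lies in $\mathcal{P}$ (Lemma \ref{C subset P}), so Lemma \ref{J=I} shows that $I(\boldsymbol{\psi})\le I(\boldsymbol{\phi})$ for every $\boldsymbol{\phi}\in\mathcal{C}$; thus $\boldsymbol{\psi}$ is a ground state. Finally, for any ground state $\widetilde{\boldsymbol{\psi}}$ we have $I(\widetilde{\boldsymbol{\psi}})=I(\boldsymbol{\psi})$. Combining Lemma \ref{J=I} with $I=\mathcal{Q}/(6-n-2b)$ from Lemma \ref{Pohozaev} gives \eqref{xi_1=Q}.
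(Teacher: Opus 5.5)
Your proposal follows the paper's proof essentially step by step: symmetrize with $|\boldsymbol{\psi}_j|^{\ast}$, normalize to $K=\mathcal{Q}=1$, and prove $P(\boldsymbol{\psi}_j)\to P(\boldsymbol{\psi}_0)$ by splitting into $B_R$ and its complement. Your tail bound via $|x|^{-b}\le R^{-b}$ and $H^1\subset L^3$ is a harmless variant of the paper's H\"older estimate with $|x|^{-b}\in L^{n/(b-\epsilon)}(B_R^c)$. You then close with weak lower semicontinuity, derive the Euler--Lagrange equation, rescale, and conclude with Lemma \ref{J=I} and Lemma \ref{Pohozaev}. Your ground-state step via $\mathcal{C}\subset\mathcal{P}$ is stated more cleanly than the paper's.

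There is one bookkeeping slip. With $\delta_\lambda g=g(\cdot/\lambda)$, $\mathcal{Q}'$ carries an extra factor $\lambda^2$ relative to $K'$. Matching the Euler--Lagrange equation therefore forces $\lambda_0^2=\frac{6-n-2b}{n+2b}$, the reciprocal of what you wrote, and then $t_0=\frac{2\xi_1}{n+2b}\lambda_0^{b-2}$.

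The real problem is the positivity of every component. The strong maximum principle applied to $-\gamma_k\Delta\psi_k+b_k\psi_k=|x|^{-b}f_k(\boldsymbol{\psi})\ge 0$ only shows that each $\psi_k$ is either identically zero or strictly positive. Hypothesis (H8) does not rule out the first alternative: its vanishing-on-hyperplanes clause is used only for the rearrangement inequality of Lemma \ref{Q,K,P,Q',K',P'}$(ix)$.

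In fact the claim can fail under (H1)--(H8). Take $l=4$ and $F=\overline{z}_1^{2}z_2+\overline{z}_3^{2}z_4$, with identical coefficients in the pairs $(1,2)$ and $(3,4)$. Critical points of $I$ decouple into pairs of critical points of one and the same two-component action $I_1$. Let $m_1$ be the least action of nontrivial critical points of $I_1$; it is positive by the Pohozaev identities combined with the Gagliardo--Nirenberg bound. A critical point with both pairs nontrivial then has $I\ge 2m_1$. By contrast, $(\phi_1,\phi_2,0,0)$ with $(\phi_1,\phi_2)$ a ground state of the subsystem has $I=m_1$. So every ground state has two vanishing components, and none is positive.

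The paper's proof has the same gap, since it invokes the maximum principle without addressing this. What the argument actually yields is a non-negative radial ground state each of whose components is either $\equiv 0$ or $>0$. Full positivity needs an additional structural hypothesis, for instance one guaranteeing that a critical point with a vanishing component is trivial, as happens for $F=\overline{z}_1^{2}z_2$.
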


\begin{proof}
    Let $(\boldsymbol{\psi}_j) \subset \mathcal{P}$ be a minimizing sequence for \eqref{xi_1}, i.e.,
$$
\lim_{j \to +\infty} J(\boldsymbol{\psi}_j) = \xi_1.
$$
Replacing $\boldsymbol{\psi}_j$ by $|\boldsymbol{\psi}_{j}|^{\ast}$ via Lemma \ref{J,J'} we may assume that $\boldsymbol{\psi}_j$ consists of radially symmetric and non-increasing functions in $\mathbf{H}^{1}$.

Define $\widetilde{\boldsymbol{\psi}}_{j} = t_j \delta_{\lambda_j} \boldsymbol{\psi}_j$, where
$$
t_j \;=\; \dfrac{\mathcal{Q}(\boldsymbol{\psi}_j)^{\frac{n-2}{4}}}{K(\boldsymbol{\psi}_j)^{\frac{n}{4}}}
\;\;\;\;\;\;\;
\mbox{and}
\;\;\;\;\;\;\;
\lambda_j \;=\; \left( \dfrac{K(\boldsymbol{\psi}_j)}{\mathcal{Q}(\boldsymbol{\psi}_j)} \right)^{\frac{1}{2}}.
$$
This choice ensures that
$$
K(\widetilde{\boldsymbol{\psi}}_{j}) = \mathcal{Q}(\widetilde{\boldsymbol{\psi}}_{j})=1 
\;\;\;\;\;\;\;
\mbox{and}
\;\;\;\;\;\;\;
J(\widetilde{\boldsymbol{\psi}}_{j}) = J(\boldsymbol{\psi}_j).
$$
Thus,
$$
\dfrac{1}{P(\widetilde{\boldsymbol{\psi}}_{j})}
\;=\;
J(\widetilde{\boldsymbol{\psi}}_{j})
\;=\;
J(\boldsymbol{\psi}_j) \rightarrow \xi_1 >0.
$$

Since $K(\widetilde{\boldsymbol{\psi}}_{j}) = \mathcal{Q}(\widetilde{\boldsymbol{\psi}}_{j}) =1$, the sequence  $(\widetilde{\boldsymbol{\psi}}_{j})$ is bounded in $\mathbf{H}_{rd}^{1}$.
Here, $\mathbf{H}^{1}_{rd}$ denotes the subspace of $\mathbf{H}^1(\mathbb{R}^n)$ consisting of radially symmetric functions, that is,
\[
\mathbf{H}^{1}_{rd}
:= \{\, u \in \mathbf{H}^1(\mathbb{R}^n) \;:\; u(x) = u(|x|) \,\}.
\]
Since $\mathbf{H}^{1}_{rd}$ is a reflexive space, there exists a subsequence, still denoted by $(\widetilde{\boldsymbol{\psi}}_{j})$, and $\boldsymbol{\psi}_0 \in \mathbf{H}_{rd}^{1}$ such that
\begin{eqnarray*}
    & & \widetilde{\boldsymbol{\psi}}_{j} \rightharpoonup \boldsymbol{\psi}_0 \;\; \mbox{in} \;\; \mathbf{H}^{1}.\\[2mm]
    & & \widetilde{\boldsymbol{\psi}}_{j} \rightarrow \boldsymbol{\psi}_0 \;\; \mbox{in} \;\; \mathbf{L}^{r}(B(0,R)) \;\; \mbox{where} \;\; 2 \leq r < 2^{\ast} \;\; \mbox{for any } R.\\[2mm]
    & & \widetilde{\boldsymbol{\psi}}_{j} \rightarrow \boldsymbol{\psi}_0 \;\; \mbox{a.e. in} \;\; \mathbb{R}^n.
\end{eqnarray*}
Note that $\displaystyle \lim_{j \to + \infty} P(\widetilde{\boldsymbol{\psi}}_j) = P(\boldsymbol{\psi}_0)$. Indeed, by Lemma \ref{Ademir.lem2.10.2.11.2.13},
\begin{eqnarray*}
    |P(\widetilde{\boldsymbol{\psi}_j}) - P(\boldsymbol{\psi}_0)| 
    &\leq&
    \int_{\mathbb{R}^{n}} |x|^{-b} |F(\widetilde{\boldsymbol{\psi}_j}) - F(\boldsymbol{\psi}_0)| \, dx \\[2mm]
    &\leq& C \sum_{m=1}^{l}\sum_{k=1}^{l}
    \int_{\mathbb{R}^{n}} |x|^{-b}\left(     
    |\widetilde{\psi}_{kj}|^{2}  +     |{\psi}_{k0}|^{2} \right)     |\widetilde{\psi}_{mj} - \psi_{m0} | \, dx \\[2mm]
&\leq& C \sum_{m=1}^{l}\sum_{k=1}^{l} \left\{ 
\left( \int_{B(0,R)^{C}} |x|^{-b \gamma'}  \right)^{\frac{1}{\gamma'}} (\|\widetilde{\psi}_{kj}\|_{L^{\gamma}}^{2} + \|{\psi}_{k0}\|_{L^{\gamma}}^{2}  ) \|\widetilde{\psi}_{mj} - \psi_{m0} \|_{L^{\gamma}} \right\} \\[2mm]
&+& C \sum_{m=1}^{l}\sum_{k=1}^{l} \left\{ 
\left( \int_{B(0,R)} |x|^{-b r'}  \right)^{\frac{1}{r'}} (\|\widetilde{\psi}_{kj}\|_{L^{r}}^{2} + \|{\psi}_{k0}\|_{L^{r}}^{2}  ) \|\widetilde{\psi}_{mj} - \psi_{m0} \|_{L^{r}} \right\}
    \end{eqnarray*}
We used Holder's inequality, where
$$
1 = \dfrac{1}{\gamma'} + \dfrac{3}{\gamma} = \dfrac{1}{r'} + \dfrac{3}{r}.
$$
Let $\epsilon < \min\{b,2-b\}$. Taking $\gamma' = \dfrac{n}{b-\epsilon}$ and   $r' = \dfrac{n}{b+\epsilon}$, we have $ 2 \leq \gamma \leq 2^{\ast}$ and $ 2 \leq r \leq 2^{\ast}$.

We deduce that 
$$ P(\boldsymbol{\psi}_0) = \displaystyle \lim_{j \to + \infty} P(\widetilde{\boldsymbol{\psi}}_j) = \dfrac{1}{\xi_1} >0,$$
which means that $\boldsymbol{\psi}_0 \in \mathcal{P}$.

    On the other hand, the lower semi-continuity of the weak convergence gives
$$
K(\boldsymbol{\psi}_0) \leq \liminf_{j} K(\widetilde{\boldsymbol{\psi}}_{j}) \;=\;1
\;\;\;\;\;
\mbox{and}
\;\;\;\;\;
\mathcal{Q}(\boldsymbol{\psi}_0) \leq \liminf_{j} \mathcal{Q}(\widetilde{\boldsymbol{\psi}}_{j}) \;=\;1.
$$
Therefore,
$$
\xi_1 \leq J(\boldsymbol{\psi}_0) = \dfrac{ \mathcal{Q}(\boldsymbol{\psi}_0)^{\frac{6-n-2b}{4}}
K(\boldsymbol{\psi}_0)^{\frac{n+2b}{4}}}{P(\boldsymbol{\psi}_0)} 
\leq \dfrac{1}{P(\boldsymbol{\psi}_0)} = \xi_1.
$$
From the above inequality, we conclude that $$ J(\boldsymbol{\psi}_0) = \xi_1 \;\;\;\;\;
\mbox{and}
\;\;\;\;\; K(\boldsymbol{\psi}_0) = \mathcal{Q}(\boldsymbol{\psi}_0) =1.
 $$

The last assertion, combined with the weak convergence $\widetilde{\boldsymbol{\psi}}_j \rightharpoonup \boldsymbol{\psi}_0$ in $\mathbf{H}^{1}$, implies that $\widetilde{\boldsymbol{\psi}}_j \to \boldsymbol{\psi}_0$ strongly in $\mathbf{H}^{1}$, thus establishing part $(i)$ of the theorem.

For part $(ii)$, we note that for sufficiently small  $t$ and any $\mathbf{u} \in \mathbf{H}^{1}$, we have $(\boldsymbol{\psi}_0 + t \mathbf{u}) \in \mathcal{P}$. Since $\boldsymbol{\psi}_0$ is a minimizer of $J$ on $\mathcal{P}$, it follows that
\begin{eqnarray*}
    0 &=& 
    \left. \dfrac{d}{dt}\right|_{t=0} J(\boldsymbol{\psi}_0 + t \mathbf{u}) \\[2mm]
    &=&
    \left( \dfrac{6-n-2b}{4}\right)J(\boldsymbol{\psi}_0) \dfrac{\mathcal{Q}'(\boldsymbol{\psi}_0)(\mathbf{u})}{\mathcal{Q}(\boldsymbol{\psi}_0)} +
    \left( \dfrac{n+2b}{4}\right)J(\boldsymbol{\psi}_0) \dfrac{K'(\boldsymbol{\psi}_0)(\mathbf{u})}{K(\boldsymbol{\psi}_0)} - J(\boldsymbol{\psi}_0) \dfrac{P'(\boldsymbol{\psi}_0)(\mathbf{u})}{P(\boldsymbol{\psi}_0)}.
\end{eqnarray*}
Since $K(\boldsymbol{\psi}_0) = \mathcal{Q}(\boldsymbol{\psi}_0) = 1$ and $\dfrac{1}{P(\boldsymbol{\psi}_0)} = \xi_1$, it follows that
\begin{equation}\label{k' + cQ' = CP'}
    K'(\boldsymbol{\psi}_0)(\mathbf{u})
    + \left( \dfrac{6-n-2b}{n+2b} \right)\mathcal{Q}'(\boldsymbol{\psi}_0)(\mathbf{u})
    \;=\; \dfrac{4 \xi_1}{n+2b}P'(\boldsymbol{\psi}_0)(\mathbf{u}).
\end{equation}
Next, define $\boldsymbol{\psi} = t_0 \delta_{\lambda_0} \boldsymbol{\psi}_0$ and $\mathbf{v} = \delta_{\lambda_{0}^{-1}}\mathbf{u}$. By Lemma \ref{Q,K,P,Q',K',P'}, we obtain
\begin{eqnarray*}
    I'(\boldsymbol{\psi})(\mathbf{u})
    &=&
    \dfrac{1}{2} [K'(\boldsymbol{\psi})(\mathbf{u}) + \mathcal{Q}'(\boldsymbol{\psi})(\mathbf{u})] - P'(\boldsymbol{\psi})(\mathbf{u}) \\
    &=&
    \dfrac{t_0}{2} [\lambda_{0}^{n-2} K'(\boldsymbol{\psi}_0)(\mathbf{v}) +\lambda_{0}^{n} \mathcal{Q}'(\boldsymbol{\psi}_0)(\mathbf{v})] - t_{0}^{2}\lambda_{0}^{n-b} P'(\boldsymbol{\psi}_0)(\mathbf{v}) \\[2mm]
    &=&
    \dfrac{t_0 \lambda_{0}^{n-2}}{2} [ K'(\boldsymbol{\psi}_0)(\mathbf{v}) +\lambda_{0}^{2} \mathcal{Q}'(\boldsymbol{\psi}_0)(\mathbf{v}) - t_{0}\lambda_{0}^{2-b} P'(\boldsymbol{\psi}_0)(\mathbf{v})] .
\end{eqnarray*}

Choosing $\lambda_{0} \;=\; \left(\dfrac{6-n-2b}{n+2b}\right)^{\frac{1}{2}}$, and $t_0 \;=\; \dfrac{4\xi_1}{n+2b} \left(\dfrac{6-n-2b}{n+2b} \right)^{\frac{b-2}{2}}$, we obtain $I'(\boldsymbol{\psi}) = 0$, then $\boldsymbol{\psi}$ is a solution of the problem.

Now from Lemmas \ref{J,J'} and \ref{J=I}, we have $\boldsymbol{\psi}$ is also a critical point of $J$ with $J(\boldsymbol{\psi}) = J(\boldsymbol{\psi}_0)$. Since $\boldsymbol{\psi}_0$ is a minimizer of $J$, so is $\boldsymbol{\psi}$.

Another application of Lemma \ref{J=I} shows that $\boldsymbol{\psi}$ is a ground state of \eqref{PDE-GroundState}. To see that $\boldsymbol{\psi}$ is positive, we note that
$$
\Delta \psi_k - \dfrac{b_k}{\gamma_k} \psi_k = - \dfrac{|x|^{-b}}{\gamma_k}f_k(\boldsymbol{\psi}) \leq 0.
$$
Since $\gamma_k >0$, $\psi_k$ are non-negative and, $f_k$ satisfies (H7).

Therefore, by the strong maximum principle \cite[Theorem 3.5]{Gilbarg}, we obtain the positivity of $\boldsymbol{\psi}$.

Finally, we prove \eqref{xi_1=Q}. Indeed, if $\boldsymbol{\psi}$ is as in part $(ii)$, then \eqref{Q=(6-n-2b)I} and Lemma \ref{Q,K,P,Q',K',P'} imply 

\begin{eqnarray*}
    \xi_1
    \;=\;
    J(\boldsymbol{\psi})
    \;=\;
    \dfrac{(n+2b)^{\frac{n+2b}{4}} (6-n-2b)^{\frac{6-n-2b}{4}}}{2} I(\boldsymbol{\psi})^{\frac{1}{2}}
    \;=\;
    \dfrac{(n+2b)^{\frac{n+2b}{4}} (6-n-2b)^{\frac{4-n-2b}{4}}}{2} \mathcal{Q}(\boldsymbol{\psi})^{\frac{1}{2}}.
\end{eqnarray*}
Therefore, if  $\widetilde{\boldsymbol{\psi}} \in \mathcal{G}(\omega,0)$, then Remark \ref{Q>0} yields
$$     \xi_1 \;=\; 
\dfrac{(n+2b)^{\frac{n+2b}{4}} (6-n-2b)^{\frac{4-n-2b}{4}}}{2} \mathcal{Q}(\widetilde{\boldsymbol{\psi}})^{\frac{1}{2}}.
$$
\end{proof}

Theorem \ref{Existence-GS} yields the sharp constant for the inequality \eqref{P<CQK}. Specifically, we have

\begin{corollary}
    Let $2 \leq n \leq 5$ with $n+2b<6$. The inequality 
    $$
    P(\mathbf{u}) \leq C_{op} \mathcal{Q}(\mathbf{u})^{\frac{6-n-2b}{4}} K(\mathbf{u})^{\frac{n+2b}{4}}
    $$
    holds for any $\mathbf{u} \in \mathcal{P}$, with
    $$
    C_{op} = \dfrac{2 (6-n-2b)^{\frac{n+2b-4}{4}}}{(n+2b)^{\frac{n+2b}{4}}}\cdot \dfrac{1}{\mathcal{Q}(\boldsymbol{\psi})^{\frac{1}{2}}},
    $$
    where $\boldsymbol{\psi} \in \mathcal{G}(\omega,\boldsymbol{\beta})$.
\end{corollary}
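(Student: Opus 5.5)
The plan is to read the inequality directly off the definition of $\xi_1$ in \eqref{xi_1} and then insert the value of $\xi_1$ supplied by Theorem \ref{Existence-GS}.

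First, take any $\mathbf{u} \in \mathcal{P}$, so that $P(\mathbf{u})>0$. By definition of $\xi_1$ we have $J(\mathbf{u}) \geq \xi_1$, that is,
$$
\dfrac{\mathcal{Q}(\mathbf{u})^{\frac{6-n-2b}{4}} K(\mathbf{u})^{\frac{n+2b}{4}}}{P(\mathbf{u})} \geq \xi_1 .
$$
Lemma \ref{C subset P} gives $\xi_1>0$, so we may multiply through by $P(\mathbf{u})/\xi_1>0$. This yields
$$
P(\mathbf{u}) \leq \xi_1^{-1}\, \mathcal{Q}(\mathbf{u})^{\frac{6-n-2b}{4}} K(\mathbf{u})^{\frac{n+2b}{4}},
$$
and hence the inequality holds with $C_{op}=\xi_1^{-1}$. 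It is moreover sharp, because Theorem \ref{Existence-GS} shows the infimum is attained at $\boldsymbol{\psi}_0\in\mathcal{P}$.

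Second, I identify $\xi_1^{-1}$ explicitly. Theorem \ref{Existence-GS}, formula \eqref{xi_1=Q}, states that for any ground state $\boldsymbol{\psi}\in\mathcal{G}(\omega,\boldsymbol{\beta})$,
$$
\xi_1=\dfrac{(n+2b)^{\frac{n+2b}{4}}(6-n-2b)^{\frac{4-n-2b}{4}}}{2}\,\mathcal{Q}(\boldsymbol{\psi})^{\frac12}.
$$
Inverting, and using $(6-n-2b)^{-\frac{4-n-2b}{4}}=(6-n-2b)^{\frac{n+2b-4}{4}}$, gives exactly the stated $C_{op}$. By Remark \ref{Q>0}, $\mathcal{Q}$ is constant on $\mathcal{G}(\omega,\boldsymbol{\beta})$, so $C_{op}$ does not depend on which ground state is chosen.

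The only delicate point is the exponent bookkeeping behind \eqref{xi_1=Q}, which I would re-derive as a check. Lemma \ref{J=I} gives $J(\boldsymbol{\psi})=\tfrac{(n+2b)^{\frac{n+2b}{4}}(6-n-2b)^{\frac{6-n-2b}{4}}}{2}I(\boldsymbol{\psi})^{1/2}$, and \eqref{Q=(6-n-2b)I} gives $I=\mathcal{Q}/(6-n-2b)$. Substituting, the power of $(6-n-2b)$ becomes $\frac{6-n-2b}{4}-\frac12=\frac{4-n-2b}{4}$, which matches \eqref{xi_1=Q}. The restriction $n\leq5$ and the requirement $6-n-2b>0$ ensure that all the quantities involved are positive, so no further obstacle is expected.
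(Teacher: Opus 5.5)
Your proposal is correct and follows the paper's own (implicit) argument: reading $C_{op}=\xi_1^{-1}$ off the definition of $\xi_1$ over $\mathcal{P}$ (using $\xi_1>0$), then inserting the value \eqref{xi_1=Q} from Theorem \ref{Existence-GS} together with the constancy of $\mathcal{Q}$ on $\mathcal{G}(\omega,\boldsymbol{\beta})$. Your exponent check $\frac{6-n-2b}{4}-\frac{1}{2}=\frac{4-n-2b}{4}$ is also right.
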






\bigskip

\section{Global solutions versus blow-up}

Here we study the dichotomy global existence versus blow-up, improve the Theorem \ref{global-solution}, in order to obtain a sharp result.

\subsection{ Global existence in  \texorpdfstring{$H^{1}$}{H1}}

Theorem \ref{global-solution} established that solutions of system \eqref{INLS} are global in $H^1$ for $4 \leq n+2b < 6$ with $0<b< \min\left\{2,\dfrac{n}{2}\right\}$, provided the initial data is sufficiently small. We now determine the precise smallness required. This is achieved by using a specific set of ground states to derive sharp sufficient conditions for global existence.

The ground states of interest are those with $b_k = \alpha_k \sigma_k$, that is,  those satisfying the system
\begin{equation}\label{PDE-GS}
    -\gamma_k \Delta \psi_k + \alpha_k\sigma_k \psi_k = |x|^{-b}f_k(\boldsymbol{\psi}), \;\;\;\;\;\; k=1,\ldots,l.
\end{equation}
\begin{remark}
    In view of Theorem \ref{Existence-GS}, ground states for \eqref{PDE-GS} do exist. In addition, they can be seen as elements of the set $\mathcal{G}(1, \mathbf{0})$.
\end{remark}

 The sharp criterion for global well-posedness is formulated using these ground states. Specifically,     parts $(ii)$ and $(iii)$ of Theorem \ref{global-solution} can be reformulated as Theorem \ref{thrm: Global}.


\textbf{Proof of Theorem \ref{thrm: Global}:}
First, note that
by Remark \ref{QinG(1,0)}, $Q = \mathcal{Q}$ on $\mathcal{G}(1, \mathbf{0
})$. The result follows by applying the Theorem \ref{global-solution}, where $$\xi_{0}^{2}=\xi_{1}^{2} = \dfrac{(n+2b)^{\frac{n+2b}{2}} (6-n-2b)^{\frac{4-n-2b}{2}}}{4}Q(\boldsymbol{\psi}).$$ 
    \begin{itemize}
        \item[$(i)$] holds directly from Theorem \ref{global-solution}.
        \item[$(ii)$]  In case $n+2b=4$, by Theorem \ref{global-solution}, the result holds if 
        $$
        Q(\mathbf{u}_0) < \dfrac{\xi_{1}^{2}}{4} = Q(\boldsymbol{\psi}).
        $$
        \item[$(iii)$] Another application of Theorem \ref{global-solution}, together with the Pohozaev identities from Lemma \ref{Pohozaev}, yields the results:
        \begin{eqnarray*}
            \left(\dfrac{\xi_1}{s_c +2}\right)^{2} &=&
            \left(\dfrac{2 \xi_1}{n+2b}\right)^{2}
            = 
            \left(\dfrac{n+2b}{6-n-2b}\right)^{\frac{n+2b-4}{2}}Q(\boldsymbol{\psi}) 
            =
            \left(\dfrac{n+2b}{6-n-2b}\right)^{s_c} Q(\boldsymbol{\psi}) \\[2mm]
            &=&
            Q(\boldsymbol{\psi})^{1-s_c} \left(\dfrac{n+2b}{6-n-2b} Q(\boldsymbol{\psi})\right)^{s_c}
            = 
            Q(\boldsymbol{\psi})^{1-s_c}K(\boldsymbol{\psi})^{s_c}
        \end{eqnarray*}
        and
        \begin{eqnarray*}
            \left( \dfrac{s_c}{s_c +2} \right)^{s_c} \left(\dfrac{\xi_1}{s_c+2}\right)^{2} &=&
            \dfrac{s_{c}^{s_c}}{(s_c+2)^{s_c+2}} \xi_{1}^{2} \\[2mm]
            &=& \dfrac{4(n+2b-4)^{\frac{n+2b-4}{2}}}{(n+2b)^{\frac{(n+2b)}{2}}} \dfrac{(n+2b)^{\frac{n+2b}{2}} (6-n-2b)^{\frac{4-n-2b}{2}}}{4}Q(\boldsymbol{\psi}) \\[2mm]
            &=& \left(\dfrac{n+2b-4}{6-n-2b}\right)^{s_c} Q(\boldsymbol{\psi}) 
            =
            Q(\boldsymbol{\psi})^{1-s_c} \left(\dfrac{n+2b-4}{6-n-2b} Q(\boldsymbol{\psi})\right)^{s_c} \\[2mm] &=&Q(\boldsymbol{\psi})^{1-s_c}  \mathcal{E}(\boldsymbol{\psi})^{s_c}.            
        \end{eqnarray*}
    \end{itemize}
\qed

    


\subsection{Virial identity}

In order to prove the sharp condition of Theorem \ref{thrm: Global}, we will introduce the \emph{Virial} with a smooth cut-off function.
\begin{theorem}\label{V,V',V''}
    Let $n\geq 2$, $0 < b< \min\left\{2, \dfrac{n}{2}\right\}$ with $n+2b<6$. Assume $\mathbf{u}_0 \in \mathbf{H}^{1}$, and let $\mathbf{u}$ be the corresponding solution given by Theorem \ref{Existence H1}. Assume $\varphi \in C_{0}^{\infty}(\mathbb{R}^{n})$ and define
\begin{eqnarray}\label{V}
    V(t) = \int_{\mathbb{R}^{n}} \varphi(x) \left(  \sum_{k=1}^{l} \dfrac{\alpha_{k}^{2}}{\gamma_k} |u_k(x,t)|^{2} \right) \, dx.
\end{eqnarray}
Then, for $m_k = \dfrac{\alpha_k}{2\gamma_k}$, we have
\begin{equation}\label{V'}
V'(t) \;=\; \mathcal{R}(t) - 4 \int_{\mathbb{R}^{n}} \varphi(x) \mbox{Im} \sum_{k=1}^{l} m_k |x|^{-b} f_k(\mathbf{u})\overline{u}_k \, dx
\end{equation}
where
\begin{equation}\label{R}
    \mathcal{R}(t) = 2 \sum_{k=1}^{l} \alpha_k \mbox{Im} \int_{\mathbb{R}^n} \nabla \varphi \nabla u_k \overline{u}_k\, dx = 2 \sum_{k=1}^{l} \alpha_k \mbox{Im} \int_{\mathbb{R}^n} \varphi' \dfrac{x \cdot \nabla u_k}{r}\overline{u_k}\, dx,
\end{equation}
and
\begin{equation}\label{V''}
V''(t) \;=\; \mathcal{R}^{'}(t) - 4 \dfrac{d}{dt} \left[ \int_{\mathbb{R}^{n}} \varphi(x) \mbox{Im} \sum_{k=1}^{l} m_k |x|^{-b} f_k(\mathbf{u})\overline{u}_k \, dx \right],
\end{equation}
where
\begin{eqnarray}\label{R'}
    \mathcal{R}'(t) &=& 4 \sum_{j=1}^{n}\sum_{m=1}^{n} \int_{\mathbb{R}^{n}} \mbox{Re} \left( \sum_{k=1}^{l} \gamma_k \partial_{x_m}\mathbf{u} \partial_{x_j}\overline{\mathbf{u}} \right) \dfrac{\partial^{2} \varphi}{\partial_{x_j}\partial_{x_m}} \, dx - \sum_{k=1}^{l} \gamma_k  \int_{\mathbb{R}^{n}} |u_k|^{2} \Delta^{2} \varphi \, dx \nonumber \\[2mm]
    &-& 2 \int_{\mathbb{R}^{n}} |x|^{-b} \mbox{Re} F(\mathbf{u}) \Delta \varphi \, dx + 4 \mbox{Re} \int_{\mathbb{R}^{n}} \nabla(|x|^{-b}) \nabla \varphi F(\mathbf{u})\,dx.  
\end{eqnarray}
\end{theorem}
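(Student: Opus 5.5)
The plan is a direct computation: differentiate $V$ in time, substitute the equations \eqref{INLS}, and integrate by parts. We first work with smooth, rapidly decaying solutions, so that every integration by parts is justified. The identities then extend to $\mathbf{H}^1$ solutions from Theorem \ref{Existence H1} by approximation. The approximation uses continuous dependence together with Lemma \ref{Ademir.lemma2.15}, which keeps $|x|^{-b}f_k(\mathbf{u})$ continuous in $\mathbf{H}^{-1}$. Since $\varphi\in C_0^\infty$, every term in \eqref{V'}--\eqref{R'} is continuous in $\mathbf{u}\in\mathbf{H}^1$. The only delicate point is $\nabla(|x|^{-b})=-b|x|^{-b-2}x$, whose size is $|x|^{-b-1}$. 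It is locally integrable against $|\mathbf{u}|^3$ by the same Hölder/Hardy splitting over $B_R$ and its complement used in Lemma \ref{H1-inequality} and Corollary \ref{corollario-sobolev law}; this needs $b+1<n$, which holds for $n\ge2$.

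\textbf{First derivative.} From the equation,
\[
\partial_t u_k=\frac{i}{\alpha_k}\big(\gamma_k\Delta u_k-\beta_k u_k+|x|^{-b}f_k(\mathbf{u})\big),
\]
so that
\[
\partial_t|u_k|^2=2\,\mathrm{Re}(\overline{u}_k\partial_tu_k)=-\frac{2}{\alpha_k}\mathrm{Im}\big(\gamma_k\overline{u}_k\Delta u_k+|x|^{-b}f_k(\mathbf{u})\overline{u}_k\big).
\]
The $\beta_k$ term drops out because it is real. Multiplying by $\varphi\,\alpha_k^2/\gamma_k$ and integrating the Laplacian term by parts gives $\mathrm{Im}\int\varphi\overline{u}_k\Delta u_k=-\mathrm{Im}\int\nabla\varphi\cdot\nabla u_k\,\overline{u}_k$. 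This produces $\mathcal{R}(t)$ in \eqref{R}, with the coefficient $\alpha_k$ fixed by the weight $\alpha_k^2/\gamma_k$. The nonlinear part gives the term with $m_k=\alpha_k/(2\gamma_k)$ in \eqref{V'}, up to checking the normalisation. When $\varphi$ is radial, the second form of \eqref{R} follows from $\nabla\varphi=\varphi'(r)\,x/r$.

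\textbf{Second derivative.} Equation \eqref{V''} is just the derivative of \eqref{V'}, so the work is computing $\mathcal{R}'$. Write $\mathcal{R}=2\sum_k\alpha_k\mathrm{Im}\int\partial_j\varphi\,\partial_ju_k\,\overline{u}_k$. Differentiating and integrating by parts once gives
\[
\mathcal{R}'=-2\sum_k\alpha_k\,\mathrm{Im}\int\big(2\partial_j\varphi\,\partial_j\overline{u}_k+\Delta\varphi\,\overline{u}_k\big)\partial_tu_k .
\]
Substituting $\alpha_k\partial_tu_k=i(\gamma_k\Delta u_k-\beta_ku_k+|x|^{-b}f_k)$ splits $\mathcal{R}'$ into three groups.

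The linear dispersive part is the classical Morawetz/virial computation. It yields the Hessian term $4\sum\gamma_k\int\mathrm{Re}(\partial_m u_k\partial_j\overline{u}_k)\partial_{jm}\varphi$ and the bilaplacian term $-\sum\gamma_k\int|u_k|^2\Delta^2\varphi$.

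The $\beta_k$ part cancels, again by a standard integration by parts, since $\mathrm{Re}\int(2\nabla\varphi\cdot\nabla\overline{u}_k+\Delta\varphi\,\overline{u}_k)u_k=0$.

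The nonlinear part is
\[
-2\,\mathrm{Re}\int|x|^{-b}\sum_k f_k\big(2\nabla\varphi\cdot\nabla\overline{u}_k+\Delta\varphi\,\overline{u}_k\big).
\]
Here I invoke Lemma \ref{Ademir.lem2.10.2.11.2.13}: part (ii) turns $\mathrm{Re}\sum_k f_k\nabla\overline{u}_k$ into $\mathrm{Re}\,\nabla F(\mathbf{u})$, and part (iii) turns $\mathrm{Re}\sum_k f_k\overline{u}_k$ into $3\,\mathrm{Re}F$. Integrating $\nabla F$ by parts against $|x|^{-b}\nabla\varphi$ moves the derivative onto $\Delta\varphi$ and onto $\nabla(|x|^{-b})$. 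This gives the last two terms of \eqref{R'}: the $\Delta\varphi$ coefficient becomes $4-6=-2$, and the boundary-weight term becomes $4\,\mathrm{Re}\int\nabla(|x|^{-b})\cdot\nabla\varphi\,F$ (the sign depends on conventions).

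\textbf{Main obstacle.} The main obstacle is this nonlinear bookkeeping: getting the constants in front of $\Delta\varphi$ and $\nabla(|x|^{-b})$ right. A further issue is whether the leftover $\mathrm{Im}\sum m_k f_k\overline{u}_k$ term, which is kept explicitly in \eqref{V'}, is consistent with the weights. The weights are $\alpha_k^2/\gamma_k$ rather than the $\alpha_k\sigma_k$ of Remark \ref{remarkH3H4}, so this term need not vanish and must be carried along rather than dropped.

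The justification at the origin is the other technical point. It is handled by cutting out $B_\varepsilon$, noting that the boundary terms are $O(\varepsilon^{n-1-b})\to0$, and using the integrability bound above.
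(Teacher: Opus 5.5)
Your computation is correct and is essentially the paper's. The paper splits $\mathcal{R}'=4\sum_k I_k+2\sum_k J_k$ and treats the two pieces separately, whereas you keep them together as $-2\sum_k\alpha_k\,\mathrm{Im}\int(2\nabla\varphi\cdot\nabla\overline{u}_k+\Delta\varphi\,\overline{u}_k)\,\partial_t u_k\,dx$. The rest is the same: the integrations by parts, the cancellation of the $\beta_k$ terms, the use of Lemma~\ref{Ademir.lem2.10.2.11.2.13}(ii)--(iii), and the final integration of $\mathrm{Re}\,\nabla F$ against $|x|^{-b}\nabla\varphi$. No hedging is needed on the constants. The identity $-2\alpha_k/\gamma_k=-4m_k$ gives exactly the $-4$ in \eqref{V'}. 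The identity $\mathrm{div}(|x|^{-b}\nabla\varphi)=\nabla(|x|^{-b})\cdot\nabla\varphi+|x|^{-b}\Delta\varphi$ gives the last term of \eqref{R'} with a plus sign.

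What needs correcting is your justification paragraph, which goes beyond the paper (the paper's proof is purely formal).

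\textbf{The integrability claim is false.} It is not true that $|x|^{-b-1}|\mathbf{u}|^3$ is locally integrable for every $\mathbf{u}\in\mathbf{H}^1$ as soon as $b+1<n$; that condition only makes the weight itself locally integrable. Take $n=3$, $b=1$ and $u=\chi(x)|x|^{-\beta}$ with $\chi$ a cutoff and $\tfrac13<\beta<\tfrac12$. Then $u\in H^1$, but $\int_{B_1}|x|^{-2}|u|^3\,dx=\int_{B_1}|x|^{-2-3\beta}\,dx=\infty$. The same construction works for $n=3$ with any $b>\tfrac12$, and for $n=4,5$ with every admissible $b$.

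Lemma~\ref{H1-inequality} does not rescue this. Its splitting relies on space--time Strichartz control of $\nabla u$ in $L^r$ with $r>2$, not on fixed-time $H^1$ bounds. So for a non-radial $\varphi$ with $\nabla\varphi(0)\neq0$, the last term of \eqref{R'} is not even defined on all of $\mathbf{H}^1$. Your claim that every term is continuous on $\mathbf{H}^1$ therefore fails.

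\textbf{The radial case is fine.} For smooth radial $\varphi$, which is the only case used in Theorem~\ref{blow-up}, the problem disappears. Since $\varphi'(0)=0$, the quotient $\varphi'(r)/r$ is bounded, so $|\nabla(|x|^{-b})\cdot\nabla\varphi|=b|x|^{-b}|\varphi'(r)/r|\le C|x|^{-b}$. Theorem~\ref{GN} then gives the continuity you need.

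\textbf{The approximation step also needs adjusting.} Smooth, rapidly decaying solutions are not directly available here, because $|x|^{-b}f_k(\mathbf{u})$ is singular at the origin. The approximation should instead regularize the weight, e.g.\ replace $|x|^{-b}$ by $(|x|^2+\varepsilon^2)^{-b/2}$, and then pass to the limit.
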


\begin{proof}
A direct calculation shows that the solution $\mathbf{u}$ of the system \eqref{INLS} satisfies
\begin{eqnarray*}
V'(t) &=&
\sum_{k=1}^{l} \dfrac{\alpha_{k}^{2}}{\gamma_k} \int_{\mathbb{R}^{n}} \varphi(x) 2 \mbox{Re} \,\partial_t u_k \overline{u}_k \, dx \;=\;
2 \mbox{Re}\sum_{k=1}^{l} \dfrac{\alpha_{k}}{\gamma_k} \int_{\mathbb{R}^{n}} \varphi(x) (\alpha_k \partial_t u_k) \overline{u}_k \, dx \\[2mm]
&=&
2 \mbox{Re} \,i\,\sum_{k=1}^{l} \dfrac{\alpha_{k}}{\gamma_k} \int_{\mathbb{R}^{n}} \varphi(x) (-i\alpha_k \partial_t u_k) \overline{u}_k \, dx \\[2mm]
&=&
2 \mbox{Re} \,i\,\sum_{k=1}^{l} \dfrac{\alpha_{k}}{\gamma_k} \int_{\mathbb{R}^{n}} \varphi(x) [\gamma_k \Delta u_k - \beta_k u_k + |x|^{-b}f_k(\mathbf{u})] \overline{u}_k \, dx \\[2mm]
&=&
- 2 \mbox{Im} \left\{\sum_{k=1}^{l} \dfrac{\alpha_{k}}{\gamma_k} \int_{\mathbb{R}^{n}} \varphi(x) [\gamma_k \Delta u_k - \beta_k u_k + |x|^{-b}f_k(\mathbf{u})] \overline{u}_k \, dx \right\} \\[2mm]
&=&
- 2 \mbox{Im} \left\{\sum_{k=1}^{l}  \int_{\mathbb{R}^{n}} \alpha_k \Delta u_k \overline{u}_k \varphi(x)  - \dfrac{\alpha_{k} \beta_k }{\gamma_k} |u_k|^{2}\varphi(x) + |x|^{-b} \varphi(x)f_k(\mathbf{u}) \overline{u}_k \, dx \right\} \\[2mm]
&=&
2 \mbox{Im} \sum_{k=1}^{l} \alpha_k  \int_{\mathbb{R}^{n}} \nabla u_k \nabla( \overline{u}_k \varphi(x)) \, dx  - 2  \int_{\mathbb{R}^{n}} \varphi(x)|x|^{-b}  \mbox{Im} \sum_{k=1}^{l}  m_k f_k(\mathbf{u}) \overline{u}_k \, dx  \\[2mm]
&=&
2 \mbox{Im} \sum_{k=1}^{l} \alpha_k  \int_{\mathbb{R}^{n}} \nabla u_k ( \nabla \overline{u}_k \varphi(x) + \overline{u}_k \nabla \varphi(x)) \, dx  \\[2mm]
& & - 2  \int_{\mathbb{R}^{n}} \varphi(x)|x|^{-b}  \mbox{Im} \sum_{k=1}^{l}  m_k f_k(\mathbf{u}) \overline{u}_k \, dx \\[2mm]
&=&
2 \mbox{Im} \sum_{k=1}^{l} \alpha_k  \int_{\mathbb{R}^{n}} \nabla \varphi(x) \nabla u_k  \overline{u}_k  \, dx  - 2  \int_{\mathbb{R}^{n}} \varphi(x)|x|^{-b}  \mbox{Im} \sum_{k=1}^{l}  m_k f_k(\mathbf{u}) \overline{u}_k \, dx.  
\end{eqnarray*}

Now, calculating the derivative of $\mathcal{R}$,
\begin{eqnarray*}
  \mathcal{R}'(t)
  &=& 
  2 \sum_{k=1}^{l}  \alpha_k
  Im \int_{\mathbb{R}^n} \nabla \varphi \cdot (\nabla \partial_t u_k \, \overline{u}_k  + \nabla u_k \, \partial_t \overline{u}_k) \, dx  \\[2mm]
  &=& 
  2 \sum_{k=1}^{l}  \alpha_k Im \left\{   \sum_{j=1}^{n} \int_{\mathbb{R}^n} \left[   \partial_t\overline{u}_k \partial_{x_j}u_k \partial_{x_j}\varphi - \partial_t u_k \partial_{x_j}(\overline{u}_k \partial_{x_j}\varphi)   \right]\,dx   \right\} \\[2mm]
&=& 
  2 \sum_{k=1}^{l}  \alpha_k Im \left\{   \sum_{j=1}^{n} \int_{\mathbb{R}^n} \left[   \partial_t\overline{u}_k \partial_{x_j}u_k \partial_{x_j}\varphi -  \partial_t u_k \partial_{x_j}\overline{u}_k \partial_{x_j}\varphi - \partial_t u_k \overline{u}_k \partial_{x_j}^{2}\varphi \right]\,dx   \right\} \\[2mm]
&=& 
  4 \sum_{k=1}^{l}  Im \left\{ \alpha_k    \int_{\mathbb{R}^n} \sum_{j=1}^{n}   \partial_t\overline{u}_k \partial_{x_j}u_k \partial_{x_j}\varphi\,dx   \right\}
  - 
  2 \sum_{k=1}^{l}   Im \left\{  \alpha_k \int_{\mathbb{R}^n}  \sum_{j=1}^{n}  \partial_t u_k \overline{u}_k \partial_{x_j}^{2}\varphi \,dx  \right\} \\[2mm]
&=&
    4\sum_{k=1}^{l} I_k + 2 \sum_{k=1}^{l}J_k.
\end{eqnarray*}
Let us start with $J_k$, using \eqref{INLS}, we obtain 
\begin{eqnarray*}
    J_k &=& -    Im \left\{  \alpha_k \int_{\mathbb{R}^n}  \sum_{j=1}^{n}  \partial_t u_k \overline{u}_k \partial_{x_j}^{2}\varphi \,dx   \right\} \;=\; \sum_{j=1}^{n}  \mbox{Im}   \, i \int_{\mathbb{R}^n} i \alpha_k   \partial_t u_k \overline{u}_k \partial_{x_j}^{2}\varphi \,dx  \\[2mm] 
    &=& \sum_{j=1}^{n}   \mbox{Re}   \, \int_{\mathbb{R}^n} - [\gamma_k \Delta u_k - \beta_k u_k +|x|^{-b}f_k(\mathbf{u})] \overline{u}_k \partial_{x_j}^{2} \varphi \,dx  \\[2mm]
    &=& - \mbox{Re} \sum_{j=1}^{n}\sum_{m=1}^{n}   \, \gamma_k \int_{\mathbb{R}^n}  \partial_{x_m}^{2} u_k \overline{u}_k \partial_{x_j}^{2}\varphi \,dx \\[2mm]
    &+&  \mbox{Re} \, \sum_{j=1}^{n}   
 \beta_k \int_{\mathbb{R}^n}   |u_k|^{2} \partial_{x_j}^{2}\varphi \,dx \;-\;   \mbox{Re} \, \sum_{j=1}^{n} \int_{\mathbb{R}^n} |x|^{-b}f_k(\mathbf{u}) \overline{u}_k \partial_{x_j}^{2}\varphi \,dx\\[2mm]
    &=&  \mbox{Re} \sum_{j=1}^{n}\sum_{m=1}^{n}   \, \gamma_k \int_{\mathbb{R}^n}  |\partial_{x_m} u_k|^2 \partial_{x_j}^{2}\varphi \,dx \;+\; 
     \mbox{Re} \sum_{j=1}^{n}\sum_{m=1}^{n}   \, \gamma_k \int_{\mathbb{R}^n}  \partial_{x_m} u_k \overline{u}_k \dfrac{ \partial^{3} \varphi}{\partial_{x_m}\partial_{x_j}^{2}} \,dx 
    \\[2mm]
    &+&     
 \beta_k \int_{\mathbb{R}^n}   |u_k|^{2} \Delta \varphi \,dx \;-\;   \mbox{Re} \,  \int_{\mathbb{R}^n} |x|^{-b}f_k(\mathbf{u}) \overline{u}_k \Delta \varphi \,dx \\[2mm]
    &=&    \gamma_k \int_{\mathbb{R}^n}  |\nabla u_k|^2 \Delta \varphi \,dx \;+\;  \beta_k \int_{\mathbb{R}^n}   |u_k|^{2} \Delta \varphi \,dx \;-\;   \mbox{Re} \,  \int_{\mathbb{R}^n} |x|^{-b}f_k(\mathbf{u}) \overline{u}_k \Delta \varphi \,dx \\[2mm]
    &+&
     \mbox{Re} \sum_{j=1}^{n}\sum_{m=1}^{n}   \, \gamma_k \int_{\mathbb{R}^n}  \partial_{x_m} u_k \overline{u}_k \dfrac{ \partial^{3} \varphi}{\partial_{x_m}\partial_{x_j}^{2}} \,dx .  
\end{eqnarray*}
Since $\partial_{x_m} |u_k|^{2} = 2 \mbox{Re} \, \partial_{x_m}u_k \overline{u}_k$, then
$$
\mbox{Re} \int_{\mathbb{R}^n}  \partial_{x_m} u_k \overline{u}_k \dfrac{ \partial^{3} \varphi}{\partial_{x_m}\partial_{x_j}^{2}} \,dx \;=\; 
\dfrac{1}{2} \int_{\mathbb{R}^n}  \partial_{x_m}(|u_k|^{2}) \dfrac{ \partial^{3} \varphi}{\partial_{x_m}\partial_{x_j}^{2}} \,dx \;=\; -
\dfrac{1}{2} \int_{\mathbb{R}^n}  |u_k|^{2} \dfrac{ \partial^{4} \varphi}{\partial_{x_m}^{2}\partial_{x_j}^{2}} \,dx.
$$

Therefore,
\begin{eqnarray*}
    J_k  &=&    \gamma_k \int_{\mathbb{R}^n}  |\nabla u_k|^2 \Delta \varphi \,dx \;+\;  \beta_k \int_{\mathbb{R}^n}   |u_k|^{2} \Delta \varphi \,dx \;-\;   \mbox{Re} \,  \int_{\mathbb{R}^n} |x|^{-b}f_k(\mathbf{u}) \overline{u}_k \Delta \varphi \,dx \\[2mm]
    &-&
\dfrac{\gamma_k}{2} \int_{\mathbb{R}^n}  |u_k|^{2} \Delta^{2} \varphi \,dx.
\end{eqnarray*}
Summing over $k$, we obtain
\begin{eqnarray}\label{2J}
   2 \sum_{k=1}^{l} J_k  &=&  2 \sum_{k=1}^{l}   \gamma_k \int_{\mathbb{R}^n}  |\nabla u_k|^2 \Delta \varphi \,dx \;+\; 2 \sum_{k=1}^{l}  \beta_k \int_{\mathbb{R}^n}   |u_k|^{2} \Delta \varphi \,dx  \\[2mm]
   &-& 2 \sum_{k=1}^{l}    \mbox{Re} \,  \int_{\mathbb{R}^n} |x|^{-b}f_k(\mathbf{u}) \overline{u}_k \Delta \varphi \,dx \;-\;  \sum_{k=1}^{l} 
\gamma_k \int_{\mathbb{R}^n}  |u_k|^{2} \Delta^{2} \varphi \,dx. \nonumber
\end{eqnarray}

Now, we study carefully $I_k$, since $\mbox{Im}z = - \mbox{Im}\overline{z}$, equation \eqref{INLS} implies:
\begin{eqnarray*}
    I_k &=& Im \left\{ \alpha_k    \int_{\mathbb{R}^n} \sum_{j=1}^{n}   \partial_t\overline{u}_k \partial_{x_j}u_k \partial_{x_j}\varphi\,dx   \right\} \;=\; - Im \left\{ \alpha_k    \int_{\mathbb{R}^n} \sum_{j=1}^{n}   \partial_t u_k \partial_{x_j}\overline{u}_k \partial_{x_j}\varphi\,dx   \right\} \\[2mm] &=&  Im\, i \,    \int_{\mathbb{R}^n} \sum_{j=1}^{n}  i \alpha_k  \partial_t u_k \partial_{x_j}\overline{u}_k \partial_{x_j}\varphi\,dx \;+\; \mbox{Re}\, \sum_{j=1}^{n}   \int_{\mathbb{R}^n} i \alpha_k  \partial_t u_k \partial_{x_j}\overline{u}_k \partial_{x_j}\varphi\,dx \\[2mm]
    &=& - \mbox{Re}\, \sum_{j=1}^{n}  \int_{\mathbb{R}^n}   [ \gamma_k \Delta u_k - \beta_k u_k +|x|^{-b}f_k(\mathbf{u}) ] \partial_{x_j}\overline{u}_k \partial_{x_j}\varphi\,dx \\[2mm]
    &=&
    - \mbox{Re}\,  \gamma_k \sum_{j=1}^{n}\sum_{m=1}^{n}  \int_{\mathbb{R}^n}    \partial_{x_m}^{2}u_k  \partial_{x_j}\overline{u}_k \partial_{x_j}\varphi\,dx \;+\;
    \beta_k \sum_{j=1}^{n}  \int_{\mathbb{R}^n}  \mbox{Re}\,( \partial_{x_j}\overline{u}_k  u_k) \partial_{x_j}\varphi\,dx \\[2mm]
    &-&  \sum_{j=1}^{n}  \int_{\mathbb{R}^n} |x|^{-b} \mbox{Re}(f_k(\mathbf{u}) \partial_{x_j}\overline{u}_k) \partial_{x_j}\varphi\,dx \\[2mm]
    \\[2mm]
    &=&
    - \mbox{Re}\,  \gamma_k \sum_{j=1}^{n}\sum_{m=1}^{n}  \int_{\mathbb{R}^n}    \partial_{x_m}^{2}u_k  \partial_{x_j}\overline{u}_k \partial_{x_j}\varphi\,dx \;+\;
    \dfrac{\beta_k}{2} \sum_{j=1}^{n}  \int_{\mathbb{R}^n}   \partial_{x_j}(|u_k|^{2}) \partial_{x_j}\varphi\,dx \\[2mm]
    &-&  \sum_{j=1}^{n}  \int_{\mathbb{R}^n} |x|^{-b} \mbox{Re}(f_k(\mathbf{u}) \partial_{x_j}\overline{u}_k) \partial_{x_j}\varphi\,dx
\\[2mm]
    &=&
    - \mbox{Re}\,  \gamma_k \sum_{j=1}^{n}\sum_{m=1}^{n}  \int_{\mathbb{R}^n}    \partial_{x_m}^{2}u_k  \partial_{x_j}\overline{u}_k \partial_{x_j}\varphi\,dx \;-\;
    \dfrac{\beta_k}{2}   \int_{\mathbb{R}^n}   |u_k|^{2} \Delta \varphi\,dx \\[2mm]
    &-&  \sum_{j=1}^{n}  \int_{\mathbb{R}^n} |x|^{-b} \mbox{Re}(f_k(\mathbf{u}) \partial_{x_j}\overline{u}_k) \partial_{x_j}\varphi\,dx
\end{eqnarray*}
Since, 
$$
\partial_{x_j}F(\mathbf{u}) = \sum_{k=1}^{l} \left( \dfrac{\partial F}{\partial u_k} \dfrac{\partial u_k}{\partial_{x_j}} + \dfrac{\partial F}{\partial \overline{u}_k} \dfrac{\partial \overline{u}_k}{\partial_{x_j}}  \right).
$$
Taking the real part, and using (H3), we have
\begin{eqnarray*}
\mbox{Re} \,\partial_{x_j}F(\mathbf{u}) &=& \mbox{Re} \sum_{k=1}^{l} \left( \overline{\dfrac{\partial F}{\partial u_k}} \dfrac{\partial \overline{u}_k}{\partial_{x_j}} + \dfrac{\partial F}{\partial \overline{u}_k} \dfrac{\partial \overline{u}_k}{\partial_{x_j}}  \right)
=\mbox{Re} \sum_{k=1}^{l} \left( \overline{\dfrac{\partial F}{\partial u_k}} + \dfrac{\partial F}{\partial \overline{u}_k}  \right) \partial_{x_j}\overline{u}_k \\[2mm]
&=&
\mbox{Re} \sum_{k=1}^{l} f_k(\overline{u})\partial_{x_j} \overline{u}_k. 
\end{eqnarray*}
Therefore, taking de sum over k, we have
\begin{eqnarray*}
    \sum_{k=1}^{l} I_k &=&     \sum_{k=1}^{l} - \gamma_k   \mbox{Re}\,  \sum_{j=1}^{n}\sum_{m=1}^{n}  \int_{\mathbb{R}^n}    \partial_{x_m}^{2}u_k  \partial_{x_j}\overline{u}_k \partial_{x_j}\varphi\,dx \\[2mm]
    &-&  \sum_{k=1}^{l}
    \dfrac{\beta_k}{2}   \int_{\mathbb{R}^n}   |u_k|^{2} \Delta \varphi\,dx \;-\; \sum_{j=1}^{n}  \int_{\mathbb{R}^n} |x|^{-b} \partial_{x_j} {\mbox{Re}F(\mathbf{u})}  \partial_{x_j}\varphi\,dx \\[2mm]
    &=:&  \sum_{k=1}^{l} A_k - \sum_{k=1}^{l}
    \dfrac{\beta_k}{2}   \int_{\mathbb{R}^n}   |u_k|^{2} \Delta \varphi\,dx +B.
\end{eqnarray*}

Integration by Parts $A_k$:
$$
A_k \;=\;
  \gamma_k   \mbox{Re}\,  \sum_{j=1}^{n}\sum_{m=1}^{n}  \int_{\mathbb{R}^n}    \partial_{x_m}u_k  \dfrac{\partial^{2} \overline{u}_k}{\partial_{x_m}\partial_{x_j}} \partial_{x_j}\varphi\,dx
+
\gamma_k   \mbox{Re}\,  \sum_{j=1}^{n}\sum_{m=1}^{n}  \int_{\mathbb{R}^n}    \partial_{x_m}u_k  \partial_{x_j}\overline{u}_k \dfrac{\partial^{2} \varphi}{\partial_{x_m}\partial_{x_j}}\,dx,
$$
note that 
$$
 \int_{\mathbb{R}^n}    \dfrac{\partial^{2} \overline{u}_k}{\partial_{x_m}\partial_{x_j}} \partial_{x_m}u_k\partial_{x_j}\varphi\,dx =
 - \int_{\mathbb{R}^n}    \dfrac{\partial^{2} u_k}{\partial_{x_m}\partial_{x_j}} \partial_{x_m}\overline{u}_k\partial_{x_j}\varphi\,dx - \int_{\mathbb{R}^n} |\partial_{x_m} u_k|^{2} \partial_{x_j}^{2} \varphi \, dx. 
$$
Then
$$
\mbox{Re}\,  \sum_{j=1}^{n}\sum_{m=1}^{n}  \int_{\mathbb{R}^n}    \partial_{x_m}u_k  \dfrac{\partial^{2} \overline{u}_k}{\partial_{x_m}\partial_{x_j}} \partial_{x_j}\varphi\,dx = - \dfrac{1}{2} \int_{\mathbb{R}^n} |\nabla u_k|^{2} \Delta \varphi \, dx.
$$
Therefore,
$$
A_k \;=\;
  - \dfrac{\gamma_k}{2} \int_{\mathbb{R}^n} |\nabla u_k|^{2} \Delta \varphi \, dx+
\gamma_k   \mbox{Re}\,  \sum_{j=1}^{n}\sum_{m=1}^{n}  \int_{\mathbb{R}^n}    \partial_{x_m}u_k  \partial_{x_j}\overline{u}_k \dfrac{\partial^{2} \varphi}{\partial_{x_m}\partial_{x_j}}\,dx.
$$
Now, Integrating by parts $B$:
\begin{eqnarray*}
    B &=& \sum_{j=1}^{n}  \int_{\mathbb{R}^n} \dfrac{\partial (|x|^{-b})}{\partial_{x_j}} \; \mbox{Re}F(\mathbf{u})  \partial_{x_j}\varphi\,dx + \sum_{j=1}^{n}  \int_{\mathbb{R}^n} |x|^{-b}  \mbox{Re}F(\mathbf{u})  \partial_{x_j}^{2}\varphi\,dx \\[2mm]
    &=&   \int_{\mathbb{R}^n} \nabla (|x|^{-b}) \; \nabla\varphi  \mbox{Re}F(\mathbf{u})  \,dx +   \int_{\mathbb{R}^n} |x|^{-b}  \mbox{Re}F(\mathbf{u})  \Delta\varphi\,dx.
\end{eqnarray*}
Consequently,
\begin{eqnarray}\label{4I}
    4 \sum_{k=1}^{l} I_k &=& 4\sum_{k=1}^{l} A_k-4\sum_{k=1}^{l}
    \dfrac{\beta_k}{2}   \int_{\mathbb{R}^n}   |u_k|^{2} \Delta \varphi\,dx +4B \nonumber \\[2mm]
    &=&
    -2\sum_{k=1}^{l} \gamma_k \int_{\mathbb{R}^n} |\nabla u_k|^{2} \Delta \varphi \, dx+ 4 \sum_{k=1}^{l}
  \mbox{Re}\,  \sum_{j=1}^{n}\sum_{m=1}^{n} \gamma_k  \int_{\mathbb{R}^n}    \partial_{x_m}u_k  \partial_{x_j}\overline{u}_k \dfrac{\partial^{2} \varphi}{\partial_{x_m}\partial_{x_j}}\,dx \nonumber \\[2mm]
&-&2\sum_{k=1}^{l}
    \beta_k   \int_{\mathbb{R}^n}   |u_k|^{2} \Delta \varphi\,dx + 4\mbox{Re} \int_{\mathbb{R}^n} \nabla (|x|^{-b}) \; \nabla\varphi  F(\mathbf{u})  \,dx\\[2mm]
    &+& 4 \mbox{Re}  \int_{\mathbb{R}^n} |x|^{-b} F(\mathbf{u})  \Delta\varphi\,dx. \nonumber
\end{eqnarray}
The conclusion follows from \eqref{2J} and \eqref{4I}.
\end{proof}

We finish this section with a technical lemma that will be used in the next section:

\begin{lemma}\label{lemma: varphi radial}
    Assume that $n\geq 1$. Let $r=|x|$, for $x \in \mathbb{R}^{n}$. Let $R>0$. There exists a radial smooth function 
    $$
    \varphi(x) = \left\{ \begin{array}{lc}
|x|^{2} \;\;\;\;\;\;\;\; \text{if} \;\; 0 \leq |x| \leq R\\
\;\;0\;\;\;\;\;\;\;\;\;\; \text{if} \;\;\;\;\;\; |x| \geq 2R
\end{array},\right.
    $$
    with $0 \leq \varphi(x) \leq |x|^{2}$, \;\; $\partial_{r}^{2} \varphi \leq 2$ \;\; and, \;\; $\partial_{r}^{4} \varphi \leq \dfrac{4}{R^2}$, \;\; for all $x \in \mathbb{R}^{n}$.
\end{lemma}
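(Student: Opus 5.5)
The plan is to reduce to one variable by scaling. Write $\varphi(x)=R^{2}\psi(|x|/R)$, where $\psi\in C^{\infty}([0,\infty))$ is a profile with
\[
\psi(s)=s^{2}\ \ (0\le s\le 1),\qquad \psi(s)=0\ \ (s\ge 2),\qquad 0\le\psi(s)\le s^{2},\qquad \psi''\le 2,\qquad \psi''''\le 4 .
\]
Then $\partial_r^{j}\varphi(x)=R^{2-j}\psi^{(j)}(|x|/R)$. So $\varphi=|x|^{2}$ on $B_R$, $\varphi=0$ outside $B_{2R}$, $0\le\varphi\le|x|^{2}$, $\partial_r^{2}\varphi\le 2$ and $\partial_r^{4}\varphi\le 4/R^{2}$. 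Smoothness at the origin is automatic, because $\varphi=|x|^{2}$ near $0$. The whole lemma therefore reduces to building $\psi$.

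For the construction, I would set $\psi''=2\eta$ on $[0,2]$, with $\eta$ smooth, $\eta=1$ on $[0,1]$ and $\eta=0$ near $2$. I would then choose $\eta$ so that
\[
\int_1^2\eta=-1+\text{(correction)}\quad\text{and}\quad \int_1^2(2-s)\eta(s)\,ds=-\tfrac12 .
\]
These are exactly the conditions $\psi'(2)=\psi(2)=0$ after integrating from $s=1$, where $\psi(1)=1$ and $\psi'(1)=2$. Once these hold, $\psi''\le 2$ amounts to $\eta\le1$, and $0\le\psi\le s^{2}$ follows from comparing with $s^{2}$ through $\psi''\le 2$ together with a sign analysis. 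The remaining requirement is $\psi''''=2\eta''\le 4$, i.e. $\eta''\le 2$.

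The main obstacle is this last pointwise bound, and before carrying out the construction I would check it against the boundary conditions. Integrating $\psi''''\le4$ backward from $s=2$, where $\psi'''(2)=0$, gives $\psi'''(s)\ge-4(2-s)$ on $[1,2]$. Hence
\[
\psi''(2)-\psi''(1)=\int_1^2\psi'''\ \ge\ -2 .
\]
But the left side must equal exactly $0-2=-2$. So equality is forced, meaning $\psi'''\equiv-4(2-s)$ on $[1,2]$. This contradicts $\psi'''(1)=0$, which is needed for $C^{3}$ matching with $s^{2}$.

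So with the transition confined to $[R,2R]$, the constant $4$ in the stated bound cannot be attained by a smooth $\varphi$. The scaling argument above only yields $\partial_r^{4}\varphi\le C/R^{2}$ with some $C>4$ (any $C>4$ can be approached by a sufficiently sharp smooth profile). My proposal is therefore to prove the lemma in that form: fix $C$, build $\eta$ by mollifying the piecewise-linear extremal profile, and track the constant. For the application in the blow-up argument, only the $R^{-2}$ decay of $\partial_r^4\varphi$ matters, not the numerical value $4$.
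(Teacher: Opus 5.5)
Your obstruction to the constant $4$ is correct, so the statement as written has no proof; the paper only asserts ``a straightforward calculation''. The gap is in your repair. The same lemma with $4$ replaced by some $C>4$ is still false, and the construction step you sketch cannot be carried out.

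The incompatibility already appears at the level of $\partial_r^2\varphi\le 2$. Since $\varphi'(2R)=0$ and $\varphi''\le 2$, one has $\varphi'(r)\ge -2(2R-r)$ on $[R,2R]$, hence
\[
R^2=\varphi(R)=\varphi(2R)-\int_R^{2R}\varphi'(r)\,dr\le 2\int_R^{2R}(2R-r)\,dr=R^2 .
\]
Equality holds only if $\varphi'\equiv-2(2R-r)$, i.e.\ $\varphi'(R)=-2R$, whereas in fact $\varphi'(R)=2R$. So the three requirements $\varphi=|x|^2$ on $B_R$, $\varphi=0$ off $B_{2R}$ and $\partial_r^2\varphi\le 2$ are mutually incompatible, whatever is demanded of $\partial_r^4\varphi$.

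You missed this because of a slip in your matching conditions. Taylor's formula gives $\psi(2)=\psi(1)+\psi'(1)+\int_1^2(2-s)\psi''\,ds=3+2\int_1^2(2-s)\eta\,ds$. So $\psi(2)=0$ needs $\int_1^2(2-s)\eta=-\tfrac32$, not $-\tfrac12$, while $\psi'(2)=0$ needs exactly $\int_1^2\eta=-1$. Subtracting gives $\int_1^2(s-1)\eta=\tfrac12=\int_1^2(s-1)\,ds$, which together with $\eta\le 1$ forces $\eta\equiv1$ on $[1,2]$, a contradiction.

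Independently, your claim that constants close to $4$ are attainable is wrong even for the fourth-derivative bound alone. Your argument uses only the matching of $\psi''$ and $\psi'''$. Adding $\psi'(2)=\psi(2)=0$, the function $w=\psi''''$ satisfies $\int_1^2(2-s)^j w\,ds=0,-2,-8,-24$ for $j=0,1,2,3$. Writing $w=C-v$ with $v\ge0$, the inequality $\int_1^2(2-s)^3v\le\int_1^2(2-s)^2v$ forces $C\ge 192$. Consistently, the profile $\psi'''=-4(2-s)$ you intend to mollify has $\psi'(2)=8/3\neq 0$.

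What is true, and what the proof of Theorem \ref{blow-up} actually uses, is a version with a different outer condition. Take $\eta\in C^\infty([0,\infty))$ nonincreasing, with $\eta=1$ on $[0,1]$ and $\eta=0$ on $[2,\infty)$, and set $\varphi(x)=\int_0^{|x|}2t\,\eta(t/R)\,dt$. Then:
\begin{itemize}
\item $\varphi=|x|^2$ on $B_R$, $\varphi$ is constant for $|x|\ge 2R$, and $0\le\varphi\le|x|^2$;
\item $\varphi'\le 2r$ and $\varphi''=2\eta(r/R)+2(r/R)\eta'(r/R)\le 2$;
\item since $\varphi=R^2\theta(|x|/R)$ for a fixed smooth $\theta$, your scaling argument gives $|\partial^\alpha\varphi|\le C_\alpha R^{2-|\alpha|}$, in particular $|\Delta^2\varphi|\le CR^{-2}$.
\end{itemize}
This covers everything used later. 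Step 1 needs only $\varphi''\le 2$ and $\varphi'\le 2r$. Step 2 needs the two-sided bound $|\Delta^2\varphi|\le CR^{-2}$, not a one-sided bound on $\partial_r^4\varphi$. Step 3 needs vanishing of the coefficient on $B_R$, boundedness of $\Delta\varphi$ and $\varphi'/r$, and $|\nabla\varphi|\le CR$.

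Having $\varphi$ constant rather than zero outside $B_{2R}$ is harmless, since $\varphi$ is bounded and $\nabla\varphi$ is supported in $\overline{B_{2R}}$. If you insist on compact support, the transition must occupy $[R,cR]$ with $c>2$; the computation above shows $c=2$ is excluded.
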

\begin{proof}
    The lemma follows by a straightforward calculation.
\end{proof}

\subsection{ Blow-up results}


Under specific assumptions on the coefficients of \eqref{INLS}, we now establish the sharpness of condition $(iii)$ in Theorem \ref{thrm: Global}. This sharpness is demonstrated by constructing initial data that violate this condition and lead to finite-time blow-up.

\subsubsection{Intercritical case}

This subsection aims to show the existence of blow-up solutions for \eqref{INLS}. To give the precise statement of our result, we set $\mathcal{G}(1, \mathbf{0})$. Thus, the main result of this subsection is Theorem \ref{blow-up}.


Let us start by introducing the ``\emph{Pohozaev}'' functional
\begin{eqnarray}\label{T}
    \mathcal{T}_n(\mathbf{u}(t)) = K(\mathbf{u}(t)) - \dfrac{n+2b}{2}P(\mathbf{u}(t)). 
\end{eqnarray}
We recall the definition of the energy functional:$$
E(\mathbf{u}(t)) \;=\; K(\mathbf{u}(t)) + L(\mathbf{u}(t)) - 2P(\mathbf{u}(t)),
$$
where
$$
L(\mathbf{u}(t)) = \sum_{k=1}^{l} \beta_k \int_{\mathbb{R}^{n}} |u_k|^2  \, dx.
$$
Thus, 
$$
P(\mathbf{u}(t)) \;=\; \dfrac{1}{2} \left[ K(\mathbf{u}(t)) + L(\mathbf{u}(t)) - E(\mathbf{u}(t)) \right]
$$
so
\begin{eqnarray}\label{T_n}
    \mathcal{T}_n (\mathbf{u}(t)) \;=\;
    \dfrac{n+2b}{4} E(\mathbf{u}(t)) - \left( \dfrac{n+2b-4}{4} \right)K(\mathbf{u}(t)) - \dfrac{n+2b}{4}L(\mathbf{u}(t)).
\end{eqnarray}

We note that the Pohozaev functional is strictly negative.

\begin{lemma}\label{T_n < - delta} Under the assumptions of Theorem \ref{blow-up}, there exists $\delta >0$ such that 
    $$
\mathcal{T}_n (\mathbf{u}(t)) \; \leq -\delta < 0, \quad t \in I.
$$
\end{lemma}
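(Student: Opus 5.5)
The plan is to combine the sharp Gagliardo--Nirenberg inequality with the conservation laws, and then use the trapping Lemma \ref{lemma-inequality} in its ``$G(0)>\gamma$'' form. Since $\boldsymbol{\psi}\in\mathcal{G}(1,\mathbf{0})$, we have $Q=\mathcal{Q}$ on $\boldsymbol{\psi}$ (Remark \ref{QinG(1,0)}), and the Corollary following Theorem \ref{Existence-GS} gives
\[
P(\mathbf{u})\le \frac{1}{\xi_1}Q(\mathbf{u})^{\frac{6-n-2b}{4}}K(\mathbf{u})^{\frac{n+2b}{4}}
\]
(for $P(\mathbf{u})\le0$ this is trivial). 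Combining this with conservation of energy and charge (Lemmas \ref{E(u0)}, \ref{Q(u0)}) and with $L\ge0$ yields
\[
K(\mathbf{u}(t))\le E(\mathbf{u}_0)+\frac{2}{\xi_1}Q(\mathbf{u}_0)^{\frac{6-n-2b}{4}}K(\mathbf{u}(t))^{\frac{n+2b}{4}} .
\]
This is exactly the setting of Lemma \ref{lemma-inequality}, with $\alpha=E(\mathbf{u}_0)$, $q=\frac{n+2b}{4}>1$ and $G=K(\mathbf{u}(t))$, as in the proof of Theorem \ref{global-solution}$(iii)$. The computation in the proof of Theorem \ref{thrm: Global}$(iii)$ shows that $\alpha<(1-1/q)\gamma$ is equivalent to \eqref{E(u_0)Q(u_0)}, and that $G(0)>\gamma$ is equivalent to \eqref{K(u_0)Q(u_0)}. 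Concretely, $\gamma^{s_c}$ times $Q(\mathbf{u}_0)^{1-s_c}$ equals $K(\boldsymbol{\psi})^{s_c}Q(\boldsymbol{\psi})^{1-s_c}$; I would double-check this exponent bookkeeping via the Pohozaev identities of Lemma \ref{Pohozaev}. Part $(ii)$ of Lemma \ref{lemma-inequality} then gives $K(\mathbf{u}(t))>\gamma$ for all $t\in I$.

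A merely strict inequality is not enough, so the next step is to upgrade it to a uniform gap. Since \eqref{E(u_0)Q(u_0)} is strict, choose $\delta_1>0$ with $E(\mathbf{u}_0)\le(1-\delta_1)(1-\frac1q)\gamma$. A refinement of the lemma then shows $K(\mathbf{u}(t))\ge(1+\delta_2)\gamma$. To see this, let $f(r)=\alpha-r+\beta r^q$. This function is convex, with minimum at $r=\gamma$ and minimum value $\alpha-(1-\frac1q)\gamma<0$. Its larger zero $r_+$ satisfies $r_+>\gamma$, and $f\circ G\ge0$ together with $G>\gamma$ forces $G\ge r_+$. So $\delta_2$ is determined by $r_+/\gamma-1$, which depends only on the data.

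Finally, I substitute into \eqref{T_n}. Dropping the term $-\frac{n+2b}{4}L\le0$ gives
\[
\mathcal{T}_n(\mathbf{u}(t))\le \frac{n+2b}{4}E(\mathbf{u}_0)-\frac{n+2b-4}{4}K(\mathbf{u}(t)).
\]
Since $\frac{n+2b}{4}(1-\frac1q)=\frac{n+2b-4}{4}$, the first term is at most $(1-\delta_1)\frac{n+2b-4}{4}\gamma$, while the second is at most $-(1+\delta_2)\frac{n+2b-4}{4}\gamma$. Hence
\[
\mathcal{T}_n\le-\frac{n+2b-4}{4}(\delta_1+\delta_2)\gamma=:-\delta<0 .
\]
If $E(\mathbf{u}_0)<0$, the same estimate holds trivially.

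The main obstacle is the bookkeeping. One must match $\gamma$ and $\alpha$ with the ground-state quantities $K(\boldsymbol{\psi})$, $\mathcal{E}(\boldsymbol{\psi})$, $Q(\boldsymbol{\psi})$ using $\xi_1$ and the Pohozaev identities. One must also handle the exponent convention when $E(\mathbf{u}_0)\le0$, where the power $E^{s_c}$ should be read as the condition being automatically satisfied.
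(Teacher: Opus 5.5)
Your argument is correct. Its first half coincides with the paper's: the sharp Gagliardo--Nirenberg bound and the conservation laws, followed by Lemma~\ref{lemma-inequality} with $q=\frac{n+2b}{4}$ and $\beta=\frac{2}{\xi_1}Q(\mathbf{u}_0)^{\frac{6-n-2b}{4}}$, giving $K(\mathbf{u}(t))>\gamma$ with $\gamma^{s_c}Q(\mathbf{u}_0)^{1-s_c}=K(\boldsymbol{\psi})^{s_c}Q(\boldsymbol{\psi})^{1-s_c}$. The exponent bookkeeping checks out. Your coefficient $2/\xi_1$ is the one that actually produces the paper's formula for $\gamma$; the paper writes $1/\xi_1$ there, which is a slip.

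The routes differ in how the uniform gap is obtained. The paper proceeds in three steps:
\begin{itemize}
\item[(a)] it derives $(n+2b)E(\mathbf{u}_0)<(n+2b-4)K(\mathbf{u}(t))$;
\item[(b)] it proves $\mathcal{T}_n(\mathbf{u}(t))<-\sigma_0K(\mathbf{u}(t))$ by contradiction along sequences $t_m$, $\sigma_m\to0$, treating $E(\mathbf{u}_0)\le0$ separately;
\item[(c)] it invokes the lower bound $K(\mathbf{u}(t))>\epsilon_0$ coming from trapping.
\end{itemize}
You instead read the gap off directly from the strictness of the energy condition. This is shorter and gives an explicit $\delta$.

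Your detour through the larger root $r_+$ is unnecessary, and your remark that a merely strict inequality is not enough is too pessimistic. The bound $K(\mathbf{u}(t))>\gamma$ alone already yields
\[
\mathcal{T}_n(\mathbf{u}(t))\le\frac{n+2b}{4}E(\mathbf{u}_0)-\frac{n+2b-4}{4}K(\mathbf{u}(t))<\frac{n+2b}{4}\Bigl(E(\mathbf{u}_0)-\bigl(1-\tfrac{1}{q}\bigr)\gamma\Bigr)<0,
\]
which is a constant independent of $t$. So either $\delta_1$ or $\delta_2$ by itself suffices.

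Your route also recovers the paper's stronger claim. With $\delta_1\in(0,1]$ you have $E(\mathbf{u}_0)\le(1-\delta_1)(1-\frac1q)\gamma\le(1-\delta_1)(1-\frac1q)K(\mathbf{u}(t))$. Combined with the same estimate, this gives $\mathcal{T}_n(\mathbf{u}(t))\le-\delta_1\frac{n+2b-4}{4}K(\mathbf{u}(t))$. That is the paper's proportional bound with $\sigma_0=\delta_1\frac{n+2b-4}{4}$, obtained without any contradiction argument.

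Two points of precision:
\begin{itemize}
\item[(i)] To apply the sharp inequality to $\mathbf{u}(t)$, you need $Q=\mathcal{Q}$ as functionals on all of $\mathbf{H}^1$, not only at $\boldsymbol{\psi}$. This holds because $b_k=\alpha_k\sigma_k$.
\item[(ii)] Since $\mathbf{u}(t)$ is complex-valued with $P=\mathrm{Re}\int|x|^{-b}F(\mathbf{u})\,dx$, the bound is most cleanly justified as the paper does. Use (H6) and $\|\nabla|u_k|\|_{L^2}\le\|\nabla u_k\|_{L^2}$ to reduce to $|\mathbf{u}(t)|$.
\end{itemize}
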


\begin{proof}
    Remember, by Pohozaev's identities, Lemma \ref{Pohozaev}: $K(\boldsymbol{\psi}) = (n+2b) I(\boldsymbol{\psi})$, $\mathcal{Q}(\boldsymbol{\psi}) \;=\; (6-n-2b)I(\boldsymbol{\psi})$ and $P(\boldsymbol{\psi}) = 2 I(\boldsymbol{\psi})$. Thus
\begin{eqnarray}\label{K=E}
K(\boldsymbol{\psi}) \;=\; \dfrac{n+2b}{n+2b -4} \mathcal{E}(\boldsymbol{\psi}). 
\end{eqnarray}
Since $\boldsymbol{\psi} \in \mathcal{G}(1,\mathbf{0})$ the functionals $Q$ and $\mathcal{Q}$ are the same. Therefore, from (H6) and \eqref{P<QK}
\begin{eqnarray}\label{K<E+CQK}
    K(\mathbf{u}(t)) &=& E(\mathbf{u}_0) - L(\mathbf{u}(t)) + 2P(\mathbf{u}(t)) \;\leq\; E(\mathbf{u}_0) + 2|P(\mathbf{u}(t))| \nonumber \\[2mm]
    &\leq& E(\mathbf{u}_0) + \dfrac{2}{\xi_1} Q(\mathbf{u}(t))^{\frac{6-n-2b}{4}} K(\mathbf{u}(t))^{\frac{n+2b}{4}}.
\end{eqnarray}
Now in the notation of Lemma \ref{lemma-inequality}, if we take
$$G(t) = K(\mathbf{u}(t)), \;\;\;\; \alpha = E(\mathbf{u}_0), \;\;\;\; \beta = \dfrac{1}{\xi_1} Q(\mathbf{u}_0)^{\frac{6-n-2b}{4}},$$
where $\dfrac{1}{\xi_1} = \dfrac{2(6-n-2b)^{\frac{n+2b-4}{4}}}{(n+2b)^{\frac{n+2b}{4}}} \dfrac{1}{Q(\boldsymbol{\psi})^{\frac{1}{2}}}$ \;\; and \;\; $q=\dfrac{n+2b}{4}$, then  
\begin{eqnarray}\label{gamma}
\gamma = (\beta q)^{-\frac{1}{q-1}} = \dfrac{n+2b}{6-n-2b} \dfrac{Q(\boldsymbol{\psi})^{\frac{2}{n+2b-4}}}{{Q}(\mathbf{u}_{0})^{\frac{6-n-2b}{n+2b-4}}}.   
\end{eqnarray}
Also from \eqref{K<E+CQK} $f \circ G \geq 0$. Moreover, by using \eqref{gamma} a direct calculation gives
$a < \left(1 - \dfrac{1}{q} \right) \gamma$ is equivalent to 
\begin{eqnarray}\label{EQ<QQ}
E(\mathbf{u}(t))^{n+2b-4}{Q}(\mathbf{u}_{0})^{6-n-2b} < \left( \dfrac{n+2b-4}{6-n-2b} Q(\boldsymbol{\psi})\right)^{n+2b-4} Q(\boldsymbol{\psi})^{6-n-2b},    
\end{eqnarray}
since 
\begin{eqnarray}\label{E=K=Q}
\mathcal{E}(\boldsymbol{\psi}) = \dfrac{n+2b-4}{n+2b}K(\boldsymbol{\psi}) = \dfrac{n+2b-4}{6-n-2b}Q(\boldsymbol{\psi}).
\end{eqnarray}
Replacing \eqref{E=K=Q} in \eqref{EQ<QQ}, we have
\begin{eqnarray*}
    E(\mathbf{u}(t))^{s_c}{Q}(\mathbf{u}_{0})^{1-s_c} < \mathcal{E}(\boldsymbol{\psi})^{s_c} Q(\boldsymbol{\psi})^{1-s_c} .
\end{eqnarray*}
Similarly, we see that $G(0) > \gamma $ is equivalent to
\begin{eqnarray*}
    K(\mathbf{u}_0)^{s_c}{Q}(\mathbf{u}_{0})^{1-s_c} > K(\boldsymbol{\psi})^{s_c}Q(\boldsymbol{\psi})^{1-s_c}.
\end{eqnarray*}
Hence, an application of Lemma \ref{lemma-inequality} yields
\begin{eqnarray}\label{QK}
K(\mathbf{u}(t))^{s_c}{Q}(\mathbf{u}_{0})^{1-s_c} > K(\boldsymbol{\psi})^{s_c}Q(\boldsymbol{\psi})^{1-s_c}.    
\end{eqnarray}
Thus, from \eqref{E(u_0)Q(u_0)}, \eqref{E=K=Q} and \eqref{QK} we have
\begin{eqnarray*}
    \left[ \left( \dfrac{n+2b}{n+2b-4} E(\mathbf{u}(t)) \right) \right]^{s_c} 
    &=& \left[ \left( \dfrac{n+2b}{n+2b-4} E(\mathbf{u}_0) \right) \right]^{s_c} \\[2mm]
    &<& \left[ \left( \dfrac{n+2b}{n+2b-4} \mathcal{E}(\boldsymbol{\psi})^{1-s_c})  \right) \right]^{s_c} \dfrac{Q(\boldsymbol{\psi})^{1-s_c} }{Q(\mathbf{u}_0)^{1-s_c}} \\[2mm]
    &=& K(\boldsymbol{\psi})^{1-s_c})^{s_c} \dfrac{Q(\boldsymbol{\psi})^{1-s_c} }{Q(\mathbf{u}_0)^{1-s_c}},
\end{eqnarray*}
and then 
$$
(n+2b)E(\mathbf{u}(t)) < (n+2b-4)K(\mathbf{u}(t)).
$$
This combined with \eqref{T_n}
$$
4 \mathcal{T}_n(\mathbf{u}(t)) \leq (n+2b)E(\mathbf{u}(t)) - (n+2b-4)K(\mathbf{u}(t)) \leq 0.
$$
\textbf{Claim:} There exists $\sigma_0 >0$ such that 
\begin{eqnarray}\label{Tn < sigma_0 K}
    \mathcal{T}_n(\mathbf{u}(t)) < -\sigma_0 K(\mathbf{u}(t)), \quad t \in I.
\end{eqnarray}
Indeed, if $E(\mathbf{u}_0) \leq 0$ from \eqref{T_n} we can promptly take $\sigma_0 = \dfrac{n+2b-4}{4}$ (Remark $4 < n+2b <6$).
Now suppose $E(\mathbf{u}_0) >0$ and assume by contradiction that \eqref{Tn < sigma_0 K} does not hold. Then we can find sequences $(t_m) \subset I$ and $(\sigma_m) \subset \mathbb{R}_{+}$ with $\sigma_m \to 0$ such that
$$
- \sigma_m \left( \dfrac{n+2b-4}{4} \right)K(\mathbf{u}(t_m)) \leq \mathcal{T}_{n}(\mathbf{u}(t_m)) < 0.
$$
Thus, the last inequality and \eqref{T_n} give
\begin{eqnarray*}
    E(\mathbf{u}(t_m)) &=& \dfrac{4}{n+2b}\mathcal{T}_n(\mathbf{u}(t_m)) + \dfrac{n+2b-4}{n+2b} K(\mathbf{u}(t_m)) + L(\mathbf{u}(t_m)) \nonumber\\[2mm]
    &\geq& - \sigma_{m} \left( \dfrac{n+2b-4}{n+2b} \right) K(\mathbf{u}(t_m)) + \dfrac{n+2b-4}{n+2b} K(\mathbf{u}(t_m)) + L(\mathbf{u}(t_m)) \nonumber \\[2mm]
    &\geq& (1 - \sigma_{m}) \left( \dfrac{n+2b-4}{n+2b} \right) K(\mathbf{u}(t_m)).
\end{eqnarray*}
From this, the conservation of the energy, \eqref{QK} and \eqref{E=K=Q} we get
\begin{eqnarray*}
    Q(\mathbf{u}_0)^{1-s_c} E(\mathbf{u}_0)^{s_c} &\geq& (1-\sigma_m)^{s_c}\left( \dfrac{n+2b-4}{n+2b} \right)^{s_c} K(\mathbf{u}(t))^{s_c} Q(\mathbf{u}_0)^{1-s_c} \\[2mm]
    &>& (1-\sigma_m)^{s_c}\left( \dfrac{n+2b-4}{n+2b}  K(\boldsymbol{\psi})\right)^{s_c} Q(\boldsymbol{\psi})^{1-s_c} \\[2mm]
    &=&
    (1-\sigma_m)^{s_c} \mathcal{E}(\boldsymbol{\psi})^{s_c} Q(\boldsymbol{\psi})^{1-s_c}.
\end{eqnarray*}
Taking $m\to +\infty$ in the last inequality we obtain
$$
Q(\mathbf{u}_0)^{1-s_c} E(\mathbf{u}_0)^{s_c} \geq  \mathcal{E}(\boldsymbol{\psi})^{s_c} Q(\boldsymbol{\psi})^{1-s_c},
$$
which is a contradiction, so the claim is proved.

Finally note that \eqref{QK} gives $$
K(\mathbf{u}(t))^{s_c} > K(\boldsymbol{\psi})^{s_c} \dfrac{Q(\boldsymbol{\psi})^{1-s_c}}{Q(\mathbf{u}_0)^{1-s_c}} =: \epsilon_{0}^{s_c}.
$$ 
Thus $-\sigma_0 K(\mathbf{u}(t)) < - \sigma_0 \epsilon_0$. Therefore the result is proved.
\end{proof}
We now proceed to the proof of Theorem \ref{blow-up}.

\textbf{Proof of Theorem \ref{blow-up}:}

Suppose that the maximal existence interval is $(-T_{\ast}, T^{\ast})$. We proceed by contradiction, without loss of generality assume that $T^{\ast} = + \infty$. Assume $\varphi$ is radial, since $r=|x|$ then $r^2 = |x|^2$ thus $2 r \dfrac{\partial r}{\partial x_j} = 2 x_j \Rightarrow \dfrac{\partial r}{\partial x_j} = \dfrac{x_j}{r}$, therefore
$$
\dfrac{\partial \varphi}{\partial x_j} = \varphi'(r) \dfrac{\partial r}{\partial x_j} = \varphi'(r) \dfrac{x_j}{r} \;\;\;\; \mbox{thus} \;\;\;\; \nabla \varphi = \varphi' \dfrac{x}{r}.
$$
and
$$
\dfrac{\partial^{2} \varphi}{\partial x_m \partial x_j} = \varphi^{''}(r) \dfrac{x_m x_j}{r^2} + \delta_{jm} \dfrac{\varphi'(r)}{r} - \varphi' \dfrac{x_m x_j}{r^3}.
$$
Then, let $V(t)$ be defined in \eqref{V}
\begin{equation*}
    V(t) = \sum_{k=1}^{l} \int_{\mathbb{R}^{n}} \dfrac{\alpha_{k}^{2}}{\gamma_k} \varphi(x)|u_k(x,t)|^2\,dx,
\end{equation*}
and then, by Theorem~\ref{V,V',V''}
$$
V'(t) \;=\; \mathcal{R}(t) - 4 \int_{\mathbb{R}^{n}} \varphi(x) \mbox{Im} \sum_{k=1}^{l} m_k |x|^{-b} f_k(\mathbf{u})\overline{u}_k \, dx
$$
where $m_k = \dfrac{\alpha_k}{2 \gamma_k}$ and
\begin{equation*}
    \mathcal{R}(t) = 2 \sum_{k=1}^{l} \alpha_k \mbox{Im} \int_{\mathbb{R}^n} \nabla \varphi \nabla u_k \overline{u}_k\, dx = 2 \sum_{k=1}^{l} \alpha_k \mbox{Im} \int_{\mathbb{R}^n} \varphi' \dfrac{x \cdot \nabla u_k}{r}\overline{u_k}\, dx.
\end{equation*}
Moreover,
\begin{eqnarray*}
    \mathcal{R}'(t) &=& 4 \sum_{j=1}^{n} \sum_{m=1}^{n} \int_{\mathbb{R}^n}  \mbox{Re} \left(\sum_{k=1}^{l} \gamma_k \partial_{x_m}u_k \partial_{x_j}\overline{u}_k \right) \left(  \varphi^{''} \dfrac{x_m x_j}{r^2} + \delta_{jm} \dfrac{\varphi'}{r} - \varphi' \dfrac{x_m x_j}{r^3}\right) \, dx \nonumber \\[2mm]
    &-& \sum_{k=1}^{l} \gamma_k \int_{\mathbb{R}^n} |u_k|^{2} \Delta^{2} \varphi\, dx - 2 \int_{\mathbb{R}^n} |x|^{-b} \mbox{Re} F(\mathbf{u}) \Delta \varphi \, dx -4b \mbox{Re} \int_{\mathbb{R}^n} |x|^{-b} F(\mathbf{u}) \dfrac{\varphi'}{r}\, dx,
\end{eqnarray*}
where $\nabla(|x|^{-b}) \nabla \varphi =-b|x|^{-b} \dfrac{\varphi'}{r}$.
$$
\dfrac{\partial }{\partial x_j}(|x|^{-b}) = \dfrac{\partial }{\partial x_j}((|x|^2)^{-\frac{b}{2}}) = - \dfrac{b}{2} (|x|^2)^{-\frac{b}{2}-1} 2 x_j = -b |x|^{-b-2}  x_j 
$$ and
$$
\dfrac{\partial }{\partial x_j}(|x|^{-b}) \dfrac{\partial \varphi }{\partial x_j} = -b|x| ^{-b-2}  x_j \varphi' \dfrac{x_j}{r} \;\;\;\; \mbox{thus} \;\;\;\; \nabla(|x|^{-b}) \nabla \varphi \;=\; -b |x|^{-b}\dfrac{\varphi'}{r}.
$$
Using \eqref{T}, we can re-write $\mathcal{R}'$
\begin{eqnarray*}
    \mathcal{R}'(t)
    &=&
     8 \mathcal{T}_n(u) 
     \;+\;
     4 \int_{\mathbb{R}^n} \left( \dfrac{\varphi'}{r} -2 \right)\left( \sum_{k=1}^{l} \gamma_k |\nabla u_k|^2 \right) \, dx \\[2mm]
     &+&
        4 \int_{\mathbb{R}^n} \left( \dfrac{\varphi^{''}}{r^2} - \dfrac{\varphi'}{r^3} \right)  \left( \sum_{k=1}^{l} \gamma_k |x \cdot \nabla u_k|^2 \right) \, dx 
        \;-\;
        \sum_{k=1}^{l}  \gamma_k \int_{\mathbb{R}^n}|u_k|^2  \Delta^{2}\varphi \, dx\\[2mm]
        &-&  2\,Re\, \int_{\mathbb{R}^n} \left[  \Delta \varphi - 2(n+2b) +  2b \dfrac{\varphi'}{r} \right]\, |x|^{-b} F(\mathbf{u})\,dx\\[2mm]
        &=:& 8 \mathcal{T}_n(u) + \mathcal{R}_{1}(t) + \mathcal{R}_{2}(t)  + \mathcal{R}_{3}(t),
\end{eqnarray*}
where
\begin{eqnarray*}
     \mathcal{R}_{1} &=&  4 \int_{\mathbb{R}^n} \left( \dfrac{\varphi'}{r} -2 \right)\left( \sum_{k=1}^{l} \gamma_k |\nabla u_k|^2 \right) \, dx \;+\;
        4 \int_{\mathbb{R}^n} \left( \dfrac{\varphi^{''}}{r^2} - \dfrac{\varphi'}{r^3} \right)  \left( \sum_{k=1}^{l} \gamma_k |x \cdot \nabla u_k|^2 \right) \, dx;  \\[2mm]
     \mathcal{R}_{2} &=& - \sum_{k=1}^{l}  \gamma_k \int_{\mathbb{R}^n}|u_k|^2\Delta^{2}\varphi \, dx;  \\[2mm]
     \mathcal{R}_{3} &=& - 2\,\mbox{Re}\, \int_{\mathbb{R}^n} \left[  \Delta \varphi - 2(n+2b) +  2b \dfrac{\varphi'}{r} \right]\, |x|^{-b} F(\mathbf{u})\,dx.
\end{eqnarray*}
 Let $\varphi(x)$ as in Lemma~\ref{lemma: varphi radial}, then
$0 \leq \varphi \leq |x|^{2}$, $\partial_{r}^{2} \varphi(x) \leq 2$ and $\partial_{r}^{4} \varphi(x) \leq \dfrac{4}{R^{2}}$, $\forall x \in \mathbb{R}^{n}$. Here, $R$ is seen as a parameter that be chosen later.
\paragraph{}

\underline{\textbf{Step $1$:}} We will show $\mathcal{R}_1 \leq 0$.

Let be the set $\Omega \;=\; \left\{x\in \mathbb{R}^{n}; \;\;\; \dfrac{\varphi^{''}}{r^2} - \dfrac{\varphi'}{r^3} \leq 0, \quad \mbox{where} \; r=|x| \right\}$. 

Since $\varphi'' \leq 2$ then $\varphi' \leq 2 r$, $\forall r \in \mathbb{R}$, we get:
\begin{eqnarray*}
     \mathcal{R}_{1} &=&  4 \int_{\Omega} \left( \dfrac{\varphi'}{r} -2 \right)\left( \sum_{k=1}^{l} \gamma_k |\nabla u_k|^2 \right) \, dx \;+\;
        4 \int_{\Omega} \left( \dfrac{\varphi^{''}}{r^2} - \dfrac{\varphi'}{r^3} \right)  \left( \sum_{k=1}^{l} \gamma_k |x \cdot \nabla u_k|^2 \right) \, dx  \\[2mm]
        &+& 4 \int_{\mathbb{R}^n \setminus \Omega} \left( \dfrac{\varphi'}{r} -2 \right)\left( \sum_{k=1}^{l} \gamma_k |\nabla u_k|^2 \right) \, dx \\[2mm] &+&
        4 \int_{\mathbb{R}^n \setminus \Omega} \left( \dfrac{\varphi^{''}}{r^2} - \dfrac{\varphi'}{r^3} \right)  \left( \sum_{k=1}^{l} \gamma_k |x \cdot \nabla u_k|^2 \right) \, dx  \\[2mm]
        &\leq& 4 \int_{\Omega} \left( \dfrac{\varphi'}{r} -2 \right)\left( \sum_{k=1}^{l} \gamma_k |\nabla u_k|^2 \right) \, dx \;+\; 4 \int_{\mathbb{R}^n \setminus \Omega} \left( \varphi^{''} - 2 \right)  \left( \sum_{k=1}^{l} \gamma_k | \nabla u_k|^2 \right) \, dx \\[2mm]
        &\leq& 0.
\end{eqnarray*}

\underline{\textbf{Step $2$:}} Bound $\mathcal{R}_{2}$.

Using that $|\Delta^2 \varphi| \leq \dfrac{C}{R^2}$, we have
\begin{eqnarray}\label{R2}
    \mathcal{R}_{2} &=&
     -   \sum_{k=1}^{l}  \gamma_k \int_{\mathbb{R}^n}|u_k|^2\Delta^{2}\varphi \, dx 
     \;\leq\;
 \int_{\mathbb{R}^n}  |\Delta^{2}\varphi| \left( \sum_{k=1}^{l}  \gamma_k |u_k|^2\right) \, dx\\[2mm]
    &\leq& \dfrac{C}{R^2}   \sum_{k=1}^{l}  \gamma_k \int_{\mathbb{R}^n}  |u_k|^2 \, dx \nonumber \;\leq\; \dfrac{C}{R^2} Q(\mathbf{u}(t)) \;=\; \dfrac{C}{R^2} Q(\mathbf{u}_0).
\end{eqnarray}

\underline{\textbf{Step $3$:}} Estimate $\mathcal{R}_{3}$.
Note that if $0 \leq |x| \leq R$, then $\Delta \varphi = 2n$ and $\varphi' = 2r$. Thus, 
$$
\Delta \varphi - 2(n+2b) +  2b \dfrac{\varphi'}{r} =0 \;\;\;\;\; \mbox{in} \;\; 0 \leq |x| \leq R.
$$
Also, there exists a positive constant $C$ such that
$$
\mathcal{R}_{3}(t) 
\;\leq \;
C \,\mbox{Re}\, \int_{\mathbb{R}^n} |x|^{-b} |F(\mathbf{u})|\,dx
\;\leq \;
C \sum_{k=1}^{l} \int_{|x|> R} |x|^{-b}  |u_k|^{3}\;dx.
$$
In the last inequality, we use Lemma \ref{Ademir.lem2.10.2.11.2.13}.

Recall that for any radial function $f \in H^{1}(\mathbb{R}^{n})$ (see, for instance,  \cite[equation~(3.7)]{Ogawa} )
$$
\int_{|x| \geq R} |f|^{3}\, dx \leq C R^{-\frac{(n-1)}{2}} \|f\|_{L^{2}(|x|\geq R)}^{\frac{5}{2}}\|\nabla f\|_{L^{2}(|x|\geq R)}^{\frac{1}{2}}.
$$
Hence, from Young's inequality we can write, for any $\epsilon >0$
\begin{eqnarray*}
\sum_{k=1}^{l} \int_{|x|> R} |x|^{-b}  |u_k|^{3}\;dx 
&\leq& R^{-b} \sum_{k=1}^{l} \int_{|x|> R} |u_k|^{3}\;dx \\[2mm]
&\leq& C R^{-\frac{(n +2b-1)}{2}} \sum_{k=1}^{l}   \|u_k\|_{L^{2}(|x|\geq R)}^{\frac{5}{2}}\|\nabla u_k\|_{L^{2}(|x|\geq R)}^{\frac{1}{2}} \\[2mm]
&\leq&
\sum_{k=1}^{l}  C(\epsilon) R^{-\frac{2(n +2b-1)}{3}} \|u_k\|_{L^{2}(|x|\geq R)}^{\frac{10}{3}}  \\[2mm]
&+& \sum_{k=1}^{l} 2(n+2b-4)\epsilon \gamma_k \|\nabla u_k\|_{L^{2}(|x|\geq R)}^{2}  \\[2mm]
&\leq& C_{\epsilon}R^{-\frac{2(n +2b-1)}{3}} Q(\mathbf{u}_0)^{\frac{5}{3}} + 2(n+2b-4)\epsilon K(\mathbf{u}(t)),
\end{eqnarray*}
where $C_\epsilon$ is a positive constant depending on $\epsilon$, $\alpha_k$, $\gamma_k$ and $\sigma_k$. 

Gathering together the estimates for $\mathcal{R}_1$, $\mathcal{R}_2$ and $\mathcal{R}_3$, we obtain
\begin{eqnarray*}
    \mathcal{R}'(t)
    &\leq&
    8 \mathcal{T}_n(\mathbf{u}(t)) + CR^{-2} Q(\mathbf{u}_0) + 2(n+2b-4)\epsilon K(\mathbf{u}(t)) + C_{\epsilon} R^{-\frac{2(n+2b-1)}{3}} Q(\mathbf{u}_0)^{\frac{5}{3}}
\\[2mm]
&\leq& 
8(1-\epsilon) \mathcal{T}_n(\mathbf{u}(t)) + 8\epsilon \mathcal{T}_n(\mathbf{u}(t)) + CR^{-2} Q(\mathbf{u}_0) + 2(n+2b-4)\epsilon K(\mathbf{u}(t)) \\[2mm] & & + C_{\epsilon} R^{-\frac{2(n+2b-1)}{3}} Q(\mathbf{u}_0)^{\frac{5}{3}}.
\end{eqnarray*}
Since $\mathcal{T}_n(\mathbf{u}(t)) \leq -\delta$ and 
$$
\mathcal{T}_n(\mathbf{u}(t))  \;+\: \dfrac{(n+2b-4)}{4} K(\mathbf{u}(t)) \;\leq\; \dfrac{n+2b}{4}|E(\mathbf{u}_0)|,
$$
we have 
\begin{eqnarray}\label{R'<=}
    \mathcal{R}'(t)
\;\leq\;
-8(1-\epsilon) \delta + 2(n+2b) \epsilon |E(\mathbf{u}_0)| + CR^{-2} Q(\mathbf{u}_0) + C_{\epsilon} R^{-\frac{2(n+2b-1)}{3}} Q(\mathbf{u}_0)^{\frac{5}{3}}.    
\end{eqnarray}
Choosing $\epsilon$ and $R$ such that,
$0< \epsilon < \min \left\{ \dfrac{1}{8},\; \dfrac{\delta}{2(n+2b) |E(\mathbf{u}_0)|}  \right\}$ and $\dfrac{1}{R^2} <  \min \left\{ \dfrac{\delta}{C Q(\mathbf{u}_0)},\; \left[ \dfrac{\delta}{C_{\epsilon} Q(\mathbf{u}_0)^{\frac{5}{3}}} \right]^{\frac{3}{n+2b-1}}\right\}$, we have
\begin{equation}\label{R'<-2delta}
\mathcal{R}'(t) \leq - 2 \delta.    
\end{equation}
Integrating \eqref{R'<-2delta} from $[0,t)$, we have
$$
\mathcal{R}(t) - \mathcal{R}(0) \;=\; \int_{0}^{t} \mathcal{R}'(s)\,ds \;\leq\; \int_{0}^{t} -2\delta \, ds \;=\; -2\delta \,t,
$$
and therefore,
$$
\mathcal{R}(t) \leq - 2 \delta t + \mathcal{R}(0).
$$
On the other hand, using \eqref{R}
\begin{eqnarray*}
    |\mathcal{R}(t)|
    =
    \left| 
    2\,\mbox{Im}\, \sum_{k=1}^{l} \alpha_k \int_{\mathbb{R}^{n}} \nabla \varphi \cdot \nabla u_k \, \overline{u}_k\, dx
    \right| \leq
    2 \sum_{k=1}^{l} \alpha_k \int_{\mathbb{R}^{n}} |\nabla \varphi |\, |\nabla u_k| \, |u_k|\, dx.
\end{eqnarray*}
Observe that $|\nabla \varphi| \leq CR$, from Hölder's inequality we have
\begin{eqnarray*}
    |\mathcal{R}(t)| 
    &\leq& C R \sum_{k=1}^{l} \int_{\mathbb{R}^{n}} |\nabla u_k| \, |u_k|\, dx
    \leq C R \sum_{k=1}^{l} \|\nabla u_k\|_{L^2}\| u_k\|_{L^2} \\[2mm]
    &\leq& 
    C  R Q(\mathbf{u}_0)^{\frac{1}{2}} K(\mathbf{u}(t))^{\frac{1}{2}}.
\end{eqnarray*}
Now, we may choose $T_0$ sufficiently large such that  $\dfrac{\mathcal{R}(0)}{\delta} \;<\; T_0$, and thus,
$$
\mathcal{R}(t) \leq - 2 \delta \, t + \mathcal{R}(0) \;=\; -  \delta \, t + ( \mathcal{R}(0) -  \delta \, t) < -  \delta \, t < 0,\;\;\; t \geq T_0, 
$$
and therefore,
$$
0 < \delta \,t \leq - \mathcal{R}(t) = |\mathcal{R}(t)|  \leq CR Q(\mathbf{u}_0)^{\frac{1}{2}} K(\mathbf{u}(t))^{\frac{1}{2}}, \;\;\; t \geq T_0.
$$
Equivalently,
$$
\delta^2 t^2 \leq C^2 R^2 Q(\mathbf{u}_0) K(\mathbf{u}(t)),  \;\;\; t \geq T_0.
$$
Thus
\begin{eqnarray}\label{K>C0t^2}
K(\mathbf{u}(t)) \geq C_0 t^2,  \;\;\; t \geq T_0,    
\end{eqnarray}
for some positive constant $C_0$.\\
Moreover, taking into account that $\epsilon$ is sufficiently small (less than $1/2$ is enough) by \eqref{R'<=} and \eqref{T_n}
\begin{eqnarray*}
    \mathcal{R}'(t)
    &\leq&
    8 \mathcal{T}_n(\mathbf{u}(t)) + CR^{-2} Q(\mathbf{u}_0) + 2(n+2b-4)\epsilon K(\mathbf{u}(t)) + C_{\epsilon} R^{-\frac{2(n+2b-1)}{3}} Q(\mathbf{u}_0)^{\frac{5}{3}}
\\[2mm]
&=& 
8 \left(
\dfrac{n+2b}{4} E(\mathbf{u}_0) - \dfrac{(n+2b-4)}{4} K(\mathbf{u}(t)) - \dfrac{n+2b}{4} L(\mathbf{u}(t)) \right) \\[2mm]
& & + CR^{-2} Q(\mathbf{u}_0) + 2(n+2b-4)\epsilon K(\mathbf{u}(t)) + C_{\epsilon} R^{-\frac{2(n+2b-1)}{3}} Q(\mathbf{u}_0)^{\frac{5}{3}} \\[2mm]
&=&
2(n+2b)  E(\mathbf{u}_0) - 2(n+2b-4)(1-\epsilon)  K(\mathbf{u}(t)) - 2(n+2b) L(\mathbf{u}(t)) \\[2mm]
& & + CR^{-2} Q(\mathbf{u}_0) + C_{\epsilon} R^{-\frac{2(n+2b-1)}{3}} Q(\mathbf{u}_0)^{\frac{5}{3}} \\[2mm]
&\leq& 
- (n+2b-4)K(\mathbf{u}(t)) + 2(n+2b)  E(\mathbf{u}_0) +  CR^{-2} Q(\mathbf{u}_0) + C_{\epsilon} R^{-\frac{2(n+2b-1)}{3}} Q(\mathbf{u}_0)^{\frac{5}{3}}. 
\end{eqnarray*}
In the inequality above,  we have used the conservation of the energy and the fact that $L(\mathbf{u}(t)) \geq 0$. Note that the last three terms of the above equation do not depend on $t$. Then, we may take $T_1 > T_0$ such that
$$
C_0 \dfrac{(n+2b-4)}{2} T_{1}^{2} \geq 2(n+2b)  E(\mathbf{u}_0) +  CR^{-2} Q(\mathbf{u}_0) + C_{\epsilon} R^{-\frac{2(n+2b-1)}{3}}Q(\mathbf{u}_0)^{\frac{5}{3}},  
$$
where $C_0$ is the constant appearing in \eqref{K>C0t^2}. Thus, 
$$
    \mathcal{R}'(t)
\leq  - (n+2b-4)K(\mathbf{u}(t)) + C_0 \dfrac{(n+2b-4)}{2} T_{1}^{2}.
$$
Since $K(\mathbf{u}(t)) \geq C_0 t^2 \geq C_0 T_{1}^{2} \geq T_{1}$, we obtain
\begin{eqnarray*}
     \mathcal{R}'(t)
&\leq&
- (n+2b-4)K(\mathbf{u}(t)) + \dfrac{(n+2b-4)}{2} K(\mathbf{u}(t)) \\[2mm]
&=& - \dfrac{(n+2b-4)}{2} K(\mathbf{u}(t)), \;\;\;\; t>T_1.
\end{eqnarray*}
Now, integrating the last inequality on $[T_1,t)$ gives us
$$
\mathcal{R}(t) - \mathcal{R}(T_1) \;=\; \int_{T_1}^{t} 
\mathcal{R}'(s) \, ds \; \leq \; - \dfrac{(n+2b-4)}{2} \int_{T_1}^{t}K(\mathbf{u}(s))\, ds.
$$
Since $\mathcal{R}(t) <0$, for $t \geq  T_0$ and $T_1 > T_0$, and then,
$$
\mathcal{R}(t)  \;\leq\;  - \dfrac{(n+2b-4)}{2} \int_{T_1}^{t}K(\mathbf{u}(s))\, ds. 
$$
Therefore,
$$
\dfrac{(n+2b-4)}{2} \int_{T_1}^{t}K(\mathbf{u}(s))\, ds
\leq - \mathcal{R}(t) = | \mathcal{R}(t)| \leq C R Q(\mathbf{u}_0)^{\frac{1}{2}} K(\mathbf{u}(t))^{\frac{1}{2}}.
 $$
Now, define
$$
\eta(t):= \int_{T_1}^{t} K(\mathbf{u}(s))\,ds \;\;\;\;\;\; \mbox{and}\;\;\;\;\;\; A:= \dfrac{(n+2b-4)^{2}}{4C^2R^2 Q(\mathbf{u}_0)}.
$$
Since $K(\mathbf{u}(t)) \geq C_0 t^2$, we have $\displaystyle \eta (t) \geq \int_{T_1}^{t} C_0\, s^2\,ds > 0$, for $t >T_1$.\\
Thus $A \leq \dfrac{\eta^{'}(t)}{\eta(t)^2}$, for $t>T_1$. Finally, taking $T_1 < T^{'}$ and integrating on $[T^{'}, t)$, we obtain
$$
A(t - T^{'}) \;=\; \int_{T^{'}}^{t} A \,ds \leq \int_{T^{'}}^{t} \dfrac{\eta^{'}(s)}{\eta(s)^2} \,ds \;=\;\left. - \dfrac{1}{\eta(s)} \right|_{s=T^{'}}^{s=t} = \dfrac{1}{\eta(T^{'})} - \dfrac{1}{\eta(t)} \leq \dfrac{1}{\eta(T^{'})}. 
$$
In the last inequality we have used  $\eta(t)>0$ when $t > T_1$. The above inequality gives
$$
0 < \eta(T^{'}) \leq \dfrac{1}{A(t - T^{'})},
$$
taking $t \to +\infty$, we obtain $0 < \eta(T^{'}) \leq 0$, which is a contradiction. Hence the proof of Theorem \ref{blow-up} is completed.

 \qed


\textbf{Funding} M. C. is supported by the National Council for Scientific and Technological Development (CNPq), Brazil, Grant No. 307099/2023-7, and by financial support from the CAPES/COFECUB Program, Grant No. 88887.879175/2023-00. L. G. is supported by a scholarship (PDSE 88881.934182/2024-01) from the Brazilian Federal Agency for Support and Evaluation of Graduate Education (CAPES).

\textbf{Acknowledgements}
The first author  acknowledges the support of the 
Conselho Nacional de Desenvolvimento Científico e Tecnológico (CNPq), Brazil and the CAPES/COFECUB Program, Brazil.
The second author gratefully acknowledges the Federal Institute of Minas Gerais (IFMG) for granting leave to pursue doctoral studies, which made this work possible. This project was largely carried out while  L. G. was visiting the Department of Mathematics and Statistics at Florida International University, Miami, FL, during the third year of his PhD program. He thanks the Department and the University for their hospitality and support. The authors thank Svetlana Roudenko for her valuable comments and suggestions, which helped improve the manuscript. This study was financed in part by the Coordenação de Aperfeiçoamento de Pessoal de Nível Superior – Brasil (CAPES) – Finance Code 001.

\addcontentsline{toc}{section}{References}
\bibliographystyle{abbrv}

\bibliography{sn-bibliography}

@article{Bai,
  title={Finite time/infinite time blow-up behaviors for the inhomogeneous nonlinear {S}chr{\"o}dinger equation},
  author={Bai, Ruobing and Li, Bing},
  journal={Nonlinear Analysis},
  volume={232},
  pages={113266},
  year={2023},
  publisher={Elsevier}
}

@article{Begout,
  title={Necessary conditions and sufficient conditions for global existence in the nonlinear
{S}chr{\"o}dinger equation},
  author={Pascal B{\'e}gout},
  journal={Adv. Math. Sci Appl.},
  volume={12},
  pages={817--827},
  year={2002}
}

@article{Bergh,
  title={Interpolation spaces. {A}n introduction},
  author={Bergh, J},
  journal={Grundlehren der Mathematischen Wissenschaften},
  volume={223},
  year={1976}
}

@article{Burchard,
  title={Rearrangement inequalities for functionals with monotone integrands},
  author={Burchard, Almut and Hajaiej, Hichem},
  journal={Journal of Functional Analysis},
  volume={233},
  number={2},
  pages={561--582},
  year={2006},
  publisher={Elsevier}
}

@article{Campos,
  title={Scattering of radial solutions to the inhomogeneous nonlinear {S}chr{\"o}dinger equation},
  author={Campos, Luccas},
  journal={Nonlinear Analysis},
  volume={202},
  pages={112118},
  year={2021},
  publisher={Elsevier}
}

@article{Cardoso,
year = {2022},
publisher = {IOP Publishing},
volume = {35},
number = {8},
pages = {4426--4436},
author = {Cardoso, Mykael and Farah, Luiz Gustavo},
title = {Blow-up of non-radial solutions for the ${{L}^2}$ critical inhomogeneous {NLS} equation},
journal = {Nonlinearity}
}

@article{Cardoso-Murphy,
  title={Scattering below the ground state for the intercritical non-radial inhomogeneous {NLS}},
  author={Cardoso, Mykael and Farah, Luiz Gustavo and Guzm{\'a}n, Carlos M and Murphy, Jason},
  journal={Nonlinear Analysis: Real World Applications},
  volume={68},
  pages={103687},
  year={2022},
  publisher={Elsevier}
}

@book{Cazenave,
  title     = {Semilinear {Schr\"odinger} Equations},
  author    = {Cazenave, Thierry},
  series    = {Courant Lecture Notes in Mathematics},
  volume    = {10},
  year      = {2003},
  publisher = {American Mathematical Society},
  address   = {Providence, RI}
}

@article{Combet,
year = {2016},
volume = {16},
number = {2},
pages = {483--500},
author = {Combet, Vianney
and Genoud, François},
title = {Classification of minimal mass blow-up solutions for an ${L^{2}}$critical inhomogeneous NLS},
journal = {Journal of Evolution Equations}
}

@book{Demenguel,
  title={Functional spaces for the theory of elliptic partial differential equations},
  author={Demengel, Fran{\c{c}}oise},
  year={2012},
  journal={Springer}
}

@article{Dinh,
  title={Blowup of ${H^{1}}$ solutions for a class of the focusing inhomogeneous nonlinear
schrodinger equation},
  author={Dinh, Van Duong},
  journal={Nonlinear Anal. Theory Methods Appl},
  volume={174},
  pages={169--188},
  year={2018}
}

@article{Dinh.2,
  title={Energy scattering for a class of inhomogeneous nonlinear Schr{\"o}dinger equation in two dimensions},
  author={Dinh, Van Duong},
  journal={Journal of Hyperbolic Differential Equations},
  volume={18},
  number={01},
  pages={1--28},
  year={2021},
  publisher={World Scientific}
}

@article{Doung,
  title={A system of inhomogeneous {NLS} arising in optical media with a ${\chi^{(2)}}$ nonlinearity, part {I}: {D}ynamics},
  author={Esfahani, Amin and Dinh, Van Duong  },
  journal={Communications on Pure and Applied Analysis},
  volume={24},
  number={4},
  pages={584--625},
  year={2025},
  publisher={Communications on Pure and Applied Analysis}
}

@article{Esfahani,
  title={Sharp constant of an anisotropic {G}agliardo--{N}irenberg-type inequality and applications},
  author={Esfahani, Amin and Pastor, Ademir},
  journal={Bulletin of the Brazilian Mathematical Society, New Series},
  volume={48},
  number={1},
  pages={171--185},
  year={2017},
  publisher={Springer}
}

@article{Farah,
  title={Global well-posedness and blow-up on the energy space for the inhomogeneous nonlinear {S}chr{\"o}dinger equation},
  author={Farah, Luiz G},
  journal={Journal of Evolution Equations},
  volume={16},
  number={1},
  pages={193--208},
  year={2016},
  publisher={Springer}
}

@article{Guzman,
  title={Scattering for the radial {3D} cubic focusing inhomogeneous nonlinear {S}chr{\"o}dinger equation},
  author={Farah, Luiz Gustavo and Guzm{\'a}n, Carlos M},
  journal={Journal of Differential Equations},
  volume={262},
  number={8},
  pages={4175--4231},
  year={2017},
  publisher={Elsevier}
}

@article{Guzman.2,
  title={Scattering for the radial focusing inhomogeneous {NLS} equation in higher dimensions},
  author={Farah, Luiz Gustavo and Guzm{\'a}n, Carlos M},
  journal={Bulletin of the Brazilian Mathematical Society, New Series},
  volume={51},
  number={2},
  pages={449--512},
  year={2020},
  publisher={Springer}
}

@article{Genoud,
  title={Schrodinger equations with a spatially decaying nonlinearity: existence and stability of standing waves},
  author={Genoud, Fran{\c{c}}ois and Stuart, Charles A},
  journal={Discrete and Continuous Dynamical Systems},
  volume={21},
  number={1},
  pages={137--186},
  year={2008},
  publisher={AIMS PRESS}
}

@book{Gilbarg,
  title     = {Elliptic Partial Differential Equations of Second Order},
  author    = {Gilbarg, David and Trudinger, Neil S.},
  edition   = {2},
  year      = {1983},
  publisher = {Springer-Verlag},
  address   = {Berlin}
}

@article{Hajaiej,
  title={On the necessity of the assumptions used to prove {H}ardy--{L}ittlewood and {R}iesz rearrangement inequalities},
  author={Hajaiej, Hichem},
  journal={Archiv der Mathematik},
  volume={96},
  number={3},
  pages={273--280},
  year={2011},
  publisher={Springer}
}

@article{Hayashi,
title = {On a system of nonlinear {S}chrödinger equations with quadratic interaction},
journal = {Annales de l'Institut Henri Poincaré C, Analyse non linéaire},
volume = {30},
number = {4},
pages = {661-690},
year = {2013},
issn = {0294-1449},
author = {Nakao Hayashi and Tohru Ozawa and Kazunaga Tanaka}
}

@article{Koynov,
  title={Nonlinear phase shift via multistep ${\chi^{(2)}}$ cascading},
  author={Koynov, K and Saltiel, S},
  journal={Optics communications},
  volume={152},
  number={1-3},
  pages={96--100},
  year={1998},
  publisher={Elsevier}
}

@book{Leoni,
  title     = {A First Course in Sobolev Spaces},
  author    = {Leoni, Giovanni},
  series    = {Graduate Studies in Mathematics},
  volume    = {105},
  year      = {2009},
  publisher = {American Mathematical Society},
  address   = {Providence, RI}
}

@book{Lieb,
  title     = {Analysis},
  author    = {Lieb, Elliott H. and Loss, Michael},
  series    = {Graduate Studies in Mathematics},
  volume    = {14},
  year      = {2001},
  publisher = {American Mathematical Society},
  address   = {Providence, RI}
}

@book{Linares-Ponce,
   title     = {Introduction to Nonlinear Dispersive Equations},
  author    = {Linares, Felipe and Ponce, Gustavo},
  series    = {Universitext},
  year      = {2015},
  publisher = {Springer},
  address   = {New York}
}

@article{Miao,
  title={Scattering for the non-radial inhomogeneous {NLS}},
  author={Miao, Changxing and Murphy, Jason and Zheng, Jiqiang},
  journal={Mathematical Research Letters},
  volume={28},
  number={5},
  pages={1481--1504},
  year={2021}
}

@article{Murphy,
  title={A simple proof of scattering for the intercritical inhomogeneous {NLS}},
  author={Murphy, Jason},
  journal={Proceedings of the American Mathematical Society},
  volume={150},
  number={3},
  pages={1177--1186},
  year={2022}
}

@article{Noguera2,
  title={Blow-up solutions for a system of {S}chr{\"o}dinger equations with general quadratic-type nonlinearities in dimensions five and six},
  author={Noguera, Norman and Pastor, Ademir},
  journal={Calculus of Variations and Partial Differential Equations},
  volume={61},
  number={3},
  pages={111},
  year={2022},
  publisher={Springer}
}

@article{Noguera,
  title={A system of {S}chr{\"o}dinger equations with general quadratic-type nonlinearities},
  author={Noguera, Norman and Pastor, Ademir},
  journal={Communications in contemporary mathematics},
  volume={23},
  number={04},
  pages={2050023},
  year={2021},
  publisher={World Scientific}
}

@article{Ogawa,
  title={Blow-up of ${H^{1}}$ solution for the nonlinear {S}chr{\"o}dinger equation},
  author={Ogawa, Takayoshi and Tsutsumi, Yoshio},
  journal={Journal of Differential Equations},
  volume={92},
  number={2},
  pages={317--330},
  year={1991},
  publisher={Elsevier}
}

@article{Ozawa,
  title={Remarks on proofs of conservation laws for nonlinear {S}chr{\"o}dinger equations.},
  author={Ozawa, T},
  journal={Calculus of Variations \& Partial Differential Equations},
  volume={25},
  number={3},
pages={403--408},
  year={2006}
}

@article{Pastor2,
  title={On three-wave interaction {S}chr{\"o}dinger systems with quadratic nonlinearities: {G}lobal well-posedness and {S}tanding waves.},
  author={Pastor, Ademir},
  journal={Communications on Pure \& Applied Analysis},
  volume={18},
  number={5},
  year={2019}
}

@article{Pastor,
  title={Weak concentration and wave operator for a {3D} coupled nonlinear {S}chr{\"o}dinger system},
  author={Pastor, Ademir},
  journal={Journal of Mathematical Physics},
  volume={56},
  number={2},
  year={2015},
  publisher={AIP Publishing}
}


\end{document}